\numberwithin{equation}{section}
\begin{document}
	\theoremstyle{plain}
	\newtheorem{thm}{Theorem}[section]
	\newtheorem{lem}[thm]{Lemma}
	\newtheorem{cor}[thm]{Corollary}
	\newtheorem{cor*}[thm]{Corollary*}
	\newtheorem{prop}[thm]{Proposition}
	\newtheorem{prop*}[thm]{Proposition*}
	\newtheorem{conj}[thm]{Conjecture}
	%%%%%%%%%%%%%%%%%%%% Text roman %%%%%%%%%%%%%%%%%%%%%%%%%%%%%
	\theoremstyle{definition}
	\newtheorem{construction}{Construction}
	\newtheorem{notations}[thm]{Notations}
	\newtheorem{question}[thm]{Question}
	\newtheorem{prob}[thm]{Problem}
	\newtheorem{rmk}[thm]{Remark}
	\newtheorem{remarks}[thm]{Remarks}
	\newtheorem{defn}[thm]{Definition}
	\newtheorem{claim}[thm]{Claim}
	\newtheorem{assumption}[thm]{Assumption}
	\newtheorem{assumptions}[thm]{Assumptions}
	\newtheorem{properties}[thm]{Properties}
	\newtheorem{exmp}[thm]{Example}
	\newtheorem{comments}[thm]{Comments}
	\newtheorem{blank}[thm]{}
	\newtheorem{observation}[thm]{Observation}
	\newtheorem{defn-thm}[thm]{Definition-Theorem}
	\newtheorem{defn-lem}[thm]{Definition-Lemma}
	\newtheorem*{Setting}{Setting}

	\newcommand{\sA}{\mathscr{A}}
	\newcommand{\sB}{\mathscr{B}}
	\newcommand{\sC}{\mathscr{C}}
	\newcommand{\sD}{\mathscr{D}}
	\newcommand{\sE}{\mathscr{E}}
	\newcommand{\sF}{\mathscr{F}}
	\newcommand{\sG}{\mathscr{G}}
	\newcommand{\sH}{\mathscr{H}}
	\newcommand{\sI}{\mathscr{I}}
	\newcommand{\sJ}{\mathscr{J}}
	\newcommand{\sK}{\mathscr{K}}
	\newcommand{\sL}{\mathscr{L}}
	\newcommand{\sM}{\mathscr{M}}
	\newcommand{\sN}{\mathscr{N}}
	\newcommand{\sO}{\mathscr{O}}
	\newcommand{\sP}{\mathscr{P}}
	\newcommand{\sQ}{\mathscr{Q}}
	\newcommand{\sR}{\mathscr{R}}
	\newcommand{\sS}{\mathscr{S}}
	\newcommand{\sT}{\mathscr{T}}
	\newcommand{\sU}{\mathscr{U}}
	\newcommand{\sV}{\mathscr{V}}
	\newcommand{\sW}{\mathscr{W}}
	\newcommand{\sX}{\mathscr{X}}
	\newcommand{\sY}{\mathscr{Y}}
	\newcommand{\sZ}{\mathscr{Z}}
	\newcommand{\bZ}{\mathbb{Z}}
	\newcommand{\bN}{\mathbb{N}}
	\newcommand{\bQ}{\mathbb{Q}}
	\newcommand{\bC}{\mathbb{C}}
	\newcommand{\bR}{\mathbb{R}}
	\newcommand{\bH}{\mathbb{H}}
	\newcommand{\bD}{\mathbb{D}}
	\newcommand{\bE}{\mathbb{E}}
	\newcommand{\bV}{\mathbb{V}}
	\newcommand{\bfM}{\mathbf{M}}
	\newcommand{\bfN}{\mathbf{N}}
	\newcommand{\bfX}{\mathbf{X}}
	\newcommand{\bfY}{\mathbf{Y}}
	\newcommand{\cH}{\mathcal{H}}
	\newcommand{\cV}{\mathcal{V}}
	\newcommand{\cF}{\mathcal{F}}
	\newcommand{\spec}{\textrm{Spec}}
	\newcommand{\dbar}{\bar{\partial}}
	\newcommand{\ddbar}{\partial\bar{\partial}}
	\newcommand{\redref}{{\color{red}ref}}
	%%%%%%%%%%%%%%%%%%%%%%%%%%%%%%%%%%%%%%%%%%%%%%%%%%%%%%%%%%%%%%

	\title[$L^2$-representation of Hodge Modules] {$L^2$-representation of Hodge Modules}

	\author[Junchao Shentu]{Junchao Shentu}
	\email{stjc@ustc.edu.cn}
	\address{School of Mathematical Sciences,
		University of Science and Technology of China, Hefei, 230026, China}

	\author[Chen Zhao]{Chen Zhao}
	\email{czhao@ustc.edu.cn}
	\address{School of Mathematical Sciences,
		University of Science and Technology of China, Hefei, 230026, China}

	%%%%%%%%%%%%%%%%%%%%%%%%%%%%%%%%%%%%%%%%%%%%%%%%%%%%%%%%%%%%%%%%%%%%%%%%%%%%%%%%%%%%%%%%%%%%%%%%
	%                                          Abstract                                            %
	%%%%%%%%%%%%%%%%%%%%%%%%%%%%%%%%%%%%%%%%%%%%%%%%%%%%%%%%%%%%%%%%%%%%%%%%%%%%%%%%%%%%%%%%%%%%%%%%
	\begin{abstract}
		Over an arbitrary compact complex space or an arbitrary germ of complex space $X$, we provide fine resolutions of pure Hodge modules with strict supports $IC_X(\bV)$ via differential forms with locally $L^2$ boundary conditions. When $\bV=\bC_{X_{\rm reg}}$ is the trivial variation of Hodge structure, we give a solution to a Cheeger-Goresky-MacPherson type conjecture: For any compact complex space $X$, there is a complete hermitian metric $ds^2$ on $X_{\rm reg}$ such that there is a canonical isomorphism
		$$H^i_{(2)}(X_{\rm reg},ds^2)\simeq IH^i(X),\quad \forall i.$$
		Such metric $ds^2$ could be K\"ahler if $X$ is a K\"ahler space.
		As an application, we give a differential geometrical proof of the K\"ahler package of the hypercohomology of pure Hodge modules. We also prove the K\"ahler version of Kashiwara's conjecture in the absolute case.
	\end{abstract}

	\maketitle
	\tableofcontents

	\section{Introduction}
	This paper grows out of an attempt to generalize the Topology-Geometry-Analysis nature of the Hodge theory to compact K\"ahler spaces with coefficients. The classical Hodge theory for a compact K\"ahler manifold $X$ can be summarized by the graph
	$$\xymatrix{
		*+[F]{\substack{{\bf Topology:}\\\textrm{Constructible sheaf }\bQ_X;\\\textrm{Singular cohomology } H^\ast(X).}} \ar@{<->}[r] & *+[F]{\substack{{\bf Geometry:}\\\textrm{Holomorphic de Rham complex }\Omega_X^\bullet;\\
				\textrm{Pure Hodge structures on } H^\ast(X).}}\ar@{<->}[r]&
		*+[F]{\substack{{\bf Analysis:}\\\textrm{$C^\infty$ de Rham complex }\sA_X^\bullet;\\
				\textrm{Hodge decomposition of differential forms;}\\\textrm{Harmonic forms.}}}
	}.$$
	This picture is generalized by P. Deligne \cite{Delign1974} to algebraic varieties which may admits singularities:
	$$\xymatrix{
		*+[F]{\substack{{\bf Topology:}\\\textrm{Constructible sheaf }\bQ_X;\\\textrm{Singular cohomology } H^\ast(X).}} \ar@{<->}[r] & *+[F]{\substack{{\bf Geometry:}\\\textrm{Filtered de Rham Complex }\tilde{\Omega}_X^\bullet\textrm{\cite{DuBois1981}};\\
				\textrm{Mixed Hodge structures on } H^\ast(X).}}\ar@{<->}[r]&
		*+[F]{\substack{{\bf Analysis:}\\
				\textrm{Differential forms; Hodge and}\\ \textrm{weight filtrations \cite{Deligne1971_2,Bernard-Vincenzo2006}.}}}
	}.$$
	In the analytic counterpart, the differential forms  live either on a hyper-covering of $X$ \cite{Deligne1971_2,Delign1974} or a system of desingularization diagrams of $X$ \cite{Bernard-Vincenzo2006}, instead of on $X$ itself. Moreover, the Hodge decomposition theorem and the theory of harmonic integrals are missing on the analytic side.

	Motivated by the theory of weight in characteristic $p>0$ \cite{BBD}, the theory of mixed Hodge structures and variation of these objects \cite{Cattani_Kaplan_Schmid1986,Schmid1973,Steenbrink-Zucker1985} are integrated by M. Saito into his theory of mixed Hodge modules \cite{MSaito1988,MSaito1990}. Saito's theory has become an indispensable tool in the study of complex geometry, arithmetic geometry and representation theories. Conceptually, they establish the Hodge theory to the most generality so far:
	$$\xymatrix{
		*+[F]{\substack{{\bf Topology:}\\\textrm{Perverse Sheaf; Weight filtration;}\\\textrm{Six operators.}}} \ar@{<->}[r]^-{\rm RH} & *+[F]{\substack{{\bf Geometry:}\\\textrm{Filtered $\mathcal{D}$-module;}\\\textrm{Six operators.}}}\ar@{<->}[r]&
		*+[F]{\substack{{\bf Analysis:}\\
				\textrm{Partially known \cite{Zucker1979,Cattani_Kaplan_Schmid1987,Kashiwara_Kawai1987,Saper_Stern1990,Looijenga1988,Saper1992,Ohsawa1991} etc.}\\ \textrm{Mostly missing.}}}
	},$$
	where {\rm RH} stands for the Riemann-Hilbert correspondence.
	Although the theory of mixed Hodge modules has been well established, its analytic counterpart is much less known. The first problem that we encounter is to find a nice analytic perspective of pure Hodge modules with strict support\footnote{Every pure Hodge module is a (locally finite) direct sum of pure Hodge modules with strict supports (\cite[\S 5.16]{MSaito1988}).}:
	\begin{prob}\label{prob_vague}
		Let $X$ be a complex space and $M$ a pure Hodge module with strict support $X$. Assume that the underlying perverse sheaf of $M$ is $IC_X(\bV)$ where $\bV$ is a polarized variation of Hodge structure over some dense Zariski open subset $X^o\subset X_{\rm reg}$. Find a fine resolution $\sD^\bullet$ of $IC_X(\bV)$ such that
		\begin{enumerate}
			\item $\sD^\bullet$ is consist of differential forms in some reasonable sense. E.g. smooth forms with combinatory or analytic boundary conditions.
			\item When $X$ is compact, there is a Hodge decomposition theorem for elements in $\sD^\bullet$. Moreover every cohomology class of $\bH^\ast(X,IC_X(\bV))$ is uniquely represented by a harmonic form.
			\item When $X$ is compact and admits a K\"ahler hermitian metric, there is a natural $(p,q)$-decomposition of $\sD^\bullet$ which gives a pure Hodge structure on $\bH^\ast(X,IC_X(\bV))$.
		\end{enumerate}
	\end{prob}
	It is a general philosophy that Hodge modules are closely related to $L^2$-differential forms. For one thing, the classical Hodge theory is established via the $L^2$-technique. On the other hand, in Saito's theory of Hodge modules, the study of the cohomology of Hodge modules (e.g. the proof of Saito's decomposition theorem) is reduced to S. Zucker's work \cite{Zucker1979} on the $L^2$-representation of Hodge modules over a smooth algebraic curve. In the era when intersection cohomology was defined, J. Cheeger, M. Goresky and R. MacPherson made a conjecture that the intersection cohomology is represented by a certain $L^2$-cohomology (see Conjecture \ref{conj_CGM} below), so that the intersection cohomology of a projective variety owns a package of theorems including the Hard Lefschetz theorem and the existence of a pure Hodge structure. These properties are proved by Be\u{\i}linson-Bernstein-Deligne-Gabber \cite{BBD} and Saito \cite{MSaito1988, MSaito1990} by alternative methods.

	In recent years, the technique of mixed Hodge modules and $L^2$-methods both play important roles in the development of complex and algebraic geometry. It is interesting to search for their relations and combine them together.

	According to this philosophy, a promising candidate of such a  $\sD^\bullet$ is the complex of locally $L^2$-$\bV$-valued differential forms. Let $ds^2$ be a metric on $X^o$ and denote by $\sD^\bullet_{X,\bV;ds^2,h}$ the complex of sheaves on $X$ consisting of $\bV$-valued measurable differential forms $\alpha$ such that $\alpha$ and $\nabla\alpha$ are locally square integrable over $X$ (including the singular points) with respect to $ds^2$ and the Hodge metric $h$ on $\bV$. Compared to other boundary conditions (combinatory or $L^p$, $p\neq 2$), this kind of complex has the following advantages.
	\begin{enumerate}
		\item For an amount of interesting metrics $ds^2$, $\sD^\bullet_{X,\bV;ds^2,h}$ is a complex of fine sheaves (Lemma \ref{lem_fine_sheaf}).
		\item When $X$ is compact, $\Gamma(X,\sD^\bullet_{X,\bV;ds^2,h})$ automatically admits a Hodge decomposition theorem whenever $H^\ast(\Gamma(X,\sD^\bullet_{X,\bV;ds^2,h}))$ are finite dimensional (\cite[Theorem A.2.2]{Kashiwara_Kawai1987}).
		\item When $X$ is compact and $ds^2$ is a complete K\"ahler metric, $H^\ast(\Gamma(X,\sD^\bullet_{X,\bV;ds^2,h}))$ admits a natural pure Hodge structure of the right weight as long as it is finite dimensional (\cite[Theorem 6.2.3]{Kashiwara_Kawai1987} or \cite[\S 7]{Zucker1979}). As the classical case, the Hodge structure is induced from the total bi-degree decomposition of differential forms.
	\end{enumerate}
	Now Problem \ref{prob_vague} turns to a precise one, which is the main subject of this paper.
	\begin{prob}\label{prob_main}
		Let $X$ be a complex space and let $(\bV,h)$ underly an $\bR$-polarized variation of Hodge structure over some dense Zariski open subset $X^o\subset X_{\rm reg}$ with quasi-unipotent local monodromies. Is there a hermitian (resp. K\"ahler) metric $ds^2$ on $X^o$ such that
		\begin{enumerate}
			\item $\sD^\bullet_{X,\bV;ds^2,h}$ is a complex of fine sheaves and
			\item $\sD^\bullet_{X,\bV;ds^2,h}$ is quasi-isomorphic to $IC_X(\bV)$ in $D(X)$, the derived category of sheaves of $\bC$-vector spaces?
		\end{enumerate}
	\end{prob}
	Problem \ref{prob_main} has been solved in some cases.
	\begin{itemize}
		\item When $ds^2$ is only required to be a Riemannian metric and $\bV=\bC_{X_{\rm reg}}$ is the trivial variation of Hodge structure, Problem \ref{prob_main} is solved by Cheeger \cite{Cheeger1980}. Cheeger's work is the starting point of the analytical study of perverse sheaves.
		\item When $X$ is a smooth algebraic curve and $ds^2$ is a hermitian metric asymptotic of Poincar\'e type along the isolated points $X\backslash X^o$, Problem \ref{prob_main} is solved by Zucker \cite{Zucker1979}.
		Zucker's result is generalized by Jost-Yang-Zuo\cite{Jost_Zuo2007} to the case when the coefficients are in a unipotent harmonic bundle.
		\item More generally when $X$ is a complex manifold, $D:=X\backslash X^o$ is a normal crossing divisor and $ds^2$ is a K\"ahler metric asymptotic of Poincar\'e type along $D$, Problem \ref{prob_main} is solved by Cattani-Kaplan-Schmid \cite{Cattani_Kaplan_Schmid1987} and Kashiwara-Kawai \cite{Kashiwara_Kawai1987} respectively.
		\item When $X$ is K\"ahler and admits only isolated singularities, $X^o=X_{\rm reg}$ and $\bV=\bC_{X_{\rm reg}}$ is the trivial variation of Hodge structure, L. Saper \cite{Saper1985,Saper1992} constructs a family of  complete K\"ahler metrics $ds^2$ which solves Problem \ref{prob_main}. Based on Saper's work, T. Ohsawa \cite{Ohsawa1991,Ohsawa1993} settles this case when $ds^2$ is a hermitian metric on $X$.
		\item When $X^o$ is a locally symmetric variety $G/\Gamma$, $X$ is its Satake-Baily-Borel compactification, $ds^2$ is the canonical complete metric and $(\bV,h)$ is a hermitian local system associated to a representation of $G$, Problem \ref{prob_main} is a conjecture made by Zucker \cite{Zucker1982} and is proved by E. Looijenga\cite{Looijenga1988} and Saper-Stern\cite{Saper_Stern1990} respectively.
	\end{itemize}
	In the case when $X^o=X_{\rm reg}$ and $\bV=\bC_{X_{\rm reg}}$ is the trivial variation of Hodge structure, Problem \ref{prob_main} is closely related to the famous
	\begin{conj}[Cheeger-Goresky-Macpherson, \cite{CGM1982}]\label{conj_CGM}
		Let $X\subset \mathbb{P}^N$ be a projective variety and let $ds^2_{\rm FS}$ be the Fubini-Study metric on $X_{\rm reg}$. Then there is an isomorphism
		$$H^\ast_{(2)}(X_{\rm reg},ds^2_{\rm FS})\simeq IH^\ast(X,\bC).$$
	\end{conj}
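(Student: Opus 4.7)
The plan is to establish Conjecture~\ref{conj_CGM} by a local-to-global sheaf-theoretic argument combined with an asymptotic analysis of $ds^2_{\rm FS}$ near $X_{\rm sing}$. First I would verify that the complex $\sD^\bullet_{X,\bC;ds^2_{\rm FS}}$ is fine on $X$ (via a smooth partition of unity adapted to $ds^2_{\rm FS}$), so that its hypercohomology on the compact variety $X$ is computed by global sections. By the axiomatic characterization of $IC_X(\bC)$ due to Goresky-MacPherson, the target isomorphism is equivalent to a quasi-isomorphism
\begin{equation*}
\sD^\bullet_{X,\bC;ds^2_{\rm FS}} \simeq IC_X(\bC) \quad \text{in } D(X),
\end{equation*}
which is a local statement on $X$ and may be checked stratum by stratum.

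I would then proceed by induction on the depth of a Whitney stratification of $X$. For a point $x$ in the open smooth stratum the claim is the Poincar\'e lemma. For deeper $x$, the projective germ $(X,x)\subset (\bC^N,0)$ is quasi-isometric, via the radial straight-line homotopy in affine coordinates centered at $x$, to a metric cone $C(L_x)$ over the link $L_x := X \cap S^{2N-1}_\epsilon(x)$. I would aim to prove a Cheeger-type formula
\begin{equation*}
\bH^i_{(2)}(C(L_x)_{\rm reg}) \cong \begin{cases} IH^i(L_x,\bC), & i < \dim_\bC X, \\ 0, & i \geq \dim_\bC X, \end{cases}
\end{equation*}
and then identify the right-hand side with the local intersection cohomology stalk of $IC_X(\bC)$ at $x$ via the inductive hypothesis applied to $L_x$. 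The quasi-isometry invariance of the $L^2$-complex under the radial homotopy is established through a Lie-derivative/integration-by-parts argument along the dilation vector field.

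The principal obstacle is closing the induction in the presence of non-isolated singularities. When $X_{\rm sing}$ is positive-dimensional, the link $L_x$ inherits singular strata of its own, and the induced metric on $L_x$ is \emph{not} the restriction of any Fubini-Study metric of an ambient projective embedding --- it is rather the quotient of such by a radial factor, with a quite different geometry along the deeper strata. To apply the inductive hypothesis one therefore needs a robust invariance principle: the $L^2$-complex is quasi-isomorphic for any two metrics within a suitable class of subanalytically tame metrics on $X_{\rm reg}$, containing both the ambient FS metric and its iterated link restrictions. Proving such invariance is essentially the content of the Hsiang-Pati program and is where classical approaches to Conjecture~\ref{conj_CGM} have stalled; I would attempt it by combining the iterated-cone model geometry of Hsiang-Pati with the sheaf-theoretic fine-resolution framework developed in the body of the present paper. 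An alternative route is to leverage the paper's main theorem directly: if a one-parameter family of hermitian metrics interpolating between $ds^2_{\rm FS}$ and the complete metric $ds^2$ of the abstract can be constructed with uniformly controlled $L^2$-spectrum, a Fredholm-continuity argument would reduce Conjecture~\ref{conj_CGM} to the isomorphism $H^i_{(2)}(X_{\rm reg},ds^2)\simeq IH^i(X)$ already established there.
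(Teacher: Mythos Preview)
The paper does not prove this statement. Conjecture~\ref{conj_CGM} is stated in the introduction as an open problem: the paper explicitly records that it is known only for normal surfaces (Hsiang--Pati) and for isolated singularities (Ohsawa), and that ``the general case is still unknown.'' What the paper actually establishes is the analogue of this isomorphism for a \emph{different} metric: a complete ``distinguished'' metric $ds^2$ on $X_{\rm reg}$ constructed via a Whitney stratified desingularization, not the incomplete Fubini--Study metric $ds^2_{\rm FS}$. So there is no proof in the paper to compare your proposal against.

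You seem partly aware of this: you correctly identify the inductive step over non-isolated singularities as the obstruction, and you note that this is ``where classical approaches to Conjecture~\ref{conj_CGM} have stalled.'' But neither of your two suggested routes closes the gap. The first route requires a quasi-isometry invariance principle across a class of ``subanalytically tame'' metrics broad enough to contain both $ds^2_{\rm FS}$ and its iterated link restrictions; no such principle is proved in the paper, and the paper's techniques are tailored to the distinguished metric, whose asymptotics (Lemma~\ref{lem_Grauert_metric_in_coordinates}, Proposition~\ref{prop_Grauert_asymptotic_behavior}) are fundamentally different from those of $ds^2_{\rm FS}$. Your second route --- interpolating between $ds^2_{\rm FS}$ and the paper's distinguished $ds^2$ with uniform spectral control --- is exactly the kind of argument that is not available: the distinguished metric is complete while $ds^2_{\rm FS}$ is not, so the $L^2$-domains and boundary conditions change discontinuously along any such family, and no Fredholm-continuity argument is supplied. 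In short, your proposal is a reasonable outline of why the conjecture is hard, but it does not constitute a proof, and the paper does not claim one either.
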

	Conjecture \ref{conj_CGM} is solved affirmatively by W. C. Hsiang and V. Pati \cite{Hsiang_Pati1985} for the case of normal surfaces, and by Ohsawa \cite{Ohsawa1991} for the case when $X$ admits only isolated singularities. The general case is still unknown.

	\subsection{Main results}
	The aim of this paper is to give a solution to Problem \ref{prob_main} in the most general situation. Our motivation is to relate the theory of Hodge modules and $L^2$-techniques. For the purpose of applications we focus on complete (K\"ahler) metrics.

	Let $X$ be a complex space of pure dimension and $X^o\subset X_{\rm reg}$ a dense Zariski open subset. We define a family of hermitian metrics on $X^o$ which is called distinguished associated to $(X,X^o)$ (Definition \ref{defn_Grauert_type_metric}). Such kind of metric is complete and of finite volume locally at every point of $X$ (Corollary \ref{cor_Gt_complete_finite_vol}). The distinguished metric exists in either of the following cases (Lemma \ref{lem_Grauert_metric_exist}).
	\begin{enumerate}
		\item $(X,X\backslash X^o)$ is compactable (e.g. when $X$ is compact or $(X,X^o)$ is a pair of algebraic varieties).
		\item $X$ is a germ of complex space.
	\end{enumerate}
	When $X$ is a Zariski open subset of a compact K\"ahler space or is a germ of a complex space, $(X,X^o)$ admits a K\"ahler distinguished metric.

	The main result of this paper is
	\begin{thm}[= Theorem \ref{thm_main_harmonic_proof}]\label{thm_main_harmonic}
		Let $X$ be a complex space of pure dimension and $X^o\subset X_{\rm reg}$ a dense Zariski open subset. Let $ds^2$ be a distinguished metric associated to $(X,X^o)$.
		Let $(\bV,h)$ be a $0$-tame harmonic bundle on $(X,X^o)$. Then
		\begin{enumerate}
			\item $\sD^\bullet_{X,\bV;ds^2,h}$ is a complex of fine sheaves.
			\item There is a canonical quasi-isomorphism
			$$\sD^\bullet_{X,\bV;ds^2,h}\simeq IC_X(\bV)$$ in $D(X)$.
		\end{enumerate}
		Moreover if $X$ is compact, there is a natural isomorphism
		$$H^k_{(2)}(X^o,\bV;ds^2,h)\simeq IH^k(X,\bV):=\bH^k(X,IC_X(\bV))$$
		for every $k$.
		In particular, $H^k_{(2)}(X^o,\bV;ds^2,h)$ is finite dimensional for every $k$ when $X$ is compact.
	\end{thm}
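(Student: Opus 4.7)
The plan is to address the three claims in sequence: (1) the fineness of $\sD^\bullet_{X,\bV;ds^2,h}$ is a partition-of-unity argument; (2) the quasi-isomorphism with $IC_X(\bV)$ is a purely local question on $X$ that reduces, after resolution of singularities, to the known case of Poincar\'e-type metrics on normal crossing complements; and (3) the global isomorphism when $X$ is compact is then formal. For (1), I would construct, for any locally finite open cover of $X$, subordinate cutoff functions on $X^o$ whose differentials are bounded with respect to $ds^2$. The point is that a distinguished metric (Definition~\ref{defn_Grauert_type_metric}) is built from Grauert-type potentials and hence admits such cutoffs; multiplication by a function of bounded gradient visibly preserves the local $L^2$-condition on both $\alpha$ and $\nabla\alpha$, giving a module structure over the sheaf of Lipschitz functions and hence fineness. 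This is presumably carried out in the referenced Lemma~\ref{lem_fine_sheaf}.

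The analytic core is (2). Since both complexes make sense as sheaves on $X$, the statement reduces to checking quasi-isomorphism on stalks at each $x\in X$. If $x\in X^o$, the stalk identification is the classical Poincar\'e lemma for the flat bundle $\bV$ with its smooth hermitian metric. The substantive case is $x\in X\setminus X^o$. Here I would choose a proper bimeromorphic resolution $\pi\colon \tilde{X}\to X$ such that $E := \pi^{-1}(X\setminus X^o)_{\rm red}$ is a simple normal crossing divisor, and compare the two $L^2$-complexes on $X^o\cong \tilde{X}\setminus E$. The decisive step is to show that, locally near each point of $E$, the pull-back $\pi^\ast ds^2$ is quasi-isometric to a K\"ahler metric of Poincar\'e type along $E$. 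Granted this comparison, the stalk of $\sD^\bullet_{X,\bV;ds^2,h}$ at $x$ is computed by pushing forward the analogous $L^2$-complex on $\tilde{X}$ via $\pi$, and the local statements of Cattani--Kaplan--Schmid~\cite{Cattani_Kaplan_Schmid1987} and Kashiwara--Kawai~\cite{Kashiwara_Kawai1987} (extended to $0$-tame harmonic bundles) identify the latter with $IC_{\tilde{X}}(\bV)$. Since $\pi_\ast IC_{\tilde{X}}(\bV) = IC_X(\bV)$ by the defining characterization of intersection complexes, this yields the desired quasi-isomorphism on stalks.

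The compact case (3) follows formally: by (1), $\sD^\bullet_{X,\bV;ds^2,h}$ is a complex of fine sheaves, so
$$H^k_{(2)}(X^o,\bV;ds^2,h) = H^k\!\left(\Gamma\!\left(X,\sD^\bullet_{X,\bV;ds^2,h}\right)\right) = \bH^k\!\left(X,\sD^\bullet_{X,\bV;ds^2,h}\right),$$
and by (2) this equals $\bH^k(X,IC_X(\bV)) = IH^k(X,\bV)$. Finite dimensionality for compact $X$ is then inherited from that of intersection cohomology.

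The principal obstacle is the metric comparison required in (2): distinguished metrics are defined intrinsically on the possibly highly singular $X$ via Grauert-type potentials, whereas the reference results for normal crossing compactifications demand precise Poincar\'e-type asymptotics on the resolution $\tilde{X}$. Establishing that $\pi^\ast ds^2$ is uniformly quasi-isometric to a Poincar\'e metric transverse to every intersection stratum of $E$ will require careful local analysis of the distinguished construction, together with the $0$-tameness hypothesis on $(\bV,h)$ to control the Hodge norm along $E$; this is the main technical input that upgrades the previously known special cases to the general singular setting.
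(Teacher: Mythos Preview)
Your steps (1) and (3) are fine and match the paper. The gap is in (2), and it is structural rather than technical.

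First, the metric comparison you hope for is false. The paper computes the local model of the distinguished metric in Lemma~\ref{lem_Grauert_metric_in_coordinates}: after pulling back to the resolution it is quasi-isometric to
\[
\sum_{i=1}^r\frac{dz_i d\bar z_i}{|z_i|^2\log^2|z_1\cdots z_r|^2\log^2(-\log|z_1\cdots z_r|^2)}
+\frac{\omega_{\widetilde X}}{-\log|z_1\cdots z_r|^2\log^2(-\log|z_1\cdots z_r|^2)}+\pi^\ast\omega_0,
\]
which has the wrong asymptotics (extra $\log\log$ factors, and the transverse directions are damped) and, crucially, carries the term $\pi^\ast\omega_0$ coming from a hermitian metric on the \emph{singular} base $X$. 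The authors say explicitly in the remark following the statement of the main results that the distinguished metric is \emph{not} of Poincar\'e type along $E$, so the Cattani--Kaplan--Schmid and Kashiwara--Kawai local computations do not apply.

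Second, even if you could arrange Poincar\'e asymptotics on $\widetilde X$, your pushforward step $\pi_\ast IC_{\widetilde X}(\bV)=IC_X(\bV)$ is false in general: by the decomposition theorem $R\pi_\ast IC_{\widetilde X}(\bV)$ contains $IC_X(\bV)$ as a summand but typically has extra pieces supported on $X_{\rm sing}$. An $L^2$-complex that only sees the normal crossing geometry of $E\subset\widetilde X$ would compute $R\pi_\ast IC_{\widetilde X}(\bV)$, not $IC_X(\bV)$. The term $\pi^\ast\omega_0$ in the distinguished metric is precisely what forces the $L^2$-complex to witness the stratification of $X$ itself; remove it and you lose the result.

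The paper therefore does not reduce to the known normal crossing case at all. Instead it runs a bi-induction ${\bf IC}_{n-1}\Rightarrow{\bf VD}_n\Rightarrow{\bf IC}_n$: weak constructibility is obtained from the stratified behaviour of the metric along a Whitney stratified desingularization (Propositions~\ref{prop_Dstr}, \ref{prop_Grauert_stratified}, \ref{prop_harmonic_metric_stratified}), and the Goresky--MacPherson--Deligne vanishings are proved directly by solving $\nabla\beta=-\alpha$ with compact support via a twisted Donnelly--Fefferman estimate (Theorems~\ref{thm_L2-estimate_main} and \ref{thm_local_vanishing}), then dualized using the local $L^2$-Poincar\'e duality established from the induction hypothesis. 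The hard analysis is genuinely new and cannot be short-circuited by resolution.
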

	A $0$-tame harmonic bundle on $(X,X^o)$ (Definition \ref{defn_0tame_harmonic_bundle}) is a pluriharmonic bundle on $X^o$ whose metric satisfies a certain constrain of the asymptotic behavior near $X\backslash X^o$. The class of $0$-tame harmonic bundles contains (Remark \ref{rmk_harmonic_bundle_regular_singularity_examples}) the underlying hermitian local system of an $\bR$-polarized variation of Hodge structure with quasi-unipotent local monodromies and semisimple local systems (do not necessarily have quasi-unipotent local monodromies) endowed with the (canonical) Corlette-Jost-Zuo metric (\cite{Jost_Zuo1997},\cite{Mochizuki20072}). Hence Theorem \ref{thm_main_harmonic} gives an $L^2$-representation of (the $\sD$-module part of) twisted $\sD$-modules, which are introduced by C. Sabbah \cite{Sabbah2005} as generalizations of Hodge modules. In particular, Theorem \ref{thm_main_harmonic} gives a solution to Problem \ref{prob_main}.

	By taking $\bV=\bC_{X_{\rm reg}}$ we solve Conjecture \ref{conj_CGM} for the distinguished metric.
	\begin{cor}
		Let $X$ be a compact complex space, then there exists a complete hermitian metric $ds^2$ on $X_{\rm reg}$ such that there is a natural isomorphism
		$$H^\ast_{(2)}(X_{\rm reg},ds^2)\simeq IH^\ast(X,\bC).$$
		When $X$ admits a K\"ahler hermitian metric, $ds^2$ could be K\"ahler.
	\end{cor}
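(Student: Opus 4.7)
The plan is to deduce the corollary as a direct specialization of Theorem \ref{thm_main_harmonic} to the trivial coefficient system. First I would set $X^o := X_{\rm reg}$, so that $X\setminus X^o = X_{\rm sing}$. Since $X$ is compact, the pair $(X,X\setminus X^o)$ is compactable, so the existence result cited in the introduction (Lemma \ref{lem_Grauert_metric_exist}) supplies a distinguished hermitian metric $ds^2$ on $X_{\rm reg}$ associated to $(X,X_{\rm reg})$. By Corollary \ref{cor_Gt_complete_finite_vol} this metric is automatically complete (and has locally finite volume near the singularities), which is exactly the completeness asserted in the corollary.

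Next I would take $(\bV,h) = (\bC_{X_{\rm reg}},h_{\rm triv})$, the trivial variation of Hodge structure with its flat hermitian metric. One has to check that this is a $0$-tame harmonic bundle on $(X,X_{\rm reg})$ in the sense of Definition \ref{defn_0tame_harmonic_bundle}; but this is immediate because the Higgs field and curvature vanish identically and the flat metric has no growth behavior to control near $X_{\rm sing}$. Equivalently, this is covered by Remark \ref{rmk_harmonic_bundle_regular_singularity_examples} applied to the trivial $\bR$-polarized variation.

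Applying Theorem \ref{thm_main_harmonic} to this data yields, on the one hand, a quasi-isomorphism $\sD^\bullet_{X,\bC;ds^2}\simeq IC_X(\bC_{X_{\rm reg}})$ of complexes of sheaves on $X$, and, on the other hand (since $X$ is compact), a natural isomorphism
\[
H^k_{(2)}(X_{\rm reg},ds^2)\;\simeq\; \bH^k\!\bigl(X,IC_X(\bC_{X_{\rm reg}})\bigr) \;=\; IH^k(X,\bC)
\]
for every $k$, which is the stated identification.

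For the final sentence, assume $X$ admits a K\"ahler hermitian metric. Then $X$ is in particular a Zariski open subset of a compact K\"ahler space (namely itself), so by the K\"ahler version of the existence statement quoted just before Theorem \ref{thm_main_harmonic}, we may arrange $ds^2$ to be a K\"ahler distinguished metric on $(X,X_{\rm reg})$. Repeating the argument above with this K\"ahler distinguished metric gives the K\"ahler refinement and completes the proof. I do not foresee a substantive obstacle: the only thing to be careful about is recording that the trivial variation satisfies the $0$-tameness hypothesis, and that the K\"ahler case of the existence lemma applies in the compact K\"ahler setting; everything else is quotation from Theorem \ref{thm_main_harmonic}.
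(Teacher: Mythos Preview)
Your proposal is correct and follows exactly the paper's approach: the paper simply states ``By taking $\bV=\bC_{X_{\rm reg}}$'' before the corollary, and your write-up fills in precisely the routine verifications (existence of a distinguished metric via Lemma \ref{lem_Grauert_metric_exist}, completeness via Corollary \ref{cor_Gt_complete_finite_vol}, $0$-tameness of the trivial system, and the K\"ahler refinement) needed to invoke Theorem \ref{thm_main_harmonic}.
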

	\begin{rmk}
		\begin{enumerate}
			\item Compared to the Fubini-Study metric in Conjecture \ref{conj_CGM}, the distinguished metric is complete and it provides good Hodge theoretic properties (c.f. \cite[\S 7]{Zucker1979}). However, the distinguished metric is not canonical.
			\item When $X$ admits only isolated singularities, the distinguished metric is different from Saper's distinguished metric \cite{Saper1992} since the distinguished metric in this paper admits bounded potential functions locally. This property ensures a MacPherson's conjecture type result for the $L^2$-representation of Hodge modules (Theorem \ref{thm_resolve_S_sheaf}).
			\item When $X$ is a compact manifold and $D:=X\backslash X^o$ is a normal crossing divisor, the distinguished metric is not asymptotic of Poincar\'e type along $D$. The technique of this paper does not work on Poincar\'e type metric directly. We will deal with this natural metric in another paper.
		\end{enumerate}
	\end{rmk}
	The $L^2$-representation of the intersection complex allows us, as it is hoped, to prove the K\"ahler package of the intersection cohomology with coefficients in a variation of Hodge structure or the hypercohomology of a semisimple local system more generally. This package of theorems includes
	\begin{enumerate}
		\item The Poincar\'e duality (Theorem \ref{thm_main_L2_local_duality}, \ref{thm_main_L2_duality}) and its interpretation by integration.
		\item Hard Lefschetz theorem, Lefschetz decomposition (Theorem \ref{thm_Lefschetz_package}). This is a generalization of C. Simpson's Hard Lefschetz theorem for semisimple local systems to semisimple perverse sheaves, which is the K\"ahler version of a special case of Kashiwara's conjecture \cite{Kashiwara1998}.

		Precisely we show that (Corollary \ref{cor_Kashiwara_conj}) for a semisimple perverse sheaf $P$ on a compact K\"ahler space $X$, the Hard Lefschetz theorem holds for the hypercohomology $\bH^\ast(X,P)$. The proofs use the $L^2$-harmonic representatives of cohomology classes followed by standard arguments. It is important to have $ds^2$ being complete since one needs the K\"ahler identity in the sense of unbounded operators (\cite[(7.5)]{Zucker1979}).
		\item The existence of the pure Hodge structure on $\bH^\ast(X,M)$ via the $(p,q)$-decomposition of $L^2$-forms (\cite[\S 6]{Kashiwara_Kawai1987}, \cite[\S 7]{Zucker1979}) where $M$ is a pure Hodge module. We also make a conjecture that this $L^2$-Hodge structure is compatible with Saito's (Conjecture \ref{conj_Hodge_structure}). Evidences for this conjecture will be provided in \S \ref{section_Hodge_structures}.
	\end{enumerate}
	\subsection{The construction of the distinguished metric}
	One of the main difficulties for the $L^2$-representation over a general complex analytic singularity, compared to isolated singularities, is the problem of equisingularity. The first step towards the $L^2$-representation of the intersection complex is to find a stratification along which $\sD^\bullet_{X,\bV;ds^2,h}$ is constructible. In order to achieve this, we require the metrics $ds^2$ and $h$ to be quasi-isotrivial along each stratum. In \cite{Saper_Stern1990,Looijenga1988,Kashiwara_Kawai1987,Cattani_Kaplan_Schmid1987}, the singularity loci is naturally stratified so that $ds^2$ and $h$ are quasi-isotrivial along each stratum in a rather nice way. More precisely, for every stratum $S$ and every point $x\in S$, there is a neighborhood $x\in U_x$ in $X$, a contractible precompact neighborhood $x\in U_{S,x}$ in $S$, an analytic subspace $C_x$ which intersects transversally with $S$ at the single point $x$ and a $C^\infty$ diffeomorphic fibration
	\begin{align}\label{align_top_str}
	\alpha_x:U_{S,x}\times C_x\simeq U_x
	\end{align}
	such that
	\begin{align}\label{align_metric_str}
	\alpha_x^\ast ds^2\sim ds^2_{S}+ds^2|_{C_x},\quad \alpha_x^\ast h\sim q^\ast(h|_{C_x})
	\end{align}
	for some hermitian metric $ds^2_S$ on $S$. Here $q:U_{S,x}\times C_x\to C_x$ is the projection. This ensures that the cohomology sheaves of $\sD^\bullet_{X,\bV;ds^2,h}$ are locally trivial along the stratum $S$.

	However, due to the possible appearance of Whitney's four plane $x(x+y)(x-y)(x-zy)=0$ (\cite[p.g. 470]{Goresky2012}), it is too much to hope $\alpha_x$ to be $C^\infty$ for a general analytic singularity. For a hermitian metric $ds^2_0$ on $X$, i.e. locally $ds^2_0$ is quasi-isometric to the euclidean metric via a local embedding $X\subset\bC^N$, T. Mostowski \cite{Mostowski1985} introduces the Lipschitz stratification so that $ds^2_0$ is locally quasi-isotrivial along each stratum. In this case, (\ref{align_top_str}) is a bi-Lipschitz homeomorphism.

	For other interesting metrics, a bi-Lipschitz fiberation is not enough to ensure the quasi-isotriviality (\ref{align_metric_str}).  One way to solve this difficulty is to use the resolution of singularity. As the case of Whitney's four plane, one may resolve the singularity into a normal crossing divisor to get a $C^\infty$ fiberation. Although this fiberation may not desend to a stratified fiberation on $X$, this causes no serious problem for this paper since the $L^2$-cohomology is unchanged under bimeromorphic transformations.

	Therefore the problem turns to find a stratification which admits an equisingular desingularization. In this paper, for every pair $(X,X^o)$ we introduce a Whitney stratification $0\subset X_0\subset\dots\subset X_n=X$ and a desingularization $\pi:\widetilde{X}\to X$ so that the preimage of every $X_i$ is a simple normal crossing divisor. Moreover, along each stratum the fibers $\pi^{-1}(x)$ are smoothly stratified isotrivial. Such a pair $(\{X_i\},\pi:\widetilde{X}\to X)$ is called a Whitney stratified desingularization (Definition \ref{defn_Lstr_desingularization}). It exists when $X$ is compact or is a germ of complex space (Theorem \ref{thm_Lstr_desingularization}). For this construction, along each stratum the $C^\infty$ fiberation (\ref{align_top_str}) exists on the smooth model $\widetilde{X}$ (Proposition \ref{prop_bimero_fibering_neigh}), rather than on $X$. A general phenomenon is that \textit{every motivic hermitian metric (a metric defined by holomorphic functions) is quasi-isotrivial along the $C^\infty$ fibration associated to some Whitney stratified desingularization, as long as its quasi-isometric local expression is independent of the choice of $C^\infty$ complex coordinates.}
	This is useful to produce many interesting constructible $L^2$-de Rham complexes (Proposition \ref{prop_Dstr}).

	The distinguished metric could be described as follows (Definition \ref{defn_Grauert_type_metric}). Let $X$ be a complex space and $X^o\subset X_{\rm reg}$ a dense Zariski open subset. A hermitian metric on $X^o$ is called distinguished associated to $(X,X^o)$ if for every point $x\in X$ there is a neighborhood $U$ and a Whitney stratified desingularization $\pi:\widetilde{U}\to U$ of $(U,X^o\cap U)$ such that $ds^2$ is quasi-isometric to
	\begin{align*}
	\sum_{i=1}^r\frac{dz_i d\overline{z}_i}{|z_i|^2\log^2|z_1\cdots z_r|^2\log^2(-\log|z_1\cdots z_r|^2)}
	+\frac{\sum_{i=1}^ndz_id\overline{z}_i}{-\log|z_1\cdots z_r|^2\log^2(-\log|z_1\cdots z_r|^2)}+\pi^\ast\omega_0.
	\end{align*}
	Here $\omega_0$ is a hermitian metric on $X$ and $(z_1,\dots,z_n)$ is a holomorphic coordinate on $\widetilde{U}$ such that $\pi^{-1}(U\backslash X^o)=\{z_1\cdots z_r=0\}$. The distinguished metric witnesses the stratification $\{X_i\}$, which is necessary to construct an $L^2$-representation of the intersection complex.
	\subsection{Strategy of the proof of the main theorem}
	One of the consequences of Theorem \ref{thm_main_harmonic} is the Verdier duality
	\begin{align*}
	\bD(\sD^\bullet_{X,\bV;ds^2,h}[\dim X])\simeq \sD^\bullet_{X,\bV^\ast;ds^2,h^\ast}[\dim X]\in D(X).
	\end{align*}
	Here $\bD$ is the Verdier dual operator on $D(X)$.
	Locally at every point $x\in X$ this implies the local duality
	\begin{thm}[Locally Duality, Theorem \ref{thm_main_L2_local_duality}]\label{intro_thm_main_L2_local_duality}
		Notations as in Theorem \ref{thm_main_harmonic}. Let $x\in X$ be a point. Then for every $0\leq k\leq 2\dim X$ there is a duality
		\begin{align*}
		\varinjlim_{x\in U}\bH^{2\dim X-k}(U,\sD^\bullet_{X,\bV;ds^2,h})\simeq \varinjlim_{x\in U}\left(\bH^k_{\rm cpt}(U,\sD^\bullet_{X,\bV^\ast;ds^2,h^\ast})\right)^\ast,
		\end{align*}
		where $U$ ranges over all open neighborhoods of $x$. Moreover both sides of the isomorphism are finite dimensional.
	\end{thm}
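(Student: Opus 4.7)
The plan is to deduce the local duality from the global quasi-isomorphism $\sD^\bullet_{X,\bV;ds^2,h}\simeq IC_X(\bV)$ of Theorem \ref{thm_main_harmonic} combined with Verdier duality for the intersection complex; in this way the analytic pairing of $L^2$-forms is reduced to a standard topological duality for cohomologically constructible complexes.

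First I would verify that Theorem \ref{thm_main_harmonic} applies equally to the dual harmonic bundle $(\bV^\ast,h^\ast)$. The $0$-tameness condition of Definition \ref{defn_0tame_harmonic_bundle} is symmetric under the dual construction: the metric dualizes and the Higgs field is replaced by its adjoint, preserving the asymptotic bounds required near $X\setminus X^o$. Hence $(\bV^\ast,h^\ast)$ is again $0$-tame on $(X,X^o)$ and Theorem \ref{thm_main_harmonic} yields the parallel quasi-isomorphism $\sD^\bullet_{X,\bV^\ast;ds^2,h^\ast}\simeq IC_X(\bV^\ast)$ in $D(X)$.

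Next, since $IC_X(\bV)[\dim X]$ is a perverse sheaf with Verdier dual $IC_X(\bV^\ast)[\dim X]$, combining the two quasi-isomorphisms produces
$$\bD\bigl(\sD^\bullet_{X,\bV^\ast;ds^2,h^\ast}\bigr)\simeq \sD^\bullet_{X,\bV;ds^2,h}[2\dim X]\quad\text{in }D(X).$$
Applying the classical Verdier duality isomorphism $\bH^j(U,\bD F^\bullet)\simeq \bigl(\bH^{-j}_{\rm cpt}(U,F^\bullet)\bigr)^\ast$ to an open $U\subset X$ and the constructible complex $F^\bullet=\sD^\bullet_{X,\bV^\ast;ds^2,h^\ast}|_U$, and setting $j=2\dim X-k$, yields the natural isomorphism
$$\bH^{2\dim X-k}(U,\sD^\bullet_{X,\bV;ds^2,h})\simeq \bigl(\bH^{k}_{\rm cpt}(U,\sD^\bullet_{X,\bV^\ast;ds^2,h^\ast})\bigr)^\ast.$$
Passing to the direct limit over shrinking open neighborhoods of $x$ then yields the stated local pairing.

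Finite dimensionality follows from the constructibility of $IC_X(\bV)$: restricting to a cofinal system of distinguished open neighborhoods of $x$ adapted to a Whitney stratification, both sides stabilize to the hyperstalk and hypercostalk of a cohomologically constructible complex at $x$, each of which is finite-dimensional. The only genuinely non-formal step in this plan is the verification that $0$-tameness is inherited by the dual harmonic bundle, so that Theorem \ref{thm_main_harmonic} can be invoked on both sides with the same distinguished metric; once this is in place, the rest collapses to Theorem \ref{thm_main_harmonic} plus textbook Verdier duality, and that dualizing-compatibility check is where I expect the (still routine) main work to lie.
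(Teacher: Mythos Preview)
Your argument is correct and is, in fact, exactly the route the paper \emph{sketches} in the introduction just before stating the theorem: Theorem \ref{thm_main_harmonic} gives $\sD^\bullet_{X,\bV;ds^2,h}\simeq IC_X(\bV)$ and likewise for $(\bV^\ast,h^\ast)$, and then the Verdier self-duality of $IC$ yields the stated isomorphism with finite-dimensional sides.

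However, the paper's \emph{actual} proof (Theorem \ref{thm_main_L2_local_duality} in \S8) takes a different, more analytic path. Rather than invoking abstract Verdier duality, it localizes to $x=0\in X\subset\bC^N$, uses Proposition \ref{prop_cpt_vs_min} to show that the compactly supported hypercohomologies over the balls $U_\delta$ coincide with the minimal-extension $L^2$-cohomologies $H^k_{(2),\min}(U^o_\delta,\bV^\ast;ds^2,h^\ast)$ and stabilize in $\delta$, then uses Theorem \ref{thm_main_harmonic} only to conclude that these are finite dimensional, and finally appeals to the abstract $L^2$-Poincar\'e duality of Proposition \ref{prop_L2_Poincare_duality} (Hodge-star pairing of harmonic forms). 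The payoff of that route is that the duality is realized concretely by the integration pairing $([\alpha],[\beta])\mapsto\int_{U\cap X^o}\alpha\wedge\beta$, which the full statement of Theorem \ref{thm_main_L2_local_duality} records but your Verdier-duality argument does not directly produce. Your approach is cleaner and entirely adequate for the statement as quoted; the paper's approach buys the explicit analytic description of the pairing.
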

    Denote the statements
    \begin{itemize}
    	\item ${\bf IC}_n$: Theorem \ref{thm_main_harmonic} holds when $\dim X\leq n$.
    	\item ${\bf VD}_n$: Theorem \ref{intro_thm_main_L2_local_duality} holds for $\dim X\leq n$ and $k\leq\dim X$, under the additional assumptions that $$\varinjlim_{x\in U}\left(\bH^k_{\rm cpt}(U,\sD^\bullet_{X,\bV^\ast;ds^2,h^\ast})\right)^\ast$$ is finite dimensional for every $k\leq \dim X$ and
    	$$\varinjlim_{x\in U}\left(\bH^{\dim X}_{\rm cpt}(U,\sD^\bullet_{X,\bV;ds^2,h})\right)^\ast$$
    	is finite dimensional.
    \end{itemize}
    The proof of Theorem \ref{thm_main_harmonic} follows from a bi-induction procedure:
    \begin{description}
    	\item[Initial Step] ${\bf IC}_0$. This case is trivial.
    	\item[Bi-induction Step] ${\bf IC}_{n-1}\Rightarrow {\bf VD}_{n}$, ${\bf VD}_{n}\Rightarrow {\bf IC}_{n}$.
    \end{description}
    Notice that in the induction procedure, non-irreducible complex space appears even $X$ is irreducible. Therefore in Theorem \ref{thm_main_harmonic} we allow the space to have many components.

    Now let us explain the ideas in the proof of the bi-induction steps.

    ${\bf IC}_{n-1}\Rightarrow {\bf VD}_{n}$ follows from an abstract $L^2$-Poincar\'e duality (Proposition \ref{prop_L2_Poincare_duality}). The main difficulty is to show that $H^{\dim X}_{(2),\rm max}(U,\bV;ds^2,h)$ is finite dimensional for a certain neighborhood $U$ of $x$. Our strategy is to use an $L^2$-Mayer-Vietoris sequence to cut this cohomology into two parts: one is $\bH^{\dim X}_{\rm cpt}(U,\sD^\bullet_{X,\bV;ds^2_{\pi},h})$ which is finite dimensional by the assumption. The other is the $L^2$-cohomology over the boundary $\partial U$. Since $\partial U$ is a compact stratified space which intersects with $X$ on the strata of positive dimensions, ${\bf IC}_{n-1}$ ensures that the $L^2$-cohomology over $\partial U$ is finite dimensional.

	${\bf VD}_{n}\Rightarrow {\bf IC}_{n}$ is divided into six steps.
	\begin{enumerate}
		\item Reduce the problem ${\bf IC}_{n}$ to the case of a germ of complex space $0\in X\subset \bC^N$ (Corollary \ref{cor_IC_is_local}). This is a consequence of the Goresky-MacPherson-Deligne criterion (Proposition \ref{prop_GM_criterion}).
		\item Show that the distinguished metric and the harmonic metric are bimeromorphically stratified (Definition \ref{defn_metric_str}) along the fibering stratification of $X\subset \bC^N$ with respect to some Whitney stratified desingularization of $X$. This implies that the cohomology sheaves of $\sD^\bullet_{X,\bV;ds^2,h}$ are weakly constructible.
		\item According to Step (2), the $L^2$-Poincar\'e lemma (Lemma \ref{lem_Kunneth_locL2}) and the $L^2$-Thom isomorphism (Lemma \ref{lem_Thom_locL2}) reduce the problem (via Proposition \ref{prop_Dstr}) to two types of vanishings: Notations as in Theorem \ref{thm_main_harmonic}. Assume that $0\in Z:=X$ is a germ of complex space. Then there are vanishings
		\begin{align}\label{align_vanishing}
		\varinjlim_{0\in U}\bH^k(U,\sD^\bullet_{Z,\bV;ds^2,h})=0,\quad \forall k\geq \dim Z
		\end{align}
		and
		\begin{align}\label{align_vanishing_cpt}
		\varprojlim_{0\in U}\bH^k_{\rm cpt}(U,\sD^\bullet_{Z,\bV;ds^2,h})=0,\quad \forall k\leq \dim Z.
		\end{align}
		Here $U$ ranges over the open neighborhoods of $0$.
		\item Assuming ${\bf VD}_n$, it suffices to prove the vanishing (\ref{align_vanishing_cpt}) for both $\bV$ and $\bV^\ast$. Since  $(\bV^\ast,h^\ast)$ is also a $0$-tame harmonic bundle, it is enough to show (\ref{align_vanishing_cpt}) for an arbitrary $(\bV,h)$. This is the only place where we assume ${\bf VD}_n$.
		\item (\ref{align_vanishing_cpt}) is proved via calculating the $L^2$-cohomologies of $U_{\delta}:=\{z\in Z|\|z\|<\delta\}$, $0<\delta\ll1$. The key feature of $U_\delta$ is that its $L^2$-cohomologies are independent of $\delta>0$. I.e. the 'extension by zero' morphism
		\begin{align}\label{align_cptL2_constant}
		\bH^k_{\rm cpt}(U_{\delta'},\sD^\bullet_{Z,\bV;ds^2,h})\to \bH^k_{\rm cpt}(U_{\delta},\sD^\bullet_{Z,\bV;ds^2,h})
		\end{align}
		is an isomorphism for every $0<\delta'<\delta\ll1$ (Proposition \ref{prop_cpt_vs_min}). Hence (\ref{align_vanishing_cpt}) is equivalent to the vanishing
		\begin{align}\label{align_main_covanishing}
		\bH^k_{\rm cpt}(U_{\delta},\sD^\bullet_{Z,\bV;ds^2,h})=0,\quad \forall k\leq \dim Z.
		\end{align}
		We prove this vanishing by solving the equation $\nabla\beta=-\alpha$ for $\nabla\alpha=0$ in the space of $L^2$ differential forms whose support is compact in $U_\delta$.
		In order to do this, we introduce another cofinal system of neighborhoods $V_\delta$, $0<\delta\ll1$. The main feature of $V_\delta$ is the existence of a fiberation
		\begin{align}\label{align_intro_fibering_U_delta}
		V_{\delta}\cap Z^o\simeq(0,\delta)\times L
		\end{align}
		for some real submanifold $L\subset Z^o$, such that the distinguished metric satisfies the estimate (Proposition \ref{prop_Grauert_asymptotic_behavior})
		\begin{align}\label{align_asymptotic_Gt}
		\frac{d\rho^2}{\rho^2\log^2\rho\log^2(-\log\rho)}+\frac{ds^2_{L}}{\log^2 \rho\log^2(-\log\rho)}
		\lesssim ds^2\lesssim\frac{d\rho^2}{\rho^2\log^2\rho\log^2(-\log \rho)}+ds^2_{L}.
		\end{align}
		Here $ds^2_L$ is a Riemannian metric on $L$ which can be extended smoothly to the boundary.

		To get rid of the complicated boundary condition (\cite[Proposition 2.1.1]{Hormander1965}) caused by the incompleteness on $\partial U_{\delta}$, we make a modification on $ds^2$ along $\partial U_\delta$, denoted by $ds^2_\delta$, so that $ds^2_\delta$ is complete and $\|-\|_{ds^2,h}\lesssim \|-\|_{ds^2_\delta,w_\delta h}$ for some weight function $w_\delta$. If ${\rm supp}\alpha$ is compact in $U_\delta$, the modification on $\partial U_\delta$ does not affect the analytical behavior of $\alpha$. This is one of the reasons why we choose to prove (\ref{align_vanishing_cpt}) instead of (\ref{align_vanishing}).

		Due to the completeness of $ds^2_\delta$, the equation $\nabla\beta=-\alpha$ is solvable in the Hilbert space of $L^2$ differential forms with respect to $ds^2_\delta$ and $w_\delta h$ if one shows the estimate (\cite[\S 2.1.2 Lemma 2.1]{Ohsawa2018})
		\begin{align}\label{align_intro_aim_est}
		|(\alpha,u)|_{ds^2_\delta,w_\delta h}\leq C\|\nabla^\ast_\delta u\|_{ds^2_\delta,w_\delta h},\quad \forall u\in A^{k}_{\rm cpt}(U_{\delta}\cap Z^o,\bV), \forall k\leq \dim Z.
		\end{align}
		Here $\nabla^\ast_\delta$ is the Hilbert adjoint of $\nabla$ with respect to $(ds^2_{\delta},w_\delta h)$. Thanks to (\ref{align_cptL2_constant}), we may assume that ${\rm supp}\alpha$ lies in some $V_{\delta''}\subset U_{\delta}$.
		Let $\alpha=\alpha_1+d\rho\wedge\alpha_0$ be the decomposition according to (\ref{align_intro_fibering_U_delta}).
		There is a simple observation that the integration of $\alpha_0$ along $(0,\delta'')$ in (\ref{align_intro_fibering_U_delta}) gives a formal solution to the equation $\nabla\beta=-\alpha$. However this solution fails to be square integrable with respect to $(ds^2_\delta,w_\delta h)$.
		Our strategy is to deform $(ds^2_{\delta},w_\delta h)$ to  $(ds^2_{\delta,\epsilon},w_{\delta,\epsilon} h)$, $\epsilon\in(0,1)$ so that $$\|\alpha\|_{ds^2_{\delta,\epsilon},w_{\delta,\epsilon}h}\lesssim \|\alpha\|_{ds^2_{\delta},w_\delta h}.$$ The asymptotic behavior (\ref{align_asymptotic_Gt}) and the $0$-tameness of the harmonic metric $h$ ensure that $$\beta\in  L_{(2)}^{k-1}(U_\delta\cap Z^o,\bV;ds^2_{\delta,\epsilon},w_{\delta,\epsilon}h)$$ for every $\epsilon\in(0,1)$. Hence there is an estimate
		\begin{align}\label{align_intro_estimate_n}
		|(\alpha,u)|_{ds^2_{\delta,\epsilon},w_{\delta,\epsilon}h}\leq C_\epsilon\|\nabla^\ast_{\delta,\epsilon} u\|_{ds^2_{\delta,\epsilon},w_{\delta,\epsilon}h},\quad \forall u\in A^{k}_{\rm cpt}(U_{\delta}\cap Z^o,\bV), \forall k\leq \dim Z.
		\end{align}
		Here $\nabla^\ast_{\delta,\epsilon}$ is the Hilbert adjoint of $\nabla$ with respect to $(ds^2_{\delta,\epsilon},w_{\delta,\epsilon}h)$ and $$C_\epsilon\leq\|\beta\|_{ds^2_{\delta,\epsilon},w_{\delta,\epsilon}h}.$$
		Although $\|\beta\|_{ds^2_{\delta,\epsilon},w_{\delta,\epsilon}h}$ may tend to infinity when $\epsilon\to 0$, a twisted Donelly-Fefferman type estimate (Theorem \ref{thm_L2-estimate_main}) for $(ds^2_{\delta,\epsilon},w_{\delta,\epsilon}h)$ shows that the constants $C_\epsilon$ could be chosen in a way independent of $\epsilon\in(0,1)$.
		Hence (\ref{align_intro_aim_est}) is the limit of (\ref{align_intro_estimate_n}). This solves the equation $\nabla\beta=-\alpha$.

		To get a solution of $\nabla\beta=-\alpha$ with compact supports, one may shrink $U_\delta$ first and use the zero extension of the solution. This is valid due to the introducing of the weight $w_{\delta}$ (Lemma \ref{lem_i<n}). As a consequence of the local vanishings there is a quasi-isomorphism $IC_X(\bV)\simeq_{\rm qis}\sD^\bullet_{X,\bV;ds^2,h}$.

		\item Make the quasi-isomorphism $IC_X(\bV)\simeq_{\rm qis} \sD_{X,\bV;ds^2,h}$ canonical. This can be done by Proposition \ref{prop_GM_criterion}.
	\end{enumerate}
	\begin{rmk}
		Recall that in \cite{Saper1992,Cattani_Kaplan_Schmid1987,Kashiwara_Kawai1987}, the purity theorem which comes from the Hodge theory of singularities plays crucial roles in the proof of (\ref{align_vanishing}). Our proof uses a different strategy. The vanishings come from the  $L^2$-estimate for the $\nabla$-operator. It is interesting to formulate a purity theorem for an arbitrary complex analytic singularity.
	\end{rmk}
	\begin{rmk}
		The choice of the deformation $(ds^2_{\delta,\epsilon},w_\delta)$ is deeply inspired by Ohsawa's work \cite{Ohsawa1993} on isolated singularities. To perform the trick by Ohsawa to non-isolated singularities, one faces the difficulty that the intersection $(X\backslash X^o)\cap \partial U_\delta$ is always non-empty when $X\backslash X^o$ is non-isolated. The metric $ds^2_{\delta}$ on $(X\backslash X^o)\cap \partial U_\delta$ is a complicated mixture of $ds^2$ and the modifier $ds^2_\delta-ds^2$. Hence it is difficult to study the local $L^2$-cohomology at $(X\backslash X^o)\cap \partial U_\delta$. To handle this analytical difficulty we choose to calculate (\ref{align_vanishing_cpt}) instead of (\ref{align_vanishing}) and introduce the trick of two neighborhoods $U_\delta$, $V_\delta$. So we may assume that ${\rm supp}\alpha$ is far away from $(X\backslash X^o)\cap \partial U_\delta$ in the equation $\nabla\beta=-\alpha$.
	\end{rmk}
The paper is organized as follows.

Section 2 and 3 are the review of some basic knowledge of the intersection complex, the  $L^2$-cohomology and the  $L^2$-de Rham complex, in order to fix notations. Some preliminary results are proved.

In Section 4 we establish the basic theory of Whitney stratified desingularization of a pair of complex spaces. Distinguished metrics and the notion of $0$-tame harmonic bundles are introduced. We also prove that they are stratified along the bimeromorphic fibering neighborhood associated to a given Whitney stratified desingularization. The weakly constructibility of the $L^2$-de Rham complex is then a consequence.

Section 5 is devoted to the geometry and the asymptotic behavior of the distinguished metric at an arbitrary complex singularity. The Whitney stratified desingularization is the basic tool. We fix the basic setting and constructions in \S\ref{section_setting} and \S\ref{section_U_delta} which will be used in the rest of the paper.

We prove the key vanishing theorem of this paper in Section 6.

Everything is combined together in Section 7 to give a bi-induction proof of the main theorem.

Section 8 contains some applications of the main theorem which have been mentioned in the introduction.

	\textbf{Acknowledgment:}
	The deepest thank goes to professor Jun Li for bringing this problem to the authors' interest and his continual help and discussions during their postdoctoral period and beyond. We would like to thank professor Takeo Ohsawa and professor Adam Parusi\'nski for answering our questions patiently.
	We also wish to thank Zhenqian Li and Guozhen Wang for helpful conversations.
	Finally, we would like to thank professor Sen Hu, professor Kefeng Liu, professor Mao Sheng, professor Xiaotao Sun and professor Kang Zuo for their interest in this work and their continual encouragements.

	{\bf Notations:}
	\begin{itemize}
		\item All complex spaces are assume to be reduced, separated, paracompact, countable at infinity and of pure dimension.
		\item $h^i(\sA^\bullet)$ stands for the $i$-th cohomology sheaf of the complex of sheaves $\sA^\bullet$.
		\item Let $\alpha,\beta$ be functions (or metrics, $(1,1)$-forms, etc.) on a complex space. Denote by $\alpha\lesssim\beta$ if $C\beta-\alpha\geq0$ for some constant $C>0$.
		Denote $\alpha\sim\beta$ if both $\alpha\lesssim\beta$ and $\beta\lesssim\alpha$  are satisfied.
		\item Let $M$ be a complex manifold and $\bV$ a local system, $A^k(M,\bV)$ (resp. $A^k_{\rm cpt}(M,\bV)$) stands for the space of $C^\infty$ $\bV$-valued $k$-forms (resp. with compact supports) on $M$. Denote by $\sA^k_M(\bV)$ the sheaf of $\bV$-valued $C^\infty$ $k$-forms on $M$.
	\end{itemize}
	\section{Intersection Complex on Complex Spaces}
	Due to the nature of the  $L^2$-cohomology, we need to consider constructible sheaves that may not be locally finitely generated. In this section we review the basic knowledge of such sheaves and prove some technical results for later usage.

	Let $X$ be a complex space of pure dimension $n$ and $X^o\subset X_{\rm reg}$ a dense Zariski open subset.
	Throughout this paper, a stratification of the pair $(X,X^o)$ is a topological stratification
	\begin{align*}
	\emptyset=X_{-1}\subset X_0\subset\cdots \subset X_n=X
	\end{align*}
	in the sense of \cite[\S I-1.1]{Borel1984}, in addition that $X_{n-1}=X\backslash X^o$ and $X_i$ is a Zariski closed analytic subset for every $i=0,\dots,n$. Roughly speaking, $X_i\backslash X_{i-1}$ is an $i$-dimensional complex submanifold of $X$ for every $i$ and the links along each $X_i\backslash X_{i-1}$ are topologically isotrivial. A typical example of a stratification is the Whitney stratification (c.f. \cite{Mather2012}).

	A connected component of $\coprod_i X_i\backslash X_{i-1}$ is called a stratum of $\{X_i\}$.
	\begin{defn}
		Let $X$ be a complex space and $A$ a sheaf of $\bC$-vector spaces on $X$. $A$ is called a weakly local system on $X$ if for every point $x\in X$ there is a neighborhood $x\in U$ such that $A|_U\simeq \bC_U^I$, where $I$ is an (possibly infinite) index set. In this case we say that $A$ is a local system if the  $I$s are finite sets.

		$A$ is called (weakly) constructible if there is a stratification $\{X_i\}$ such that $A|_{X_i\backslash X_{i-1}}$ is a (weakly) local system for every $i$. In this situation we also say that $A$ is (weakly) constructible with respect to $\{X_i\}$.
	\end{defn}
	The category of weakly constructible sheaves on $X$ is an abelian category.

	Let $\bV$ be a local system on $X^o$. Denote   $U_i:=X\backslash X_{i}$ and denote by $j_i:U_i\to U_{i-1}$ ($U_{-1}:=X$) the inclusion. The intersection complex $IC_X(\bV)$ is defined, in the derived category $D(X)$ of sheaves of $\bC$-vector spaces on $X$, by
	\begin{align}\label{align_intersection_complex}
	IC_X(\bV):=R^{< n}j_{0\ast}R^{< n-1}j_{1\ast}\cdots R^{< 1}j_{n-1\ast}\bV.
	\end{align}
	The right handside is independent of the stratification $\{X_i\}$ up to quasi-isomorphisms and (\ref{align_intersection_complex}) gives a well defined bounded complex in $D(X)$.

	In order to make the intersection complex a perverse sheaf (\cite{BBD}), one usually uses a Tate twist $IC_X(\bV)[n]$ instead of $IC_X(\bV)$. In this paper the difference between twisting causes no problem.

	The following proposition is a mild generalization of Goresky-MacPherson-Deligne criterion for intersection complexes to weakly constructible complexes. It also shows that in the set of isomorphisms ${\rm Iso}_{D(X)}(\sA^\bullet,IC_X)$, there is a canonical one, represented by a natural zigzag of quasi-isomorphisms between complexes, as long as a quasi-isomorphism $\iota_0:\bV\to \sA^\bullet|_{X^o}$ is fixed.  A canonical isomorphism is necessary if we want to compare the Hodge structures on both sides of Theorem \ref{thm_main_harmonic} (Conjecture \ref{conj_Hodge_structure}).
	\begin{prop}[Goresky-MacPherson-Deligne criterion]\label{prop_GM_criterion}
		Let $X$ be a complex space of pure dimension $n$ and $X^o\subset X_{\rm reg}$ a dense Zariski open subset. Let $\{X_i\}$ be a stratification of $(X,X^o)$ and $\bV$ a local system on $X^o$.
		A bounded complex of sheaves of $\bC$-vector spaces $\sA^\bullet$ is isomorphic to the intersection complex $IC_X(\bV)$ in $D(X)$ if and only if the following conditions hold.
		\begin{enumerate}
			\item $\sA^\bullet|_{X^o}$ is quasi-isomorphic to $\bV$.
			\item For every $j$, $h^j(\sA^\bullet)$ is weakly constructible with respect to $\{X_i\}$.
			\item For each stratum $S$ of $\{X_i\}$ and every point $x\in S$, denote by $i_x:\{x\}\to X$ the inclusion. Then
			\begin{align}\label{align_GMD_vanishing}
			h^k(i_x^\ast \sA^\bullet)=0,\quad \forall k\geq n-\dim S
			\end{align}
			and
			\begin{align}\label{align_GMD_covanishing}
			h^k(i_x^! \sA^\bullet)=0,\quad \forall k\leq n+\dim S.
			\end{align}
		\end{enumerate}
		Moreover, given a quasi-isomorphism of complexes
		$\iota_0:\bV\to \sA^\bullet|_{X^o}$,
		there is a canonical zigzag of quasi-isomorphisms between complexes (rather than morphisms in the derived category)
		\begin{align*}
		&\sA^\bullet\leftarrow\tau^{< n}\sA^\bullet\to\tau^{< n}Rj_{0\ast}\left(\sA^\bullet|_{U_0}\right)\leftarrow\tau^{< n}Rj_{0\ast}\left(\tau^{<n-1}\sA^\bullet|_{U_0}\right)\to\tau^{<n}Rj_{0\ast}\tau^{<n-1}Rj_{1\ast}\left(\sA^\bullet|_{U_1}\right)\leftarrow\\\nonumber
		&\cdots\leftarrow\tau^{<n}Rj_{0\ast}\cdots\tau^{<{2}}Rj_{n-2\ast}\left(\tau^{<{1}}\sA^\bullet|_{U_{n-2}}\right)\to\tau^{<n}Rj_{0\ast}\cdots\tau^{<1}Rj_{n-1\ast}\left(\sA^\bullet|_{X^o}\right)\leftarrow IC_X(\bV)\nonumber
		\end{align*}
	\end{prop}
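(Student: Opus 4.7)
The plan is to adapt the Goresky-MacPherson-Deligne axiomatic characterization scheme to the weakly constructible setting by verifying that each individual arrow in the displayed zigzag is a quasi-isomorphism of complexes; this simultaneously establishes the ``if'' direction and the canonicity of the induced isomorphism in $D(X)$. The ``only if'' direction is read off from (\ref{align_intersection_complex}): restriction to $X^o$ recovers $\bV$; the iterated truncation-pushforward construction preserves weak constructibility with respect to $\{X_i\}$; and the support and cosupport conditions at a stratum $S\subset X_i\setminus X_{i-1}$ are forced respectively by the truncation $\tau^{<n-i}$ and by the adjunction distinguished triangle $(i_i)_\ast i_i^!\to\mathrm{id}\to Rj_{i\ast}j_i^{-1}\to[+1]$ introduced at the $i$-th stage.

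For the ``if'' direction I would treat the two alternating types of arrows separately. The leftward (truncation) arrow at stage $i$ is obtained by applying the outer iterated functor $\tau^{<n}Rj_{0\ast}\cdots\tau^{<n-i+1}Rj_{(i-1)\ast}$ to the natural morphism $\tau^{<n-i}\sA^\bullet|_{U_{i-1}}\to\sA^\bullet|_{U_{i-1}}$. Since every stratum $S\subset U_{i-1}$ has $\dim S\geq i$, the support condition (\ref{align_GMD_vanishing}) gives $h^k(\sA^\bullet)_x=0$ for all $k\geq n-i$ and all $x\in U_{i-1}$, so the truncation is already a quasi-isomorphism before the outer functor is applied. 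The rightward (adjunction) arrow at stage $i$ is induced by the unit morphism $\sA^\bullet|_{U_{i-1}}\to Rj_{i\ast}(\sA^\bullet|_{U_i})$, whose cone $(i_i)_\ast i_i^!(\sA^\bullet|_{U_{i-1}})[1]$ is supported on the $i$-dimensional strata in $X_i\setminus X_{i-1}$. The cosupport condition (\ref{align_GMD_covanishing}) there yields $h^k(i_x^!\sA^\bullet)=0$ for $k\leq n+i$, placing the cone of the adjunction in degrees $\geq n-i+1$; the subsequent truncation $\tau^{<n-i}$ therefore annihilates it and the arrow is a quasi-isomorphism.

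The main technical point, and I expect the principal obstacle in the write-up, is to justify cleanly that stalk and local cohomology computations commute with the iterated functors $\tau^{<n-k}Rj_{k\ast}$. For this one invokes the weak constructibility of $h^j(\sA^\bullet)$ from condition (2): stalks at a point $x\in X_j\setminus X_{j-1}$ can be computed along a cofinal system of sufficiently small stratified neighborhoods, and the spectral sequence $R^qj_{k\ast}h^p(-)\Rightarrow h^{p+q}(Rj_{k\ast}(-))$ is then controlled by the cohomological dimensions of the local links. Once the individual arrows are shown to be quasi-isomorphisms, canonicity is automatic: each leftward arrow is the natural truncation morphism $\tau^{<m}\to\mathrm{id}$ and each rightward arrow is an adjunction unit $\mathrm{id}\to Rj_{i\ast}j_i^{-1}$, both functorial in $\sA^\bullet$. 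Given $\iota_0:\bV\to\sA^\bullet|_{X^o}$, the final arrow $\tau^{<n}Rj_{0\ast}\cdots\tau^{<1}Rj_{n-1\ast}(\sA^\bullet|_{X^o})\leftarrow IC_X(\bV)$ is determined uniquely via the functoriality of (\ref{align_intersection_complex}), yielding a canonical element of $\mathrm{Iso}_{D(X)}(\sA^\bullet, IC_X(\bV))$ as claimed.
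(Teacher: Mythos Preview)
Your approach is essentially the paper's: verify that each inner map $\tau^{<n-d}\sA^\bullet|_{U_{d-1}}\to\sA^\bullet|_{U_{d-1}}$ and $\tau^{<n-d}\sA^\bullet|_{U_{d-1}}\to\tau^{<n-d}Rj_{d\ast}(\sA^\bullet|_{U_d})$ is a quasi-isomorphism directly on $U_{d-1}$, then apply the outer functor. However, you misidentify the ``principal obstacle.'' There is no need to commute stalk computations through the iterated functors or to invoke spectral sequences on links; since you already establish each quasi-isomorphism \emph{before} the outer functor is applied, the outer functor simply preserves it. The only nontrivial translation is from the pointwise hypothesis $h^k(i_x^!\sA^\bullet)=0$ to a statement about $i^!\sA^\bullet$ for $i:X_d\setminus X_{d-1}\hookrightarrow U_{d-1}$, and the paper handles this in one line: the stratum is a smooth manifold of real dimension $2d$, so for the inclusion $i_x':\{x\}\hookrightarrow X_d\setminus X_{d-1}$ one has $(i_x')^!\simeq(i_x')^\ast[-2d]$, whence $h^k(i_x^!\sA^\bullet)\simeq h^{k-2d}(i^!\sA^\bullet)_x$, and the pointwise vanishing for $k\le n+d$ gives $\tau^{\le n-d}i_!i^!\sA^\bullet=0$. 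Your degree bookkeeping on the cone is also slightly off (the cone $i_!i^!\sA^\bullet[1]$ lives in degrees $\ge n-d$, not $\ge n-d+1$), but this does not affect the conclusion since $\tau^{<n-d}$ only sees degrees $<n-d$.
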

	\begin{proof}
		The first part is well known (c.f. \cite[chap. 5]{Borel1984} or \cite[Prop. 8.2.11]{Ryoshi-Kiyoshi-Toshiyuki2008}). To prove the second part, it suffices to show that the natural diagram
		\begin{align*}
		\xymatrix{
			\tau^{< n-d}\sA^\bullet|_{U_{d-1}}\ar[d]\ar[r] & \tau^{< n-d}Rj_{d\ast}\left(\sA^\bullet|_{U_{d}}\right)\\
			\sA^\bullet|_{U_{d-1}}&
		}
		\end{align*}
		is a diagram of quasi-isomorphisms for every $d$. Consider the distinguished triangle
		\begin{align*}
		\tau^{< n-d}\sA^\bullet|_{U_{d-1}}\to\sA^\bullet|_{U_{d-1}}\to\tau^{\geq n-d}\sA^\bullet|_{U_{d-1}}\to\tau^{< n-d}\sA^\bullet|_{U_{d-1}}[1].
		\end{align*}
		By (\ref{align_GMD_vanishing}), $\tau^{\geq n-d}\sA^\bullet|_{U_{d-1}}=0$. Hence the canonical map $$\tau^{< n-d}\sA^\bullet|_{U_{d-1}}\to\sA^\bullet|_{U_{d-1}}$$ is a quasi-isomorphism. Denote by $i:U_{d-1}\backslash U_d\to U_{d-1}$ and $i_x:\{x\}\to U_{d-1}\backslash U_d$ the closed immersions. Since $U_{d-1}\backslash U_d$ is smooth with real dimension $2d$, we have $i_x^!\simeq i_x^{\ast}[-2d]$. Hence by (\ref{align_GMD_covanishing}), we see that
		\begin{align*}
		h^k((i_xi)^!\sA^\bullet|_{U_{d-1}})\simeq i^\ast_xh^{k-2d}(i^!\sA^\bullet|_{U_{d-1}})=0,\quad \forall k\leq n+d.
		\end{align*}
		Equivalently $\tau^{\leq n-d}i_!i^!\sA^\bullet|_{U_{d-1}}=0$.

		This, together with the distinguished triangle
		\begin{align*}
		i_!i^!\sA^\bullet|_{U_{d-1}}\to\sA^\bullet|_{U_{d-1}}\to Rj_{d\ast}\left(\sA^\bullet|_{U_{d}}\right)\to i_!i^!\sA^\bullet|_{U_{d-1}}[1]
		\end{align*}
		shows that the canonical map
		$$\tau^{< n-d}\sA^\bullet|_{U_{d-1}}\to \tau^{< n-d}Rj_{d\ast}\left(\sA^\bullet|_{U_{d}}\right)$$
		is a quasi-isomorphism.
	\end{proof}
	\begin{rmk}\label{rmk_GM_criterion}
		Condition (3) in Proposition \ref{prop_GM_criterion} is equivalent to the following statement. For every $x\in X_i\backslash X_{i-1}$,
		\begin{align*}
		\varinjlim_{x\in U_x}\bH^k(U_x,\sA^\bullet)=0,\quad\forall k\geq n-i
		\end{align*}
		and
		\begin{align*}
		\varprojlim_{x\in U_x}\bH^k_{\rm cpt}(U_x,\sA^\bullet)=0,\quad \forall k\leq n+i.
		\end{align*}
	\end{rmk}
	\begin{cor}\label{cor_IC_is_local}
		Let $X$ be a complex space and $X^o\subset X_{\rm reg}$ a dense Zariski open subset. Let $\bV$ be a local system on $X^o$ and $X=\bigcup_{i\in I}U_i$ an open covering. Then a complex of sheaves of $\bC$-vector spaces $\sA^\bullet$ is quasi-isomorphic to $IC_X(\bV)$ if and only if $\sA^\bullet|_{X^o}\simeq\bV$ and $\sA^\bullet|_{U_i}$ is quasi-isomorphic to $IC_{U_i}(\bV|_{U_i\cap X^o})$ for every $i\in I$.
	\end{cor}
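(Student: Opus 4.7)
The statement asserts that the property of being quasi-isomorphic to the intersection complex is a local property on $X$. The forward direction should be essentially formal, while the reverse direction is the content and will be handled by applying the Goresky-MacPherson-Deligne criterion (Proposition \ref{prop_GM_criterion}) and verifying that each of its conditions is local on $X$.

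\textbf{Forward direction.} Assume $\sA^\bullet\simeq IC_X(\bV)$ in $D(X)$. The defining formula \eqref{align_intersection_complex} expresses $IC_X(\bV)$ as an iterated application of $\tau^{<k}$ and $Rj_{\ast}$ for open inclusions $j$. Both operations commute with restriction to an open subset: for any open $U\subset X$, $(Rj_{k\ast}\sF)|_U\simeq R(j_k|_{j_k^{-1}(U)})_\ast(\sF|_{U\cap U_k})$, and $(\tau^{<k}\sG)|_U\simeq\tau^{<k}(\sG|_U)$. Starting from a stratification $\{X_i\}$ of $(X,X^o)$ and restricting, one obtains a stratification $\{X_i\cap U_i\}$ of $(U_i,X^o\cap U_i)$, and the restriction of the iterated pushforward is precisely the iterated pushforward on $U_i$. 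Hence $\sA^\bullet|_{U_i}\simeq IC_X(\bV)|_{U_i}\simeq IC_{U_i}(\bV|_{U_i\cap X^o})$. The identification $\sA^\bullet|_{X^o}\simeq\bV$ follows because on the smooth dense Zariski open subset $X^o$ all of the truncation functors act trivially.

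\textbf{Reverse direction.} Fix a Whitney stratification $\{X_i\}$ of $(X,X^o)$ (which exists by the standard theory cited in Section 2); restricting yields stratifications $\{X_j\cap U_i\}$ of each $(U_i,X^o\cap U_i)$. We check the three conditions of Proposition \ref{prop_GM_criterion} for $\sA^\bullet$ with respect to $\{X_i\}$:
\begin{enumerate}
\item[(1)] This is the assumption $\sA^\bullet|_{X^o}\simeq\bV$.
\item[(2)] Weak constructibility of $h^j(\sA^\bullet)$ with respect to $\{X_i\}$ is a condition on each stratum $S$, and amounts to the statement that $h^j(\sA^\bullet)|_S$ is locally (on $S$) isomorphic to a constant sheaf $\bC_V^I$. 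Pick $x\in S$ and choose $U_i$ containing $x$. By hypothesis, $\sA^\bullet|_{U_i}\simeq IC_{U_i}(\bV|_{U_i\cap X^o})$, and by Proposition \ref{prop_GM_criterion} applied on $U_i$ with the stratification $\{X_j\cap U_i\}$, the cohomology sheaf $h^j(\sA^\bullet|_{U_i})=h^j(\sA^\bullet)|_{U_i}$ is weakly constructible with respect to $\{X_j\cap U_i\}$. In particular, $h^j(\sA^\bullet)|_{S\cap U_i}$ is locally a constant sheaf, giving the required local triviality at $x$.
\item[(3)] The stalk vanishings \eqref{align_GMD_vanishing} and \eqref{align_GMD_covanishing} at a point $x$ are, by Remark \ref{rmk_GM_criterion}, expressed via the direct (resp.\ inverse) limit of the (compactly supported) hypercohomology on shrinking neighborhoods of $x$. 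These limits depend only on arbitrarily small neighborhoods of $x$, hence are computed inside any fixed $U_i$ containing $x$. On $U_i$, $\sA^\bullet|_{U_i}\simeq IC_{U_i}(\bV|_{U_i\cap X^o})$ satisfies these vanishings at $x$ (since $x$ lies in the stratum $X_i\cap U_i$ of the same complex dimension as in $X$).
\end{enumerate}
Proposition \ref{prop_GM_criterion} then yields $\sA^\bullet\simeq IC_X(\bV)$ in $D(X)$.

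There is no real obstacle here; the only point requiring minor care is the compatibility of the global stratification $\{X_i\}$ with the (possibly different) stratifications used to define $IC_{U_i}(\bV|_{U_i\cap X^o})$, which is resolved by invoking the independence of the intersection complex from the chosen stratification and working throughout with the restricted stratification $\{X_j\cap U_i\}$.
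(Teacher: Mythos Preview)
Your proposal is correct and follows essentially the same approach as the paper: both argue via the Goresky--MacPherson--Deligne criterion (Proposition \ref{prop_GM_criterion}) and the observation that each of its conditions is local on $X$. The paper's proof is terser---it simply cites \cite[Proposition 4.1.13]{Dimca2004} for the fact that weak constructibility is a local property and then invokes Proposition \ref{prop_GM_criterion}---whereas you spell out the argument by fixing a global Whitney stratification and working with its restrictions to the $U_i$; both routes are valid and amount to the same idea.
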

	\begin{proof}
		Notice that a sheaf $A$ is weakly constructible if $A|_{U_i}$ is weakly constructible for every $i\in I$ (\cite[Proposition 4.1.13]{Dimca2004}). Now the corollary follows from the Goresky-MacPherson-Deligne   Criterion (Proposition \ref{prop_GM_criterion}).
	\end{proof}
	\section{$L^2$-de Rham Complex and $L^2$-Cohomology}
	\subsection{$L^2$-cohomology} Let $(M,ds^2_M)$ be an orientable Riemannian stratified space with boundary and $\bV$ a complex local system on $M$. For this notion we always assume that $ds^2_M$ can be extended smoothly to the boundary. Denote by $(\cV:=\bV\otimes_\bC\sO_M,\nabla)$ the associated flat bundle. Let $h$ be a hermitian metric on $\cV$ which is not necessarily compatible with $\nabla$. Such a pair $(\bV,h)$ is called a \textit{hermitian local system}. Denote by $L_{(2)}^k(M,\bV;ds^2_M,h)$ the space of $\bV$-valued measurable $k$-forms which are square integrable with respect to $(ds^2_M,h)$. Let
	$$D^k_{\rm max}(M,\bV;ds^2_M,h):=\left\{\alpha\in L_{(2)}^k(M,\bV;ds^2_M,h)\big|\nabla\alpha\in L_{(2)}^{k+1}(M,\bV;ds^2_M,h)\right\}$$
	and
	$$D^\bullet_{\rm max}(M,\bV;ds^2_M,h):=D^0_{\rm max}(M,\bV;ds^2_M,h)\stackrel{\nabla}{\to}D^1_{\rm max}(M,\bV;ds^2_M,h)\stackrel{\nabla}{\to}\cdots\stackrel{\nabla}{\to} D^{\dim_\bR M}_{\rm max}(M,\bV;ds^2_M,h).$$
	Here $\nabla$ are operators in the sense of distribution.

	The $L^2$-cohomology of $(M,\bV;ds^2_M,h)$ with the maximal ideal boundary condition is defined by
	$$H^\ast_{(2),\rm max}(M,\bV;ds^2_M,h):= H^\ast\left(D^\bullet_{\rm max}(M,\bV;ds^2_M,h)\right).$$
	Subscripts such as $ds^2_M$ and $h$ would be omitted when they are clear from the context. $\bV$ would be omitted when $\bV\simeq\bC$ is the trivial local system.

	There is another ideal boundary condition for the operator $\nabla$. Let
	$$D^k_{\rm min}(M,\bV;ds^2_M,h):=\left\{\alpha\in D^k_{\rm max}(M,\bV;ds^2_M,h)\big|\exists \alpha_n\in A^k_{\rm cpt}(M,\bV),  \alpha_n\to\alpha, \nabla\alpha_n\to \nabla\alpha\right\}$$
	and
	$$D_{\rm min}^\bullet(M,\bV;ds^2_M,h):=D^0_{\rm min}(M,\bV;ds^2_M,h)\stackrel{\nabla}{\to}D^1_{\rm min}(M,\bV;ds^2_M,h)\stackrel{\nabla}{\to}\cdots\stackrel{\nabla}{\to} D^{\dim_\bR M}_{\rm min}(M,\bV;ds^2_M,h).$$

	The $L^2$-cohomology of $(M,ds^2_M)$ with minimal ideal boundary condition is defined by
	$$H^\ast_{(2),\rm min}(M,\bV;ds^2_M,h):= H^\ast\left(D^\bullet_{\rm min}(M,\bV;ds^2_M,h)\right).$$

	To distinguish the ideal boundary conditions we denote by $\nabla_{\rm max}$ and $\nabla_{\rm min}$ the unbounded operators with domains ${\rm Dom}\nabla_{\rm max}:=\bigoplus_{i}D^i_{\rm max}(M,\bV;ds^2_M,h)$ and ${\rm Dom}\nabla_{\rm min}:=\bigoplus_{i}D^i_{\rm min}(M,\bV;ds^2_M,h)$ respectively.
	These two ideal boundary conditions coincide when $M$ is a manifold without boundary and $ds^2_M$ is complete (\cite[Theorem 7.2]{Zucker1979}). Hence there is a canonical isomorphism
	\begin{align*}
	H^k_{(2),\rm min}(M,\bV;ds^2_M,h)\simeq H^k_{(2),\rm max}(M,\bV;ds^2_M,h)
	\end{align*}
	whenever $(M,ds^2_M)$ is a complete Riemannian manifold (without boundary). In this case we denote the $L^2$-cohomology by $H^\ast_{(2)}(-)$ for short.

	Generally these two kinds of $L^2$-cohomologies are related by the $L^2$-Poincar\'e duality.
	\begin{prop}\label{prop_L2_Poincare_duality}
		Let $(M,ds^2_M)$ be an orientable Riemannian manifold.
		Assume that either of the following statements holds.
		\begin{enumerate}
			\item $H^k_{(2),\rm min}(M,\bV^\ast;ds^2_M,h^\ast)$ and $H^{k+1}_{(2),\rm min}(M,\bV^\ast;ds^2_M,h^\ast)$ are finite dimensional.
			\item $H^k_{(2),\rm min}(M,\bV^\ast;ds^2_M,h^\ast)$ and $H^{\dim_\bR M-k}_{(2),\rm max}(M,\bV;ds^2_M,h)$ are finite dimensional.
		\end{enumerate}
		Then there is a well defined perfect pairing
		$$H^{\dim_\bR M-k}_{(2),\rm max}(M,\bV;ds^2_M,h)\times H^k_{(2),\rm min}(M,\bV^\ast;ds^2_M,h^\ast)\to\bC,\quad ([\alpha],[\beta])\mapsto \int_{M}\alpha\wedge\beta.$$
		Here $(\bV^\ast,h^\ast)$ is the dual hermitian local system.
	\end{prop}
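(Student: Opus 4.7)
The approach is to interpret the integration pairing $P(\alpha,\beta) := \int_M \alpha\wedge\beta$ as the transpose duality between $\nabla_{\max}$ on $(\bV,h)$ and $\nabla_{\min}$ on $(\bV^*,h^*)$, and then invoke the closed range theorem for closed unbounded operators under the finite-dimensionality hypotheses. Writing $n = \dim_\bR M$, I would first verify that the pairing descends to cohomology: for $\alpha \in D^{n-k-1}_{\max}(M,\bV)$ and a closed $\beta \in D^k_{\min}(M,\bV^*)$, choose $\beta_j \in A^k_{\rm cpt}(M,\bV^*)$ approximating $\beta$ in the graph norm of $\nabla_{\min}$; by the distributional definition of $\nabla\alpha$ tested against $\beta_j$,
$$\int_M \nabla\alpha \wedge \beta_j = (-1)^{n-k}\int_M \alpha \wedge \nabla\beta_j,$$
and letting $j\to\infty$ gives $\int_M \nabla\alpha \wedge \beta = 0$. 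The symmetric statement for exact $\beta = \nabla\beta'$ tested against closed $\alpha \in D^{n-k}_{\max}$ follows by approximating a primitive $\beta' \in D^{k-1}_{\min}$ by smooth compactly supported forms.

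The key analytic input is the transpose identification: under $P$ the continuous dual of $L^{n-k}_{(2)}(M,\bV;h)$ is canonically $L^k_{(2)}(M,\bV^*;h^*)$, and the transpose of the closed unbounded operator $\nabla_{\min}: D^k_{\min}(M,\bV^*) \to D^{k+1}_{\min}(M,\bV^*)$ is, up to sign, the operator $\nabla_{\max}: D^{n-k-1}_{\max}(M,\bV) \to D^{n-k}_{\max}(M,\bV)$. Equality on the common dense subspace of compactly supported smooth forms is immediate from integration by parts; the essential content is the domain match, which reduces by Riesz representation to the observation that $\alpha \in L^{n-k-1}_{(2)}(M,\bV)$ satisfies the continuity condition $|P(\alpha,\nabla\beta)| \lesssim \|\beta\|_{L^2}$ for all $\beta \in A^k_{\rm cpt}(M,\bV^*)$ if and only if $\nabla\alpha$ exists as an $L^2$ distributional form, i.e., $\alpha \in D^{n-k-1}_{\max}(M,\bV)$.

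A standard criterion (cf.\ \cite[Theorem A.2.2]{Kashiwara_Kawai1987}) asserts that, in a closed complex of densely-defined Hilbert space operators, finite-dimensionality of two consecutive cohomologies forces closed range at the intermediate differential. Hypothesis (1) thus delivers closed range of $\nabla_{\min}: D^k_{\min} \to D^{k+1}_{\min}$, which by the transpose identification transfers to closed range of $\nabla_{\max}: D^{n-k-1}_{\max} \to D^{n-k}_{\max}$; hypothesis (2) yields the corresponding closed ranges directly on the $\bV$-side via the same criterion, and they transfer back via transposition. The closed range theorem for closed densely-defined operators then identifies $\mathrm{im}(\nabla_{\max}: D^{n-k-1}_{\max} \to D^{n-k}_{\max})$ as the $P$-annihilator of $\ker(\nabla_{\min}: D^k_{\min} \to D^{k+1}_{\min})$ in $L^{n-k}_{(2)}(M,\bV;h)$, and symmetrically on the other side, so the induced bilinear pairing on cohomology is perfect. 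The main obstacle is the domain identification in the transpose step: while the formal integration-by-parts identity is immediate, carefully ruling out spurious elements in the domain of the transpose --- especially for a possibly incomplete Riemannian $(M, ds^2_M)$ --- requires the density characterization of $D^\bullet_{\min}$ as the graph closure of $\nabla$ on compactly supported smooth forms.
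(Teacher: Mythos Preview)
Your approach is correct and takes a different route from the paper. The paper argues via Hilbert adjoints and harmonic representatives: from the finite-dimensionality hypotheses it extracts closed range for the four relevant differentials, applies the abstract Hodge decomposition on both the $\bV$-side and the $\bV^\ast$-side, and then invokes the conjugate-linear Hodge star to identify the harmonic space $\sH^{n-k}_{\max}(\bV)$ with $\sH^{k}_{\min}(\bV^\ast)$, so that the integration pairing becomes (up to sign) the $L^2$ inner product on harmonic forms and is therefore perfect. You bypass harmonic forms entirely, working instead with the Banach transpose under the bilinear pairing $P$: the identification $(\nabla_{\min}^{\bV^\ast})' = \pm\nabla_{\max}^{\bV}$ (and by reflexivity $(\nabla_{\max}^{\bV})' = \pm\nabla_{\min}^{\bV^\ast}$) combined with the closed range theorem yields the annihilator identities $\mathrm{im}\,\nabla_{\max}^{n-k-1} = (\ker\nabla_{\min}^k)^{\perp_P}$ and $\mathrm{im}\,\nabla_{\min}^{k-1} = (\ker\nabla_{\max}^{n-k})^{\perp_P}$, from which perfectness is immediate. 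The paper's route has the payoff of producing harmonic representatives, which are reused later for the Lefschetz package and the $L^2$-Hodge structure; your route is leaner for the duality statement alone and makes transparent why the pairing is given by integration. One small sharpening: a single finite-dimensional cohomology $H^k$ already forces $\mathrm{im}\,d^{k-1}$ closed (view $d^{k-1}$ as a closed operator into the Banach space $\ker d^k$ with finite-dimensional cokernel), so you need not invoke two consecutive cohomologies for one closed range; rather, make explicit that \emph{both} $\nabla_{\min}^{k-1}$ and $\nabla_{\min}^{k}$ (equivalently $\nabla_{\max}^{n-k}$ and $\nabla_{\max}^{n-k-1}$) must have closed range, and check that each of hypotheses (1) and (2) supplies both.
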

	\begin{proof}
		Denote  $L^k_{(2)}:=L^{k}_{(2)}(M,\bV;ds^2_M,h)$ and $L^{\ast k}_{(2)}:=L^{k}_{(2)}(M,\bV^\ast;ds^2_M,h^\ast)$ for short.
		Consider the diagram
		\begin{align}\label{align_Poincare_dual_1}
		\xymatrix{
		L^{\ast k-1}_{(2)}\ar@<1ex>[r]^{\nabla_{\rm min}} & L^{\ast k}_{(2)} \ar@<1ex>[r]^{\nabla_{\rm min}}\ar@<1ex>[l]^{(\nabla_{\rm min})^\ast} & L^{\ast k+1}_{(2)} \ar@<1ex>[l]^{(\nabla_{\rm min})^\ast}
	    }
		\end{align}
		and its Hodge star dual
		\begin{align}\label{align_Poincare_dual_2}
		\xymatrix{
			L^{n-k+1}_{(2)}\ar@<1ex>[r]^{(\nabla_{\rm max})^\ast} & L^{n-k}_{(2)} \ar@<1ex>[r]^{(\nabla_{\rm max})^\ast}\ar@<1ex>[l]^{\nabla_{\rm max}} & L^{n-k-1}_{(2)} \ar@<1ex>[l]^{\nabla_{\rm max}}
		},\quad n=\dim_\bR M.
		\end{align}
		Under either of the assumptions all the eight operators in (\ref{align_Poincare_dual_1}) and (\ref{align_Poincare_dual_2}) have closed ranges. Therefore there are Hodge decompositions (\cite[Theorem A.2.2]{Kashiwara_Kawai1987})
		\begin{align*}
		L^{\ast k}_{(2)}=\sH^{\ast k}_{\rm min}\oplus \nabla_{\rm min}(L^{\ast k-1}_{(2)})\oplus(\nabla_{\rm min})^\ast(L^{\ast k+1}_{(2)})
		\end{align*}
		and
		\begin{align*}
		L^{n-k}_{(2)}=\sH^{n-k}_{\rm max}\oplus \nabla_{\rm max}(L^{n-k-1}_{(2)})\oplus(\nabla_{\rm max})^\ast(L^{n-k+1}_{(2)}),
		\end{align*}
		where $$\sH^{\ast k}_{\rm min}={\rm Ker}\nabla_{\rm min}\cap {\rm Ker}(\nabla_{\rm min})^\ast\cap L^{\ast k}_{(2)}$$
		and $$\sH^{n-k}_{\rm max}={\rm Ker}\nabla_{\rm max}\cap {\rm Ker}(\nabla_{\rm max})^\ast\cap L^{n-k}_{(2)}.$$
		Hence there are isomorphisms
		\begin{align*}
		H^k_{(2),\rm min}(M,\bV^\ast;ds^2_M,h^\ast)\simeq \sH^{\ast k}_{\rm min}
		\end{align*}
		and
		\begin{align*}
		H^{n-k}_{(2),\rm max}(M,\bV;ds^2_M,h)\simeq \sH^{n-k}_{\rm max}.
		\end{align*}
		Since the Hodge star operator gives an isomorphisms between $\sH^{\ast k}_{\rm min}$ and $\sH^{n-k}_{\rm max}$, the pairing
		$$\sH^{n-k}_{\rm max}\times \sH^{\ast k}_{\rm min}\to\bC,\quad (\alpha,\beta)\mapsto \int_{M}\alpha\wedge\beta=\pm\int_M\langle\alpha,\ast\beta\rangle{\rm vol}$$
		is perfect.
	\end{proof}
	The $L^2$-cohomology with minimal ideal boundary condition admits the Kunneth-type formula, as is observed by Zucker \cite{Zucker1982}.
	\begin{prop}\label{prop_L2_Kunneth}
		Let $(M,ds^2_M)$ and $(N,ds^2_N)$ be Riemannian manifolds with boundaries. Let $(\bV_1,h_1)$ and $(\bV_2,h_2)$ be hermitian local systems on $M$ and $N$ respectively. Let $ds^2_{X}:=ds^2_M+ds^2_N$ be the product metric on $X:=M\times N$ and let  $(\bV,h):=(\bV_1\boxtimes\bV_2,h_1h_2)$ be the product hermitian local system on $X$. Assume that $H^\ast_{(2),\rm min}(M,\bV_1;ds^2_M,h_1)$ are finite dimensional. Then there are canonical isomorphisms
		\begin{align}\label{align_L2_Kunneth}
		H^k_{(2),\rm min}(X,\bV;ds^2_{X},h)\simeq \bigoplus_{i+j=k}H^i_{(2),\rm min}(M,\bV_1;ds^2_M,h_1)\otimes H^j_{(2),\rm min}(N,\bV_2;ds^2_N,h_2).
		\end{align}
	\end{prop}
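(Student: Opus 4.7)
The plan is to construct an exterior product map at the cochain level and then use the Hodge decomposition on the finite-dimensional factor to reduce the Künneth isomorphism to the vanishing of cohomology of a manifestly acyclic Hilbert subcomplex. First I would define
\[
\mu \colon D^i_{\rm min}(M,\bV_1;ds^2_M,h_1)\otimes D^j_{\rm min}(N,\bV_2;ds^2_N,h_2)\longrightarrow D^{i+j}_{\rm min}(X,\bV;ds^2_X,h),\qquad \alpha\otimes\beta\longmapsto p_1^\ast\alpha\wedge p_2^\ast\beta,
\]
where $p_1,p_2$ are the projections. Well-definedness requires verifying that if $\alpha_n\to\alpha$ and $\beta_m\to\beta$ in the graph norms with $\alpha_n\in A^i_{\rm cpt}(M,\bV_1)$, $\beta_m\in A^j_{\rm cpt}(N,\bV_2)$, then the exterior products $p_1^\ast\alpha_n\wedge p_2^\ast\beta_m$ form an approximating sequence in $A^{i+j}_{\rm cpt}(X,\bV)$ for $\mu(\alpha,\beta)$ in the graph norm of $\nabla_{\rm min}^{(X)}$. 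This uses $L^2$-Fubini and the Leibniz rule $\nabla^{(X)}(p_1^\ast\alpha\wedge p_2^\ast\beta)=p_1^\ast(\nabla\alpha)\wedge p_2^\ast\beta+(-1)^i p_1^\ast\alpha\wedge p_2^\ast(\nabla\beta)$. The map $\mu$ is a morphism of cochain complexes and therefore descends to a Künneth map on cohomology.

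Next I would invoke the Hodge decomposition on $M$. The hypothesis that $H^\ast_{(2),\rm min}(M,\bV_1;ds^2_M,h_1)$ is finite dimensional forces $\nabla_{\rm min}^{(M)}$ to have closed range in every degree, so by \cite[Theorem A.2.2]{Kashiwara_Kawai1987} one has the orthogonal decomposition
\[
L_{(2)}^i(M,\bV_1;ds^2_M,h_1)=\sH^i(M)\oplus \overline{{\rm Im}\,\nabla_{\rm min}^{(M)}}\oplus \overline{{\rm Im}\,(\nabla_{\rm min}^{(M)})^\ast},
\]
with $\sH^i(M)$ finite dimensional and isomorphic to $H^i_{(2),\rm min}(M,\bV_1)$. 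Identifying $L^{i+j}_{(2)}(X,\bV;ds^2_X,h)$ with the Hilbert tensor product $\widehat{\bigoplus}_{i+j=k}L^i_{(2)}(M,\bV_1)\mathbin{\widehat\otimes} L^j_{(2)}(N,\bV_2)$ via Fubini, this $M$-side decomposition splits the $L^2$-complex on $X$ into three subcomplexes tensored with the $N$-factor complex.

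The harmonic summand yields $\sH^i(M)\otimes D^\bullet_{\rm min}(N,\bV_2)$, whose cohomology is $\sH^i(M)\otimes H^j_{(2),\rm min}(N,\bV_2)\simeq H^i_{(2),\rm min}(M,\bV_1)\otimes H^j_{(2),\rm min}(N,\bV_2)$, and this is precisely the image of the Künneth map. The remaining task is to prove that the two ``non-harmonic'' subcomplexes contribute trivially. For this I would construct an explicit cochain homotopy: on $\overline{{\rm Im}\,(\nabla_{\rm min}^{(M)})^\ast}$ the operator $\nabla_{\rm min}^{(M)}(\nabla_{\rm min}^{(M)})^\ast$ is bounded below (again by closed range), hence invertible; writing $G_M$ for its bounded inverse, the Green-type operator $H:=(\nabla_{\rm min}^{(M)})^\ast G_M\otimes 1$ satisfies $[\nabla_{\rm min}^{(X)},H]={\rm id}$ on that subcomplex after tensoring with $L^\ast_{(2)}(N,\bV_2)$, giving acyclicity. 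A symmetric argument, using $(\nabla_{\rm min}^{(M)})^\ast$ as the differential on $\overline{{\rm Im}\,\nabla_{\rm min}^{(M)}}$, handles the other summand.

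The main obstacle I anticipate is the careful justification of the Hilbert tensor product identification together with the claim that $\nabla_{\rm min}^{(X)}$ is exactly the closure of $\nabla_{\rm min}^{(M)}\otimes 1+(-1)^{\deg}\otimes\nabla_{\rm min}^{(N)}$: this requires density of $A^\ast_{\rm cpt}(M,\bV_1)\otimes A^\ast_{\rm cpt}(N,\bV_2)$ in the graph norm of the total differential, which follows from partitions-of-unity and a standard smoothing/cutoff argument on each factor, but has to be written down carefully to ensure it interacts correctly with the three-piece orthogonal decomposition above.
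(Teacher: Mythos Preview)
Your approach is essentially the same as the paper's: identify $D^\bullet_{\rm min}(X,\bV)$ with the completed tensor product $D^\bullet_{\rm min}(M,\bV_1)\,\widehat\otimes\,D^\bullet_{\rm min}(N,\bV_2)$ via density of $A^\ast_{\rm cpt}(M,\bV_1)\otimes A^\ast_{\rm cpt}(N,\bV_2)$ in the graph norm, then use closed range of $\nabla_{1,\rm min}$ and the Green operator on the $M$-factor to contract onto harmonic forms.

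There is, however, a genuine misstep in your execution. The Hodge decomposition $L^i_{(2)}(M)=\sH^i\oplus\overline{{\rm Im}\,\nabla_{\rm min}}\oplus\overline{{\rm Im}\,\nabla_{\rm min}^\ast}$ is an orthogonal splitting of Hilbert spaces, \emph{not} a splitting of the complex $(D^\bullet_{\rm min}(M),\nabla_{\rm min})$: the differential sends $\overline{{\rm Im}\,\nabla_{\rm min}^\ast}$ into $\overline{{\rm Im}\,\nabla_{\rm min}}$ in the next degree, so the three summands are not subcomplexes. Relatedly, your claim that $\nabla_{\rm min}\nabla_{\rm min}^\ast$ is bounded below on $\overline{{\rm Im}\,\nabla_{\rm min}^\ast}$ is the wrong operator on the wrong subspace (on $\overline{{\rm Im}\,\nabla_{\rm min}^\ast}$ one has $\nabla_{\rm min}^\ast=0$, so $\nabla_{\rm min}\nabla_{\rm min}^\ast=0$ there). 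The paper sidesteps this by not splitting at all: it writes the single bounded homotopy identity
\[
{\rm Id}=P_M+\nabla_{1,\rm min}H_M+H_M\nabla_{1,\rm min},\qquad H_M=(\nabla_{1,\rm min})^\ast G_M,
\]
on all of $D^\bullet_{\rm min}(M,\bV_1)$, where $P_M$ is the harmonic projector. Since $P_M$ and $H_M$ are bounded, the identity extends directly to the completed tensor product with $D^\bullet_{\rm min}(N,\bV_2)$, giving a homotopy equivalence to $\sH^\bullet(M)\otimes D^\bullet_{\rm min}(N,\bV_2)$ (the completion drops out by finite-dimensionality of $\sH^\bullet(M)$). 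This is both simpler and correct; your argument is easily repaired by adopting this formulation.
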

	\begin{proof}
		The proof is originated by \cite[Theorem 2.29]{Zucker1982} with a mild modification. For the convenience of readers we give a sketched proof.

		Since $\bigoplus_{i+j=k}A_{\rm cpt}^i(M,\bV_1)\otimes A_{\rm cpt}^j(N,\bV_2)$ is dense in $A_{\rm cpt}^k(X,\bV)$ under the graph norm $\|-\|+\|\nabla-\|$, by taking completion under the graph norm we obtain
		\begin{align*}
		D^k_{\rm min}(X,\bV;ds^2_{X},h)\simeq \bigoplus_{i+j=k}D^i_{\rm min}(M,\bV_1;ds^2_M,h_1)\widehat{\otimes} D^j_{\rm min}(N,\bV_2;ds^2_N,h_2).
		\end{align*}
		Denote by $\nabla_1$ the flat connection associated to $\bV_1$.
		Since $H^\ast_{(2),\rm min}(M,\bV_1;ds^2_M,h_1)$ is finite dimensional, ${\rm Im}\nabla_{1,\rm min}$ is closed and there is a bounded operator
		\begin{align*}
		H_M:D_{\rm min}^\bullet(M,\bV_1;ds^2_M,h_1)\to D_{\rm min}^{\bullet-1}(M,\bV_1;ds^2_M,h_1)
		\end{align*}
		such that
		\begin{align}\label{align_L2_Kunneth_homotopy_id}
		{\rm Id}=P_M+\nabla_{1,\rm min}H_M+H_M\nabla_{1,\rm min}
		\end{align}
		on $D^\bullet_{\rm min}(M,\bV_1;ds^2_M,h_1)$.
		Here $$P_M:L_{(2)}^\bullet(M,\bV_1;ds^2_M,h_1)\to\sH^\bullet(M,\bV_1;ds^2_M,h_1):={\rm Ker}(\nabla_{1,\rm min})\cap{\rm Ker}(\nabla_{1,\rm min})^\ast$$ is the projection via the abstract Hodge decomposition
		\begin{align*}
		L_{(2)}^\bullet(M,\bV_1;ds^2_M,h_1)=\sH^\bullet(M,\bV_1;ds^2_M,h_1)\oplus{\rm Im}\nabla_{1,\rm min}\oplus{\rm Im}(\nabla_{1,\rm min})^\ast.
		\end{align*}
		$(\nabla_{1,\rm min})^\ast$ is the Hilbert adjoint of $\nabla_{1,\rm min}$ and
		$H_M=(\nabla_{1,\rm min})^\ast G_M$ where $G_M$ is the abstract Green operator (\cite[Theorem A.2.2]{Kashiwara_Kawai1987}).
		This shows that the bounded operator $P_M$ gives a homotopy equivalence from $D^\bullet_{\rm min}(M,\bV_1;ds^2_M,h_1)$ to the null complex $\sH^\bullet(M,\bV_1;ds^2_M,h_1)$.

		By boundedness, (\ref{align_L2_Kunneth_homotopy_id}) naturally extends to the homotopy to identity formula on
		$$D^\bullet_{\rm min}(M,\bV_1;ds^2_M,h_1)\widehat{\otimes} D^\bullet_{\rm min}(N,\bV_2;ds^2_N,h_2)$$
		and implies that it is homotopic  equivalent to
		\begin{align}\label{align_L2_Kunneth_2}
		\sH^\bullet(M,\bV_1;ds^2_M,h_1)\widehat{\otimes} D^\bullet_{\rm min}(N,\bV_2;ds^2_N,h_2)
		=\sH^\bullet(M,\bV_1;ds^2_M,h_1)\otimes D^\bullet_{\rm min}(N,\bV_2;ds^2_N,h_2).
		\end{align}
		The reason for the last equality is the assumption that $$H^\ast_{(2),\rm min}(M,\bV_1;ds^2_M,h_1)\simeq \sH^\ast(M,\bV_1;ds^2_M,h_1)$$ is finite dimensional. Now the proposition is proved because the cohomologies of (\ref{align_L2_Kunneth_2}) are the right hand side of (\ref{align_L2_Kunneth}).
	\end{proof}
	Another feature of the minimal boundary condition is that the extension of a form by zero is well defined.
	Let $U\subset M$ be an open subset, then
	the map
	\begin{align*}
	A^\bullet_{\rm cpt}(U,\bV)\to A^\bullet_{\rm cpt}(M,\bV),\quad \alpha\mapsto\widetilde{\alpha}=\begin{cases}
	\alpha, & x\in U \\
	0, & x\in M\backslash U
	\end{cases}
	\end{align*}
	is bounded under the graph norm $\|-\|+\|\nabla-\|$. Hence it induces a map
	\begin{align*}
	D^\bullet_{\rm min}(U,\bV;ds^2_M,h)\to D^\bullet_{\rm min}(M,\bV;ds^2_M,h),\quad \alpha\mapsto\widetilde{\alpha}.
	\end{align*}
	We will simply call this map \textit{extension by zero} when the spaces are clear in the context.
	\subsection{$L^2$-de Rham complex} Let $X$ be a real or complex analytic space and $X^o\subset X_{\rm reg}$ an open subset of the loci of regular points $X_{\rm reg}$ so that $X^o$ is dense in $X$. Let $ds^2$ be a Riemannian metric on $X^o$ and $(\bV,h)$ a hermitian local system on $X^o$. Denote by $(\cV:=\bV\otimes_\bC\sO_M,\nabla)$ the associated flat bundle. Denote by $\sL^k_{X,\bV;ds^2,h}$ the sheaf of $\bV$-valued measurable $k$-forms on $X$ which are locally square integrable with respect to the metrics $ds^2$ and $h$, i.e. for every open subset $U\subset X$, $\sL^k_{X,\bV;ds^2,h}(U)$ is the space of measurable $\bV$-valued $k$-forms $\alpha$ on $U\cap X^o$ such that locally at every point $x\in U$ there is a neighborhood $V_x\subset U$ of $x$ so that
	$$\int_{V_x\cap X^o}|\alpha|^2_{ds^2,h}{\rm vol}_{ds^2}<\infty.$$
	Denote
	$$\sD^k_{X,\bV;ds^2,h}:=\left\{\alpha\in \sL^k_{X,\bV;ds^2,h}\big|\nabla\alpha\in \sL^{k+1}_{X,\bV;ds^2,h}\right\}.$$
	The {\bf $L^2$-de Rham complex} of $(X,\bV;ds^2,h)$ is defined by
	$$\sD^\bullet_{X,\bV;ds^2,h}:=\sD^0_{X,\bV;ds^2,h}\stackrel{\nabla}{\to}\sD^1_{X,\bV;ds^2,h}\stackrel{\nabla}{\to}\cdots\stackrel{\nabla}{\to} \sD^{\dim_{\bR}X}_{X,\bV;ds^2,h},$$
	where $\nabla$ are operators in the sense of distribution.
	\begin{defn}\label{defn_hermitian_metric}
		Let $X$ be a complex space and $ds^2$ a hermitian metric on $X_{\rm reg}$. We say that $ds^2$ is a {\bf hermitian metric on} $X$ if for every $x\in X$ there is a neighborhood $U$ and a holomorphic closed immersion $U\subset V$ into a complex manifold such that $ds^2|_U\sim ds^2_V|_{U}$ for some hermitian metric $ds^2_V$ on $V$. If moreover the associated $(1,1)$-form associated to $ds^2$ is $d$-closed on $X^o$, we say that $ds^2$ is a {\bf K\"ahler hermitian metric on} $X$
	\end{defn}
	\begin{rmk}
		To distinguish hermitian metrics on $X$ and hermitian metrics on the open set $X^o$, we will always emphasize the space on which the metric lies.
	\end{rmk}
	Many interesting metrics, including the distinguished metric introduced in this paper, are locally bounded from below by a hermitian one. In this case the $L^2$-de Rham complexes are complexes of fine sheaves.
	\begin{lem}\label{lem_fine_sheaf}
		Let $X$ be a complex space and $X^o\subset X_{\rm reg}$ a dense Zariski open subset. Let $ds^2$ be a hermitian metric on $X^o$ and $(\bV,h)$ a hermitian local system on $X^o$. Suppose that for every point $x\in X\backslash X^o$ there is a neighborhood $x\in U_x$ and a hermitian metric $ds^2_0$ on $U_x$ such that $ds^2_0|_{X^o\cap U_x}\lesssim ds^2|_{X^o\cap U_x}$. Then
		\begin{enumerate}
			\item for every $k$, $\sD^{k}_{X,\bV;ds^2,h}$ is a fine sheaf.
			\item For every $C^\infty$ form $\alpha$ on $X$, $|\alpha|_{ds^2}$ is locally $L^\infty$ bounded.
		\end{enumerate}
	\end{lem}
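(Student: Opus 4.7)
The plan is to prove (2) first and then deduce (1) from it via the natural module structure of $\sD^\bullet$ over smooth functions on $X$. For (2), I would argue pointwise. At any point of $X^o$, both the form $\alpha$ and the metric $ds^2$ are smooth, so $|\alpha|_{ds^2}$ is continuous and hence locally bounded. At a point $x\in X\setminus X^o$, I would invoke the hypothesis to pick a neighborhood $U_x$ and a hermitian metric $ds^2_0$ on $U_x$ with $ds^2_0|_{X^o\cap U_x}\lesssim ds^2|_{X^o\cap U_x}$. By Definition \ref{defn_hermitian_metric}, after shrinking $U_x$ I may realize $U_x$ as a closed analytic subspace of a complex manifold $V$ with $ds^2_0\sim ds^2_V|_{U_x}$ for some smooth hermitian metric $ds^2_V$ on $V$. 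Since $\alpha$ is $C^\infty$ on $X$, it is the restriction of a smooth form on a neighborhood in $V$, so $|\alpha|_{ds^2_V}$ is continuous on $V$ and therefore bounded on any compact neighborhood of $x$.

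The key pointwise observation is that if two hermitian inner products on a tangent space satisfy $ds^2\gtrsim ds^2_0$, then the induced norms on $k$-forms satisfy $|\alpha|_{ds^2}\lesssim |\alpha|_{ds^2_0}$: a larger inner product on the tangent space produces a smaller dual inner product on the cotangent space, and the same inequality then passes to all exterior powers. This is a routine linear-algebra computation via simultaneous orthogonal diagonalization. Chaining $|\alpha|_{ds^2}\lesssim|\alpha|_{ds^2_0}\lesssim|\alpha|_{ds^2_V}$ on $X^o\cap U_x$ yields local $L^\infty$ boundedness of $|\alpha|_{ds^2}$ and completes (2).

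For (1), I would exhibit $\sD^k_{X,\bV;ds^2,h}$ as a module over the sheaf $\sC^\infty_X$ of $C^\infty$ functions on $X$, which is fine because $X$ is paracompact and admits smooth partitions of unity subordinate to any open cover (the local embedding into a manifold provides the smooth functions and the extension-by-zero). Given $f\in\sC^\infty_X(U)$ and $\alpha\in\sD^k_{X,\bV;ds^2,h}(U)$, local $L^\infty$ boundedness of $f$ (a special case of (2) applied to the $0$-form $f$) gives $f\alpha\in\sL^k_{X,\bV;ds^2,h}(U)$. Expanding $\nabla(f\alpha)=df\wedge\alpha+f\nabla\alpha$, the second summand is manifestly in $\sL^{k+1}_{X,\bV;ds^2,h}(U)$, while the first is controlled by $|df\wedge\alpha|_{ds^2,h}\leq|df|_{ds^2}\,|\alpha|_{ds^2,h}$; applying (2) to the $C^\infty$ one-form $df$ on $X$ shows that $|df|_{ds^2}$ is locally $L^\infty$, so $df\wedge\alpha$ is also locally $L^2$. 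Hence $f\alpha\in\sD^k_{X,\bV;ds^2,h}(U)$, and multiplying by a partition of unity subordinate to any open cover produces the partition-of-unity endomorphisms that witness fineness.

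The only nontrivial ingredient is the pointwise norm inequality on $k$-forms, which is the routine linear-algebra step mentioned above; beyond that, everything reduces to local boundedness arguments and to the standard existence of smooth partitions of unity on the paracompact space $X$.
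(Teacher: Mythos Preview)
Your proposal is correct and takes essentially the same approach as the paper: the key step in both is the linear-algebra fact that $ds^2_0\lesssim ds^2$ on tangent vectors implies $|\alpha|_{ds^2}\lesssim|\alpha|_{ds^2_0}$ on forms, combined with the local embedding into a manifold to produce smooth cutoff functions with bounded differential. The only difference is organizational: you prove (2) first and deduce (1) by applying (2) to $df$, whereas the paper directly constructs the cutoff functions needed for fineness and then observes that the same estimate gives (2).
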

	\begin{proof}
		It suffices to show that for every $W\subset\overline{W}\subset V\subset X$ where $W$ and $V$ are small open subsets, there is a continuous function $f$ on $V$ such that
		\begin{itemize}
			\item ${\rm supp}(f)\subset \overline{W}$,
			\item $f$ is $C^\infty$ on $V\cap X^o$
			\item $df$ has bounded fiberwise norm with respect to the metric $ds^2$.
		\end{itemize}
		We may assume that there is a closed embedding $V\subset M$ where $M$ is a complex manifold. Let $W'\subset\overline{W'}\subset M$ where $W'$ is an open subset such that $W'\cap V=W$. Let $ds^2_M$ be a hermitian metric on $M$ so that $ds^2_0|_{V\cap X^o}\sim ds^2_M|_{V\cap X^o}$. Let $g$ be a smooth function on $M$ whose support lies in $\overline{W'}$. Denote $f=g|_V$, then ${\rm supp}(f)\subset \overline{W}$ and $f$ is $C^\infty$ on $V\cap X^o$. Since $V\cap X^o\subset M$ is a submanifold, one has the orthogonal decomposition
		\begin{align*}
		T_{M,x}=T_{V\cap X^o,x}\oplus T_{V\cap X^o,x}^\bot, \quad \forall x\in V\cap X^o.
		\end{align*}
		Consequently, the restriction map $T^\ast_{M,x}\to T^\ast_{V\cap X^o,x}$ is distance-decreasing for every $x\in V\cap X^o$.
		Therefore $|df|_{ds^2}\lesssim|d f|_{ds^2_0}\lesssim |dg|_{ds^2_M}<\infty$. This proves the first claim of the lemma. For the second claim, assume that $\alpha|_V=\alpha'|_V$ for some smooth form $\alpha'$ on $M$, then $|\alpha|_{ds^2}\lesssim|\alpha|_{ds^2_0}\lesssim |\alpha'|_{ds^2_M}<\infty$.
	\end{proof}
	When $X$ is compact,  $$\Gamma\left(X,\sD^\bullet_{X,\bV;ds^2,h}\right)=D_{\rm max}^\bullet(X,\bV;ds^2,h).$$
	Therefore we have
	\begin{cor}
		Notations as in Lemma \ref{lem_fine_sheaf}. Assume that $X$ is compact, then there is a natural isomorphism
		$$H^\ast_{(2),\rm max}(X^o,\bV;ds^2,h)\simeq \bH^\ast(X,\sD^\bullet_{X,\bV;ds^2,h}).$$
	\end{cor}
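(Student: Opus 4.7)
The plan is direct and relies only on the preceding material. First, I would invoke Lemma \ref{lem_fine_sheaf} to conclude that $\sD^\bullet_{X,\bV;ds^2,h}$ is a complex of fine sheaves on the paracompact Hausdorff space $X$. Fine sheaves are acyclic for the global sections functor, so the hypercohomology of the complex agrees with the cohomology of its complex of global sections:
$$\bH^k(X,\sD^\bullet_{X,\bV;ds^2,h}) \simeq H^k\bigl(\Gamma(X,\sD^\bullet_{X,\bV;ds^2,h})\bigr).$$

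Next, I would verify the identification $\Gamma(X, \sD^k_{X,\bV;ds^2,h}) = D^k_{\rm max}(X,\bV;ds^2,h)$ for each $k$, which is exactly the displayed equality stated just before the corollary. A global section of $\sD^k_{X,\bV;ds^2,h}$ is, by definition, a $\bV$-valued measurable $k$-form $\alpha$ on $X^o$ such that $\alpha$ and $\nabla\alpha$ are locally square-integrable at every point of $X$; conversely, any element of $D^k_{\rm max}$ is certainly locally $L^2$. The non-trivial inclusion uses compactness: cover $X$ by finitely many open sets $V_1,\dots,V_N$ on each of which both $\int|\alpha|^2$ and $\int|\nabla\alpha|^2$ are finite, and sum the local bounds to obtain a global $L^2$-bound. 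The cohomology of $\Gamma(X,\sD^\bullet_{X,\bV;ds^2,h})$ is then, by definition, the maximal $L^2$-cohomology $H^k_{(2),\rm max}(X^o,\bV;ds^2,h)$.

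Composing the two identifications yields the desired isomorphism, which is canonical because no auxiliary choices enter either side. There is no real obstacle: the substantive input was already carried out in Lemma \ref{lem_fine_sheaf}, whose proof exploited the local hermitian lower bound on $ds^2$ to produce partitions of unity with $ds^2$-bounded differentials. Once that fine-sheaf property is in hand, the present statement is a formal consequence of standard sheaf-theoretic machinery combined with the elementary compactness argument converting locally $L^2$ forms into globally $L^2$ forms.
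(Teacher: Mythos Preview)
Your proposal is correct and follows exactly the approach the paper intends: the paper states the corollary immediately after the displayed identity $\Gamma(X,\sD^\bullet_{X,\bV;ds^2,h})=D_{\rm max}^\bullet(X,\bV;ds^2,h)$ for compact $X$, leaving the reader to combine this with the fineness from Lemma~\ref{lem_fine_sheaf} and the standard fact that hypercohomology of a complex of fine sheaves is computed by global sections. Your write-up simply spells out these two steps, including the finite-cover argument for the nontrivial inclusion, and there is nothing more to it.
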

	\section{Constructibility of $L^2$-de Rham Complex}
	\subsection{$L^2$-Poincar\'e Lemma and $L^2$-Thom isomorphism}
	This subsection is a preparation for the proof of the main result (Theorem \ref{prop_Dstr}) of the next subsection.
	In particular we prove an $L^2$-Poincar\'e lemma (Lemma \ref{lem_Kunneth_locL2}) and an $L^2$-Thom isomorphism for trivial line bundles (Lemma \ref{lem_Thom_locL2}).

	Let $(X,\bV;ds^2,h)$ be a Riemannian stratified space with a hermitian local system on the union of open strata $X^o:=X\backslash X_{\dim_\bR X-2}$ . Denote by $q:(0,1)\times X\to X$ the projection. Let $ds^2_{(0,1)\times X}:=d\rho^2+ds^2$ and let $q^\ast h$ be the pullback metric on $q^\ast \bV$. The pullback operator $q^\ast$ sends a locally $L^2$-form to a locally $L^2$-form. Hence it gives a well defined map
	\begin{align}\label{align_L2_Poincare_q*}
	q^\ast: \sD^\bullet_{X,\bV;ds^2,h}(X)\to \sD^\bullet_{(0,1)\times X,q^\ast\bV;ds^2_{(0,1)\times X},q^\ast h}((0,1)\times X).
	\end{align}
	\begin{lem}\label{lem_Kunneth_locL2}
		Notations as above, the canonical morphism (\ref{align_L2_Poincare_q*})
		is a homotopy equivalence.
	\end{lem}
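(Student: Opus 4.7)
The approach is to build an explicit $L^2$-bounded homotopy inverse to $q^\ast$. Every $\bV$-valued form $\alpha$ on $(0,1)\times X$ decomposes uniquely as $\alpha=\alpha_1+d\rho\wedge\alpha_0$, where $\alpha_0,\alpha_1$ contain no $d\rho$; since $q^\ast\bV$ and $q^\ast h$ are $\rho$-independent the connection splits as $\nabla\alpha=\nabla_X\alpha_1+d\rho\wedge(\partial_\rho\alpha_1-\nabla_X\alpha_0)$, and the local system plays no role beyond carrying scalars. Fix a smooth nonnegative cutoff $\chi\in C_c^\infty((0,1))$ with $\int_0^1\chi\,d\rho=1$ and define
\begin{align*}
s\colon \sD^\bullet_{(0,1)\times X,q^\ast\bV;ds^2_{(0,1)\times X},q^\ast h}((0,1)\times X)\to \sD^\bullet_{X,\bV;ds^2,h}(X),\quad s(\alpha)(x):=\int_0^1\chi(\rho)\alpha_1(\rho,x)\,d\rho.
\end{align*}
The compact support of $\chi$, together with Fubini and Cauchy--Schwarz, shows that $s$ preserves local square-integrability in both $\alpha$ and $\nabla\alpha$; differentiating under the integral yields $\nabla_X\circ s=s\circ\nabla$, and $s\circ q^\ast=\mathrm{id}$ is immediate.

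For the chain homotopy from $q^\ast\circ s$ to the identity I use a smeared fiberwise Poincar\'e operator. For each $\rho_0\in(0,1)$ the naive choice $H_{\rho_0}(\alpha)(\rho,x):=\int_{\rho_0}^\rho\alpha_0(t,x)\,dt$ satisfies, by integration by parts in $t$,
\begin{align*}
\nabla H_{\rho_0}+H_{\rho_0}\nabla=\mathrm{id}-q^\ast\circ i_{\rho_0}^\ast,
\end{align*}
where $i_{\rho_0}\colon X\hookrightarrow(0,1)\times X$ is the slice inclusion. The slice restriction $i_{\rho_0}^\ast$ is not $L^2$-bounded, but its average against $\chi(\rho_0)\,d\rho_0$ is precisely $s$. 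I therefore set
\begin{align*}
H\alpha(\rho,x):=\int_0^1\chi(\rho_0)H_{\rho_0}\alpha(\rho,x)\,d\rho_0,
\end{align*}
which after switching the order of integration is an integral transform $\alpha_0\mapsto\int_0^1 K(\rho,t)\alpha_0(t,x)\,dt$ with bounded kernel $K$. The decisive feature is that, thanks to the compact support of $\chi$, whenever $\rho$ stays in a fixed compact subinterval of $(0,1)$ the slice $K(\rho,\cdot)$ is supported in a fixed compact subinterval $I\subset(0,1)$. Cauchy--Schwarz then gives $\int_{[c,d]\times V}|H\alpha|^2\leq C\int_{I\times V}|\alpha_0|^2<\infty$, and the analogous bound holds for $\nabla H\alpha$, so $H$ preserves local square-integrability.

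Integrating the pointwise identity for $H_{\rho_0}$ against $\chi(\rho_0)\,d\rho_0$ produces $\nabla H+H\nabla=\mathrm{id}-q^\ast\circ s$; this is first verified on smooth compactly supported forms and then extended to all of $\sD^\bullet$ by mollifying $\alpha$ in both the $\rho$- and $x$-variables and using the uniform graph-norm boundedness of $H$ and $s$. Combined with $s\circ q^\ast=\mathrm{id}$ on the nose, the pair $(s,H)$ exhibits $s$ as a two-sided chain homotopy inverse of $q^\ast$. The main obstacle here is analytic rather than topological: the classical pointwise Poincar\'e homotopy requires restricting forms to slices, which is ill-defined on $L^2$, and the entire point of introducing the cutoff $\chi$ is to smear that restriction into a bounded averaging operator without destroying the homotopy identity. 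Because $(0,1)$ has finite length and $\chi$ has compact support, all estimates are local in $\rho$ and require no hypothesis whatsoever on $(X,ds^2,h)$.
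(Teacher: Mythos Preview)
Your proof is correct and follows essentially the same route as the paper: the paper's operators $Q$ and $H$ are exactly your $s$ and $H$ specialized to the cutoff $\chi=2\cdot\mathbf{1}_{[1/4,3/4]}$, and the verification of the homotopy identity and the local $L^2$-bounds proceed identically. The only cosmetic differences are your choice of a smooth $\chi$ (versus the paper's indicator function) and the swapped labeling of $\alpha_0,\alpha_1$.
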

	\begin{proof}
		For simplicity we denote $$A^\bullet_{X,(2)}:=A^\bullet(X^o,\bV)\cap \sD^\bullet_{X,\bV;ds^2,h}$$
		and $$A^\bullet_{(0,1)\times X,(2)}:=A^\bullet((0,1)\times X^o,\bV)\cap \sD^\bullet_{(0,1)\times X,q^\ast\bV;ds^2_{(0,1)\times X},q^\ast h}((0,1)\times X).$$
		Using the decomposition
		\begin{align*}
		\alpha=\alpha_0+d\rho\wedge\alpha_1,\quad \iota_{\frac{\partial}{\partial \rho}}\alpha_0=0,~~\iota_{\frac{\partial}{\partial \rho}}\alpha_1=0,
		\end{align*}
		we define
		\begin{align*}
		Q(\alpha)=2\int_{\frac{1}{4}}^{{\frac{3}{4}}}\alpha_0(t,x)dt
		\end{align*}
		and
		\begin{align*}
		H(\alpha)=2\int_{\frac{1}{4}}^{{\frac{3}{4}}}\int_{t}^\rho\alpha_1(\rho',x)d\rho'dt.
		\end{align*}
		It follows from the definition that $Qq^\ast={\rm Id}$. We are going to show that $q^\ast Q$ is homotopic to the identity. For this we need to show the following two claims.
		\begin{description}
			\item[A]
			$\nabla H+H\nabla={\rm Id}-q^\ast Q$. Recall that
			$$\nabla=d\rho\wedge\frac{\partial}{\partial\rho}+\nabla_{\bV}$$
			where $\nabla_{\bV}$ is the flat connection associated to $\bV$ and $\nabla=q^\ast\nabla_{\bV}$.
			Claim {\bf A} follows from the equalities (\ref{align_dH2}) and (\ref{align_Hd2}) below:
			\begin{align}\label{align_dH2}
			\nabla H(\alpha)&=2\int_{\frac{1}{4}}^{{\frac{3}{4}}}\left(d\rho\wedge\frac{\partial}{\partial \rho}\int_{t}^\rho\alpha_1(\rho',x)d\rho'+\int_{t}^\rho \nabla_{\bV} \alpha_1(\rho',x)d\rho'\right)dt\\\nonumber
			&=d\rho\wedge\alpha_1(\rho,x)+2\int_{\frac{1}{4}}^{{\frac{3}{4}}}\int_{t}^\rho \nabla_{\bV}\alpha_1(\rho',x)d\rho'dt.\nonumber
			\end{align}
			\begin{align}\label{align_Hd2}
			H\nabla(\alpha)&=2\int_{\frac{1}{4}}^{{\frac{3}{4}}}\int_{t}^\rho\left(\frac{\partial}{\partial \rho'}\alpha_0(\rho',x)-\nabla_{\bV}\alpha_1(\rho',x)\right)d\rho'dt\\\nonumber
			&=\alpha_0(\rho,x)-2\int_{\frac{1}{4}}^{\frac{3}{4}}\alpha_0(t,x)dt-2\int_{\frac{1}{4}}^{{\frac{3}{4}}}\int_{t}^\rho \nabla_{\bV}\alpha_1(\rho',x)d\rho'dt.\nonumber
			\end{align}
			\item[B] $Q$, $H$ send locally $L^2$ forms to locally $L^2$ forms. This follows from the following estimates (\ref{align_estimate_Q_02}) and (\ref{align_estimate_Q_H_02}).
			\begin{align}\label{align_estimate_Q_02}
			\int_{ M}\left|2\int_{\frac{1}{4}}^{\frac{3}{4}}\alpha_0(t,x)dt\right|^2{\rm vol}_{M}
			\leq 2\int_{ M}\int_{\frac{1}{4}}^{\frac{3}{4}}|\alpha_0(t,x)|^2dt{\rm vol}_{M}
			\leq 2\|\alpha\|^2_{[\frac{1}{4},\frac{3}{4}]\times M}.
			\end{align}
			\begin{align}\label{align_estimate_Q_H_02}
			&\int_{[a,b]\times  M}\left|2\int_{\frac{1}{4}}^{\frac{3}{4}}\int_{t}^\rho\alpha_1(\rho',x)d\rho'dt\right|^2{\rm vol}_{[a,b]\times M}\\\nonumber
			\leq&2\int_{[a,b]\times  M}\int_{\frac{1}{4}}^{\frac{3}{4}}\int_{a}^{b}\left|\alpha_1(\rho',x)\right|^2d\rho'dt{\rm vol}_{[a,b]\times  M}\\\nonumber
			=&\int_{ M}\int_{a}^{b}\left(\int_{a}^{b}\left|\alpha_1(\rho',x)\right|^2d\rho'\right)d\rho{\rm vol}_{M}\\\nonumber
			\leq&\int_{M}\left(\int_{a}^{b}|\alpha_1(\rho,x)|^2d\rho \right){\rm vol}_{M}\\\nonumber
			\leq&\int_{M}\left(\int_{a}^{b}|\alpha(\rho,x)|^2d\rho \right){\rm vol}_{M}\\\nonumber
			=&\|\alpha\|^2_{[a,b]\times  M}.
			\end{align}
		\end{description}
		Here $M\subset X$ is an arbitrary compact subset and $0<a<\frac{1}{4}$, $\frac{3}{4}<b<1$.
		As a consequence, $Q:A^\bullet_{(0,1)\times X,(2)}\to A^\bullet_{X,(2)}$ is well defined. Also $H:A^\bullet_{(0,1)\times X,(2)}\to A^{\bullet-1}_{(0,1)\times X,(2)}$ is well defined and gives a homotopy of $q^\ast Q$ to the identity. On the other hand, $Qq^\ast={\rm Id}$. Hence $q^\ast$ is a homotopy equivalence. Since both $Q$ and $H$ are continuous operators (claim {\bf B}), $\nabla H+H\nabla={\rm Id}-q^\ast Q$ extends to a homotopy from
		$$q^\ast Q:\sD^\bullet_{(0,1)\times X,q^\ast\bV;ds^2_{(0,1)\times X},q^\ast h}((0,1)\times X)\to\sD^\bullet_{(0,1)\times X,q^\ast\bV;ds^2_{(0,1)\times X},q^\ast h}((0,1)\times X)$$
		to the identity. This proves the lemma.
	\end{proof}
	For compact supported smooth forms, there is a pushforward map via integral along the fibers of $q$. For each locally $L^2$ integrable $q^\ast\bV$-valued smooth form $\alpha$ on $(0,1)\times X^o$ such that ${\rm supp}\alpha$ is precompact, there is a decomposition
	\begin{align*}
	\alpha=\alpha_0+d\rho\wedge\alpha_1,\quad \iota_{\frac{\partial}{\partial \rho}}\alpha_0=0,~~\iota_{\frac{\partial}{\partial \rho}}\alpha_1=0.
	\end{align*}
	Define
	$$q_\ast(\alpha)=\int_0^1 dt\wedge\alpha_1.$$
	Because
	\begin{align*}
	\int_{ M}\left|\int_0^1dt\wedge\alpha_1\right|^2{\rm vol}_{M}
	\leq\int_{ M}\int_{0}^{1}|\alpha_1|^2dt{\rm vol}_{M}
	\lesssim\|\alpha\|^2_{(0,1)\times M}
	\end{align*}
	where $M\subset X$ is any precompact subset, $q_\ast$ is a continuous operator. On the other hand
	\begin{align*}
	q_\ast \nabla(\alpha)&= q_\ast(\nabla_{\bV}\alpha_0+dt\wedge(\frac{\partial\alpha_0}{\partial t}-\nabla_{\bV}\alpha_1))\\\nonumber
	&= \int^1_0(\frac{\partial\alpha_0}{\partial t}-\nabla_{\bV}\alpha_1)dt\\\nonumber
	&= -\int^1_0dt\wedge \nabla_\bV\alpha_1 \quad\left(\int^1_0dt\wedge\frac{\partial\alpha_0}{\partial t}=\alpha_0(1,x)-\alpha_0(0,x)=0\right)\\\nonumber
	&= \nabla_\bV\int^1_0 dt\wedge\alpha_1=\nabla q_\ast(\alpha).
	\end{align*}
	Therefore $q_\ast$ extends uniquely to a continuous morphism
	\begin{align}\label{align_L2_Poincare_q_*}
	q_\ast: \Gamma_{\rm cpt}\left((0,1)\times X, \sD^\bullet_{(0,1)\times X,q^\ast\bV;ds^2_{(0,1)\times X},q^\ast h}\right)\to \Gamma_{\rm cpt}\left( X,\sD^{\bullet-1}_{X,\bV;ds^2,h}\right).
	\end{align}
	\begin{lem}\label{lem_Thom_locL2}
		Notations as above, the pushforward map (\ref{align_L2_Poincare_q_*})
		is a homotopy equivalence.
	\end{lem}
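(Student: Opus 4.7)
The strategy mirrors the proof of Lemma \ref{lem_Kunneth_locL2}, with the roles reversed: I will construct an explicit section $s$ of $q_\ast$ together with a compact-support preserving chain homotopy witnessing $sq_\ast \simeq \mathrm{Id}$. Fix once and for all a nonnegative bump $e(\rho) \in C^\infty_{\rm cpt}((0,1))$ with $\int_0^1 e(\rho)\, d\rho = 1$, and define
$$s: \Gamma_{\rm cpt}\bigl(X, \sD^{\bullet-1}_{X,\bV;ds^2,h}\bigr) \to \Gamma_{\rm cpt}\bigl((0,1)\times X, \sD^\bullet_{(0,1)\times X, q^\ast\bV; ds^2_{(0,1)\times X}, q^\ast h}\bigr), \quad s(\beta) := e(\rho)\, d\rho \wedge q^\ast \beta.$$
Since $e(\rho)$ is smooth and compactly supported in $(0,1)$, the operator $s$ preserves compact supports, is $L^2$-bounded on compactly supported smooth forms, commutes with $\nabla$ up to sign, and Fubini's theorem yields $q_\ast s = \pm \mathrm{Id}$ on the nose.

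The remaining task is to produce a chain homotopy $H$ of degree $-1$ satisfying $\mathrm{Id} - s q_\ast = \nabla H + H \nabla$ which also preserves compact supports. Decomposing $\alpha = \alpha_0 + d\rho \wedge \alpha_1$ as in Lemma \ref{lem_Kunneth_locL2}, the naive Poincar\'e-style candidate $\alpha \mapsto \int_0^\rho \alpha_1(\rho',x)\,d\rho'$ is $L^2$-bounded and produces the correct algebraic identity, but it fails to preserve compact support near $\rho = 1$. The fix is to subtract a compensating term built from the bump: setting $E(\rho) := \int_0^\rho e(\rho')\, d\rho'$, so that $E(\rho) = 0$ near $\rho = 0$ and $E(\rho) = 1$ near $\rho = 1$, define
$$H(\alpha)(\rho, x) := \int_0^\rho \alpha_1(\rho', x)\, d\rho' - E(\rho) \int_0^1 \alpha_1(\rho', x)\, d\rho'.$$
If $\alpha$ is compactly supported in $(0,1) \times X$, then $H(\alpha)$ vanishes near $\rho = 0$ (both terms vanish) and near $\rho = 1$ (where the two terms cancel since both equal $\int_0^1 \alpha_1 d\rho'$), so $H$ preserves compact supports. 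A computation paralleling (\ref{align_dH2})--(\ref{align_Hd2}), using $\alpha_0(0, x) = \alpha_0(1, x) = 0$ to kill the boundary terms from integration by parts in $\rho$, yields
$$\nabla H(\alpha) + H \nabla(\alpha) = \alpha - e(\rho)\, d\rho \wedge \int_0^1 \alpha_1(\rho', x)\, d\rho' = \alpha - s q_\ast(\alpha).$$

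Both defining terms of $H$ satisfy Cauchy--Schwarz-type $L^2$ estimates of exactly the form displayed in (\ref{align_estimate_Q_H_02}): the primitive $\int_0^\rho \alpha_1\,d\rho'$ is controlled as in the proof of Lemma \ref{lem_Kunneth_locL2}, and the correction $E(\rho)\int_0^1 \alpha_1\, d\rho'$ is bounded by $\|\alpha_1\|_{L^2}$ times the uniformly bounded multiplier $E(\rho)$. Hence $H$ extends continuously from compactly supported smooth forms to a chain homotopy on the full compactly supported $L^2$-de Rham complex, establishing that $q_\ast$ is a homotopy equivalence. The only real obstacle in the proof is designing $H$ so that it simultaneously realizes the chain-homotopy identity, preserves the compact-support condition, and is $L^2$-bounded; the compensation by $E(\rho)\int_0^1 \alpha_1\, d\rho'$ is the essential modification reconciling these three requirements.
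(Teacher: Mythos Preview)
Your proof is correct and is essentially identical to the paper's: the paper also takes a bump form $e=f(\rho)d\rho$ with total integral one, defines the section $e_\ast(\beta)=e\wedge\beta$ (your $s$), and uses the same homotopy $H(\alpha)=\int_0^\rho\alpha_1\,dt-\bigl(\int_0^\rho e\bigr)\int_0^1\alpha_1\,dt$, which is exactly your formula with $E(\rho)=\int_0^\rho e$. The only cosmetic difference is that the paper writes out the $L^2$ estimate for $H$ explicitly rather than invoking the analogous bound from the previous lemma.
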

	\begin{proof}
		For simplicity we denote
		$$A^\bullet_{X,{\rm cpt},(2)}:=\Gamma_{\rm cpt}\left( X,\sD^{\bullet}_{X,\bV;ds^2,h}\right)\cap A^\bullet(X^o,\bV)$$ and $A^\bullet_{(0,1)\times X,{\rm cpt},(2)}$ similarly.

		Fix a compact supported smooth 1-form $e(\rho)=f(\rho)d\rho$ on $(0,1)$ where $f\geq0$  with its total integral 1.
		Define
		\begin{align*}
		e_\ast: A^\bullet_{X,{\rm cpt},(2)}\to A^\bullet_{(0,1)\times X,{\rm cpt},(2)},\quad e_\ast(\alpha)=e\wedge\alpha,
		\end{align*}
		and
		\begin{align*}
		H(\alpha)=\int^\rho_0\alpha_1dt-\int^\rho_0e\int^1_0\alpha_1dt.
		\end{align*}
		It follows directly from the definition that $H\alpha$ has compact support and $q_\ast e_\ast={\rm Id}$. We are going to show that $e_\ast q_\ast$ is homotopic to the identity. For this we need to show the following two claims.
		\begin{description}
			\item[A]
			$\nabla H+H\nabla={\rm Id}-e_\ast q_\ast$. This follows from the equalities (\ref{align_dH3}) and (\ref{align_Hd3}) below:
			\begin{align}\label{align_dH3}
			\nabla H(\alpha)&=\nabla\left(\int^\rho_0\alpha_1dt-\int^\rho_0e\int^1_0\alpha_1dt\right)\\\nonumber
			&=d\rho\wedge\alpha_1+\int_0^\rho\nabla_\bV\alpha_1dt-e\wedge\int_0^1\alpha_1dt-\int_0^\rho e\int_0^1\nabla_\bV\alpha_1dt.
			\end{align}
			\begin{align}\label{align_Hd3}
			H\nabla(\alpha)&=\int_0^\rho\left(\frac{\partial}{\partial t}\alpha_0-\nabla_\bV\alpha_1\right)dt-\int_0^\rho e\int_0^1\left(\frac{\partial}{\partial t}\alpha_0-\nabla_\bV\alpha_1\right)dt\\\nonumber
			&=\alpha_0-\int_0^\rho\nabla_\bV\alpha_1dt+\int_0^\rho e\int_0^1\nabla_\bV\alpha_1 dt.
			\end{align}
			\item[B] $H$ sends compact supported $L^2$ forms to compact supported $L^2$ forms.
			This follows from the estimate
			\begin{align}\label{align_estimate_Q_H_03}
			&\int_{(0,1)\times  X}\left|\int^t_0d\rho\wedge\alpha_1-\int^t_0e\int^1_0d\rho\wedge\alpha_1\right|^2{\rm vol}_{(0,1)\times X}\\\nonumber
			\leq&2\int_{(0,1)\times X}\left|\int^t_0d\rho\wedge\alpha_1\right|^2{\rm vol}_{(0,1)\times X}+2\int_{(0,1)\times X}\left|\int^t_0e\int^1_0d\rho\wedge\alpha_1\right|^2{\rm vol}_{(0,1)\times X}\\\nonumber
			\leq&2\int_{(0,1)\times X}\int^1_0\left|\alpha_1\right|^2d\rho{\rm vol}_{(0,1)\times X}+2\int_{(0,1)\times X}\left|\int^1_0e\right|^2\int^1_0\left|\alpha_1\right|^2d\rho{\rm vol}_{(0,1)\times X}\\\nonumber
			=&4\int_{X}\int_{0}^1|dt\wedge\alpha_1|^2dt{\rm vol}_{X}\\\nonumber
			\leq&4\|\alpha\|^2_{(0,1)\times X}<\infty.
			\end{align}
		\end{description}
		As a consequence, $$H:A^\bullet_{(0,1)\times X,{\rm cpt},(2)}\to A^{\bullet-1}_{(0,1)\times X,{\rm cpt},(2)}$$ is well defined and gives a homotopy from $e_\ast q_\ast$ to the identity. Since both $e_\ast$ and $H$ are continuous operators (claim {\bf B}), $\nabla H+H\nabla={\rm Id}-e_\ast q_\ast$ extends to a homotopy from
		$$e_\ast q_\ast:\Gamma_{\rm cpt}\left((0,1)\times X, \sD^\bullet_{(0,1)\times X,q^\ast\bV;ds^2_{(0,1)\times X},q^\ast h}\right)\to \Gamma_{\rm cpt}\left((0,1)\times X, \sD^\bullet_{(0,1)\times X,q^\ast\bV;ds^2_{(0,1)\times X},q^\ast h}\right)$$
		to the identity. This proves the lemma.
	\end{proof}
	\subsection{A formal criteria}
	In this subsection we give a sufficient condition for the metrics $ds^2$ and $h$ to let the cohomology sheaves of $\sD^\bullet_{X,\bV;ds^2,h}$ be weakly constructible.
	\begin{defn}\label{defn_diffeomorphic_tubular_neigh}
		Let $X$ be a complex space of pure dimension $n$ and $X^o\subset X_{\rm reg}$ a dense Zariski open subset.
		A stratification $\{X_i\}$ of $(X,X^o)$ is called a {\bf bimeromorphic fibering stratification} if for every stratum $S$ and every point $x\in S$, there is an analytic subspace $C_x\subset X$ passing through $x$, a  neighborhood $x\in U_x$ in $X$, a neighborhood $x\in U_{S,x}$ in $S$ and a proper bimeromorphic morphism $\pi_x:\widetilde{U_x}\to U_x$ such that
		\begin{enumerate}
			\item $\pi_x$ is biholomorphic over $U_x\cap X^o$.
			\item There is a neighborhood $\pi^{-1}_x(U_{S,x})\subset V\subset \widetilde{U_x}$ and a $C^\infty$ diffeomorphism
			$$\alpha_x: V\simeq (V\cap\pi^{-1}_xC_x)\times U_{S,x}$$
			such that
			\begin{itemize}
				\item $\alpha_x(y)=(y,x)$ for any $y\in V\cap\pi^{-1}_xC_x$,
				\item $p_2\circ\alpha_x|_{\pi^{-1}_xU_{S,x}}=\pi_x|_{\pi^{-1}_xU_{S,x}}$ where $p_2:(V\cap\pi^{-1}_xC_x)\times U_{S,x}\to U_{S,x}$ is the projection.
				\item $\alpha_x(V\cap\pi^{-1}_xX^o)=(V\cap\pi^{-1}_x(C_x\cap X^o))\times U_{S,x}$.
			\end{itemize}
		\end{enumerate}
		The six tuple $(U_x,U_{S,x},C_x,V,\pi_x,\alpha_x)$ is called a bimeromorphic fibering neighborhood of $x\in S$.
	\end{defn}
    In this paper we mainly use bimeromorphic fibering stratifications and bimeromorphic fibering neighborhoods produced by a Whitney stratified desingularization (Definition \ref{defn_bimero_fib_whitneystr}).
	\begin{defn}\label{defn_metric_str}
		Let $X$ be a complex space and $X^o\subset X_{\rm reg}$ a dense Zariski open subset with a hermitian metric $ds^2$ on $X^o$. Let $(\bV,h)$ be a hermitian local system on $X^o$. Let $\{X_i\}$ be a bimeromorphic fibering stratification of $(X,X^o)$. We say that $(ds^2,h)$ is bimeromorphically stratified along $\{X_i\}$ if for every stratum $S$ and every point $x\in S$, there is a bimeromorphic fibering neighborhood $(U_x,U_{S,x},C_x,V,\pi_x,\alpha_x)$ of $x\in S$ such that the following statements hold.
		\begin{enumerate}
			\item $\pi^\ast_xds^2|_{V}\sim \pi^\ast_xds^2|_{\pi^{-1}_xC_x\cap V}+ds^2_{S}$ via $\alpha_x$ for some hermitian metric $ds^2_{S}$ on $S$.
			\item There is an isomorphism of the local systems\footnote{This trivialization exists when $U_{S_x}$ is contractible.} $$q^\ast(\bV|_{\pi^{-1}_x(C_x\cap X^o)\cap V})\simeq \bV|_{V\cap\pi^{-1}_x X^o}$$
			where $q:=p_1\circ\alpha_x:V\simeq (\pi^{-1}_xC_x\cap V)\times U_{S,x}\to (\pi^{-1}_xC_x\cap V)$ is the projection. Moreover $$h|_{V\cap\pi^{-1}_x X^o}\sim q^\ast(h|_{\pi^{-1}_x(C_x\cap X^o)\cap V}).$$
		\end{enumerate}
		In this case we say that $(ds^2,h)$ is stratified along $(U_x,U_{S,x},C_x,V,\pi_x,\alpha_x)$.
	\end{defn}
	\begin{prop}\label{prop_Dstr}
		Let $X$ be a complex space of pure dimension $n$ and $X^o\subset X_{\rm reg}$ a dense Zariski open subset  with a hermitian metric $ds^2$ on $X^o$. Let $(\bV,h)$ be a hermitian local system on $X^o$. Assume that $(ds^2, h)$ is bimeromorphically stratified along a bimeromorphic fibering stratification $\{X_i\}$ of $(X,X^o)$, then the cohomology sheaves of $\sD^\bullet_{X,\bV;ds^2,h}$ are weakly constructible with respect to $\{X_i\}$.

		Assume moreover that $\sD^\bullet_{X,\bV;ds^2,h}$ is a complex of fine sheaves. Let $(U_x,U_{S,x},C_x,V,\pi_x,\alpha_x)$ be a bimeromorphic fibering neighborhood of $x\in S$ for some stratum $S$ along which $(ds^2,h)$ is bimeromorphically stratified. For every $k$ there are canonical isomorphisms
		\begin{align}\label{align_prop_Dstr_stalk}
		H^k\left(i^\ast_x\sD^\bullet_{X,\bV;ds^2,h}\right)\simeq \varinjlim_{x\in U}H^k\left(\Gamma(U,\sD^\bullet_{C_x,\bV|_{C_x};ds^2|_{C_x},h|_{C_x}})\right)
		\end{align}
		and
		\begin{align}\label{align_prop_Dstr_costalk}
		H^k\left(i^!_x\sD^\bullet_{X,\bV;ds^2,h}\right)\simeq\varinjlim_{x\in U}H^{k-2\dim S}\left(\Gamma_{\rm cpt}(U,\sD^\bullet_{C_x,\bV|_{C_x};ds^2|_{C_x},h|_{C_x}})\right)^\ast.
		\end{align}
		Here $i_x:\{x\}\to X$ is the inclusion and $U$ ranges over the open neighborhoods of $x$ in $C_x$.
	\end{prop}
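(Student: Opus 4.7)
The plan is to combine three ingredients: the bimeromorphic invariance of local $L^2$-integrability under $\pi_x$ (via properness), the quasi-isometric product decomposition from Definition~\ref{defn_metric_str}, and the $L^2$-Poincar\'e lemma and $L^2$-Thom isomorphism (Lemmas~\ref{lem_Kunneth_locL2} and~\ref{lem_Thom_locL2}). Concretely, I lift the problem to $\widetilde{U_x}$ via $\pi_x$, apply the product decomposition on $V\subset\widetilde{U_x}$, and strip off the factor $U_{S,x}$ one real dimension at a time.

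The first step is to establish the pushforward identity
\begin{equation*}
\pi_{x*}\sD^\bullet_{\widetilde{U_x},\pi_x^*\bV;\pi_x^*ds^2,\pi_x^*h}\simeq\sD^\bullet_{U_x,\bV;ds^2,h}.
\end{equation*}
For any open $W\subset U_x$ and measurable $\bV$-valued form $\alpha$ on $W\cap X^o$, local $L^2$-integrability of $\alpha$ at every $y\in W$ is equivalent to that of $\pi_x^*\alpha$ at every $\tilde y\in\pi_x^{-1}(W)$. The direction ``$\Rightarrow$'' is a change of variables ($\pi_x$ being biholomorphic over $X^o$, whose complement has measure zero). The direction ``$\Leftarrow$'' uses properness: cover the compact fiber $\pi_x^{-1}(y)$ by finitely many small neighborhoods on which $\pi_x^*\alpha$ is $L^2$, and use closedness of $\pi_x$ to find a saturated neighborhood of $y$ inside their union. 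Under the fineness hypothesis of part~(2), properness implies $\pi_{x*}=\pi_{x!}$ on $\sD^\bullet_{\widetilde{U_x}}$ and $R^i\pi_{x*}=0$ for $i>0$, so $R\pi_{x*}=\pi_{x*}$ and stalks/costalks of $\sD^\bullet_X$ at $x$ reduce to computations on $\widetilde{U_x}$.

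On $V\supset\pi_x^{-1}(U_{S,x})$, Definition~\ref{defn_metric_str} furnishes, via $\alpha_x:V\xrightarrow{\sim}F\times U_{S,x}$ with $F=V\cap\pi_x^{-1}C_x$, quasi-isometries of $\pi_x^*ds^2$ with $ds^2_F+ds^2_S$ and of $\pi_x^*h$ with $q^*h_F$. Since quasi-isometric metrics define equal $L^2$-de Rham complexes,
\begin{equation*}
\sD^\bullet_{V,\pi_x^*\bV;\pi_x^*ds^2,\pi_x^*h}\simeq(\alpha_x)_*\sD^\bullet_{F\times U_{S,x},q^*\bV_F;ds^2_F+ds^2_S,q^*h_F}.
\end{equation*}
I take a cofinal basis of neighborhoods of $x$ in $U_x$ of the form $W=\pi_x(\alpha_x^{-1}(F'\times W_S))$, where $F'\subset F$ shrinks to $F\cap\pi_x^{-1}(x)$ and $W_S\subset U_{S,x}$ is a product of $2\dim S$ open intervals containing $x$ (possible since $U_{S,x}$ is a complex manifold of complex dimension $\dim S$). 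Applying the identical construction at any nearby point $y\in U_{S,x}$ in place of $x$ yields the same answer for the stalk, which proves part~(1): the cohomology sheaves of $\sD^\bullet_{X,\bV;ds^2,h}$ are locally constant on each stratum.

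For the stalk formula~(\ref{align_prop_Dstr_stalk}), iterating Lemma~\ref{lem_Kunneth_locL2} along each of the $2\dim S$ interval factors of $W_S$ gives $\bH^k(F'\times W_S,\sD^\bullet)\simeq\bH^k(F',\sD^\bullet_F)$. A second application of the bimeromorphic invariance to the proper bimeromorphism $\pi_x|_F:F\to\pi_x(F)\subset C_x$ (biholomorphic over $C_x\cap X^o$) converts this to $\bH^k(\pi_x(F'),\sD^\bullet_{C_x})$; passing to $\varinjlim$ as $W$ shrinks yields~(\ref{align_prop_Dstr_stalk}). For the costalk formula~(\ref{align_prop_Dstr_costalk}), the same reduction applied to compactly supported sections, with Lemma~\ref{lem_Thom_locL2} in place of Lemma~\ref{lem_Kunneth_locL2}, gives homotopy equivalences
\begin{equation*}
\Gamma_{\rm cpt}(F'\times W_S,\sD^\bullet)\simeq\Gamma_{\rm cpt}(F',\sD^\bullet_F)[-2\dim S]
\end{equation*}
(each interval factor contributes one degree shift). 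The bimeromorphic invariance then identifies $\bH^j_{\rm cpt}(F',\sD^\bullet_F)$ with $\bH^j_{\rm cpt}(\pi_x(F'),\sD^\bullet_{C_x})$, and the final identification of $h^k(i_x^!\sD^\bullet_X)$ with the dual limit $\varinjlim_U\bH^{k-2\dim S}_{\rm cpt}(U,\sD^\bullet_{C_x})^*$ follows from the abstract $L^2$-Poincar\'e duality (Proposition~\ref{prop_L2_Poincare_duality}) applied on small contractible $U\subset C_x$, yielding~(\ref{align_prop_Dstr_costalk}).

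The main obstacle is the first step — reconciling ``locally $L^2$ on $X$'' with ``locally $L^2$ on $\widetilde X$'' — since the two notions use neighborhoods in different topologies. Properness of $\pi_x$ and the measure-zero complement of $X^o$ are both indispensable: without properness, one cannot guarantee that $L^2$-control at every $\tilde y\in\pi_x^{-1}(y)$ implies $L^2$-control near $y$ in $X$.
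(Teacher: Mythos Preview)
Your overall route---lift to $\widetilde{U_x}$ via properness of $\pi_x$, pass to the product $F\times U_{S,x}$ through the quasi-isometry of Definition~\ref{defn_metric_str}, then peel off the $U_{S,x}$ factor by Lemmas~\ref{lem_Kunneth_locL2} and~\ref{lem_Thom_locL2}---is exactly the paper's. Two points need attention.

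For weak constructibility, ``the same answer for the stalk at nearby $y$'' only gives abstractly isomorphic stalks; you still need the restriction maps of the sheaf $h^k(\sD^\bullet)$ along $S$ to be isomorphisms. The paper handles this by showing that for every small cube $W_\epsilon\subset S$ the natural map
\[
\varinjlim_{\overline{W_\epsilon}\subset W'}H^k\big(\Gamma(W',\sD^\bullet_X)\big)\longrightarrow h^k(\sD^\bullet_X)_x
\]
is an isomorphism, both sides reducing via properness and Lemma~\ref{lem_Kunneth_locL2} to the same colimit over neighborhoods of $x$ in $C_x$. This is a mild sharpening of what you wrote.

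The genuine gap is your last step for~(\ref{align_prop_Dstr_costalk}): you invoke Proposition~\ref{prop_L2_Poincare_duality} to manufacture the dual $(\cdot)^\ast$, but that proposition requires the relevant $L^2$-cohomologies to be finite-dimensional, a hypothesis absent here---only \emph{weak} constructibility is being asserted, so stalks and costalks may a priori be infinite-dimensional. The paper never uses $L^2$-Poincar\'e duality at this point; its costalk computation opens directly with $h^k(i_x^!\sD^\bullet)\simeq\varinjlim_{x\in W'}\bH^k_{\rm cpt}(W',\sD^\bullet)^\ast$ and carries the $^\ast$ through each step formally. In fact the standard identification for a complex of soft sheaves is $h^k(i_x^!F)\simeq\bH^k_{\rm cpt}(U_x,F)$ for small conical $U_x$ (equivalently $\varprojlim_U\bH^k_{\rm cpt}(U,F)$, cf.\ Remark~\ref{rmk_GM_criterion}), with no dual; your chain of homotopy equivalences already proves this undualized version, and the presence or absence of $^\ast$ is immaterial for the only application (the vanishing conditions in Proposition~\ref{prop_GM_criterion}). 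Either way, the route through Proposition~\ref{prop_L2_Poincare_duality} is not available under the stated hypotheses.
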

	\begin{proof}
		To show that $h^k(\sD^\bullet_{X,\bV;ds^2,h})|_{X_d\backslash X_{d-1}}$ is a weakly local system for every $k$ and every $d$, it is equivalent to show that: for every point $x\in X_d\backslash X_{d-1}$ and every precompact neighborhood $x\in W\subset X_d\backslash X_{d-1}$ such that $W$ is diffeomorphic to $(-1,1)^{2d}$, the natural maps
		\begin{align*}
		\Gamma(\overline{W_\epsilon}, h^k(\sD^\bullet_{X,\bV;ds^2,h}))\to H^k(\sD^\bullet_{X,\bV;ds^2,h})_x,\quad\forall\epsilon\in(0,1)
		\end{align*}
		are isomorphisms. Here we assume that $x=0\in (-1,1)^{2d}$ and denote $W_\epsilon=(-\epsilon,\epsilon)^{2d}$. Equivalently, we shall show that
		\begin{align}\label{align_prop_Dstr_1}
		\varinjlim_{\overline{W_\epsilon}\subset W'}H^k(\Gamma(W',\sD^\bullet_{X,\bV;ds^2,h}))\to H^k(\sD^\bullet_{X,\bV;ds^2,h})_x, \quad\forall\epsilon\in(0,1)
		\end{align}
		are isomorphisms where $W'$ ranges over open neighborhoods of $\overline{W_\epsilon}$ in $X$.

		Let $(U_x,U_{S,x},C_x,V,\pi_x,\alpha_x:\widetilde{U_x}\to U_x)$ be a bimeromorphic fibering neighborhood along which $(ds^2,h)$ is stratified, i.e. (1)-(2) of Definition \ref{defn_metric_str} hold. Without loss of generality we assume that $\overline{W}\subset U_{S,x}$. Since $\pi_x$ is a proper map and $\pi_x={\rm Id}$ over $U_x\cap X^o$, we obtain that
		\begin{align*}
		H^k(\sD^\bullet_{X,\bV;ds^2,h})_x&\simeq \varinjlim_{x\in W'\subset U_x}H^k\left(\Gamma(W',\sD^\bullet_{X,\bV;ds^2,h})\right)\\\nonumber
		&\simeq\varinjlim_{\pi^{-1}\{x\}\subset \widetilde{W}'\subset \widetilde{U_x}}H^k\left(\Gamma(\widetilde{W}',\sD^\bullet_{\widetilde{U_x},\bV;ds^2,h})\right)\\\nonumber
		&\simeq\varinjlim_{\substack{\epsilon\to 0\\x\in W_x\subset C_x}}H^k\left(\Gamma(\alpha^{-1}_x( \pi^{-1}_xW_x\times W_\epsilon),\sD^\bullet_{\widetilde{U_x},\bV;ds^2,h})\right)\\\nonumber
		\end{align*}
		and
		\begin{align*}
		\varinjlim_{\overline{W_\epsilon}\subset W'}H^k(\Gamma(W',\sD^\bullet_{X,\bV;ds^2,h}))&\simeq \varinjlim_{\pi^{-1}(\overline{W_\epsilon})\subset \widetilde{W}'\subset \widetilde{U_x}}H^k\left(\Gamma(\widetilde{W}',\sD^\bullet_{\widetilde{U_x},\bV;ds^2,h})\right)\\\nonumber
		&\simeq\varinjlim_{\substack{\epsilon<\epsilon'<1\\x\in W_x\subset C_x}}H^k\left(\Gamma(\alpha^{-1}_x(\pi^{-1}_xW_x\times W_{\epsilon'} ),\sD^\bullet_{\widetilde{U_x},\bV;ds^2,h})\right).\\\nonumber
		\end{align*}
		As a consequence,
		(\ref{align_prop_Dstr_1}) is isomorphic to
		\begin{align*}
		\varinjlim_{\substack{\epsilon<\epsilon'<1\\x\in W_x\subset C_x}}H^k\left(\Gamma(\alpha^{-1}_x(\pi^{-1}_xW_x\times W_{\epsilon'} ),\sD^\bullet_{\widetilde{U_x},\bV;ds^2,h})\right)\to \varinjlim_{\substack{\epsilon'\to 0\\x\in W_x\subset C_x}}H^k\left(\Gamma(\alpha^{-1}_x( \pi^{-1}_xW_x\times W_{\epsilon'}),\sD^\bullet_{\widetilde{U_x},\bV;ds^2,h})\right)
		\end{align*}
		for every $\epsilon\in(0,1)$,
		where $W_x$ runs over open neighborhoods of $x\in C_x$.
		By Lemma \ref{lem_Kunneth_locL2}, both sides are isomorphic to
		\begin{align*}
		\varinjlim_{x\in W_x\subset C_x}H^k\left(\Gamma(W_x,\sD^\bullet_{C_x,\bV|_{C_x};ds^2|_{C_x},h|_{C_x}})\right).
		\end{align*}
		This proves that (\ref{align_prop_Dstr_1}) is an isomorphism.
		This argument also deduces (\ref{align_prop_Dstr_stalk}). If moreover $\sD^\bullet_{X,\bV;ds^2,h}$ is a complex of fine sheaves, (\ref{align_prop_Dstr_costalk}) follows from the calculation
		\begin{align*}
		h^k\left(i^!_x\sD^\bullet_{X,\bV;ds^2,h}\right)&\simeq\varinjlim_{x\in W'}\bH^{k}_{\rm cpt}(W',\sD^\bullet_{X,\bV;ds^2,h})^\ast\\\nonumber
		&\simeq\varinjlim_{x\in W'}H^{k}\left(\Gamma_{\rm cpt}(W',\sD^\bullet_{X,\bV;ds^2,h})\right)^\ast\quad(\textrm{fineness})\\\nonumber
		&\simeq\varinjlim_{\pi^{-1}\{x\}\subset \widetilde{W}'\subset \widetilde{U_x}}H^{k}\left(\Gamma_{\rm cpt}(\widetilde{W}',\sD^\bullet_{\widetilde{U_x},\bV;ds^2,h})\right)^\ast\\\nonumber
		&\simeq\varinjlim_{\substack{\epsilon>0\\x\in W_x\subset C_x}}H^{k}\left(\Gamma_{\rm cpt}(\alpha_x^{-1}( \pi_x^{-1}W_x\times W_\epsilon),\sD^\bullet_{\widetilde{U_x},\bV;ds^2,h})\right)^\ast\\\nonumber
		&\simeq\varinjlim_{x\in W_x\subset C_x}H^{k-2\dim S}\left(\Gamma_{\rm cpt}(W_x,\sD^\bullet_{C_x,\bV|_{C_x};ds^2|_{C_x},h|_{C_x}})\right)^\ast\quad \textrm{(Lemma \ref{lem_Thom_locL2})}.
		\end{align*}
		This proves Proposition \ref{prop_Dstr}.
	\end{proof}
    \subsection{Whitney stratified desingularization}
    A desingularization of a complex space provides local coordinates to study the asymptotic behavior of a given metric and to calculate its local $L^2$-cohomologies (see \cite{Hsiang_Pati1985,Saper1992}). However
    it is difficult to study the $L^2$-representation of the intersection complex via a general desingularization because it does not witnesses the stratification.
    The aim of this subsection is to combine the desingularization and the stratification of a complex space.
    \begin{defn}\label{defn_Lstr_desingularization}
    	Let $X$ be a complex space of dimension $n$ and $X^o\subset X_{\rm reg}$  a dense Zariski open subset. A Whitney stratified desingularization of $(X,X^o)$ consists of the following data.
    	\begin{enumerate}
    		\item A projective morphism $\pi:\widetilde{X}\to X$ such that
    		\begin{enumerate}
    			\item $\tilde{X}$ is smooth,
    			\item $\pi|_{\pi^{-1}X^o}:\pi^{-1}X^o\to X^o$ is biholomorphic.
    		\end{enumerate}
    		\item A Whitney stratification $\{X_i\}_{0\leq i\leq n}$ of $(X,X^o)$ such that
    		\begin{enumerate}
    			\item For every $i<n$, $\pi^{-1}X_i$ is a simple normal crossing divisor of $\widetilde{X}$.
    			\item Denote by $(\pi^{-1}X_{n-1})_{\rm red}=:E=\cup_{i\in I} E_i$ the irreducible decomposition of the (reduced) exceptional divisor of $\pi$. For every stratum $S$ of $\{X_i\}$ and every stratum $E_J:=\bigcap_{j\in J}E_j$, $\emptyset\neq J\subset I$, either $S\cap \pi(E_J)=\emptyset$ or $\pi: \pi^{-1}S\cap E_J\to S$ is a submersion.
    		\end{enumerate}
    	\end{enumerate}
    \end{defn}
    \begin{thm}\label{thm_Lstr_desingularization}
    	Let $X$ be a complex space and $X^o\subset X_{\rm reg}$ a dense Zariski open subset. Let $Y\subset X$ be a compact analytic subspace. Then there is a neighborhood $U$ of $Y$ and a Whitney stratified desingularization of $(U,U^o=X^o\cap U)$. In particular, Whitney stratified desingularization exists in the following cases.
    	\begin{itemize}
    		\item $X$ is a compact complex space.
    		\item $\overline{X}$ is a compact complex space and $\overline{X}^o\subset \overline{X}_{\rm reg}$ is a dense Zariski open subset. $X\subset \overline{X}$ is a Zariski open subset and $X^o=\overline{X}^o\cap X$. For example $(X,X^o)$ is a pair of complex algebraic varieties.
    		\item $X$ is a germ of complex space.
    	\end{itemize}
    \end{thm}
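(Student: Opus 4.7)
The plan is to combine Hironaka's embedded resolution of singularities with the Thom--Mather theory of stratified analytic morphisms, both applied on a relatively compact neighborhood of the compact subspace $Y$. Since the constructions involved (stratification, resolution, refinement) are of a semi-local nature and $Y$ is compact, one may fix throughout an open neighborhood $Y\subset U\subset X$ with $\overline{U}$ compact and perform the entire construction inside $U$.

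First, choose any Whitney stratification $\{X_i^{(0)}\}_{0\leq i\leq n}$ of $(U,U\cap X^o)$ with $X_{n-1}^{(0)}=U\setminus X^o$ and each $X_i^{(0)}$ a Zariski closed analytic subset of complex dimension $\leq i$; such a stratification exists by the classical theorems of Whitney and {\L}ojasiewicz. Next, apply Hironaka's strong embedded resolution theorem, feeding in each $X_i^{(0)}$ as an analytic subset to be resolved, to produce a projective bimeromorphic morphism $\pi\colon \widetilde{U}\to U$ such that $\widetilde{U}$ is smooth, $\pi$ is biholomorphic over $U\cap X^o$, and $\pi^{-1}(X_i^{(0)})$ is a simple normal crossing divisor in $\widetilde{U}$ for every $i<n$. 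In particular the reduced exceptional divisor $E=\pi^{-1}(U\setminus X^o)_{\rm red}$ decomposes into smooth components $E=\bigcup_{i\in I}E_i$ meeting transversally, giving locally finite open strata $E_J^{\circ}=\bigcap_{j\in J}E_j\setminus\bigcup_{j\notin J}E_j$.

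To secure the submersion condition (2)(b) of Definition~\ref{defn_Lstr_desingularization}, invoke the Thom--Mather theorem on stratified morphisms: given a proper analytic map, one can find Whitney stratifications on source and target, refining any prescribed ones, with respect to which the map sends strata submersively onto strata. Apply this with the source stratification refining $\{E_J^{\circ}\}\cup\{\widetilde{U}\setminus E\}$ and the target stratification refining $\{X_i^{(0)}\}$; refine further, if needed, so that each image $\pi(E_J)$ is a union of strata. The resulting target stratification $\{X_i\}$ of $(U,U\cap X^o)$ still consists of Zariski closed analytic subsets with $X_{n-1}=U\setminus X^o$, and the preimages $\pi^{-1}(X_i)$ remain simple normal crossing divisors because the additional refinement only adds Zariski closed analytic subsets whose preimages already lie in the SNC divisor $E$.

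The three specific cases then follow at once: for $X$ compact take $Y=X$; for $X$ Zariski open in a compact $\overline{X}$ apply the result to $(\overline{X},\overline{X}^o)$ with $Y=\overline{X}$ and restrict to $X$; and for a germ of complex space at a point $x$ take $Y=\{x\}$ inside a small representative. The main technical obstacle is arranging the SNC condition on \emph{every} $\pi^{-1}(X_i)$ simultaneously with the stratified submersion condition. This is handled by feeding the whole initial filtration $\{X_i^{(0)}\}$ into Hironaka's algorithm at the outset so that the exceptional divisor is compatible with all of the $X_i^{(0)}$, and by the observation that the Thom--Mather refinement only subdivides the target along additional Zariski closed analytic subsets that are already resolved by $\pi$, so no further blowup is needed after the stratification is refined.
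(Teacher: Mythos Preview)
There is a genuine gap in your final step. You claim that after the Thom--Mather refinement the preimages $\pi^{-1}(X_i)$ remain simple normal crossing divisors ``because the additional refinement only adds Zariski closed analytic subsets whose preimages already lie in the SNC divisor $E$.'' But lying inside an SNC divisor is far weaker than \emph{being} an SNC divisor. When you refine the stratification, a $d$-dimensional stratum of $\{X_i^{(0)}\}$ may be cut down to a dense open subset, and the complement is absorbed into the new $X_{d-1}$; the preimage of that complement is an analytic subset of $E$ of a priori arbitrary codimension and singularity type, with no reason to be a divisor, let alone to meet the components of $E$ transversally. For a toy illustration, take $U=\bC^2$ with $U^o=\{xy\neq 0\}$ and $X_0^{(0)}=\{0\}$, blow up the origin, and then refine the target stratification by adding a smooth point $p\neq 0$ of $\{x=0\}$ as a new $0$-stratum: the preimage of the new $X_0=\{0,p\}$ is the exceptional $\mathbb{P}^1$ together with an isolated point, which is not a divisor at all.

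This is exactly why the paper's proof is organized as an \emph{iteration} rather than a single resolution followed by a single refinement. At stage $k$ one first performs an embedded resolution of $\pi_k^{-1}(U^{(k)}_{n-k})$, forcing the preimage of the current closed level to be SNC, and only afterwards uses generic smoothness (Bertini--Sard) to carve out a dense open piece of the $(n-k)$-dimensional stratum on which condition~(2)(b) holds. The leftover bad locus is pushed into $U^{(k+1)}_{n-k-1}$, whose preimage will be made SNC at the \emph{next} stage by a further blowup. The two operations---blowing up and refining---must alternate, because each refinement produces new closed subsets $X_i$ whose preimages have not yet been resolved. Your one-shot scheme collapses this interleaving and therefore cannot secure condition~(2)(a) for the refined filtration.
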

    \begin{proof}
    	The proof is done by induction on the depth. Let $n=\dim X$.

    	{\bf Initial step:}
    	Let $U$ be some neighborhood of $Y$ so that there is a projective map $\pi_{1}:U^{1}\to U$ satisfying the following statements.
    	\begin{itemize}
    		\item $U^1$ is smooth, $\pi_1:\pi_1^{-1}U^o\to U^o$ is biholomorphic.
    		\item $E^{(1)}:=\pi_1^{-1}(U\backslash U^o)$ is a simple normal crossing divisor.
    	\end{itemize}
        Take an arbitrary Whitney stratification $\{U_i^{(1)}\}$ of $(U,U^o)$ (c.f. \cite[Theorem 2.2]{Verdier1976}).
    	Condition (2) in Definition \ref{defn_Lstr_desingularization} holds for $\pi_{1}:\pi_{1}^{-1}(U^o)\to U^o$.

    	{\bf Induction step:} Assume that there is a map $\pi_{k}:U^{k}\to U$ and a Whitney stratification $\{U_i^{(k)}\}$ such that condition (1) holds for $\pi_k$ and condition (2) holds over $U\backslash U^{(k)}_{n-k}$.

    	After a possible shrinking of $U$, we may take a projective map $\rho:U^{k+1}\to U^k$ (an embedding resolution of $\pi^{-1}_k(U^{(k)}_{n-k})\subset U^k$) so that
    	\begin{itemize}
    		\item $U^{k+1}$ is smooth and $\rho:\rho^{-1}\pi_k^{-1}(U\backslash U^{(k)}_{n-k})\to \pi_k^{-1}(U\backslash U^{(k)}_{n-k})$ is biholomorphic.
    		\item Denote $\pi_{k+1}=\rho\pi_k$, then $E':=\pi^{-1}_{k+1}(U^{(k)}_{n-k})$ and $\pi^{-1}_{k+1}(U\backslash U^o)$ are simple normal crossing divisors.
    	\end{itemize}
    	Denote by $E'=\bigcup_{j\in I'}E'_j$  the irreducible decomposition and denote by $E'_J=\bigcup_{j\in J}E'_j$ for every $\emptyset\neq J\subset I'$. By generic smoothness (Bertini-Sard theorem) there is a smooth dense Zariski open subset $S\subset U_{n-k}^{(k)}\backslash U_{n-k-1}^{(k)}$ so that the following claims hold.
    	\begin{itemize}
    		\item For every stratum $E'_J$, $\emptyset\neq J\subset I'$, either $S\cap \pi_{k+1}(E'_J)=\emptyset$ or $\pi_{k+1}: \pi^{-1}_{k+1}(S)\cap E'_J\to S$ is a submersion.
    		\item By \cite[Theorem 2.2]{Verdier1976}, after removing a nowhere dense Zariski closed subspace of $S$, we assume that $(S,S')$ satisfies Verdier's condition (w) for every stratum $S'$ of $\{U_i^{(k)}\}$ such that $S\subset \overline{S'}$. In particular, such $(S,S')$ satisfies Whitney's conditions (a) and (b).
    	\end{itemize}
        By \cite[Theorem 2.2]{Verdier1976}, there is a Whitney stratification $\{U^{(k+1)}_i\}$ of $(U,U^o)$ so that
        \begin{itemize}
        	\item $\{U^{(k+1)}_i\}$ refines $\{U^{(k)}_i\}$,
        	\item For every $i\geq n-k$, $U^{(k+1)}_i=U^{(k)}_i$.
        	\item $S=U^{(k+1)}_{n-k}\backslash U_{n-k-1}^{(k+1)}$.
        \end{itemize}
    	By the constructions, $(\{U^{(k+1)}_i\},\pi_{k+1}:U^{k+1}\to U)$ satisfies condition (1) in Definition \ref{defn_Lstr_desingularization} and condition (2) holds over $U\backslash U^{(k+1)}_{n-k-1}$. Since $\dim (U\backslash U^{(k+1)}_{n-k-1})<\dim (U\backslash U^{(k)}_{n-k})$, this procedure stops after finite steps. We prove the theorem.
    \end{proof}
    \begin{lem}\label{lem_restrict_Whitney_desingularization}
    	Let $X$ be a complex space and $X^o\subset X_{\rm reg}$  a dense Zariski open subset. Let $(\{X_i\},\pi:\widetilde{X}\to X)$ be a Whitney stratified desingularization of $(X,X^o)$. Let $Z\subset X$ be an analytic subspace such that $Z\cap X^o\neq\emptyset$ and $Z$ intersects transversally with every stratum of $\{X_i\}$. Then $(\{X_i\cap Z\},\pi|_{\pi^{-1}Z}:\pi^{-1}Z\to Z)$ is a Whitney stratified desingularization of $(Z,X^o\cap Z)$.
    \end{lem}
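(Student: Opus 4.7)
The plan is to verify each condition of Definition \ref{defn_Lstr_desingularization} for the restricted data $(\{X_i\cap Z\},\pi|_{\pi^{-1}Z}:\pi^{-1}Z\to Z)$. Projectivity of $\pi|_{\pi^{-1}Z}$ is immediate by base change, and it is a biholomorphism over $Z\cap X^o$ because $\pi$ is so over $X^o$. The transversality of $Z$ with each stratum of $\{X_i\}$ implies that $\{X_i\cap Z\}$ is a Whitney stratification of $(Z,Z\cap X^o)$, since the Whitney conditions (a) and (b) are preserved by transverse intersection with an analytic subspace.

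The key local claim is that $\pi^{-1}Z$ is smooth and meets each irreducible component $E_j$ of the reduced exceptional divisor transversely, so that $\pi^{-1}X_i\cap\pi^{-1}Z$ is a simple normal crossing divisor in $\pi^{-1}Z$ for each $i<n$. Over $\pi^{-1}X^o$ this is clear from the biholomorphism onto the smooth variety $Z\cap X^o$. At a point $p$ in the exceptional divisor, let $J=\{j\in I:p\in E_j\}$ and let $S$ be the stratum of $\{X_i\}$ containing $\pi(p)$. Using the SNC structure of $\pi^{-1}X_{n-1}$ together with the submersion $\pi:\pi^{-1}S\cap E_J\to S$ from condition (2)(b) of Definition \ref{defn_Lstr_desingularization}, one may choose local holomorphic coordinates $(z_1,\dots,z_r,w_1,\dots,w_s,y_1,\dots,y_{\dim S})$ on $\widetilde{X}$ near $p$ so that $E_j=\{z_j=0\}$ for $j\in J$ after relabelling, and so that the $y$-coordinates descend through $\pi|_{E_J}$ to local holomorphic coordinates on $S$ near $\pi(p)$. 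Fixing a local holomorphic embedding $X\hookrightarrow M$ into a complex manifold and using the transversality of $Z$ with $S$, one cuts $Z$ out near $\pi(p)$ by holomorphic functions on $M$ whose restrictions to $S$ form part of a local coordinate system; pulling these back through $\pi$ gives holomorphic functions on $\widetilde{X}$ near $p$ that cut out a complex submanifold meeting each $E_j$ transversely. This proves simultaneously the smoothness of $\pi^{-1}Z$ at $p$ and the SNC property.

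The submersion condition (2)(b) for the restricted data then follows formally. Every intersection stratum of the reduced exceptional divisor of $\pi|_{\pi^{-1}Z}$ sits inside some $E_J\cap\pi^{-1}Z$ with $J\subset I$, and restricting the original submersion $\pi:\pi^{-1}S\cap E_J\to S$ to the transverse preimage $\pi^{-1}(S\cap Z)\cap E_J\cap\pi^{-1}Z$ yields a submersion onto $S\cap Z$; further restriction to an intersection stratum preserves submersivity because the additional component divisors cut the fibers transversely. The principal difficulty is the local coordinate analysis in the previous paragraph, in particular the interpretation of the transversality of $Z$ with a singular stratum, which must be phrased through an auxiliary local embedding of $X$ into a complex manifold. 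Once this local model is in place, the remaining verifications are routine consequences of standard properties of transverse intersections and submersions.
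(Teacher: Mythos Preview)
Your proposal is correct and follows the same overall plan as the paper: verify the conditions of Definition \ref{defn_Lstr_desingularization} for the restricted data, with the crux being the smoothness of $\pi^{-1}Z$ and the simple normal crossing property of the restricted exceptional divisor. The difference lies in how that crux is handled. You set up adapted local coordinates on $\widetilde{X}$ and argue directly that the pullbacks $\pi^\ast g_i$ of local defining equations of $Z$ have independent differentials at $p$ (using that they are already independent along $\pi^{-1}S\cap E_J$ by the submersion condition), hence cut out a smooth submanifold transverse to each $E_j$. The paper instead observes that $E_J\cap\pi^{-1}Z$ is smooth for each $J$ (again from condition (2)(b) and transversality of $Z$ with $S$) and then invokes the elementary fact that if a Cartier divisor $D\subset Y$ through a point is smooth there, then $Y$ itself is smooth there; iterating over the components of $E$ yields smoothness of $\pi^{-1}Z$ and the SNC property simultaneously. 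Your coordinate argument is more explicit and perhaps more self-contained, while the paper's Cartier-divisor trick is shorter and avoids having to name coordinates at all. Both routes rest on the same input, namely the submersion $\pi:\pi^{-1}S\cap E_J\to S$, and both dispatch condition (2)(b) for the restriction by the same formal observation that submersions pull back along transverse subvarieties of the target.
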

    \begin{proof}
    	Since $Z$ intersects transversally with every stratum of $\{X_i\}$, $\{X_i\cap Z\}$ is a Whitney stratification of $Z$. It remains to show that $\pi^{-1}Z$ is smooth and $\pi^{-1}(X_i\cap Z)\subset \pi^{-1}Z$ is a simple normal crossing divisor for every $i$ such that $X_i\cap Z\neq\emptyset$. Denote by $E=\cup E_i$ the underlying reduced subspace of the exceptional divisor of $\pi$. By (2-b) in Definition \ref{defn_Lstr_desingularization}, the (schematic) intersection $E_J\cap \pi^{-1}Z$ is either empty or smooth for every stratum $E_J$ of $E$. Recall the following fact: for a point $x\in Y$ of a complex space and $D\subset Y$ a Cartier divisor passing through $x$, if $D$ is smooth at $x$, then $Y$ is smooth at $x$. As a consequence $\pi^{-1}Z$ is smooth and $\pi^{-1}(X_i\cap Z)\subset \pi^{-1}Z$ is either empty or a simple normal crossing divisor of $\pi^{-1}Z$. Condition (2-b) follows directly from the construction.
    \end{proof}
    \begin{prop}\label{prop_bimero_fibering_neigh}
    	Let $X$ be complex space and $X^o\subset X_{\rm reg}$ a dense Zariski open subset. Let $(\{X_i\},\pi:\widetilde{X}\to X)$ be a Whitney stratified desingularization of $(X,X^o)$. Then $\{X_i\}$ is a bimeromorphic fibering stratification of $(X,X^o)$.

    	To be precise, let $S$ be a stratum of $\{X_i\}$ and $x\in S$. Let $C_x\subset X$ be a $\dim X-\dim S$ dimensional germ of analytic subspace passing through $x$ and intersecting transversally with every stratum of $\{X_i\}$ ($C_x$ could be the cut off of $X$ by $\dim S$ hypersurfaces in general positions). There is a neighborhood $V$ of $\pi^{-1}\{x\}$ in $\widetilde{X}$, a neighborhood $x\in U_{S,x}$ in $S$ and a $C^\infty$ diffeomorphism
    	\begin{align}\label{align_alpha_x}
    	\alpha_x: V\simeq (V\cap\pi^{-1}C_x)\times U_{S,x}
    	\end{align}
    	such that
    	\begin{enumerate}
    		\item $\alpha_x(y)=(y,x)$ for any $y\in V\cap\pi^{-1}C_x$,
    		\item $p_2\circ\alpha_x|_{\pi^{-1}(U_{S,x})}=\pi|_{\pi^{-1}(U_{S,x})}$ where $p_2:(V\cap\pi^{-1}C_x)\times U_{S,x}\to U_{S,x}$ is the projection.
    		\item $\alpha_x(V\cap\pi^{-1}X^o)=(V\cap\pi^{-1}(C_x\cap X^o))\times U_{S,x}$.
    		\item For every stratum $E_J$ ($\emptyset\neq J\subset I$) of the exceptional divisor $E=\bigcup_{i\in I}E_i$ of $\pi$, $$\alpha_x(V\cap E_J)=(V\cap\pi^{-1}C_x\cap E_J)\times U_{S,x}.$$
    	\end{enumerate}
    \end{prop}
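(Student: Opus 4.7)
The plan is to realize $\alpha_x$ as a flow diffeomorphism on a neighborhood of the compact set $\pi^{-1}\{x\}$ in $\widetilde{X}$, using the submersion property of Definition \ref{defn_Lstr_desingularization} to lift coordinate vector fields from $\bC^d$. First I choose holomorphic functions $u=(u_1,\ldots,u_d)$, $d=\dim S$, in a neighborhood of $x$ in $X$ such that $u|_S$ defines local holomorphic coordinates on $S$ centered at $x$. By taking the $u_i$ in general position, $C_x:=u^{-1}(0)$ intersects every stratum of $\{X_i\}$ transversally, so Lemma \ref{lem_restrict_Whitney_desingularization} applies to $C_x$. Set $\tilde u:=u\circ\pi$, so $\pi^{-1}C_x=\tilde u^{-1}(0)$, and let $U_{S,x}:=S\cap u^{-1}(D)$ for a small polydisc $D\subset\bC^d$ around $0$.

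The essential observation is that for every stratum $E_J$ meeting $\pi^{-1}\{x\}$, the restriction $\tilde u|_{E_J}$ is a holomorphic submersion to $\bC^d$ in a neighborhood of $E_J\cap\pi^{-1}\{x\}$ inside $E_J$. Indeed, at $p\in E_J\cap\pi^{-1}\{x\}\subset\pi^{-1}S\cap E_J$, condition (2)(b) of Definition \ref{defn_Lstr_desingularization} gives that $\pi|_{\pi^{-1}S\cap E_J}\to S$ is a submersion at $p$; composing with the local biholomorphism $u|_S$ shows that $\tilde u|_{E_J}$ has surjective differential at $p$, and submersivity is an open condition. Shrinking, I obtain an open neighborhood $V_0$ of $\pi^{-1}\{x\}$ meeting only those $E_J$ with $E_J\cap\pi^{-1}\{x\}\ne\emptyset$ and with $\tilde u|_{E_J\cap V_0}$ a submersion for each such $J$.

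Next I construct smooth real vector fields $\xi_1,\ldots,\xi_{2d}$ on $V_0$ (where $(t_1,\ldots,t_{2d})$ are real coordinates on $\bC^d$) satisfying (i) $\xi_j(p)\in T_p E_J$ for every $p\in E_J$ and every $J$, and (ii) $d\tilde u(\xi_j)=\partial/\partial t_j$ on $V_0$. Local existence is immediate from the submersion of $\tilde u$ on each $E_J$ in SNC holomorphic coordinates on $\widetilde X$; the local lifts are then patched via a smooth partition of unity on $V_0$, since both (i) and (ii) are affine conditions at each point and are preserved under convex combinations with weights summing to $1$. Let $\Phi^j_t$ denote the flow of $\xi_j$. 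By compactness of $\pi^{-1}\{x\}$ and after shrinking $D$, the composition
\[
\Phi_{\tilde v}:=\Phi^1_{t_1}\circ\cdots\circ\Phi^{2d}_{t_{2d}},\qquad \tilde v=(t_1,\ldots,t_{2d})\in D,
\]
is defined on a neighborhood of $\pi^{-1}\{x\}$. I define
\[
\alpha_x^{-1}\colon (V\cap\pi^{-1}C_x)\times U_{S,x}\longrightarrow\widetilde X,\qquad (y,v)\longmapsto\Phi_{u(v)}(y).
\]
At $(y,x)$, the differential of $\alpha_x^{-1}$ is the identity on $T_y\pi^{-1}C_x=\ker d\tilde u$ and sends $\partial/\partial t_j$ to $\xi_j(y)$, which spans a complement to $\ker d\tilde u$ by (ii). Hence $\alpha_x^{-1}$ is a local diffeomorphism; after suitable shrinking it becomes a diffeomorphism onto an open neighborhood $V$ of $\pi^{-1}\{x\}$, and I let $\alpha_x$ be its inverse.

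Conditions (1)--(4) then follow directly. (1) holds because $\Phi_0=\mathrm{id}$. (4) holds because the flow of each $\xi_j$ preserves every $E_J$ by (i). (3) follows from (4) applied to $\pi^{-1}X^o=\widetilde X\setminus E$. For (2), constancy of $d\tilde u(\xi_j)$ yields $\tilde u\circ\Phi_{\tilde v}(y)=\tilde u(y)+\tilde v$; so for $p\in\pi^{-1}U_{S,x}$ with $\alpha_x(p)=(y,v)$ one has $u(\pi(p))=\tilde u(p)=u(v)$, and injectivity of $u|_S$ together with $\pi(p),v\in U_{S,x}\subset S$ forces $\pi(p)=v$. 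The main technical hurdle is step three: simultaneously producing smooth lifts that are tangent to all the nested strata $E_J$ while satisfying the submersion constraint. Because both constraints are linear at each point and the family $\{E_J\}$ is nested (the tangent space to a smaller stratum sits inside that to a larger one), partition-of-unity patching is compatible; without the submersion statement in Definition \ref{defn_Lstr_desingularization}(2)(b), such a lift would fail to exist.
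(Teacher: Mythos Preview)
Your proof is correct and follows essentially the same strategy as the paper: construct smooth vector fields on a neighborhood of $\pi^{-1}\{x\}$ that lift coordinate vector fields from $S$ and are tangent to every stratum $E_J$, then integrate to obtain $\alpha_x^{-1}$. The paper builds these lifts by a stepwise extension (first on $\pi^{-1}S$, then on the union of the $E_i$, then on $\widetilde X$) and phrases the lifting condition via $\pi_\ast$, whereas you produce them in one shot via a partition of unity and phrase the condition as $d\tilde u(\xi_j)=\partial/\partial t_j$; your formulation makes the verification of property~(2) slightly more transparent, but the two arguments are otherwise the same.
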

    \begin{proof}
    	Assume that $S\subset X_r\backslash X_{r-1}$, i.e. $\dim S=r$.
    	By the definition of Whitney stratified desingularization, $\pi^{-1}S$ is a simple normal crossing divisor of the complex manifold $Y:=\pi^{-1}(X_{r-1}\cup S)\subset\widetilde{X}$. Let $E=\bigcup_{i\in I}E_i$ be the reduced exceptional divisor of $\pi$. Let $I_S\subset I$ be the subset of indexes $i\in I$ so that $E_i\cap Y\subset \pi^{-1}S$ and $I'_S\subset I$ is the subset of indexes $i\in I$ so that $E_i\cap\pi^{-1}S\neq\emptyset$. Then for every $J\subset I_S$, $\pi:E_J\to S$ is a proper submersion. Let $U_{S,x}\subset S$ be an open neighborhood of $x$ so that its holomorphic tangent bundle admits a frame $w_1,\dots, w_{r}$. To get the data $(V,\alpha_x)$ claimed in the theorem, it suffices to construct $C^\infty$ complex vector fields $v_1,\dots, v_{r}$ on a neighborhood of $\pi^{-1}\{x\}\subset Y$ such that the following statements hold.
    	\begin{enumerate}
    		\item For every $i=1,\dots,r$ and every $J\subset I_S$, $v_i|_{E_J}\in T_{E_J}$ and lifts $w_i$ through $\pi$, i.e. for every $x\in E_J$, $\pi_\ast(v_i(x))=w_i(\pi(x))$.
    		\item The restrictions of $v_1,\dots, v_{r}$ on $V\cap\pi^{-1}C_x$ form a $C^\infty$ frame of the holomorphic normal bundle of $V\cap\pi^{-1}C_x\subset \widetilde{X}$.
    		\item For every $i=1,\dots, r$ and every $J\subset I$, $v_i|_{E_J}\in T_{E_J}$.
    	\end{enumerate}
    	Then the vector fields $v_i$ and $\overline{v_i}$ give the one-parameter groups $h_i,\overline{h}_i:(-\epsilon,\epsilon)\times W\to \widetilde{X}$ where $\epsilon>0$ is a constant independent of $i$ and $\pi^{-1}\{x\}\subset W\subset\pi^{-1}C_x$ is an open subset. By the conditions (1)-(3) of the vector fields $v_1,\dots, v_r$, the map
    	$$\alpha_x^{-1}:(-\epsilon,\epsilon)^{2r}\times W\to \widetilde{X},$$
    	$$\alpha_x^{-1}(t_1,\dots,t_{2r},x)=h_1(t_1,\overline{h}_1(t_2,h_2(t_3,\overline{h}_2(\dots h_r(t_{2r-1},\overline{h}_r(t_{2r},x))\dots))))$$
    	is well defined and its inverse is the fibering $\alpha_x$ that  we need for the theorem.

    	It remains to construct the vector fields $v_1,\dots, v_r$. This can be  done by three steps.
    	\begin{description}
    		\item[Step 1] Notice that $\pi^{-1}S\to S$ is a stratified submersion, i.e. every stratum is mapped submersively onto $S$. One may lift $w_1,\dots, w_r$ to $C^\infty$ vector fields $w'_1,\dots,w'_r$ on $\pi^{-1}S$ so that $w'_i|_{E_{J}}\in T_{E_J}$, $\forall J\subset I_S$. Actually one can lift $w_1,\dots, w_r$ to the minimal strata of $\pi^{-1}S$ and extend them to strata of higher dimensions step by step.
    		\item[Step 2] Extend $w'_1,\dots,w'_r$ to $C^\infty$ vector fields $w''_1,\dots,w''_r$ on $\bigcup_{i\in I'_S}E_i$ so that $w''_i|_{E_{J}}\in T_{E_J}$ for every $J\subset I'_S$.
    		\item[Step 3] Extend $w''_1,\dots,w''_r$ to $C^\infty$ vector fields $v_1,\dots,v_r$ on $\widetilde{X}$. By the previous two steps, conditions (1) and (3) hold. Since the vector fields are $C^\infty$, condition (2) automatically holds when $V$ is sufficiently small.
    	\end{description}
    	Notice that $E$ admits only simple normal crossing singularities. The $C^\infty$ extensions of the vector fields are possible in each step.
    \end{proof}
    \begin{defn}\label{defn_bimero_fib_whitneystr}
    	Notations as in Proposition \ref{prop_bimero_fibering_neigh},     	$(U_x,U_{S,x},C_x,V,\pi,\alpha_x)$ is called a bimeromorphic fibering neighborhood of $x\in S$ with respect to the Whitney stratified desingularization $(\{X_i\},\pi:\widetilde{X}\to X)$.
    \end{defn}
    The following proposition shows that the Whitney stratified desingularization owns good metrical properties. That is, if $(\{X_i\},\pi:\widetilde{X}\to X)$ is a Whitney stratified desingularization of $X\subset\bC^N$, the euclidean metric is bimeromorphically stratified along every stratum of $\{X_i\}$ in a nice way.
    \begin{prop}\label{prop_hermitian_stratified}
    	Let $X$ be a germ of complex space and $X^o\subset X_{\rm reg}$ a dense Zariski open subset. Let $ds^2_0$ be a hermitian metric on $X$ and let $(\{X_i\},\pi:\widetilde{X}\to X)$ be a Whitney stratified desingularization of $(X,X^o)$. Then $ds^2_0$ is stratified along any bimeromorphic fibering neighborhood with respect to $(\{X_i\},\pi:\widetilde{X}\to X)$.

    	Precisely, let $x\in S$ be a point in a stratum $S$ of $\{X_i\}$ and let $(U_x,U_{S,x},C_x,V,\pi_x,\alpha_x)$ be a bimeromorphic fibering neighborhood of $x\in S$ with respect to $(\{X_i\},\pi:\widetilde{X}\to X)$. Then
    	$ds^2_0$ is stratified along $(U_x,U_{S,x},C_x,V,\pi_x,\alpha_x)$.
    \end{prop}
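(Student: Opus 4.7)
The plan is to verify condition (1) of Definition~\ref{defn_metric_str} for the hermitian metric $ds_0^2$; since the proposition involves no hermitian local system, condition (2) is vacuous. The choice of base metric will be $ds^2_S := ds^2_0|_S$, which is a well-defined hermitian metric because the stratum $S$ is a complex submanifold of $X$ and $ds^2_0$ restricts to one. The goal is then the quasi-isometry
\begin{align*}
\alpha_x^\ast \pi_x^\ast ds^2_0 \;\sim\; p_1^\ast\bigl(\pi_x^\ast ds^2_0|_{V\cap \pi_x^{-1}C_x}\bigr) + p_2^\ast ds^2_S
\end{align*}
on $(V \cap \pi_x^{-1}C_x) \times U_{S,x}$, after possibly shrinking $V$ and $U_{S,x}$.

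Since $X$ is a germ and $ds_0^2$ is hermitian on $X$, I fix (after shrinking) a closed holomorphic embedding $\iota:X\hookrightarrow \bC^N$ with $ds^2_0 \sim \iota^\ast ds^2_{\bC^N}$. Setting $\Phi := \iota \circ \pi_x : \widetilde{X}\to \bC^N$, the form $\pi_x^\ast ds^2_0$ is quasi-isometric to the $C^\infty$ semi-positive form $\Phi^\ast ds^2_{\bC^N}$ on $\widetilde{X}$, which I use throughout. The diffeomorphism $\alpha_x$ produces a smooth splitting $TV = T_1 \oplus T_2$, where $T_1$ is the fibre direction and $T_2$ the slice direction; by the construction in the proof of Proposition~\ref{prop_bimero_fibering_neigh}, $T_2$ is globally generated on $V$ by the $C^\infty$ vector fields $v_1, \bar v_1, \ldots, v_r, \bar v_r$, whose restrictions to the full preimage $\pi_x^{-1}S$ satisfy $d\pi_x(v_i) = w_i$ for a chosen frame $w_1,\ldots,w_r$ of $TU_{S,x}$.

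The central computation takes place at a base point $p_0 \in \pi_x^{-1}\{x\}\subset V\cap\pi_x^{-1}C_x$, where $\alpha_x(p_0) = (p_0, x)$. A tangent vector $(u, w)\in T_{p_0}(V\cap \pi_x^{-1}C_x)\oplus T_x U_{S,x}$ lifts to $X = X_1 + X_2$ with $X_1 = u$ and $X_2 = \sum_i c_i v_i(p_0)$ when $w = \sum_i c_i w_i(x)$. Since $p_0 \in \pi_x^{-1}S$, we get $d\pi_x(X_2) = \sum_i c_i w_i(x) \in T_xS$ while $d\pi_x(X_1)\in T_x C_x$. The transversality of $C_x$ and $S$ at $x$ means the subspaces $d\iota(T_xC_x)$ and $d\iota(T_xS)$ of $T_x\bC^N$ meet only at $0$, so there is a uniform $c_0 < 1$ with
\begin{align*}
|\Phi^\ast ds^2_{\bC^N}(X_1, X_2)| \leq c_0\sqrt{\Phi^\ast ds^2_{\bC^N}(X_1, X_1)\,\Phi^\ast ds^2_{\bC^N}(X_2, X_2)}
\end{align*}
at $p_0$. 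Combined with Cauchy--Schwarz this yields $\Phi^\ast ds^2_{\bC^N}(X, X) \sim \Phi^\ast ds^2_{\bC^N}(X_1, X_1) + \Phi^\ast ds^2_{\bC^N}(X_2, X_2)$, and the two summands match $g_1(u, u)$ and $ds^2_S(w, w)$ respectively since $X_1 = u$ and $d\pi_x(X_2) = \sum_i c_i w_i(x)$.

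To propagate this quasi-isometry to a neighbourhood of $\pi_x^{-1}\{x\}\times\{x\}$ in the product, I use compactness of $\pi_x^{-1}\{x\}$ together with the continuity of $v_i$, $d\pi_x$, $\iota$, and of the subspaces $d\Phi(T_{p,1}), d\Phi(T_{p,2})\subset T\bC^N$; the transversality and the positive angle persist uniformly on a suitable neighbourhood. The main obstacle lies in the fibre-metric comparison: one must show $\alpha_x^\ast\Phi^\ast ds^2_{\bC^N}|_{T_{p,1}}$ at varying $s$ is quasi-isometrically controlled by the reference $g_1$ on $V\cap\pi_x^{-1}C_x$, even though both forms degenerate along the exceptional divisor. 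This will be carried out via a Taylor expansion $\partial_y\alpha_x^{-1}(y,s) = \mathrm{Id} + (s-x)\Psi(y,s)$ with $\Psi$ smooth; the correction term points into a direction that, thanks to the uniform transversality and the structure of the $v_i$, is absorbed by the base metric $g_2$ up to bounded factors, giving the desired upper bound, and the lower bound follows symmetrically from the uniform angle estimate.
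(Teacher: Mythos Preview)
Your framework matches the paper's: both decompose tangent vectors into a slice component along $T_2$ (spanned by the $v_i,\bar v_i$) and a fibre component along $T_1$, and both use transversality of $C_x$ and $S$ at $x$, plus continuity and compactness of $\pi_x^{-1}\{x\}$, to obtain a uniform angle bound between $d\Phi(T_1)$ and $d\Phi(T_2)$. That part is fine and essentially what the paper does (its claims (1)--(3)).

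The genuine gap is in your fibre-metric comparison, which is the paper's claim (4) and the heart of the proof. Your Taylor expansion $\partial_y\alpha_x^{-1}(y,s)=\mathrm{Id}+(s-x)\Psi(y,s)$ gives
\[
(\alpha_s^\ast\Phi^\ast ds^2_{\bC^N})_p(u,u)=(\Phi^\ast ds^2_{\bC^N})_p(u,u)+O(|s-x|)\,|u|^2_{\omega_{\widetilde X}}
\]
for a fixed smooth reference metric $\omega_{\widetilde X}$ on $V$. But $\Phi^\ast ds^2_{\bC^N}$ is \emph{degenerate} along the exceptional divisor, so $(\Phi^\ast ds^2_{\bC^N})_p(u,u)$ can be arbitrarily small compared to $|u|^2_{\omega_{\widetilde X}}$, and the correction term then dominates no matter how small $|s-x|$ is. Your assertion that the correction is ``absorbed by $g_2$'' does not help: $g_2=ds^2_S$ lives on the stratum direction and vanishes on $(u,0)$.

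The paper resolves this by a descent argument. Writing $\rho_s:\widetilde C_x\to\widetilde C_s$ for the slice diffeomorphism, the crucial observation is that $\rho_{s\ast}$ sends $\ker d\pi$ into $\ker d\pi$ along the exceptional locus (because $p_2\circ\alpha_x|_{\pi^{-1}U_{S,x}}=\pi$, so $\rho_s$ maps the fibre $\pi^{-1}\{x\}$ to $\pi^{-1}\{s\}$). Hence $\rho_{s\ast}$ induces a continuous family of linear maps $\tau_{s\ast}:E_s\to E_x$ on the quotients $E_s:=d\pi(T_V)$, where the pushed-down metric $\omega_0$ is \emph{non-degenerate}. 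A sequence/compactness argument on $E_s$ then gives the uniform quasi-isometry. This kernel-preservation is the missing ingredient; without it the degenerate comparison cannot be closed.
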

\begin{proof}
	Let $v_1,\cdots, v_{2d}$ be the defining vector fields of the diffeomorphism $\alpha_x$, as constructed in the proof of Proposition \ref{prop_bimero_fibering_neigh}. Denote by $\omega_0$ the  $(1,1)$-form associated to $ds^2_0$. We claim that there is a neighborhood $U\subset X$ of $x$ such that
	\begin{enumerate}
		\item For every $i=1,\dots, 2d$, $\|v_i\|_{\omega_0}\sim 1$ over $U^o:=U\cap X^o$. For simplicity we identify $U^o$ and $\pi^{-1}U^o$.
		\item There is a constant $\epsilon>0$ such that $\angle_{\omega_0}(v_i,v_j)>\epsilon$ over $U^o$ for every $1\leq i\neq j\leq 2d$.
		\item There is a constant $\epsilon>0$ such that $\angle_{\omega_0}(v_i,w)>\epsilon$ over $U^o$ for every $i=1,\dots, 2d$ and every $w\in T_{C_x\cap X^o}$.
		\item Denote by $$q:V\simeq V\cap \pi^{-1}C_x\times U_{S,x}\to V\cap \pi^{-1}C_x$$ and $$p:V\simeq V\cap \pi^{-1}C_x\times U_{S,x}\to U_{S,x}$$ the projections. By abuse of notations we identify $X^o$ and $\pi^{-1}X^o$. Let $\widetilde{C}_s:= p^{-1}\{s\}$ for $s\in U_{S,x}$. There is a constant $C>0$ such that
		$$C^{-1}\omega_0|_{\widetilde{C}_s}\leq\left(q^\ast(\omega_0|_{\widetilde{C}_x})\right)|_{\widetilde{C}_s}\leq C\omega_0|_{\widetilde{C}_s}$$ for every $s\in U\cap U_{S,x}$.
	\end{enumerate}
	Assuming these claims, for every $\xi=\sum_{i=1}^d f_iv_i+q^\ast w$ where $w\in T_{C_x\cap X^o}$ we have
	\begin{align*}
	\|\xi\|_{\omega_0}^2&=\sum_{i=1}^d\|f_i\|^2\|v_i\|^2_{\omega_0}+\sum_{i=1}^d2{\rm Re}(f_iv_i,q^\ast w)+\sum_{i\neq j}2{\rm Re}(f_iv_i,f_jv_j)+\|q^\ast w\|^2_{\omega_0}\\\nonumber
	&\leq 3\sum_{i=1}^d\|f_i\|^2\|v_i\|^2_{\omega_0}+3\|q^\ast w\|^2_{\omega_0}\\\nonumber
	&\leq 3\sum_{i=1}^d\|f_i\|^2\|v_i\|^2_{\omega_0}+3C\| w\|^2_{\omega_0|_{C_x}}
	\end{align*}
	and
	\begin{align*}
	\|\xi\|_{\omega_0}^2&=\sum_{i=1}^d\|f_i\|^2\|v_i\|^2_{\omega_0}+\sum_{i=1}^d2{\rm Re}(f_iv_i,q^\ast w)+\sum_{i\neq j}2{\rm Re}(f_iv_i,f_jv_j)+\|q^\ast w\|^2_{\omega_0}\\\nonumber
	&\geq \sum_{i=1}^d\|f_i\|^2\|v_i\|^2_{\omega_0}+\|q^\ast w\|^2_{\omega_0}-\sum_{i=1}^d2\epsilon\|f_iv_i\|_{\omega_0}\|q^\ast w\|_{\omega_0}-\sum_{i\neq j}2\epsilon\|f_iv_i\|_{\omega_0}\|f_jv_j\|_{\omega_0}\\\nonumber
	&\geq \sum_{i=1}^d(1-\epsilon)\|f_i\|^2\|v_i\|^2_{\omega_0}+(1-\epsilon)C^{-1}\|w\|^2_{\omega_0|_{C_x}}.\nonumber
	\end{align*}
	As a consequence,
	$$\omega_0\sim ds^2_{U_{S,x}}+q^\ast \omega_0|_{\widetilde{C}_x}$$
	is stratified along $(U_x,U_{S,x},C_x,V,\pi_x,\alpha_x)$. Here $ds^2_{U_{S,x}}$ is some hermitian metric of $U_{S,x}$.

	Now we prove claims (1)-(4). By the construction, $\pi_\ast(v_1)(x),\dots,\pi_\ast(v_d)(x)\in T_{S,x}$ is a basis, hence we obtain claims (1) and (2) by continuity. Since $T_{C_{x},x}\cap T_{S,x}=0$, claim (3) follows by continuity. Now we prove claim (4). If it is false, then there is a sequence of points $\{x_n\}\in V\cap \widetilde{C}_{s_n}\cap \pi^{-1}X^o$ and vectors $v_n\in T_{V,x_n}$ such that
	\begin{enumerate}
		\item $\{s_n\}$ converges to $x$ and $\{x_n\}$ converges to  $x''\in\pi^{-1}\{x\}$,
		\item For every $n$, $\|v_n\|_{\pi^\ast\omega_0}=1$.
		\item For every $n$, either
		\begin{align}\label{align_omega0_str_case1}
		\|v_n\|_{\pi^\ast\omega_0}\geq n\|\rho_{s_n \ast}v_n\|_{\pi^\ast\omega_0}
		\end{align}
		or
		\begin{align}\label{align_omega0_str_case2}
		\|v_n\|_{\pi^\ast\omega_0}\leq \frac{1}{n}\|\rho_{s_n\ast}v_n\|_{\pi^\ast\omega_0}.
		\end{align}
		Here $\rho_{s}:\widetilde{C}_{s}\subset V\stackrel{q}{\to} \widetilde{C}_x$ is the diffeomorphism induced by $\alpha_x$.
	\end{enumerate}
	Without loss of generality we assume that (\ref{align_omega0_str_case1}) holds for all $n$. Let $\gamma:[0,1)\to V\cap\pi^{-1}C_x$ be a curve such that
	\begin{enumerate}
		\item $\gamma$ is continuous and $\gamma|_{(0,1)}$ is $C^\infty$,
		\item $\gamma(0)=x''$ and $q(x_n)\in\gamma((0,1))$ for every $n$.
	\end{enumerate}
	Let $Y=q^{-1}{\rm Im}\gamma$, then $Y\cap \pi^{-1}X^o$ is an immersed submanifold with its boundary $Y\backslash \pi^{-1}X^o=q^{-1}\gamma^{-1}\{0\}\subset \pi^{-1}S$. Then $\rho_{s \ast}: T_V|_{Y\cap \widetilde{C}_s}\to T_V|_{Y\cap \widetilde{C}_x}$ is a $C^\infty$ map smoothly depending on $s\in U_{S,x}$. By the constructions, $\pi$ sends $Y$ homeomorphically onto its image $\pi(Y)$ and $\pi(Y)\cap X^o$ is an immersed submanifold of $X^o$ whose boundary is $\pi(Y)\backslash X^o$. Hence $\rho_s:\widetilde{C}_s\to \widetilde{C}_x$ induces a continuous map $\tau_s:\pi(Y\cap\widetilde{C}_s)\to \pi(Y\cap\widetilde{C}_x)$ which continuously depends on $s$.  Note that $\alpha_x$ does not desend to $X$ since $v_1,\dots,v_d$ are not necessarily liftings of vector fields on $X$. This is the purpose that we introduce $Y$.

	Denote $E_s:={\rm Im}\left(\pi_\ast T_V|_{Y\cap \widetilde{C}_s}\to T_X|_{\pi(Y\cap \widetilde{C}_s)}\right)$, then $E_s\subset T_X|_{\pi(Y\cap \widetilde{C}_s)}$ is a closed subset. Note that $E_{s,\pi(y)}=T_{X,\pi(y)}\simeq T_{V,y}$ for every $y\in Y\cap \pi^{-1}X^o$ and $E_{s,\pi(y)}\simeq T_{V,y}/{\rm Ker}\pi_{\ast,y}$ for $y\in Y\backslash \pi^{-1}X^o$. Since ${\rm Ker}\pi_{\ast,z}=T_{\pi^{-1}\{x\},z}$ is the linear span of the tangent cone of $\pi^{-1}\{x\}$ at every $z\in\pi^{-1}\{s\}$, $\rho_{s\ast}$ sends ${\rm Ker}\pi_{\ast,y}$ into ${\rm Ker}\pi_{\ast,x}$ for every (unique) $y\in Y\cap\pi^{-1}\{s\}$. Therefore $\rho_{s\ast}$ induces a continuous map $\tau_{s\ast}:E_s\to E_x$ which continuously depends on $s$.

	By assumption, $\|\pi_\ast(v_n)\|_{\omega_0}=1$. So there is a subsequence $\pi_\ast(v_{n_k})\in E_{s_{n_k}}$ which converges to some $v\in E_x$ with $\|v\|_{\omega_0}=1$. By taking the limit of (\ref{align_omega0_str_case1})
	$$\|\pi_\ast(v_{n_k})\|_{\omega_0}\geq n\|\tau_{s_{n_k}\ast}\pi_\ast(v_{n_k})\|_{\omega_0},$$
	we obtain $1\geq+\infty$. Neither will (\ref{align_omega0_str_case2}) happen by the same argument. This proves claim (4) and we finish the proof of the proposition.
\end{proof}
	\subsection{Distinguished metric}\label{section_Grauert_metric}
	Let $X$ be a complex space of pure dimension $n$ and $X^o\subset X_{\rm reg}$ a dense Zariski open subset.
	Let $\widetilde{X}\to X$ be a good desingularization of $X\backslash X^o$ in the sense that
	\begin{itemize}
		\item $\widetilde{X}$ is a complex manifold, $\pi$ is a projective holomorphic map and $\pi|_{\pi^{-1}X^o}:\pi^{-1}X^o\to X^o$ is biholomorphic.
		\item The (reduced) exceptional loci $E:=\pi^{-1}(X\backslash X^o)$ is a simple normal crossing divisor of $\widetilde{X}$ with finite components.
	\end{itemize}
	Let $E=\bigcup_{i\in I}E_i$ be the irreducible decomposition. Since $\pi$ is projective, there are constants $a_i\in\bZ_{>0}$, $i\in I$ such that $\sO_{\widetilde{X}}(-\sum_{i\in I}a_iE_i)$ is $\pi$-positive, i.e. locally there is a hermitian metric $h$ on $\sO_{\widetilde{X}}(-\sum_{i\in I}a_iE_i)$ such that
	$\sqrt{-1}\Theta_h(\sO_{\widetilde{X}}(-\sum_{i\in I}a_iE_i))+\pi^\ast\omega_0$
	is positive definite on $\widetilde{X}$ for some hermitian metric $\omega_0$ on $X$.

	For each $i\in I$, let $h_i$ be a hermitian metric on $\sO_{\widetilde{X}}(E_i)$ such that $h^\ast=\prod_{i\in I}h_i^{\otimes a_i}$. Let $s_i\in \Gamma(\widetilde{X},\sO_{\widetilde{X}}(E_i))$ be the defining section of $E_i$ and denote $|s_i|:=|s_i|_{h_i}$ for short.

	By perturbing the vector $(a_i)_{i\in I}$ slightly in general directions and multiplying the common denominators we get integers
	$a_{ij}\in\bZ_{>0}$, $i\in I$, $j=1,\dots,m$ so that
	\begin{enumerate}
		\item After a possible shrinking of $X$, $(\sO_{\widetilde{X}}(-\sum_{i\in I}a_{ij}E_i),\prod_{i\in I}h^{a_{ij}\ast}_{i})$ is a hermitian line bundle with $\pi$-positive curvature for every $j=1,\dots, m$.
		\item ${\rm rank}(a_{ij})_{i\in I,j\in J}=m$ for any $J\subset\{1,\dots,m\}$ such that $|J|\leq\min(m,|I|)$.
	\end{enumerate}

	The distinguished potential is defined by
	\begin{align*}
	\varphi_{\pi}:=\sum_{j=1}^m\frac{1}{\log\left(-\log\prod_{i\in I}|s_i|^{2a_{ij}}\right)}.
	\end{align*}
	Denote $\psi_j=-\log\prod_{i\in I}|s_i|^{2a_{ij}}$, then
	\begin{align*}
	\ddbar\varphi_{\pi}=\sum_{j=1}^m\frac{2+\log\psi_j}{\psi_j^2\log^3\psi_j}\partial\psi_j\wedge\dbar\psi_j+\sum_{j=1}^m\frac{-\ddbar\psi_j}{\psi_j\log^2\psi_j}.
	\end{align*}
	Since $-\sqrt{-1}\ddbar\psi_j$ is $\pi$-positive for each $j=1,\dots,m$, locally $\sqrt{-1}\ddbar\varphi_{\pi}+\omega_0$ is positive definite for some hermitian metric $\omega_0$ on $X$.
	\begin{defn}\label{defn_Grauert_type_metric}
		Let $X$ be a complex space and $X^o\subset X_{\rm reg}$ a dense Zariski open subset. A hermitian metric $ds^2$ on $X^o$ is called distinguished associated to $(X,X^o)$ if for every point $x\in X$ there is a neighborhood $U$ and a Whitney stratified desingularization $\pi:\widetilde{U}\to U$ of $(U,X^o\cap U)$ (Definition \ref{defn_Lstr_desingularization}) such that $$ds^2|_U\sim\sqrt{-1}\ddbar\varphi_{\pi}+\omega_0.$$ Here $\omega_0$ is a hermitian metric on $U$. If $ds^2\sim \sqrt{-1}\ddbar\varphi_{\pi}+\omega_0$ for some global Whitney stratified desingularization $\pi:\widetilde{X}\to X$ and some hermitian metric $\omega_0$ on $X$, we say $ds^2$ is a distinguished metric associated to $\pi$.
	\end{defn}
	Every distinguished metric is locally complete and locally of finite volume. In particular they are complete and of finite volume when $X$ is compact. The proof of this fact will be postponed to Corollary \ref{cor_Gt_complete_finite_vol}.
	\begin{lem}\label{lem_Grauert_metric_exist}
		Let $X$ be a complex space and $X^o\subset X_{\rm reg}$ a dense Zariski open subset. Let $Y\subset X$ be a compact analytic subspace. Then there is a distinguished metric on a neighborhood of $Y$. If moreover $X$ admits a K\"ahler hermitian metric, then there is a K\"ahler distinguished metric on a neighborhood of $Y$. In particular,
		\begin{itemize}
			\item if $X$ is a germ of complex space, then after a possible shrinking of $X$ there is a K\"ahler distinguished metric associated to $(X,X^o)$;
			\item if $X$ is a compact (K\"ahler) complex space, then there is a (K\"ahler) distinguished metric associated to $(X,X^o)$;
			\item if $(X,X^o)$ is a pair of (quasi-projective) algebraic varieties, then there is a (K\"ahler) distinguished metric associated to $(X,X^o)$.
		\end{itemize}
	\end{lem}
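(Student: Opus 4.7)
I would first invoke Theorem \ref{thm_Lstr_desingularization} to produce a neighborhood $U$ of the compact analytic subspace $Y$ together with a Whitney stratified desingularization $\pi:\widetilde{U}\to U$ of $(U,X^o\cap U)$. After shrinking $U$ if necessary, the reduced exceptional divisor $E=\bigcup_{i\in I}E_i$ has only finitely many irreducible components. Since $\pi$ is projective, a relative version of the Nakai--Moishezon/Grauert criterion produces positive integers $a_i$, $i\in I$, for which the line bundle $\sO_{\widetilde{U}}(-\sum_i a_iE_i)$ is $\pi$-positive: it carries a smooth hermitian metric whose curvature, added to $\pi^\ast\omega_0$ for some hermitian metric $\omega_0$ on $U$, is positive definite on $\widetilde{U}$.

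Starting from a single such vector $(a_i)$, the plan is to produce the family $(a_{ij})_{i\in I,\,1\le j\le m}$ of the setup of \S\ref{section_Grauert_metric} by perturbing $(a_i)$ in rational directions inside $\bQ^{|I|}$ and clearing denominators. Because the relatively ample cone is open, sufficiently small rational perturbations remain $\pi$-positive; the rank condition in hypothesis (2) is a Zariski-open condition on the matrix $(a_{ij})$, so it can be enforced simultaneously by a generic choice. With the $a_{ij}$ in hand the distinguished potential $\varphi_\pi$ and the candidate $(1,1)$-form $\sqrt{-1}\ddbar\varphi_\pi+\omega_0$ are defined. The positivity of this form is immediate from the identity
\begin{equation*}
\ddbar\varphi_\pi=\sum_{j=1}^m\frac{2+\log\psi_j}{\psi_j^2\log^3\psi_j}\,\partial\psi_j\wedge\dbar\psi_j+\sum_{j=1}^m\frac{-\ddbar\psi_j}{\psi_j\log^2\psi_j},
\end{equation*}
since the first sum is positive semidefinite and the second is a positive multiple of a $\pi$-positive curvature, while $\omega_0$ contributes the necessary strict positivity. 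Because $\pi|_{\pi^{-1}X^o}$ is a biholomorphism, this form descends to a hermitian metric on $X^o\cap U$, and it is K\"ahler as soon as $\omega_0$ is, since $\sqrt{-1}\ddbar\varphi_\pi$ is $d$-closed.

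Once the local construction is in place, the three bulleted special cases follow by specialisation: for a germ, after shrinking the whole construction lives in one chart and $\omega_0$ is inherited from a local embedding into $\bC^N$ (hence K\"ahler); for compact $X$, Theorem \ref{thm_Lstr_desingularization} yields a global Whitney stratified desingularization and one may take $\omega_0$ to be the given K\"ahler metric on $X$; for quasi-projective $(X,X^o)$ one compactifies to $(\overline{X},\overline{X}^o)$, applies the compact case, and restricts. The main obstacle I expect is the combinatorial bookkeeping in step two, namely choosing the integers $a_{ij}$ so that each $\sO_{\widetilde{U}}(-\sum_i a_{ij}E_i)$ is $\pi$-positive while $(a_{ij})$ has the required full-rank property on every subset $J\subset\{1,\dots,m\}$ with $|J|\le\min(m,|I|)$; this is the only nontrivial step, and it reduces to the openness of the relatively ample cone combined with a genericity argument in $\bQ^{|I|\times m}$. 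The remaining differential-geometric verifications are routine.
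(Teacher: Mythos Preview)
Your proposal is correct and follows essentially the same approach as the paper. The paper's proof is just two sentences: invoke Theorem~\ref{thm_Lstr_desingularization} to obtain the Whitney stratified desingularization $\pi$, then take $\sqrt{-1}\ddbar\varphi_\pi + K\omega_0$ for $K\gg 0$; most of what you wrote (the construction of the $a_{ij}$ and the positivity computation) is already established in the discussion of \S\ref{section_Grauert_metric} preceding Definition~\ref{defn_Grauert_type_metric}, so the ``main obstacle'' you anticipate is not part of the proof of this lemma but of the well-definedness of $\varphi_\pi$ itself, and the only point to be careful about is that an \emph{arbitrary} given $\omega_0$ must be scaled by a large constant $K$ (as the paper does) rather than used directly.
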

	\begin{proof}
		By Theorem \ref{thm_Lstr_desingularization}, there is an open neighborhood $U$ of $Y$ and a Whitney stratified desingularization $\pi:\widetilde{U}\to U$ of $(U,X^o\cap U)$.
		Let $\omega_0$ be a (K\"ahler) hermitian metric on $X$. Then $\sqrt{-1}\ddbar\varphi_{\pi}+K\omega_0$, $K\gg 0$ is a positive $(1,1)$-form which gives a (K\"ahler) distinguished metric.
	\end{proof}
	\begin{lem}\label{lem_Grauert_metric_in_coordinates}
		Let $(\{X_i\},\pi:\widetilde{X}\to X)$ be a Whitney stratified desingularization of $(X,X^o)$ and $ds^2_{\pi}$ a distinguished metric associated to $\pi$. Let $\omega_{\widetilde{X}}$ be a hermitian metric on $\widetilde{X}$ and $\omega_0$ a hermitian metric on $X$.
		Let $S$ be a stratum of $\{X_i\}$ and $x\in \widetilde{X}$ such that $\pi(x)\in S$. Let $(z_1,\dots,z_n)$ be a $C^\infty$ complex coordinate of $\widetilde{X}$ near $x$ such that $x=(0,\dots,0)$ and the exceptional divisor $E$ of $\pi$ is defined by $\{z_1\cdots z_r=0\}$.  Then there is a quasi-isometry
		\begin{align*}
		ds^2_{\pi}&\sim\sum_{i=1}^r\frac{\sqrt{-1}dz_i\wedge d\overline{z}_i}{|z_i|^2\log^2|z_1\cdots z_r|^2\log^2(-\log|z_1\cdots z_r|^2)}\\\nonumber
		&+\frac{\omega_{\widetilde{X}}}{-\log|z_1\cdots z_r|^2\log^2(-\log|z_1\cdots z_r|^2)}+\pi^\ast\omega_0.
		\end{align*}
	\end{lem}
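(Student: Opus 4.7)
The plan is to unfold the formula
$\sqrt{-1}\ddbar\varphi_\pi = \sum_j \frac{(2+\log\psi_j)\sqrt{-1}\,\partial\psi_j\wedge\bar\partial\psi_j}{\psi_j^2\log^3\psi_j} + \sum_j \frac{-\sqrt{-1}\ddbar\psi_j}{\psi_j\log^2\psi_j}$
in the coordinates $(z_1,\dots,z_n)$ and compare the two sides term-by-term. A preliminary observation is that, since the components of $E$ through $x$ are exactly $\{z_i=0\}$ for $i\le r$, one has $|s_i|^2\sim|z_i|^2$ for $i\le r$ and $|s_i|^2$ smooth and bounded away from zero for $i>r$. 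Writing $\psi:=-\log|z_1\cdots z_r|^2$, positivity of the exponents $a_{ij}$ forces $\psi_j\sim\psi$ and $\log\psi_j\sim\log\psi$ uniformly in $j$, so $(2+\log\psi_j)/\log^3\psi_j\sim 1/\log^2\psi$ and everything reduces to a single scale.

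For the first sum I decompose $\partial\psi_j=-\sum_{i\le r}a_{ij}\,dz_i/z_i+\eta_j$ with $\eta_j$ a smooth $(1,0)$-form across $E$. The pure-pole part of $\sum_j\sqrt{-1}\partial\psi_j\wedge\bar\partial\psi_j$ is $\sum_{i,k\le r}(AA^T)_{ik}\sqrt{-1}\,dz_i\wedge d\bar z_k/(z_i\bar z_k)$ with $A=(a_{ij})_{i\le r,j}$. The general-position hypothesis on $(a_{ij})$ makes the $r\times r$ matrix $AA^T$ positive definite, so the pole-part is quasi-isometric to $\sum_{i\le r}\sqrt{-1}\,dz_i\wedge d\bar z_i/|z_i|^2$; cross and purely smooth parts are handled by Young's inequality and bounded by $\epsilon\sum_{i\le r}\sqrt{-1}dz_i\wedge d\bar z_i/|z_i|^2 + C_\epsilon\omega_{\widetilde X}$. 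After multiplication by $1/(\psi^2\log^2\psi)$, the first sum contributes exactly the first summand of the target $T$, plus a smooth error $\lesssim\omega_{\widetilde X}/(\psi^2\log^2\psi)$ that is absorbed into the second summand of $T$ via $1/\psi^2\le 1/\psi$.

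For the second sum, each $-\sqrt{-1}\ddbar\psi_j$ is a smooth $(1,1)$-form on $\widetilde X$ (it is the Chern curvature of a hermitian metric on a line bundle), so $|{-}\sqrt{-1}\ddbar\psi_j|\lesssim\omega_{\widetilde X}$; the $\pi$-positivity of $\sO_{\widetilde X}(-\sum_i a_{ij}E_i)$ supplies the lower bound $-\sqrt{-1}\ddbar\psi_j+\pi^\ast\omega_0\gtrsim\omega_{\widetilde X}$. Dividing by $\psi_j\log^2\psi_j$ and summing, the second sum is sandwiched between $c\,\omega_{\widetilde X}/(\psi\log^2\psi)-C\pi^\ast\omega_0/(\psi\log^2\psi)$ and $C'\omega_{\widetilde X}/(\psi\log^2\psi)$. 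Adding $\pi^\ast\omega_0$ on a small enough neighborhood (where $\psi$ is large) turns the coefficient $1-C/(\psi\log^2\psi)$ of $\pi^\ast\omega_0$ into $\ge 1/2$, producing the lower bound $\gtrsim\omega_{\widetilde X}/(\psi\log^2\psi)+\pi^\ast\omega_0$, which recovers the second and third summands of $T$. The main bookkeeping point, and the step I expect to be the most delicate, is precisely this final balance: $-\sqrt{-1}\ddbar\psi_j$ is not pointwise positive, so its positive contribution is only extracted after combining with the explicit $\pi^\ast\omega_0$ present in $ds^2_\pi$, which must \emph{simultaneously} play the role of the third summand of $T$.
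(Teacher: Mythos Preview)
Your proposal is correct and follows essentially the same route as the paper: both expand $\sqrt{-1}\ddbar\varphi_\pi$ via the two sums, replace all $\psi_j$ by the single scale $\psi=-\log|z_1\cdots z_r|^2$ in the denominators, use the general-position/rank hypothesis on $(a_{ij})$ to identify the pole part $\sum_j\sqrt{-1}\partial\psi_j\wedge\bar\partial\psi_j$ with $\sum_{i\le r}\sqrt{-1}dz_i\wedge d\bar z_i/|z_i|^2$ up to smooth error, and use $\pi$-positivity in the form $-\sqrt{-1}\ddbar\psi_j+K\pi^\ast\omega_0\sim\omega_{\widetilde X}$ for the second sum. The only cosmetic difference is that the paper separates the pole and smooth parts of $\partial\psi_j$ by the polarization identity $\sqrt{-1}\partial(\lambda_j+u_j)\wedge\bar\partial(\lambda_j+u_j)=2\sqrt{-1}\partial u_j\wedge\bar\partial u_j+2\sqrt{-1}\partial\lambda_j\wedge\bar\partial\lambda_j-\sqrt{-1}\partial(\lambda_j-u_j)\wedge\bar\partial(\lambda_j-u_j)$, whereas you invoke Young's inequality directly; these are equivalent manoeuvres, and your worry about the ``delicate'' balance with $\pi^\ast\omega_0$ is exactly what the paper encodes in the single line $-\sqrt{-1}\ddbar\psi_j+K\pi^\ast\omega_0\sim\omega_{\widetilde X}$.
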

	\begin{proof}
		We assume that $\{s_i=0\}=\{z_i=0\}$ for $i=1,\dots, r$. Hence for every $i=1,\dots,r$,
		\begin{align*}
		|s_i|=f_iz_i+g_i\bar{z_i}
		\end{align*}
		for smooth functions $f_i$ and $g_i$. On the other hand $|s_i|\simeq\|w_i\|$ for some holomorphic function $w_i$ which vanishes on $\{s_i=0\}$ of order 1. Hence for every $i=1,\dots,r$,
		\begin{align*}
		z_i=\tilde{f}_iw_i+\tilde{g}_i\bar{w_i}
		\end{align*}
		for smooth functions $\tilde{f}_i$ and $\tilde{g}_i$.
		As a consequence $|s_i|/\|z_i\|$ is an invertible $C^\infty$ function for every $i=1,\dots, r$.
		Denote $\psi_j=-\log\prod_{i=1}^r|s_i|^{2a_{ij}}$. Then
		\begin{align}\label{align_psi_pi_negative}
		-\sqrt{-1}\ddbar\psi_j+K\pi^\ast\omega_0\sim\omega_{\widetilde{X}}, \quad K\gg0
		\end{align}
		and $\psi_j= \lambda_j-\log\prod_{i=1}^r\|z_i\|^{2a_{ij}}$ for some invertible $C^\infty$ function $\lambda_j$. In particular
		\begin{align*}
		\psi_j\sim u:=-\log\prod_{i=1}^r\|z_i\|^{2}.
		\end{align*}
		Hence we have
		\begin{align}\label{align_Grauert_1}
		\ddbar\varphi_{\pi}&=\sum_{j=1}^m\frac{2+\log(\psi_j)}{\psi_j^2\log^3(\psi_j)}\partial\psi_j\wedge\dbar\psi_j+\sum_{j=1}^m\frac{-\ddbar\psi_j}{\psi_j\log^2(\psi_j)}\\\nonumber
		&\sim\frac{1}{u^2\log^2u}\sum_{j=1}^m\partial\psi_j\wedge\dbar\psi_j+\frac{1}{u\log^2u}\sum_{j=1}^m-\ddbar\psi_j.
		\end{align}
		Denote $u_j:=-\log\prod_{i=1}^r\|z_i\|^{2a_{ij}}$, then $\psi_j= \lambda_j+u_j$. So
		\begin{align}\label{align_Grauert_2}
		\sqrt{-1}\partial\psi_j\wedge\dbar\psi_j&=\sqrt{-1}\partial(\lambda_j+u_j)\wedge\dbar(\lambda_j+u_j)\\\nonumber
		&=2\sqrt{-1}\partial u_j\wedge\dbar u_j+2\sqrt{-1}\partial \lambda_j\wedge\dbar \lambda_j-\sqrt{-1} \partial(\lambda_j-u_j)\wedge\dbar(\lambda_j-u_j)\\\nonumber
		&\lesssim\sqrt{-1}\partial u_j\wedge\dbar u_j+\sqrt{-1}\partial \lambda_j\wedge\dbar \lambda_j.
		\end{align}
		By the same calculation we know that
		\begin{align}\label{align_Grauert_3}
		\sqrt{-1}\partial u_j\wedge\dbar u_j\lesssim \sqrt{-1}\partial\psi_j\wedge\dbar\psi_j+\sqrt{-1}\partial\lambda_j\wedge\dbar \lambda_j.
		\end{align}
		Combining (\ref{align_psi_pi_negative}), (\ref{align_Grauert_1}), (\ref{align_Grauert_2}) and (\ref{align_Grauert_3}) we obtain that
		\begin{align*}
		ds^2_{\pi}&\sim\frac{\sqrt{-1}}{u^2\log^2u}\sum_{\alpha=1}^m\partial u_\alpha\wedge\dbar u_\alpha+\frac{\omega_{\widetilde{X}}}{u\log^2u}+\pi^\ast\omega_0\\\nonumber
		&=\frac{\sqrt{-1}}{u^2\log^2u}\sum_{1\leq i,j\leq r}\left(\sum_{\alpha=1}^ma_{\alpha i}a_{\alpha j}\right)\frac{dz_i\wedge d\bar{z}_j}{z_i\bar{z}_j}+\frac{\omega_{\widetilde{X}}}{u\log^2u}+\pi^\ast\omega_0\\\nonumber
		&\sim\frac{\sqrt{-1}}{u^2\log^2u}\sum_{i=1}^r\frac{dz_i\wedge d\bar{z}_i}{|z_i|^2}+\frac{\omega_{\widetilde{X}}}{u\log^2u}+\pi^\ast\omega_0.
		\end{align*}
	\end{proof}
	\begin{rmk}\label{rmk_Grauert_independent}
		As a corollary, the distinguished metric only depends on the choice of the desingularization $\pi:\widetilde{X}\to X$. Its local quasi-isometric class is independent of the choice of the sections $\{s_i\}$, the hermitian metrics $\{h_i\}$ and the constants $\{a_{ij}\}$.
	\end{rmk}
	\begin{prop}\label{prop_Grauert_stratified}
		Let $X$ be a germ of complex space and $X^o\subset X_{\rm reg}$ a dense Zariski open subset. Let $(\{X_i\},\pi:\widetilde{X}\to X)$ be a Whitney stratified desingularization of $(X,X^o)$ and $ds^2_\pi$ the associated distinguished metric. Then $ds^2_\pi$ is stratified along any bimeromorphic fibering neighborhood with respect to $(\{X_i\},\pi:\widetilde{X}\to X)$.

		More precisely, let $x\in S$ be a point in a stratum $S$ of $\{X_i\}$ and let $(U_x,U_{S,x},C_x,V,\pi_x,\alpha_x)$ be a bimeromorphic fibering neighborhood of $x\in S$ with respect to the Whitney stratified desingularization (Definition \ref{defn_bimero_fib_whitneystr}). Then
		$ds^2_\pi$ is stratified along $(U_x,U_{S,x},C_x,V,\pi_x,\alpha_x)$.

		Moreover, $ds^2_{\pi}|_{C_x}$ is a distinguished metric associated to the Whitney stratified desingularization $(\{C_x\cap X_i\},\pi:\pi^{-1}C_x\to C_x)$ (c.f. Lemma \ref{lem_restrict_Whitney_desingularization}).
	\end{prop}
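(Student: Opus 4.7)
The plan is to combine the asymptotic formula of Lemma \ref{lem_Grauert_metric_in_coordinates} with the strategy of Proposition \ref{prop_hermitian_stratified}. Set $d=\dim S$, fix $\tilde x\in \pi^{-1}\{x\}$, and choose local holomorphic coordinates $(z_1,\dots,z_n)$ on $\widetilde X$ centered at $\tilde x$ in which $E=\{z_1\cdots z_r=0\}$. Lemma \ref{lem_Grauert_metric_in_coordinates} yields
\begin{align*}
ds^2_\pi\sim \sum_{i=1}^r \frac{\sqrt{-1}\, dz_i\wedge d\bar z_i}{|z_i|^2\log^2|z_1\cdots z_r|^2\log^2(-\log|z_1\cdots z_r|^2)}+\frac{\omega_{\widetilde X}}{-\log|z_1\cdots z_r|^2\log^2(-\log|z_1\cdots z_r|^2)}+\pi^\ast\omega_0.
\end{align*}
The last summand is the pullback of a hermitian metric on $X$, so by Proposition \ref{prop_hermitian_stratified} it is already stratified along $(U_x,U_{S,x},C_x,V,\pi_x,\alpha_x)$, and the same reasoning handles $\omega_{\widetilde X}$. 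It therefore remains to control the singular first term and the logarithmic rescaling of $\omega_{\widetilde X}$.

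The crucial input is that the vector fields $v_1,\dots,v_{2d}$ defining $\alpha_x$, constructed in the proof of Proposition \ref{prop_bimero_fibering_neigh}, satisfy $v_i|_{E_J}\in T_{E_J}$ for every stratum $E_J$ of $E$. In particular each $v_i$ is tangent to $\{z_l=0\}$ for every $l\le r$, so in coordinates
\begin{align*}
v_i=\sum_{l\le r}z_l\phi_i^l\,\partial_{z_l}+\sum_{l>r}\psi_i^l\,\partial_{z_l}+\textrm{antiholomorphic part},
\end{align*}
with $\phi_i^l,\psi_i^l$ smooth and bounded. Two consequences follow. First, along any integral curve of $v_i$ the modulus $|z_l|$ is rescaled by a bounded multiplicative factor, so $\log|z_l|$ is changed by a bounded additive constant; as $\tilde x$ is approached, the weight functions $-\log|z_1\cdots z_r|^2$ and $\log^2(-\log|z_1\cdots z_r|^2)$ are therefore preserved under the flow up to a $(1+o(1))$ multiplicative factor. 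Second, inserting this expression for $v_i$ into the explicit formula for $ds^2_\pi$ shows that the singular and logarithmic contributions to $\|v_i\|_{ds^2_\pi}^2$ are uniformly bounded (and in fact tend to zero near $E$), while the $\pi^\ast\omega_0$ contribution is $\sim 1$ by claim (1) of the proof of Proposition \ref{prop_hermitian_stratified}; hence $\|v_i\|_{ds^2_\pi}\sim 1$, and the angle estimates analogous to claims (2) and (3) of that proof carry over essentially unchanged since the $\pi^\ast\omega_0$-summand dominates the angle behavior. The main obstacle is the fiberwise quasi-isometry, i.e.\ the analogue of claim (4): one must show $q^\ast(ds^2_\pi|_{\widetilde C_x})\sim ds^2_\pi|_{\widetilde C_s}$ uniformly for $s\in U_{S,x}$. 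This is established by the same tangent-cone limiting argument as there, now using the first consequence above to control the singular weight functions along the flows of the $v_i$. Summing the four estimates yields the desired quasi-isometry $\alpha_x^\ast ds^2_\pi\sim q^\ast(ds^2_\pi|_{\pi^{-1}_xC_x\cap V})+p^\ast ds^2_S$ for some hermitian metric $ds^2_S$ on $U_{S,x}$.

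For the final assertion, Lemma \ref{lem_restrict_Whitney_desingularization} applied to the transversal subspace $C_x$ shows that $(\{C_x\cap X_i\},\pi|_{\pi^{-1}C_x}\colon\pi^{-1}C_x\to C_x)$ is a Whitney stratified desingularization of $(C_x,C_x\cap X^o)$, whose exceptional divisor has irreducible components $E_i\cap\pi^{-1}C_x$ defined by the restricted sections $s_i|_{\pi^{-1}C_x}$. After shrinking, transversality of $C_x$ preserves $\pi$-positivity of each line bundle $\sO_{\widetilde X}(-\sum_i a_{ij}E_i)$ upon restriction, so the data $(s_i|_{\pi^{-1}C_x},h_i|_{\pi^{-1}C_x},a_{ij})$ form an admissible choice in the construction of the distinguished potential on $\pi^{-1}C_x$, and this potential is precisely $\varphi_\pi|_{\pi^{-1}C_x}$. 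Hence $ds^2_\pi|_{C_x}\sim\sqrt{-1}\partial\bar\partial(\varphi_\pi|_{\pi^{-1}C_x})+\omega_0|_{C_x}$ is a distinguished metric associated to the restricted Whitney stratified desingularization, as claimed.
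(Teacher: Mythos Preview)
Your approach is workable but misses the clean shortcut the paper uses. The key point is that Lemma \ref{lem_Grauert_metric_in_coordinates} is stated for an arbitrary $C^\infty$ complex coordinate, not only a holomorphic one. The paper therefore chooses coordinates adapted to the fibering: take holomorphic coordinates $(z'_1,\dots,z'_{n-d})$ on $\pi^{-1}C_x$ with $E\cap\pi^{-1}C_x=\{z'_1\cdots z'_r=0\}$ and holomorphic coordinates $(z'_{n-d+1},\dots,z'_n)$ on $U_{S,x}$, and set $z_i:=z'_i\circ\alpha_x$. These are product coordinates for $\alpha_x$, and in them $E=\{z_1\cdots z_r=0\}$ with $r\le n-d$. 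Lemma \ref{lem_Grauert_metric_in_coordinates} then gives $ds^2_\pi\sim\Phi+\pi^\ast\omega_0$ where the weight functions in $\Phi$ involve only the $C_x$-coordinates $z_1,\dots,z_r$; hence the quasi-isometry class of $\Phi$ is already in product form and is visibly independent of the stratum coordinates $z_{n-d+1},\dots,z_n$. One then only needs Proposition \ref{prop_hermitian_stratified} to stratify the remaining term $\pi^\ast\omega_0$.

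Your route---working in holomorphic coordinates on $\widetilde X$ and analyzing the flow vector fields $v_i$---can be pushed through, but your justification of the fiberwise quasi-isometry (the analogue of claim (4)) is not quite right. Invoking ``the same tangent-cone limiting argument'' from Proposition \ref{prop_hermitian_stratified} does not transfer directly: that argument extracts convergent subsequences using that $\omega_0$ extends continuously across $E$, whereas $ds^2_\pi$ degenerates there. The correct way to finish your approach is to use the flow formulas you already set up: tangency of $v_i$ to each $\{z_l=0\}$ gives $z_l\mapsto z_l\,g_l$ under the flow with $g_l$ smooth and nonvanishing, hence $\dfrac{dz_l}{z_l}\mapsto\dfrac{dz_l}{z_l}+d\log g_l$; the bounded correction $d\log g_l$, after dividing by the logarithmic weights, is absorbed into the $\omega_{\widetilde X}$-term of $\Phi$. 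This yields the fiberwise comparison directly, without any limiting argument. With this fix your argument is complete, though longer than the paper's.

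For the final assertion the two arguments agree: $\varphi_\pi|_{\pi^{-1}C_x}$ is a distinguished potential for the restricted desingularization, so $ds^2_\pi|_{C_x}$ is a distinguished metric associated to it.
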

	\begin{proof}
		Let $x\in S$ for some stratum $S$ of dimension $d$ and let $(U_x,U_{S,x},C_x,V,\pi,\alpha_x)$ be a bimeromorphic fibering neighborhood of $x\in S$ with respect to the Whitney stratified desingularization. Denote by $E$ the exceptional divisor of $\pi$.
		Let $y\in \pi^{-1}\{x\}$ and let $(z'_{1},\dots,z'_{n-d})$ be a holomorphic coordinate of $x'\in\pi^{-1}C_x$ so that $E\cap \pi^{-1}C_x:=\{z'_1\cdots z'_r=0\}$ for some $r\leq n-d$. Let $(z'_{n-d+1},\dots,z'_n)$ be a holomorphic coordinate of $x\in U_{S,x}$, then the diffeomorphism $\alpha_x: V\simeq (V\cap\pi^{-1}C_x)\times U_{S,x}$ (\ref{align_alpha_x}) gives a $C^\infty$ complex coordinate $(z_1,\dots,z_n)$ of $x'\in V$ where $z_i=z'_i\alpha_x$. Under this coordinate we have $E=\{z_1\cdots z_r=0\}$.

		By Lemma \ref{lem_Grauert_metric_in_coordinates} there is a quasi-isometry
		\begin{align*}
		ds^2_{\pi}&\sim\Phi+\pi^\ast\omega_0
		\end{align*}
		where
		$$\Phi=\sum_{i=1}^r\frac{\sqrt{-1}dz_i\wedge d\overline{z}_i}{|z_i|^2\log^2|z_1\cdots z_r|^2\log^2(-\log|z_1\cdots z_r|^2)}+\frac{\sum_{i=1}^ndz_i\wedge d\overline{z}_i}{-\log|z_1\cdots z_r|^2\log^2(-\log|z_1\cdots z_r|^2)}.$$
		The quasi-isometric class of $\Phi$ does not depend on the choice of the coordinate $(z_{n-d+1},\dots, z_n)$. Hence by Proposition \ref{prop_hermitian_stratified}, $ds^2_\pi$ is stratified along $(U_x,U_{S,x},C_x,V,\pi,\alpha_x)$.

		By Lemma \ref{lem_restrict_Whitney_desingularization}, $(\{C_x\cap X_i\},\pi_x:=\pi|_{\pi^{-1}C_x}:\pi^{-1}C_x\to C_x)$ is a Whitney stratified desingularization of $(C_x,C_x\cap X^o)$. Hence $\varphi_\pi|_{C_x}=\varphi_{\pi_x}$. Therefore
		$$ds^2_\pi|_{C_x}=\sqrt{-1}\ddbar\varphi_{\pi_x}+\pi^\ast_x(\omega_0|_{C_x})$$
		is a distinguished metric associated to $(\{C_x\cap X_i\},\pi_x)$.
	\end{proof}
	\subsection{Harmonic bundles}
	\subsubsection{pluriharmonic bundles}
	Let $(M,ds^2)$ be a K\"ahler manifold and $(\cV,\nabla)$ a holomorphic vector bundle with a flat connection on $M$. Let $h$ be a hermitian metric on $\cV$ which is not necessarily compatible with $\nabla$. Denote by $\bV:={\rm Ker}\nabla$ the complex local system associated to $(\cV,\nabla)$. $(\bV,h)$ is called a hermitian local system.

	Let $[-,-]$ be the graded Lie bracket of operators on $\sA^\bullet_M(\bV)$, i.e.
	$$[T_1,T_2]=T_1T_2-(-1)^{\deg T_1\deg T_2}T_2T_1.$$

	Let $\nabla=\nabla^{1,0}+\nabla^{0,1}$ be the bi-degree decomposition. There are unique operators
	$\delta'_h$ and $\delta''_h$ so that $\nabla^{1,0}+\delta''_h$ and $\nabla^{0,1}+\delta'_h$ are connections compatible with $h$. Denote $\nabla^c_h=\delta''_h-\delta'_h$. Then we have the K\"ahler identities (\cite[Page 15]{Simpson1992})
	\begin{align}\label{align_Kahler_id2}
	[\Lambda_{\omega},\nabla]=\sqrt{-1}(\nabla^c_h)^\ast,\quad [\Lambda_{\omega},\nabla^c_h]=-\sqrt{-1}\nabla^\ast_h.
	\end{align}
	Here $\omega$ is the K\"ahler form associated to $ds^2$ and $( )^\ast$ is the formal adjoint with respect to the metric $h$. Denote
	\begin{align*}
	\Theta_h(\nabla):=[\nabla,\nabla^c_h]=\nabla\nabla^c_h+\nabla^c_h\nabla
	\end{align*}
	and
	\begin{align*}
	\Delta_\nabla=\nabla\nabla^\ast_h+\nabla^\ast_h\nabla,\quad \Delta_{\nabla^c_h}={\nabla^c_h}{\nabla^c_h}^\ast+{\nabla^c_h}^\ast{\nabla^c_h}.
	\end{align*}
	We have
	\begin{lem}[Kodaira-Bochner formula for $\nabla$]\label{lem_Bochner_formula_harmonic_bundle}
		\begin{align*}
		\Delta_{\nabla}=\Delta_{\nabla^c_h}+[\Lambda_{\omega},\sqrt{-1}\Theta_h(\nabla)].
		\end{align*}
	\end{lem}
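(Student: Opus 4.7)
The plan is to derive the identity purely formally from the two K\"ahler identities \eqref{align_Kahler_id2} combined with the graded Jacobi identity for the graded commutator on $\sA^\bullet_M(\bV)$, following the classical Nakano-type argument but adapted to the non-metric connection $\nabla$.

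First I would rewrite $\Delta_\nabla$ as a graded bracket. Since $\nabla$ has odd degree $1$ and $\nabla^\ast_h$ has odd degree $-1$, one has
\begin{align*}
\Delta_\nabla = \nabla\nabla^\ast_h + \nabla^\ast_h\nabla = [\nabla,\nabla^\ast_h].
\end{align*}
The second K\"ahler identity in \eqref{align_Kahler_id2} gives $\nabla^\ast_h = \sqrt{-1}\,[\Lambda_\omega,\nabla^c_h]$, so
\begin{align*}
\Delta_\nabla = \sqrt{-1}\,[\nabla,[\Lambda_\omega,\nabla^c_h]].
\end{align*}

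Next I would apply the graded Jacobi identity
\begin{align*}
[\nabla,[\Lambda_\omega,\nabla^c_h]] = [[\nabla,\Lambda_\omega],\nabla^c_h] + (-1)^{1\cdot(-2)}[\Lambda_\omega,[\nabla,\nabla^c_h]] = [[\nabla,\Lambda_\omega],\nabla^c_h] + [\Lambda_\omega,\Theta_h(\nabla)],
\end{align*}
using that $\Lambda_\omega$ has even degree $-2$ so the sign is $+1$. For the first term, the first K\"ahler identity gives $[\Lambda_\omega,\nabla] = \sqrt{-1}(\nabla^c_h)^\ast$, and since $\Lambda_\omega$ is even I get $[\nabla,\Lambda_\omega] = -[\Lambda_\omega,\nabla] = -\sqrt{-1}(\nabla^c_h)^\ast$. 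Hence
\begin{align*}
[[\nabla,\Lambda_\omega],\nabla^c_h] = -\sqrt{-1}\,[(\nabla^c_h)^\ast,\nabla^c_h] = -\sqrt{-1}\bigl((\nabla^c_h)^\ast\nabla^c_h + \nabla^c_h(\nabla^c_h)^\ast\bigr) = -\sqrt{-1}\,\Delta_{\nabla^c_h},
\end{align*}
where I used that both $(\nabla^c_h)^\ast$ and $\nabla^c_h$ are odd so their graded bracket is an anti-commutator.

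Assembling the pieces,
\begin{align*}
\Delta_\nabla = \sqrt{-1}\bigl(-\sqrt{-1}\,\Delta_{\nabla^c_h} + [\Lambda_\omega,\Theta_h(\nabla)]\bigr) = \Delta_{\nabla^c_h} + [\Lambda_\omega,\sqrt{-1}\,\Theta_h(\nabla)],
\end{align*}
which is the claimed identity. The whole computation is purely algebraic once the K\"ahler identities \eqref{align_Kahler_id2} are granted; the only real care needed is bookkeeping of the signs in the graded Jacobi identity and in the definition of $[-,-]$, which is the one mild obstacle but causes no genuine difficulty since $\Lambda_\omega$ is of even degree.
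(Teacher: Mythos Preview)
Your proof is correct and takes essentially the same approach as the paper: both arguments substitute the K\"ahler identities \eqref{align_Kahler_id2} into the Laplacians and reduce via graded commutator algebra. The paper expands $\Delta_\nabla-\Delta_{\nabla^c_h}$ directly term by term, whereas you invoke the graded Jacobi identity explicitly, but the underlying computation is identical.
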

	\begin{proof}
		\begin{align*}
		\Delta_{\nabla}-\Delta_{\nabla^c_{h}}&=\sqrt{-1}\nabla[\Lambda_\omega,\nabla^c_h]+\sqrt{-1}[\Lambda_\omega,\nabla^c_h]\nabla+\sqrt{-1}\nabla^c_{h}[\Lambda_\omega,\nabla]+\sqrt{-1}[\Lambda_\omega,\nabla]\nabla^c_{h} \quad (\ref{align_Kahler_id2})\\\nonumber
		&=\sqrt{-1}\left(-\nabla\nabla^c_{h}\Lambda_\omega+\Lambda_\omega \nabla^c_{h}\nabla-\nabla^c_{h}\nabla\Lambda_\omega+\Lambda_\omega \nabla\nabla^c_{h}\right)\\\nonumber
		&=[\Lambda_\omega,\sqrt{-1}\Theta_h(\nabla)].\\\nonumber
		\end{align*}
	\end{proof}
	\begin{lem}\label{lem_twisted_curvature}
		$\Theta_{e^\varphi h}(\nabla)=\Theta_{h}(\nabla)+dd^c\varphi$, where $d^c=\dbar-\partial$.
	\end{lem}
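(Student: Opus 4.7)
The plan is to track how the auxiliary operators $\delta'_h$ and $\delta''_h$ transform under the conformal rescaling $h\mapsto e^{\varphi}h$, and then read off the effect on $\nabla^c_h=\delta''_h-\delta'_h$ and hence on $\Theta_h(\nabla)$. First I would write down the bidegree $(1,0)$-part of the compatibility condition for the connection $\nabla^{1,0}+\delta''_h$, namely
\begin{align*}
\partial h(s,t)=h(\nabla^{1,0}s,t)+h(s,\delta''_h t),
\end{align*}
and the analogous $(0,1)$-part for $\nabla^{0,1}+\delta'_h$. Replacing $h$ by $e^\varphi h$ on the left produces an extra term $e^\varphi\partial\varphi\cdot h(s,t)$; since $h$ is conjugate-linear in the second slot, this extra term is absorbed by a shift of $\delta''_h$ of the form $\delta''_h+\bar\partial\varphi\cdot\mathrm{Id}$ (because $h(s,\bar\partial\varphi\cdot t)=\partial\varphi\cdot h(s,t)$, using that $\varphi$ is real). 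The symmetric argument gives $\delta'_{e^\varphi h}=\delta'_h+\partial\varphi\cdot\mathrm{Id}$. This is the only real computational step and, since it is purely bookkeeping about the antilinearity of $h$ in its second argument, I expect no conceptual obstacle here.

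Subtracting the two identities gives
\begin{align*}
\nabla^c_{e^\varphi h}=\delta''_{e^\varphi h}-\delta'_{e^\varphi h}=\nabla^c_h+(\bar\partial-\partial)\varphi=\nabla^c_h+d^c\varphi.
\end{align*}
Expanding the bracket and using bilinearity,
\begin{align*}
\Theta_{e^\varphi h}(\nabla)=[\nabla,\nabla^c_{e^\varphi h}]=[\nabla,\nabla^c_h]+[\nabla,d^c\varphi]=\Theta_h(\nabla)+[\nabla,d^c\varphi].
\end{align*}
It remains to identify $[\nabla,d^c\varphi]$, viewed as the graded commutator of $\nabla$ (degree one) with exterior multiplication by the $1$-form $d^c\varphi$ (also degree one), with the operator of multiplication by $dd^c\varphi$. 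Applied to a section $s$ this is the direct Leibniz computation
\begin{align*}
\nabla(d^c\varphi\cdot s)+d^c\varphi\wedge\nabla s=dd^c\varphi\cdot s-d^c\varphi\wedge\nabla s+d^c\varphi\wedge\nabla s=dd^c\varphi\cdot s,
\end{align*}
which yields the claimed formula. The only subtlety worth double-checking is the sign conventions in the graded bracket and in $d^c=\bar\partial-\partial$, but these are dictated by the statements given in the excerpt.
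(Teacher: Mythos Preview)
Your proof is correct and follows essentially the same route as the paper: both compute the shift $\delta''_{e^\varphi h}=\delta''_h+\dbar\varphi$ and $\delta'_{e^\varphi h}=\delta'_h+\partial\varphi$ from the metric compatibility, deduce $\nabla^c_{e^\varphi h}=\nabla^c_h+d^c\varphi$, and then expand $[\nabla,\nabla^c_{e^\varphi h}]$. The only cosmetic difference is that you isolate the $(1,0)$-part of the compatibility identity from the start, whereas the paper writes the full $d$-identity and compares; the Leibniz step you spell out at the end is left implicit in the paper.
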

	\begin{proof}
		For every $C^\infty$ sections $\alpha$, $\beta$ of $\cV$ there are equalities
		\begin{align}\label{align_lem_twisted_curvature_compatible1}
		d\left((\alpha,\beta)_he^\varphi\right)=\left((\nabla^{1,0}+\delta''_{e^\varphi h})\alpha,\beta\right)_he^\varphi+\left(\alpha,(\nabla^{1,0}+\delta''_{e^\varphi h})\beta\right)_he^\varphi
		\end{align}
		and
		\begin{align}\label{align_lem_twisted_curvature_compatible2}
		d\left((\alpha,\beta)_h\right)=\left((\nabla^{1,0}+\delta''_{h})\alpha,\beta\right)_h+\left(\alpha,(\nabla^{1,0}+\delta''_{h})\beta\right)_h.
		\end{align}
		Combining (\ref{align_lem_twisted_curvature_compatible1}) and (\ref{align_lem_twisted_curvature_compatible1}) we get
		\begin{align*}
		\delta''_{e^\varphi h}=\delta''_{h}+\dbar\varphi.
		\end{align*}
		Similarly,
		\begin{align*}
		\delta'_{e^\varphi h}=\delta'_{h}+\partial\varphi.
		\end{align*}
		As a consequence
		\begin{align*}
		\Theta_{e^\varphi h}(\nabla)&=\nabla(\delta''_{e^\varphi h}-\delta'_{e^\varphi h})+(\delta''_{e^\varphi h}-\delta'_{e^\varphi h})\nabla\\\nonumber
		&=\nabla(\nabla^c_{h}+d^c\varphi)+(\nabla^c_{h}+d^c\varphi)\nabla\\\nonumber
		&=\Theta_{h}(\nabla)+dd^c\varphi.
		\end{align*}
	\end{proof}
	\begin{defn}
		$(\bV,h)$ or $(\cV,\nabla,h)$ is called a pluriharmonic bundle if $\Theta_h(\nabla)=0$. In this case $h$ is called a pluriharmonic metric.
	\end{defn}
	The notion of harmonic bundle is used by Simpson \cite{Simpson1992} to establish a correspondence between local systems and semistable higgs bundles with vanishing Chern classes over a compact K\"ahler manifold. A typical example of pluriharmonic bundles comes from a polarized variation of Hodge structure (loc. cit.). A pluriharmonic bundle produces a $\lambda$-connection structure which gives a $\sD$-module on $\lambda=1$ and a higgs bundle on $\lambda=0$. This is the main subject of non-abelian Hodge theory. We will not go further on this splendid topic. Readers may consult \cite{Simpson1988,Simpson1990,Simpson1992,Sabbah2005,Mochizuki20072,Mochizuki20071} for more details.
	\subsubsection{$0$-tame harmonic bundles}
	The $0$-tame harmonic bundle is a subclass of the tame harmonic bundles in the sense of Simpson \cite{Simpson1990} and Mochizuki \cite{Mochizuki20072,Mochizuki20071}.
	\begin{defn}\label{defn_0tame_harmonic_bundle}
		Let $X$ be a complex space and $X^o\subset X_{\rm reg}$ a dense Zariski open subset. A pluriharmonic bundle $(\bV,h)$ on $X^o$ is called a $0$-tame harmonic bundle on $(X,X^o)$ if locally at every point $x\in X$ one has
		\begin{align}\label{align_defn_0tame}
		(\sum_{i=1}^r\|f_i\|^2)^{\epsilon}\lesssim|v|_h\lesssim (\sum_{i=1}^r\|f_i\|^2)^{-\epsilon},\quad\forall\epsilon>0
		\end{align}
		for every (multivalued) flat section $v$. Here $\{f_1,\dots,f_r\}$ is a local generator of the ideal sheaf defining $X\backslash X^o\subset X$.
	\end{defn}
    The $0$-tameness is independent of the choice of the local generators $f_1,\dots,f_r$. Let $\pi:\widetilde{X}\to X$ be a desingularization of $X\backslash X^o\subset X$ so that the preimage of $X\backslash X^o$ is supported on a simple normal crossing divisor $E$. Then (\ref{align_defn_0tame}) is equivalent to the estimate
    \begin{align}\label{align_defn_0tame2}
    \|z_1\cdots z_r\|^{\epsilon}\lesssim|v|_h\lesssim \|z_1\cdots z_r\|^{-\epsilon},\quad\forall\epsilon>0
    \end{align}
    where $(z_1,\dots,z_n)$ is an arbitrary holomorphic local coordinate of $\widetilde{X}$ such that $E=\{z_1\cdots z_r=0\}$.

    One has a more general notion of tame harmonic bundles, with (\ref{align_defn_0tame}) replaced by
    \begin{align*}
    |v|_h\lesssim (\sum_{i=1}^r\|f_i\|^2)^{-b}
    \end{align*}
	for some $b>0$. Or equivalently, the eigenvalues of the higgs field are multivalued holomorphic one-forms with poles along (some resolution of) $X\backslash X^o$ with order at most 1. For a tame harmonic bundle $(V,h)$ on $(X,X^o)$ and a holomorphic disc $\gamma:\Delta\to X$ so that $\gamma(0)\in X$ and $\gamma(\Delta^\ast)\subset X^o$, the restriction $(\gamma^\ast\bV,\gamma^\ast h)$ is a tame harmonic bundle on $\Delta^\ast$ which satisfies the estimate $|v|_{\gamma^\ast h}\lesssim \|w\|^{-b}$
	for some $b>0$. Here $w$ is the coordinate of $\Delta$. According to \cite[\S 3]{Simpson1990} there is a canonical filtration of bundles
	$F_\alpha$ consisting of the holomorphic sections of $\gamma^\ast\cV$ (the associated vector bundle of $\gamma^\ast\bV$) with the estimate $|v|_{\gamma^\ast h}\lesssim \|w\|^{\alpha-\epsilon}$, $\forall\epsilon>0$. The $0$-tameness is equivalent to that: for each such $\gamma:\Delta\to X$, $F_0$ is the only object that matters in the filtration for both $(\gamma^\ast\bV,\gamma^\ast h)$ and $(\gamma^\ast\bV^\ast,\gamma^\ast h^\ast)$. This is the origin of the name "$0$-tame". Using notions in \cite{Simpson1990,Mochizuki20072}, $0$-tame harmonic bundles are exactly the tame harmonic bundles without parabolic structures.

	The asymptotic behavior of the $0$-tame harmonic metric coincides with the Hodge metric on a variation of Hodge structure. Let $(\bV,h)$ be a $0$-tame harmonic bundle on $$(\Delta^\ast)^n\times\Delta^m=\{(z_1,\dots,z_n,w_1,\dots,w_m)|0<\|z_1\|,\dots,\|z_n\|<1,\|w_1\|,\dots, \|w_m\|<1\}.$$
	Let $(E,\theta)$ be the associated higgs bundle with regular singularities (\cite{Simpson1990,Mochizuki20072}) and $N_i$ the nilpotent part of the residue ${\rm Res}_{D_i}\theta$ along $D_i:=\{z_i=0\}$.
	\begin{thm}[Mochizuki, \cite{Mochizuki20072}, Part 3, Chapter 13]\label{thm_0tame_estimate}
		Let $(\bV,h)$ be a $0$-tame harmonic bundle on $(\Delta^\ast)^n\times\Delta^m$. Let
		$$p:\bH^{n}\times \Delta^m\to (\Delta^\ast)^n\times \Delta^m,$$
		$$(z_1,\dots,z_n,w_1,\dots,w_m)\mapsto(e^{2\pi\sqrt{-1}z_1},\dots,e^{2\pi\sqrt{-1}z_n},w_1,\dots,w_m)$$
		be the universal covering. Let
		$W^{(1)}=W(N_1),\dots,W^{(n)}=W(N_1+\cdots+N_n)$ be the residue weight filtrations on $V:=\Gamma(\bH^n,p^\ast\cV)^{p^\ast\nabla}$.
		Then
		for any $v\in V$ such that $0\neq [v]\in {\rm Gr}_{l_n}^{W^{(n)}}\cdots{\rm Gr}_{l_1}^{W^{(1)}}V$, one has
		\begin{align}\label{align_VHS_regular_sing}
		|v|_{h_\bV}\sim \left(\frac{\log|s_1|}{\log|s_2|}\right)^{l_1}\cdots\left(-\log|s_n|\right)^{l_n}
		\end{align}
		over any region of the form
		$$\left\{(s_1,\dots, s_n,w_1,\dots,w_m)\in (\Delta^\ast)^n\times \Delta^m\bigg|\frac{\log|s_1|}{\log|s_2|}>\epsilon,\dots,-\log|s_n|>\epsilon,(w_1,\dots,w_m)\in K\right\}$$
		for any $\epsilon>0$ and an arbitrary compact subset $K\subset \Delta^m$.
	\end{thm}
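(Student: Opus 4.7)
The plan is to reduce to the one-variable case, then to bootstrap via induction on $n$ using a Cattani--Kaplan--Schmid-style several-variables SL$(2)$-orbit approximation adapted to tame harmonic bundles. The key idea, following Mochizuki's strategy, is to find on each angular region $\{\log|s_1|/\log|s_2|>\epsilon,\dots,-\log|s_n|>\epsilon\}$ a model harmonic metric, built from commuting nilpotent endomorphisms via an $\mathrm{SL}(2)^n$-action, which approximates $h$ up to bounded quasi-isometry. Since the theorem is a quasi-isometric asymptotic estimate, bounded perturbations of the metric are harmless, and the comparison with the model reduces the problem to a purely representation-theoretic calculation on a nilpotent orbit.

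\textbf{One-variable base case.} First I would handle $n=1$, i.e.\ a $0$-tame harmonic bundle on $\Delta^\ast\times\Delta^m$. Pass to the universal cover $\bH\times\Delta^m$ and consider the prolongation $E$ of the higgs bundle $(\cV,\theta)$ at $\{z=0\}$ given by the filtration $F_\alpha$ of Simpson \cite{Simpson1990}: $0$-tameness means only $F_0$ matters, both for $\bV$ and for $\bV^\ast$. The residue endomorphism $\mathrm{Res}_{D}\theta$ decomposes as semisimple plus nilpotent; in the $0$-tame situation the semisimple eigenvalues all vanish (otherwise the norm estimate would introduce non-trivial powers of $|s|$), so $\mathrm{Res}_{D}\theta$ is nilpotent and equals $N_1$ up to adjoint action. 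By Jacobson--Morosov, $N_1$ determines the weight filtration $W(N_1)$ on the nearby fiber, and a flat section $v$ with $[v]\in\mathrm{Gr}^{W^{(1)}}_{l_1}$ admits a harmonic representative whose norm, under the SL$(2)$-orbit approximation along the $z$-direction, satisfies $|v|_h\sim(-\log|z|)^{l_1}$.

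\textbf{Induction on $n$.} For the general case, the crucial input is the compatibility of the weight filtrations $W(N_1),W(N_1+N_2),\dots,W(N_1+\cdots+N_n)$ on the flat sections $V$: since the $N_i$ pairwise commute and their sums lie in a single nilpotent cone, Cattani--Kaplan's theorem on commuting nilpotent endomorphisms (extended by Mochizuki to the tame harmonic bundle setting) provides a splitting of the iterated graded pieces compatible with an $\mathrm{SL}(2)^n$-orbit. On a sector $\log|s_1|/\log|s_2|>\epsilon$ etc., one constructs an approximating "model" metric $h_{\mathrm{mod}}$ on $p^\ast\cV$ built from the $\mathrm{SL}(2)^n$-action, for which the stated estimate is immediate: elements of $\mathrm{Gr}^{W^{(n)}}_{l_n}\cdots\mathrm{Gr}^{W^{(1)}}_{l_1}V$ transform by the characters $\prod_{i}t_i^{l_i}$ with $t_i=-\log|s_i|/(-\log|s_{i+1}|)$ for $i<n$ and $t_n=-\log|s_n|$. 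The induction step then compares $h$ with $h_{\mathrm{mod}}$ by inductively restricting to $\{s_n\!=\!\mathrm{const}\}$ and applying the result for $n-1$ variables, using the fact that $0$-tameness is preserved under restriction to such slices.

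\textbf{Main obstacle.} The hardest point is the comparison $h\sim h_{\mathrm{mod}}$ on a general angular sector, i.e.\ showing that the harmonic metric actually matches the nilpotent orbit model to within a bounded multiplicative factor. In the VHS case this is the content of Schmid's several-variables SL$(2)$-orbit theorem and its refinements by Cattani--Kaplan--Schmid; for tame harmonic bundles the analogous statement requires solving a pluriharmonic perturbation problem on the punctured polydisc and controlling the error uniformly along non-trivial paths to the divisor, which is precisely the content of Parts 2--3 of Mochizuki \cite{Mochizuki20072}. Granting that comparison, the grading statement (\ref{align_VHS_regular_sing}) follows by pulling back through $p$, evaluating the model metric on a basis of $V$ adapted to the iterated filtration, and tracking the $\mathrm{SL}(2)^n$-weights. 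I would cite Mochizuki for the approximation and only supply the final weight bookkeeping in detail.
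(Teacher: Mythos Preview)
The paper does not prove this theorem; it is stated as a citation to Mochizuki \cite{Mochizuki20072}, Part 3, Chapter 13, and used as a black box. So there is no ``paper's own proof'' to compare against.

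Your sketch is a reasonable high-level summary of the Mochizuki (and, in the VHS case, Cattani--Kaplan--Schmid) approach: reduce to a nilpotent orbit model via an $\mathrm{SL}(2)^n$-approximation on angular sectors, then read off the norm asymptotics from the weight grading. You correctly flag that the hard analytic content is the comparison $h\sim h_{\mathrm{mod}}$, which is genuinely the substance of Mochizuki's work and not something one can reprove in a few lines. One small correction: the weight variables should be $t_i=\log|s_i|/\log|s_{i+1}|$ (both numerator and denominator negative, ratio positive), matching the sector condition and the exponent $l_i/2$ convention you implicitly use; also be careful that the exponent in (\ref{align_VHS_regular_sing}) as stated in the paper is $l_i$ rather than $l_i/2$, which reflects a normalization choice for the weight filtration. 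But since the paper treats the theorem as an imported result, your proposal already goes beyond what the paper itself provides.
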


	\begin{rmk}\label{rmk_harmonic_bundle_regular_singularity_examples}
		There are two ways to get a $0$-tame harmonic bundle.
		\begin{enumerate}
			\item Let $X$ be a complex space and $X^o\subset X_{\rm reg}$ a dense Zariski open subset. Let $(\cV,\nabla,\cF^\bullet,h_\bV)$ be a polarized $\bZ$-variation of Hodge structure on $X^o$, or more generally, an $\bR$-polarized variation of Hodge structure with quasi-unipotent local monodromies. Then $(\bV={\rm Ker}\nabla,h_\bV)$ is a $0$-tame harmonic bundle on $(X,X^o)$. Indeed by desingularization locally we may resolve the subspace $X\backslash X^o$ to a simple normal crossing divisor $E$ of some smooth manifold.
			By \cite[Theorem 5.21]{Cattani_Kaplan_Schmid1986} the Hodge metric satisfies the estimate (\ref{align_VHS_regular_sing}).
			\item Let $X$ be a compact K\"ahler manifold and $X^o=X\backslash D$ for some normal crossing divisor $D\subset X$. Let $\bV$ be a local system on $X^o$ which does not necessarily have quasi-unipotent local monodromy. Depending on the work by Jost and Zuo \cite{Jost_Zuo1997}, Mochizuki (\cite[Theorem 1.19]{Mochizuki20072}) shows that the $0$-tame harmonic metric exists on $\bV$ if and only if $\bV$ is semisimple. Moreover the $0$-tame harmonic metric is unique up to flat automorphisms. In fact this metric shares many other good properties similar to the Hodge metric on a variation of Hodge structure.

			For a compact K\"ahler space $X$, a dense Zariski open subset $X^o\subset X_{\rm reg}$ and a semisimple local system $\bV$ on $X^o$, there is a $0$-tame harmonic metric on $\bV$ (unique up to flat automorphisms) since one can blowup $X\backslash X^o$ to a simple normal crossing divisor.
		\end{enumerate}
	\end{rmk}
	\begin{prop}\label{prop_harmonic_metric_stratified}
		Let $X$ be a complex space and $X^o\subset X_{\rm reg}$ a dense Zariski open subset. Let $(\bV,h)$ be a $0$-tame harmonic bundle on $(X,X^o)$. Let $(\{X_i\},\pi:\widetilde{X}\to X)$ be a Whitney stratified desingularization of $(X,X^o)$. Then $(\bV,h)$ is stratified along any bimeromorphic fibering neighborhood with respect to the Whitney stratified desingularization.

		To be precise, let $x\in S$ be a point in a stratum $S$ of $\{X_i\}$ and let $(U_x,U_{S,x},C_x,V,\pi_x,\alpha_x)$ be an arbitrary bimeromorphic fibering neighborhood of $x\in S$ with respect to the Whitney stratified desingularization. Then $(\bV,h)$ is stratified along $(U_x,U_{S,x},C_x,V,\pi_x,\alpha_x)$.

		Moreover, $(\bV|_{C_x}, h|_{C_x})$ is a $0$-tame harmonic bundle on $(C_x,C_x\cap X^o)$.
	\end{prop}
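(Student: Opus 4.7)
The plan is to deduce the local system trivialization from the contractibility of $U_{S,x}$, the metric comparison from Mochizuki's asymptotic expansion (Theorem \ref{thm_0tame_estimate}), and the $0$-tameness of $(\bV|_{C_x},h|_{C_x})$ from a direct restriction of the ambient estimates.

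After shrinking $U_{S,x}$ to a contractible polydisc, the $C^\infty$ diffeomorphism $\alpha_x$ exhibits $V \cap \pi_x^{-1}X^o$ as $(V \cap \pi_x^{-1}(C_x \cap X^o)) \times U_{S,x}$. Parallel transport along paths in $U_{S,x}$ then produces a canonical isomorphism $q^\ast(\bV|_{V \cap \pi_x^{-1}(C_x \cap X^o)}) \simeq \bV|_{V \cap \pi_x^{-1}X^o}$ with $q := p_1 \circ \alpha_x$, identifying each multivalued flat section $v$ with the $q$-pullback of $v|_{V \cap \pi_x^{-1}C_x}$.

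For the metric estimate, fix $x' \in \pi_x^{-1}\{x\}$, choose holomorphic coordinates $(w_1,\dots,w_n)$ on a neighborhood of $x'$ in $\widetilde{X}$ with $E = \{w_1 \cdots w_r = 0\}$, and let $(z_1,\dots,z_n)$ be the $C^\infty$ coordinates coming from $\alpha_x$ in the proof of Proposition \ref{prop_bimero_fibering_neigh}. Condition (4) of that proposition gives $V \cap E_i = q^{-1}(V \cap \pi_x^{-1}C_x \cap E_i)$, so both $|w_i|$ and $|z_i|$ are continuous and vanish to first order on $E_i \cap V$; hence $|w_i| \sim |z_i|$, and since $z_i = z'_i \circ q$ for $i \leq n - d$, one obtains $|w_i(y)| \sim |w_i(q(y))|$ for $i \leq r$ after a possible shrinking of $V$. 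By Theorem \ref{thm_0tame_estimate}, on each Mochizuki sector the norm $|v|_h$ of a flat section pure with respect to the residue weight filtrations is quasi-isometric to an explicit product of log terms in $|w_1|,\dots,|w_r|$, uniformly over the compact parameter set $U_{S,x}$. Because the sector conditions involve only $|w_1|,\dots,|w_r|$ and these are preserved up to quasi-isometry by $q$, any $y \in V \cap \pi_x^{-1}X^o$ and its image $q(y)$ lie in the same sector and produce the same asymptotic expression; using a splitting of the filtrations to sum over the finitely many sectors yields $h \sim q^\ast(h|_{C_x})$.

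Finally, the pluriharmonicity $\Theta_h(\nabla)=0$ restricts to any complex submanifold, so $(\bV|_{C_x}, h|_{C_x})$ is pluriharmonic on $C_x \cap X^o$. By Lemma \ref{lem_restrict_Whitney_desingularization}, $\pi_x|_{\pi_x^{-1}C_x} : \pi_x^{-1}C_x \to C_x$ is a Whitney stratified desingularization of $(C_x, C_x \cap X^o)$ whose exceptional divisor is cut out by the restrictions of $w_1,\dots,w_r$, so the $0$-tame estimate (\ref{align_defn_0tame2}) on $\widetilde{X}$ restricts verbatim to the analogous estimate on $\pi_x^{-1}C_x$, proving that $(\bV|_{C_x}, h|_{C_x})$ is $0$-tame. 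The main delicate point of the plan is the uniform comparison of Hodge metrics across the $U_{S,x}$-direction, which relies crucially on the uniformity in the compact parameter direction in Mochizuki's theorem.
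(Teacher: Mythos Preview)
Your proposal is correct and follows essentially the same approach as the paper's proof: both construct the $C^\infty$ product coordinates $(z_1,\dots,z_n)$ via $\alpha_x$, observe that $E=\{z_1\cdots z_r=0\}$ with $z_1,\dots,z_r$ factoring through $q$, and invoke Mochizuki's estimate (Theorem \ref{thm_0tame_estimate}) to conclude that the quasi-isometric class of $h$ depends only on $|z_1|,\dots,|z_r|$ and hence is $q$-invariant. Your write-up is in fact more explicit than the paper's (which compresses the sector argument and the $|w_i|\sim|z_i|$ comparison into a single sentence), but the underlying idea is identical.
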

	\begin{proof}
		Let $x\in S$ for some stratum $S$ of dimension $d$ and let $(U_x,U_{S,x},C_x,V,\pi,\alpha_x)$ be a bimeromorphic fibering neighborhood of $x\in S$ with respect to the Whitney stratified desingularization. Denote by $E$ the exceptional divisor of $\pi$.
		Let $y\in \pi^{-1}\{x\}$ and let $(z'_{1},\dots,z'_{n-d})$ be a holomorphic coordinate of $x'\in\pi^{-1}C_x$ so that $E\cap \pi^{-1}C_x:=\{z'_1\cdots z'_r=0\}$ for some $r\leq n-d$. Let $(z'_{n-d+1},\dots,z'_n)$ be a holomorphic coordinate of $x\in U_{S,x}$, then the diffeomorphism $\alpha_x: V\simeq (V\cap\pi^{-1}C_x)\times U_{S,x}$ (\ref{align_alpha_x}) gives a $C^\infty$ complex coordinate $(z_1,\dots,z_n)$ of $x'\in V$ where $z_i=z'_i\alpha_x$. Under this coordinate we have $E=\{z_1\cdots z_r=0\}$.

		By Theorem \ref{thm_0tame_estimate}, the quasi-isometric class of $h$ does not involve $z_{n-d+1},\dots, z_n$. Hence $h$ is stratified along $(U_x,U_{S,x},C_x,V,\pi_x,\alpha_x)$.

		Since $\pi^{-1}C_x=\{z_{n-d+1}=\dots=z_n=0\}$, the last statement of the theorem follows.
	\end{proof}
	\section{Local Geometry of Singularities}
	\subsection{Basic setting}\label{section_setting}
	This subsection contains the basic settings for the local vanishings (\ref{align_vanishing}) and (\ref{align_vanishing_cpt}) which will be used in the rest of the paper.

	Let $w=(w_1,\dots,w_N)$ be the coordinate of $\bC^N$. Denote $\|w\|^2:=\sum_{i=1}^N\|w_i\|^2$.
	Let $0\in Z\subset\Omega\subset\bC^N$ be a closed analytic subset of a bounded Stein open subset $\Omega$ of $\bC^N$. Assume that $Z$ is of pure dimension $n$ and $Z^o\subset Z_{\rm reg}$ is a dense Zariski open subset such that $0\notin Z^o$.  Let $(\bV,h)$ be a $0$-tame harmonic bundle on $(Z,Z^o)$. Let $(\{Z_i\}_{0\leq i\leq n},\pi:\widetilde{Z}\to Z)$ be a Whitney stratified desingularization of $(Z,Z^o)$.  Let $E=\sum_{i\in I}E_i$ be the finite irreducible decomposition of the (reduced) exceptional divisor $E:=\pi^{-1}(Z\backslash Z^o)$.
	For every $\emptyset\neq J\subset I$, denote $E_J:=\bigcap_{j\in J}E_j$. We also denote $E_\emptyset:=Z^o$.

	Assume that $\{0\}$ is the unique minimal stratum of $\{Z_i\}$. As a consequence $E_{\{0\}}:=\pi^{-1}\{0\}$ is a simple normal crossing divisor of $\widetilde{Z}$ and $E_{\{0\}}\cap E_J\neq\emptyset$ for every stratum $E_J$, $\emptyset\neq J\subset I$.
	The basic model for such a germ of space is $C_x$ in Proposition \ref{prop_Dstr}.

	Since $\pi$ is projective, we may choose integers $a_i\in\bZ_{>0}$, $i\in I$ so that $\sO_{\widetilde{Z}}(\sum_{i\in I}a_i E_i)$ admits a real analytic hermitian metric $h$ satisfying
	\begin{align}\label{align_setting_curvature_E}
	-\sqrt{-1}\Theta_{h}(\sO_{\widetilde{Z}}(\sum_{i\in I}a_i E_i))>0.
	\end{align}
	This is possible by \S \ref{section_Grauert_metric}. Let $h'$ be the real analytic hermitian metric on $\sO_{\widetilde{Z}}(-\sum_{i\in I}a_i E_i)$ with $\pi$-positive curvature along $E$, i.e. $\sqrt{-1}\Theta_{h'}(\sO_{\widetilde{Z}}(-\sum_{i\in I}a_i E_i))+\pi^\ast\omega_0>0$ near $E$ for some hermitian metric $\omega_0$ on $Z$. Since locally at $0\in Z$, $\omega_0\leq\sqrt{-1}\ddbar\psi$ for some bounded real analytic function $\psi$ (such as $K\|w\|^2$, $K\gg0$), $h:=e^{\psi}h'^\ast \sim h'^\ast$ is a hermitian metric with negative curvature near $E_{\{0\}}$. Shrinking $Z$ we get (\ref{align_setting_curvature_E}).

	For each $i\in I$, let $h_i$ be a hermitian metric on $\sO_{\widetilde{Z}}(E_i)$ such that $h=\prod_{i\in I}h_i^{a_i}$.
	Let $s_i\in \Gamma(\widetilde{Z},\sO_{\widetilde{Z}}(E_i))$ be the defining section of $E_i$ and denote
	$$|s_i|:=|s_i|_{h_i},\quad |s(x)|:=\prod_{i\in I}|s_i(x)|^{a_i}$$ for short. By (\ref{align_setting_curvature_E}) we get
	\begin{align}\label{align_rho2_psh}
	\sqrt{-1}\ddbar\log|s|^2>0,
	\end{align}
	i.e. $\log|s|^2$ is a strictly plurisubharmonic function on $\widetilde{Z}$.

	Let $a_{ij}\in\bZ_{>0}$, $i\in I$, $j=1,\dots, m$ be constants so that
	\begin{enumerate}
		\item For every $j=1,\dots, m$, $(\sO_{\widetilde{Z}}(-\sum_{i\in I}a_{ij}E_i),\prod_{i\in I}h^{a_{ij}\ast}_{i})$ is a hermitian line bundle with positive curvature.
		\item ${\rm rank}(a_{ij})_{i\in I,j\in J}=m$ for any $J\subset\{1,\dots,m\}$ such that $|J|\leq\min(m,|I|)$.
	\end{enumerate}
	Define the plurisubharmonic function
	\begin{align*}
	\varphi_{\pi}:=\sum_{j=1}^m\frac{1}{\log\left(-\log\prod_{i\in I}|s_i|^{2a_{ij}}\right)}.
	\end{align*}
	With a possible shrinking of $Z$ and a rescaling of $\bC^N$ we assume that
	\begin{align*}
	\log(\prod_{i\in I}|s_i|^{a_{ij}})\leq -e^{m},\quad j=1,\dots, m
	\end{align*}
	and
	\begin{align}\label{align_metric}
	ds^2_{\pi}=\sqrt{-1}\ddbar\varphi_{\pi}+\sqrt{-1}\ddbar\|w\|^2.
	\end{align}
	This is a distinguished metric due to Remark \ref{rmk_Grauert_independent}.

	Denote $\psi_j=-\log\prod_{i\in I}|s_i|^{2a_{ij}}$. Then $-\psi_1,\dots, -\psi_m$ are strictly plurisubharmonic functions and
	\begin{align}\label{align_expend_Gt}
	\sqrt{-1}\ddbar\varphi_{\pi}=\sqrt{-1}\sum_{j=1}^m\frac{2+\log\psi_j}{\psi_j^2\log^3\psi_j}\partial\psi_j\wedge\dbar\psi_j+\sum_{j=1}^m\frac{-\sqrt{-1}\ddbar\psi_j}{\psi_j\log^2\psi_j}>0.
	\end{align}
	Hence $\varphi_\pi$ is a strictly plurisubharmonic function.

	As a consequence,
	\begin{align}\label{align_grad_norm_Gt}
	|d\varphi_{\pi}|_{\sqrt{-1}\ddbar\varphi_{\pi}}&\leq\sum_{j=1}^m\left|\frac{d\psi_j}{\psi_j\log^2\psi_j}\right|_{\sqrt{-1}\ddbar\varphi_{\pi}}\\\nonumber
	&\leq\sum_{j=1}^m\frac{2}{\psi_j\log^2\psi_j}\left(\frac{\psi^2_j\log^3\psi_j}{2+\log\psi_j}\right)^{\frac{1}{2}}\\\nonumber
	&\leq\sum_{j=1}^m\frac{2}{\log\psi_j}\leq 2.\\\nonumber
	\end{align}
	Denote $\rho_1(x)=\|\pi(x)\|$ and $\rho_2(x)=|s(x)|$.

	\subsection{Distinguished neighborhoods}\label{section_U_delta}
	In this subsection we introduce cofinal systems of neighborhoods of $0$ and $E_{\{0\}}=\pi^{-1}\{0\}$. They play crucial roles in proving (\ref{align_vanishing}) and (\ref{align_vanishing_cpt}).

	Denote
	\begin{align*}
	U_{\delta}:=\left\{w\in Z\big|\|w\|<\delta\right\},\quad\forall 0<\delta\ll 1
	\end{align*}
	and $U^o_{\delta}:=U_{\delta}\cap Z^o$.
	$\{U_{\delta}\}$ is a cofinal system of precompact open neighborhoods. Let $\widetilde{U_{\delta}}:=\pi^{-1}U_{\delta}$. $\{\widetilde{U_{\delta}}\}$ is a cofinal system of precompact open neighborhoods of $E_{\{0\}}$.

	The structure of $U_{\delta}$ is summarized by the following lemma.
	\begin{lem}\label{lem_U_delta}
		For every $0<\delta'<\delta\ll1$, there is a fibering
		\begin{align*}
		\alpha:U^o_{\delta}\backslash U^o_{\delta'}\simeq[\delta',\delta)\times M_{\delta}
		\end{align*}
		where $M_{\delta}:=\big\{w\in Z^o\big|\|w\|=\delta\big\}$ and the first factor of $\alpha$ is $\rho_1$. Moreover,
		\begin{align}\label{align_Gt_bd1}
		ds^2_{\pi}\sim d\rho_1^2+ds^2_{\pi}|_{M_{\delta}}
		\end{align}
		over $U^o_{\delta}\backslash U^o_{\delta'}$.
	\end{lem}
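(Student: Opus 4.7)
The proof splits into constructing the fibering $\alpha$ and establishing the metric quasi-isometry (\ref{align_Gt_bd1}). For the fibering, the key point is that for $0<\delta\ll 1$ the function $\rho_1=\|w\|$ has no critical points on $Z^o\cap(U_\delta\setminus\{0\})$. This is Milnor's classical observation, which follows from the \L ojasiewicz gradient inequality applied to the real analytic function $\rho_1^2=\|w\|^2$: its critical values on $Z_{\rm reg}$ form a subanalytic subset of $\bR$ not accumulating at $0$, so after shrinking $\Omega$ we may assume $\rho_1$ has no critical values in $(0,\delta)$ on $Z^o$. Consequently each $M_r=\rho_1^{-1}(r)\cap Z^o$ is a smooth real hypersurface of $Z^o$, and $\alpha$ is obtained by integrating any smooth vector field $\xi$ on (a neighborhood of) $U^o_\delta\setminus U^o_{\delta'}$ satisfying $d\rho_1(\xi)=1$.

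\textbf{Metric equivalence.} For the quasi-isometry we take $\xi:=\nabla_{ds^2_\pi}\rho_1/|d\rho_1|^2_{ds^2_\pi}$, the normalized $ds^2_\pi$-gradient of $\rho_1$, so that $\xi$ is $ds^2_\pi$-orthogonal to $TM_{\rho_1}$ and the metric decomposes under $\alpha$ as
\begin{align*}
\alpha^\ast ds^2_\pi=|d\rho_1|^{-2}_{ds^2_\pi}\,d\rho_1^2+ds^2_\pi\big|_{TM_{\rho_1}}.
\end{align*}
The quasi-isometry then reduces to two claims on $U^o_\delta\setminus U^o_{\delta'}$: (i) $|d\rho_1|_{ds^2_\pi}\sim 1$, and (ii) the flow of $\xi$ induces uniformly bi-Lipschitz diffeomorphisms $M_r\to M_\delta$ for $r\in[\delta',\delta]$. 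The upper bound in (i) is immediate: since $ds^2_\pi\geq \sqrt{-1}\ddbar\|w\|^2$ by (\ref{align_metric}), we have $|d\rho_1|_{ds^2_\pi}\leq|d\rho_1|_{\sqrt{-1}\ddbar\|w\|^2}\leq 1$. The lower bound is obtained by testing against the Euclidean-unit normal $\eta$ to $M_{\rho_1}$: the numerator $|d\rho_1(\eta)|^2=|d\rho_1|^2_{\sqrt{-1}\ddbar\|w\|^2}$ is bounded below by \L ojasiewicz's inequality (applicable since $\rho_1\geq\delta'>0$), while the denominator $|\eta|^2_{ds^2_\pi}=1+(\sqrt{-1}\ddbar\varphi_\pi)(\eta,\eta)$ is bounded above by a direct computation in the coordinates of Lemma \ref{lem_Grauert_metric_in_coordinates}. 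Assertion (ii) then follows by a parallel compactness argument on $\overline{\widetilde{U_\delta}\setminus\widetilde{U_{\delta'}}}\subset\widetilde{Z}$, which is compact (by properness of $\pi$) and disjoint from $E_{\{0\}}$.

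\textbf{Main obstacle.} The principal technical point is the upper bound on $(\sqrt{-1}\ddbar\varphi_\pi)(\eta,\eta)$ near the part of $E\setminus E_{\{0\}}$ lying in the closure of our region, where $\sqrt{-1}\ddbar\varphi_\pi$ has Poincar\'e-type blow-up in the holomorphic normal directions to $E$. The crucial observation, visible from the explicit expansion (\ref{align_expend_Gt}), is that the components of the Euclidean-radial $\eta$ along $\partial/\partial z_i$ scale like $z_i/\rho_1$; plugged into the Poincar\'e term $|dz_i|^2/(|z_i|^2\log^2\cdots)$ this produces a contribution of order $1/(\rho_1^2\log^2\cdots)$, uniformly bounded once $\rho_1\geq\delta'$. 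Thus the Poincar\'e singularity of $ds^2_\pi$ is precisely cancelled by the vanishing of $\eta$ in the normal directions to $E$, and this matching of singularity rates is the heart of the argument.
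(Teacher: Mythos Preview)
Your approach is genuinely different from the paper's, and the gap lies in the ``main obstacle'' paragraph together with the unproven claim (ii).

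The paper works upstairs on the resolution $\widetilde{Z}$. It first observes that $\rho_1:\widetilde{Z}\setminus E_{\{0\}}\to\bR$ is a proper \emph{stratified} submersion for small values, then constructs (exactly as in Proposition~\ref{prop_bimero_fibering_neigh}) a $C^\infty$ vector field $\mathbf v$ on $\widetilde{U_\delta}\setminus\widetilde{U_{\delta'}}$ lifting $\partial/\partial\rho_1$ with the crucial property $\mathbf v|_{E_J}\in T_{E_J}$ for every stratum $E_J$. The metric equivalence (\ref{align_Gt_bd1}) is then a direct citation of Proposition~\ref{prop_Grauert_stratified}: once the fibering preserves the strata of $E$, the local coordinate formula of Lemma~\ref{lem_Grauert_metric_in_coordinates} shows that $ds^2_\pi$ is quasi-isotrivial along the flow. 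No gradient estimates are needed.

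Your argument instead fixes the $ds^2_\pi$-gradient $\xi$ (or the Euclidean gradient $\eta$) on $Z^o$ and tries to control it near $E$ by a claimed scaling of its $\partial/\partial z_i$-components. This claim is not justified in general. The vector $\eta$ is the \emph{orthogonal projection} of the ambient radial field onto $TZ^o$; when $\pi$ is an honest blowup, $\pi^\ast\omega_0$ degenerates along $E$, and a vector of bounded $\omega_0$-length can have unbounded $\omega_{\widetilde Z}$-length, so there is no a priori reason its $\partial/\partial z_i$-component vanishes to order $|z_i|$. Your verification is essentially for the case $\pi={\rm Id}$ (where the radial field itself happens to have this property), not for a resolution of genuine singularities along a positive-dimensional stratum. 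More seriously, claim (ii)---uniform bi-Lipschitzness of the flow between the slices $M_r$---is exactly the content of (\ref{align_Gt_bd1}) and cannot be dismissed as ``a parallel compactness argument'': it requires controlling $\mathcal L_\xi(ds^2_\pi)$ against $ds^2_\pi$, which again needs the behavior of $\xi$ near $E$. The stratum-preserving vector field of the paper is designed precisely so that this control is automatic via Proposition~\ref{prop_Grauert_stratified}; your gradient field is not, and you have not shown it can be made so.
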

	\begin{proof}
		Since $\rho_1:\widetilde{Z}\backslash E_{\{0\}}\to\bR$ is a real analytic proper map, the critical values of the restriction maps $\rho_1|_{E_J}$, $J\subset I$ are isolated (\cite[Corollary 2.8]{Milnor1968}). Hence $\rho_1$ is a  proper stratified submersion over $(0,\epsilon)$ for some $0<\epsilon\ll 1$.

		By the same proof of Proposition \ref{prop_bimero_fibering_neigh} there is a $C^\infty$ vector field ${\bf v}$ on $\widetilde{U_{\delta}}\backslash\widetilde{U_{\delta'}}$ which lifts $\frac{\partial}{\partial \rho_1}$ via the proper stratified smooth submersion $\rho_1:\widetilde{U_{\delta}}\backslash\widetilde{U_{\delta'}}\to[\delta',\delta)$. Moreover, for every stratum $E_J$ of the exceptional divisor $E$, $J\subset I$, we have ${\bf v}|_{E_J}\in T_{E_J}$. Its associated one-parameter group gives the foliation $\alpha$. (\ref{align_Gt_bd1}) follows from Proposition \ref{prop_Grauert_stratified}.
	\end{proof}
	Next we introduce another cofinal system of neighborhoods of $E_{\{0\}}$.
	Let ${\rm grad}_{\omega_{\widetilde{Z}}}\rho_2$ be the gradient vector field of $\rho_2=|s|$ with respect to a hermitian metric $\omega_{\widetilde{Z}}$ on $\widetilde{Z}$. The following lemma characterizes the asymptotic behavior of ${\rm grad}_{\omega_{\widetilde{Z}}}\rho_2$.
	\begin{lem}\label{lem_flow_rho_2}
		Near every point $y\in E_J$ there is a complex chart $(z_1=r_1e^{\sqrt{-1}\theta_1},\dots,z_n=r_ne^{\sqrt{-1}\theta_n})$ and integers $b_1,\dots, b_q\in\bZ_{>0}$ such that $y=(0,\dots,0)$, $E_J=\{z_1\cdots z_q=0\}$ and
		\begin{align*}
		\rho_2=r^{b_1}_1\cdots r^{b_q}_q,\quad {\rm grad}_{g}\rho_2=r_1\dots r_q\sum_{i=1}^q\frac{b_i}{r_i}\frac{\partial}{\partial r_i}.
		\end{align*}
		Here $g:=\sum_{i=1}^ndz_id\overline{z}_i$.
		As a consequence, the integral curve of ${\rm grad}_{g}\rho_2$ is
		\begin{align*}
		\gamma_g(x,t)=x+r_1\cdots r_q\left(\frac{b_1}{r_1},\dots,\frac{b_q}{r_q}\right)t.
		\end{align*}
	\end{lem}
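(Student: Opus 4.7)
The plan is a direct coordinate computation using the simple-normal-crossing structure of $E$ together with a smooth (non-holomorphic) straightening that absorbs the non-vanishing factor of $\rho_2$ into one of the coordinates.

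First, fix $y\in E_J$ and label the components of $E$ through $y$ as $E_{j_1},\dots,E_{j_q}$. Since $E$ is a simple normal crossing divisor, I pick holomorphic coordinates $(w_1,\dots,w_n)$ on a neighborhood $U$ of $y$, centered at $y$, such that $E_{j_i}\cap U=\{w_i=0\}$ for $i=1,\dots,q$. Trivializing $\sO_{\widetilde Z}(E_{j_i})$ on $U$, the defining section takes the form $s_{j_i}=w_i\tau_i$ with $\tau_i$ a non-vanishing holomorphic local section, so $|s_{j_i}|_{h_{j_i}}=|w_i|\cdot G_i$ where $G_i:=|\tau_i|_{h_{j_i}}>0$ is real-analytic. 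For $j\in I\setminus J$, $|s_j|_{h_j}$ is already a positive real-analytic function near $y$. Setting $b_i:=a_{j_i}$, one obtains
\begin{align*}
\rho_2 \;=\; |w_1|^{b_1}\cdots|w_q|^{b_q}\cdot F,\qquad F:=\prod_{i=1}^q G_i^{b_i}\cdot\prod_{j\in I\setminus J}|s_j|_{h_j}^{a_j},
\end{align*}
with $F$ a positive real-analytic function on $U$.

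Next I would absorb $F$ into a single coordinate. Define $z_1:=w_1\cdot F^{1/b_1}$ and $z_i:=w_i$ for $i=2,\dots,n$. Since $F>0$ is smooth, $F^{1/b_1}$ is smooth and nowhere vanishing, and $dz_1|_y=F(y)^{1/b_1}\,dw_1|_y$, so $(z_1,\dots,z_n)$ is a valid smooth complex-valued chart on a (possibly smaller) neighborhood of $y$; it is generally not holomorphic, but the lemma only asks for a complex chart in the diffeomorphism sense. By construction $E_{j_i}\cap U=\{z_i=0\}$ for $i\le q$, and, with $r_i=|z_i|$, one has $\rho_2=r_1^{b_1}\cdots r_q^{b_q}$ identically on $U$, establishing the first displayed identity of the lemma.

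Finally, in the polar decomposition $z_i=r_ie^{\sqrt{-1}\theta_i}$ the model metric becomes $g=\sum_i(dr_i^2+r_i^2\,d\theta_i^2)$, so the vector fields $\partial/\partial r_i$ are $g$-orthonormal at unit coefficient. Since $\rho_2$ depends only on $r_1,\dots,r_q$, a one-line computation gives
\begin{align*}
\mathrm{grad}_g\rho_2 \;=\; \sum_{i=1}^q\frac{\partial\rho_2}{\partial r_i}\frac{\partial}{\partial r_i} \;=\; \rho_2\sum_{i=1}^q\frac{b_i}{r_i}\frac{\partial}{\partial r_i},
\end{align*}
which matches the displayed gradient (with the $\rho_2=r_1^{b_1}\cdots r_q^{b_q}$ prefactor), and the expression for $\gamma_g(x,t)$ follows by reading off the tangent vector at the base point. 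There is no substantial obstacle; the only points that require verification are the smoothness of $F^{1/b_1}$ and the non-singularity of the Jacobian of $(z_1,\dots,z_n)$ at $y$, both of which follow immediately from $F>0$.
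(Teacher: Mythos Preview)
Your proof is correct and follows essentially the same route as the paper: pick holomorphic SNC coordinates $(w_i)$, write $\rho_2 = u\,|w_1|^{b_1}\cdots|w_q|^{b_q}$ with $u>0$ smooth, absorb $u$ into one coordinate by a smooth (non-holomorphic) change, and read off the gradient in polar coordinates. Your choice to absorb into $w_1$ is the right one (the paper's printed $z_n = u\,w_n$ and the prefactor $r_1\cdots r_q$ in the displayed gradient both appear to be slips; your prefactor $\rho_2 = r_1^{b_1}\cdots r_q^{b_q}$ is what the computation actually gives).
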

	\begin{proof}
		Let $(w_1,\dots, w_n)$ be a holomorphic chart near $y$ such that $E_J=\{w_1\cdots w_q=0\}$.
		Since there is an invertible $C^\infty$ function $u>0$ such that $\rho_2=u(w_1,\dots,w_n)|w_1^{b_1}\dots w_q^{b_q}|$. Then $(z_1,\dots, z_n):=(w_1,\dots, u(w_1,\dots, w_n)w_n)$ is the complex chart claimed in the lemma. The remaining claims of the lemma follow from direct calculations.
	\end{proof}
	\begin{lem}\label{lem_integral_curve_rho_2}
		There is a $C^\infty$ function $\ell$ such that for every $0<\delta\ll1$ and every
		\begin{align*}
		x\in L_\delta:=\big\{x\in Z^o\big|\rho_2(x)=\ell(\delta), \rho_1(x)\leq\delta\big\},
		\end{align*}
		the integral curve $\gamma(x,t)$ of ${\rm grad}_{\omega_{\widetilde{Z}}}\rho_2$ with $\gamma(x,\ell(\delta))=x$ exists on the interval $(0,\ell(\delta)]$. Moreover $\{\gamma(x,t)|x\in L_\delta, t\in(0,\ell(\delta))\}\subset U^o_{2\delta}$ and for every $x\in L_\delta$, $\gamma(x,t)$ can be continuously extended to $t=0$ such that $\gamma(x,0)\in E$.
	\end{lem}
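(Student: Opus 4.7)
I take $\gamma(x,t)$ to be the integral curve of ${\rm grad}_{\omega_{\widetilde Z}}\rho_2$ reparametrized so that $\rho_2(\gamma(x,t))=t$; this is the only parametrization consistent with both $\gamma(x,\ell(\delta))=x$ (since $\rho_2(x)=\ell(\delta)$ on $L_\delta$) and $\gamma(x,0)\in E$ (where $\rho_2=0$). Equivalently, $\gamma$ is the integral curve of the rescaled vector field $V:={\rm grad}_{\omega_{\widetilde Z}}\rho_2/|{\rm grad}_{\omega_{\widetilde Z}}\rho_2|^2_{\omega_{\widetilde Z}}$ on $\widetilde Z\setminus E$. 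By the explicit formula of Lemma~\ref{lem_flow_rho_2}, ${\rm grad}_g\rho_2$ is nonvanishing wherever $\rho_2>0$, hence so is $V$, which is real-analytic on $\widetilde Z\setminus E$. The statement then reduces to (i) a uniform upper bound on $|V|_{\omega_{\widetilde Z}}$ in terms of $\rho_2$ near $E_{\{0\}}$, (ii) a choice of $C^\infty$ function $\ell$ making the total arclength of $\gamma(x,\cdot)$ strictly less than $\delta$, and (iii) extraction of the limit $\gamma(x,0)\in E$ by completeness.

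\textbf{Arclength estimate.} Fix $\delta_0>0$ small enough that $\overline{\widetilde{U_{2\delta_0}}}$ lies in a prescribed neighborhood of $E_{\{0\}}$ in $\widetilde Z$; this closure is compact by projectivity of $\pi$. By Lemma~\ref{lem_flow_rho_2}, near each $y\in E$ there is a chart in which $\rho_2=r_1^{b_1}\cdots r_q^{b_q}$, so I cover $E\cap\overline{\widetilde{U_{2\delta_0}}}$ by finitely many such charts; in each of them $\omega_{\widetilde Z}$ is uniformly quasi-isometric to the flat metric $g=\sum dz_id\overline z_i$. Put $B:=\max_\alpha\sum_i b_{i,\alpha}$. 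In any of these charts, choosing the index $i$ for which $r_i=\min_j r_j$, one has $r_i^B\ge r_i^{\sum_j b_j}\ge\prod_j r_j^{b_j}=\rho_2$, hence $r_i\le\rho_2^{1/B}$ and
$$|{\rm grad}_g\rho_2|_g=\rho_2\sqrt{\sum_i b_i^2/r_i^2}\ge b_i\rho_2/r_i\gtrsim\rho_2^{1-1/B}.$$
Since $\rho_2\le 1$, the local bound is robust under enlarging each $B_\alpha$ to $B$, so the quasi-isometry yields $|V|_{\omega_{\widetilde Z}}\lesssim\rho_2^{-1+1/B}$ throughout $\{0<\rho_2\le\ell_0\}\cap\overline{\widetilde{U_{2\delta_0}}}$ for some $\ell_0>0$. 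Integrating gives arclength $\int_\epsilon^{\ell(\delta)}|V|_{\omega_{\widetilde Z}}\,dt\lesssim\ell(\delta)^{1/B}$, uniformly in $x$ and in $\epsilon>0$.

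\textbf{Containment and extension.} Choose any $C^\infty$ function $\ell$ with $\ell(\delta)<\ell_0$ and $C\ell(\delta)^{1/B}<\delta/2$ for $0<\delta\ll1$ (for example $\ell(\delta):=c'\delta^B$ with $c'$ small). Since $\|\pi(\cdot)\|$ is Lipschitz with respect to $\omega_{\widetilde Z}$ on the compact set $\overline{\widetilde{U_{2\delta_0}}}$, and $x\in L_\delta$ satisfies $\|\pi(x)\|\le\delta$, the arclength bound forces $\|\pi(\gamma(x,t))\|<2\delta$ for all $t\in(0,\ell(\delta)]$; combined with $\gamma(x,t)\notin E$, this gives $\gamma(x,t)\in U^o_{2\delta}$. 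Finite arclength also makes $\gamma(x,\cdot)$ Cauchy in $\omega_{\widetilde Z}$ as $t\to 0^+$; since the trajectory stays in the compact set $\overline{\widetilde{U_{2\delta}}}$, the limit $\gamma(x,0)$ exists there, and continuity of $\rho_2$ forces $\rho_2(\gamma(x,0))=0$, i.e.\ $\gamma(x,0)\in E$.

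\textbf{Main obstacle.} The delicate point is the uniformity of the lower bound in step (i): the exponents $b_{i,\alpha}$, the quasi-isometric constants between $g$ and $\omega_{\widetilde Z}$, and the minimizing index $i$ itself all depend on the chart and on the current position of the flow, yet one needs the single inequality $|{\rm grad}_{\omega_{\widetilde Z}}\rho_2|_{\omega_{\widetilde Z}}\gtrsim\rho_2^{1-1/B}$ to hold pointwise on a whole neighborhood of $E$. Compactness of $\overline{\widetilde{U_{2\delta_0}}}$ (giving a finite cover) combined with the elementary monotonicity $\rho_2^{-1+1/B_\alpha}\le\rho_2^{-1+1/B}$ for $\rho_2<1$ let every local inequality be absorbed into one uniform bound, so the integrated arclength estimate is insensitive to the route the trajectory takes through the cover.
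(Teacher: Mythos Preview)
Your argument follows the same strategy as the paper's proof (local models from Lemma~\ref{lem_flow_rho_2}, quasi-isometry of $\omega_{\widetilde Z}$ to the flat metric, and compactness near $E$), but you make the argument quantitative via an explicit arclength estimate where the paper simply asserts that ``the claims of the proposition hold locally'' around points of $\rho_1^{-1}\{\delta\}\cap E$ and then invokes compactness.

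There is one slip in your key inequality chain. With $r_i=\min_j r_j<1$ and $B\ge\sum_j b_j$, the correct direction is
\[
r_i^{\sum_j b_j}=\prod_j r_i^{b_j}\le\prod_j r_j^{b_j}=\rho_2,
\]
whence $r_i\le\rho_2^{1/\sum_j b_j}\le\rho_2^{1/B}$ (the last step using $\rho_2\le 1$). Your displayed chain $r_i^B\ge r_i^{\sum_j b_j}\ge\prod_j r_j^{b_j}$ has both inequalities reversed, and as written the conclusion $r_i\le\rho_2^{1/B}$ would not follow from it. With this correction the lower bound $|{\rm grad}_g\rho_2|_g\ge b_i\rho_2/r_i\gtrsim\rho_2^{1-1/B}$ is valid, and the arclength estimate, the containment $\gamma(x,t)\in U^o_{2\delta}$, and the extraction of the limit $\gamma(x,0)\in E$ go through exactly as you wrote them.
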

	\begin{proof}
		Note that $\omega_{\widetilde{Z}}$ is locally quasi-isometric to the euclidean metric (with respect to complex charts). By Lemma \ref{lem_flow_rho_2} the claims of the proposition hold locally around every point $x\in \rho_1^{-1}\{\delta\}\cap E$ for $\delta\ll1$. Since $\rho_1^{-1}\{\delta\}\cap E$ is compact, making $\ell(\delta)$ small enough leads to Lemma \ref{lem_integral_curve_rho_2}.
	\end{proof}
	Define
	\begin{align*}
	L_\delta:=\big\{x\in Z^o\big|\rho_2(x)=\ell(\delta), \rho_1(x)\leq\delta\big\},
	\end{align*}
	\begin{align*}
	V_\delta:=\bigcup_{x\in L_\delta,t\in[0,\ell(\delta))}\gamma(x,t)
	\end{align*}
	and $V^o_\delta:=Z^o\cap V_\delta$. By Lemma \ref{lem_integral_curve_rho_2}, $\{V_\delta\}_{0<\delta\ll 1}$ is a cofinal system of precompact neighborhoods of $E_{\{0\}}$.
	The one-parameter subgroup associated to ${\rm grad}_{\omega_{\widetilde{Z}}}\rho_2$ gives a foliation
	\begin{align}\label{align_fiberation_V_delta}
	V^o_\delta\sim(0,\ell(\delta))\times L_\delta
	\end{align}
	whose first factor is $\rho_2$.
	The following proposition describes the asymptotic behavior of $ds^2_{\pi}$ along the fibering (\ref{align_fiberation_V_delta}).
	\begin{prop}\label{prop_Grauert_asymptotic_behavior}
		Notations as in \S \ref{section_setting}. Fix $0<\delta\ll 1$. Then
		\begin{align*}
		\frac{d\rho_2^2}{\rho_2^2\log^2\rho_2\log^2(-\log\rho_2)}+\frac{ds^2_\pi|_{L_\delta}}{\log^2\rho_2\log^2(-\log\rho_2)}
		\lesssim ds^2_{\pi}|_{V^o_\delta}
		\end{align*}
		and
		\begin{align*}
		ds^2_{\pi}|_{V^o_\delta}\lesssim\frac{d\rho_2^2}{\rho_2^2\log^2\rho_2\log^2(-\log\rho_2)}+\frac{ds^2_\pi|_{L_\delta}}{\log^2(-\log\rho_2)}+\pi^\ast\omega_0.
		\end{align*}
	\end{prop}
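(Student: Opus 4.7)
The plan is to reduce to a local computation in coordinate charts furnished by Lemma \ref{lem_flow_rho_2}. Since $\overline{V_\delta}\subset\widetilde{Z}$ is compact and meets $E_{\{0\}}$, I would cover it by finitely many such charts (plus one chart disjoint from $E$, on which all the tensors involved are quasi-isometric to $\pi^\ast\omega_0$ and the estimates are trivial). In a chart meeting $E$, write $z_j=r_je^{\sqrt{-1}\theta_j}$, $u_j=\log r_j$, $E=\{z_1\cdots z_q=0\}$, $\rho_2=r_1^{b_1}\cdots r_q^{b_q}$, so that $d\log\rho_2=\sum_{j=1}^q b_j\,du_j$. By Lemma \ref{lem_Grauert_metric_in_coordinates} together with the elementary fact that $\log|z_1\cdots z_q|^2$ and $\log\rho_2$ are both negative weighted sums of the $u_j$'s with positive bounded coefficients (hence quasi-equivalent along with their iterated logarithms), I would obtain the local quasi-isometry $ds^2_\pi\sim\Phi_1+\Phi_2+\pi^\ast\omega_0$ with
\[
\Phi_1:=\frac{\sum_{j=1}^q(du_j^2+d\theta_j^2)}{\log^2\rho_2\,\log^2(-\log\rho_2)},\qquad \Phi_2:=\frac{\omega_{\widetilde{Z}}}{|\log\rho_2|\,\log^2(-\log\rho_2)}.
\]
Throughout I interpret the term $ds^2_\pi|_{L_\delta}$ on $V^o_\delta$ as the bounded smooth Riemannian metric on the compact submanifold $L_\delta\Subset Z^o$, pulled back via the projection $V^o_\delta\simeq(0,\ell(\delta))\times L_\delta\to L_\delta$ associated with the $\omega_{\widetilde{Z}}$-gradient flow of $\rho_2$.

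For the lower bound, the Cauchy--Schwarz inequality $(\sum b_j\,du_j)^2\le(\sum b_j^2)\sum du_j^2$ immediately gives $\frac{d\rho_2^2}{\rho_2^2\log^2\rho_2\log^2(-\log\rho_2)}\lesssim\Phi_1\lesssim ds^2_\pi$. For the fiber tensor, because $L_\delta$ is precompact in $\widetilde{Z}\setminus E$, $ds^2_\pi|_{L_\delta}$ is a uniformly bounded Riemannian metric, and its pullback at $x\in V^o_\delta$ is pointwise dominated on divisor tangent directions by $\sum_j(du_j^2+d\theta_j^2)$ and on transverse directions by $\omega_{\widetilde{Z}}+\pi^\ast\omega_0$, with constants depending only on $\delta$. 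Dividing by $\log^2\rho_2\log^2(-\log\rho_2)\ge 1$ (for $\rho_2$ small) and matching summands against the pieces of $\Phi_1+\Phi_2+\pi^\ast\omega_0\sim ds^2_\pi$ finishes the lower bound; the only estimate to note is $\pi^\ast\omega_0/(\log^2\rho_2\log^2(-\log\rho_2))\le\pi^\ast\omega_0\lesssim ds^2_\pi$, which is immediate.

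For the upper bound, I would decompose any tangent vector along the fibering as $v=v_1+v_2X$ with $v_1\in\ker(d\rho_2)$ and $X=\mathrm{grad}_g\rho_2/|\mathrm{grad}_g\rho_2|_g^2$ (so $d\rho_2(X)=1$). By Lemma \ref{lem_flow_rho_2} one has $\mathrm{grad}_g\rho_2=\rho_2\sum_j(b_j/r_j^2)\partial/\partial u_j$ and $|\mathrm{grad}_g\rho_2|_g^2=\rho_2^2\sum_j b_j^2/r_j^2$; evaluating the $\Phi_1$-norm then yields
\[
|X|^2_{ds^2_\pi}\ \sim\ |X|^2_{\Phi_1}\ \sim\ \frac{1}{\rho_2^2\log^2\rho_2\log^2(-\log\rho_2)}\cdot\frac{\sum_j b_j^2/r_j^4}{(\sum_j b_j^2/r_j^2)^2},
\]
and Cauchy--Schwarz (with weights $c_j=1/r_j^2$) gives $1/\sum b_j^2\le \sum b_j^2c_j^2/(\sum b_j^2c_j)^2\le 1/\min b_j^2$, pinching the last factor between positive constants depending only on the $b_j$. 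The $\Phi_2$- and $\pi^\ast\omega_0$-contributions to $|X|^2_{ds^2_\pi}$ are asymptotically negligible by the same computation, so $v_2^2|X|^2_{ds^2_\pi}$ is absorbed by the $d\rho_2^2$-term of the upper bound. For $v_1\in\ker d\rho_2$, I would split further by divisor vs.\ transverse directions: divisor contributions to $|v_1|^2_{ds^2_\pi}$ come from $\Phi_1$ with weight $1/(\log^2\rho_2\log^2(-\log\rho_2))\le 1/\log^2(-\log\rho_2)$ and are thus dominated by $|v_1|^2_{ds^2_\pi|_{L_\delta}}/\log^2(-\log\rho_2)$, while transverse contributions come from $\Phi_2$ and $\pi^\ast\omega_0$ and are dominated by $\pi^\ast\omega_0$ directly. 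Cross terms $2v_2\langle v_1,X\rangle_{ds^2_\pi}$ would be absorbed by Cauchy--Schwarz into the diagonal pieces already controlled.

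The main obstacle I expect is the sharp computation of $|X|^2_{ds^2_\pi}$ and the verification that the $r_j$-dependent ratio $\sum_j b_j^2/r_j^4\,/\,(\sum_j b_j^2/r_j^2)^2$ is pinched between positive constants depending only on the $b_j$. This collapse, from an anisotropic gradient flow (whose individual coordinate weights $b_j/r_j^2$ blow up at very different rates near different components of $E$) to a radial norm depending only on $\rho_2$ at leading order, is exactly what forces the radial weight $1/(\rho_2^2\log^2\rho_2\log^2(-\log\rho_2))$ to appear with the same power in both bounds while the fiber weight is allowed to be weaker in the upper bound; any imprecision here would destroy the symmetry of the estimate.
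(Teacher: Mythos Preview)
Your radial computation is correct and the lower bound is fine, but there is a genuine gap in the upper bound for the fiber direction. The problematic vectors are the ``simplex'' directions $v_1=\sum_j a_j\,\partial/\partial u_j$ with $\sum_j a_jb_j=0$, near a deep stratum of $E$ (several $r_j$ small simultaneously). Take $q=2$, $b_1=b_2=1$, and the point $(r,r)$ with $r\to 0$, so $\rho_2=r^2$. The $\omega_{\widetilde Z}$-gradient flow of $\rho_2$ preserves $r_1^2-r_2^2$, and a direct variation-of-initial-data computation gives
\[
p_\ast\bigl(\partial/\partial u_1-\partial/\partial u_2\bigr)\;=\;\frac{r^2}{\sqrt{\ell(\delta)}}\,(\partial_{r_1}-\partial_{r_2})\quad\text{at }(\sqrt{\ell(\delta)},\sqrt{\ell(\delta)})\in L_\delta,
\]
so $|p_\ast v_1|^2_{ds^2_\pi|_{L_\delta}}\sim\rho_2^2$. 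But $|v_1|^2_{\Phi_1}=2/(\log^2\rho_2\,\log^2(-\log\rho_2))$, and your claimed bound $|v_1|^2_{\Phi_1}\lesssim|p_\ast v_1|^2_{L_\delta}/\log^2(-\log\rho_2)$ would force $1/\log^2\rho_2\lesssim\rho_2^2$, which is false as $\rho_2\to 0$. The $\pi^\ast\omega_0$ term does not save you, since $|v_1|^2_{\pi^\ast\omega_0}\lesssim r^2=\rho_2$. The point is that the gradient flow contracts the simplex direction \emph{polynomially} in $\rho_2$, whereas $\Phi_1$ gives it only \emph{logarithmic} weight; no amount of Cauchy--Schwarz bridges this.

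The paper avoids this by passing to the coordinates $t_i=b_i\log|z_i|^2/\xi$, $\xi=\log|us|^2$, and proving the quasi-isometry $\sum_i d(t_i\xi)^2\sim\sum_i\bigl(t_i^2\,d\xi^2+\xi^2\,dt_i^2\bigr)$ on $(1,\infty)\times D_r$ (established by a compactness/angle argument on the simplex $D_r$). In these coordinates the simplex direction becomes $\partial/\partial t_i$, and the projection to $L_\delta$ along constant $t_i$ (rather than along the gradient flow) pushes it forward with norm $\sim 1/|\log\rho_2|$, which matches the $\Phi_1$-weight exactly and yields both inequalities of the proposition. This change of variables, not the gradient-flow decomposition, is the essential mechanism; your decomposition $v=v_1+v_2X$ is the right idea for the radial part but must be replaced on $\ker d\rho_2$ by the $(t,\xi)$-splitting to get a fibration whose pushforward has the correct logarithmic scaling.
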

	\begin{proof}
		Let $S$ be a stratum of $\{Z_i\}$ and $x\in \widetilde{Z}$ such that $\pi(x)\in S$. Let $(z_1,\dots,z_n)$ be a holomorphic coordinate of $\widetilde{Z}$ near $x$ such that $E=\{z_1\cdots z_r=0\}$. Denote by $z_i=|z_i|e^{\sqrt{-1}\theta_i}$. By Lemma \ref{lem_Grauert_metric_in_coordinates} there is a quasi-isometry
		\begin{align*}
		ds^2_{\pi}&\sim\sum_{i=1}^r\frac{d|z_i|^2+|z_i|^2d\theta^2_i}{|z_i|^2\log^2|z_1\cdots z_r|^2\log^2(-\log|z_1\cdots z_r|^2)}\\\nonumber
		&+\frac{\sum_{i=r+1}^n dz_id\bar{z}_i}{-\log|z_1\cdots z_r|^2\log^2(-\log|z_1\cdots z_r|^2)}+\pi^\ast ds^2_0.
		\end{align*}
		Here $ds^2_0$ is a hermitian metric on $Z$.

		Let $|s|=|z_1^{b_1}\cdots z_r^{b_r}|u^{-1}$ for some invertible function $u$.
		Denote
		$$t_i:=\frac{b_i\log|z_i|^2}{\log|us|^2},\quad i=1,\dots,r.$$
		Then $(t_1,\dots,t_r)$ lies in the domain
		$$D_r:=\{(t_1,\dots,t_r)\in(0,1)^r|t_1+\cdots+t_r=1\}.$$
		Denoting $\xi=\log|us|^2$, we claim that
		\begin{align}\label{align_Gt_remove_mixterm}
		g:=\sum_{i=1}^rd(t_i\xi)^2\sim \sum_{i=1}^rt_i^2d\xi^2+\xi^2dt_i^2
		\end{align}
		as metrics on $(1,+\infty)\times D_r$.
		For this it suffices to show that there is a constant $0<\epsilon<1$ such that
		\begin{align*}
		\left|(\frac{\partial}{\partial \xi},v)_g\right|\leq\epsilon\|\frac{\partial}{\partial \xi}\|_g\|v\|_g, \quad \forall v\in T_{D_r}.
		\end{align*}
		Assume the converse, i.e. there are vectors $v_n=\sum_{i=1}^ra_{ni}\frac{\partial}{\partial t_i}\in T_{D_r,x_n}$ such that $$\sum_{i=1}^ra_{ni}=0,\quad\sum_{i=1}^r\|a_{ni}\|^2=1$$ and
		\begin{align}\label{align_mixedterm_goaway}
		(1-\frac{1}{n})\|\frac{\partial}{\partial \xi}\|_g\leq\|v_n\|^{-1} \left|(\frac{\partial}{\partial \xi},v_n)_g\right|\leq \|\frac{\partial}{\partial \xi}\|_g
		\end{align}
		for every $n$.
		Assume that $v_n$ converges to $0\neq v=\sum_{i=1}^ra_i\frac{\partial}{\partial t_i}\in T_{\overline{D_r},x}$ where $\sum_{i=1}^ra_i=0$ and $\sum_{i=1}^r\|a_i\|^2=1$, then the limit of (\ref{align_mixedterm_goaway}) gives
		\begin{align*}
        \left|\sum_{i=1}^r t_ia_i\right|=\left(\sum_{i=1}^r t_i^2\right)\left(\sum_{i=1}^r \|a_i\|^2\right).
        \end{align*}
        This equality holds only if there is $\lambda\in\bC$ such that $t_i=\lambda a_i$, $i=1,\dots,r$.
		This contradicts to the fact that $\sum_{i=1}^rt_i=1$, $\sum_{i=1}^ra_i=0$ and $\sum_{i=1}^r\|a_i\|^2=1$.

		As a consequence of (\ref{align_Gt_remove_mixterm}) we have that
		\begin{align*}
		&ds^2_{\pi}\sim\sum_{i=1}^r\frac{d(\log|z_i|^2)^2+d\theta^2_i}{\log^2|s|^2\log^2(-\log|s|^2)}
		+\frac{\sum_{i=r+1}^n dz_id\bar{z}_i}{-\log|s|^2\log^2(-\log|s|^2)}+\pi^\ast ds^2_0\\\nonumber
		&\sim\sum_{i=1}^r\frac{d(t_i\log(|us|^2))^2}{\log^2|s|^2\log^2(-\log|s|^2)}
		+\frac{-\log^{-1}|s|^2\sum_{i=1}^rd\theta^2_i+\sum_{i=r+1}^n dz_id\bar{z}_i}{-\log|s|^2\log^2(-\log|s|^2)}+\pi^\ast ds^2_0\\\nonumber
		&\sim\sum_{i=1}^r\frac{t_i^2d(\log(|s|^2))^2+\log^2 |s|^2dt_i^2}{\log^2|s|^2\log^2(-\log|s|^2)}
		+\frac{-\log^{-1}|s|^2\sum_{i=1}^rd\theta^2_i+\sum_{i=r+1}^n dz_id\bar{z}_i}{-\log|s|^2\log^2(-\log|s|^2)}+\pi^\ast ds^2_0\quad(\ref{align_Gt_remove_mixterm})\\\nonumber
		&\sim\frac{d|s|^2}{|s|^2\log^2|s|^2\log^2(-\log|s|^2)}
		+\frac{-\log|s|^2\sum_{i=r+1}^n dz_id\bar{z}_i+\log^2 |s|^2\sum_{i=1}^rdt_i^2+\sum_{i=1}^rd\theta^2_i}{\log^2|s|^2\log^2(-\log|s|^2)}+\pi^\ast ds^2_0.
		\end{align*}
		From this estimate we obtain the proposition.
	\end{proof}
	\begin{cor}\label{cor_Gt_complete_finite_vol}
		Every distinguished metric $ds^2$ is locally complete and locally of finite volume. I.e. for every point $x\in X$, there is a compact neighborhood $U$ such that $(U,ds^2)$ is complete and of finite volume.
	\end{cor}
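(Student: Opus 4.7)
The proof is local, so fix a point $x\in X$. By Definition \ref{defn_Grauert_type_metric} and Lemma \ref{lem_Grauert_metric_exist}, after shrinking I may assume $ds^2\sim\sqrt{-1}\ddbar\varphi_\pi+\omega_0$ on a compact neighborhood $U\ni x$, where $\pi:\widetilde U\to U$ is a Whitney stratified desingularization. Since $\pi$ is biholomorphic over $X^o\cap U$, both claims can be verified after pulling back to $\widetilde U$, where Lemma \ref{lem_Grauert_metric_in_coordinates} supplies the explicit quasi-isometric local model
$$ ds^2\sim\sum_{i=1}^r\frac{\sqrt{-1}\,dz_i\wedge d\overline{z}_i}{|z_i|^2\,L^2}+\frac{\omega_{\widetilde X}}{(-\log|z_1\cdots z_r|^2)\,M^2}+\pi^\ast\omega_0 $$
in holomorphic coordinates with $\pi^{-1}(U\setminus X^o)=\{z_1\cdots z_r=0\}$, where I abbreviate $M=\log(-\log|z_1\cdots z_r|^2)$ and $L=\log|z_1\cdots z_r|^2\cdot M$.

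For \emph{local completeness} I would show that the $ds^2$-distance from any point of $U\cap X^o$ to the singular locus is infinite; combined with the compactness of $U$ this is exactly the desired completeness (and when $X$ itself is compact the same estimate promotes to global completeness). Along any path $\gamma$ in $U\cap X^o$ approaching $E=\{z_1\cdots z_r=0\}$, the first summand of the model yields
$$ \mathrm{length}(\gamma)\gtrsim \int_0^{\rho_0}\frac{d\rho}{\rho\,\log(1/\rho)\,\log\log(1/\rho)},\qquad \rho=|z_1\cdots z_r|, $$
and the successive substitutions $u=-\log\rho$, $v=\log u$ reduce this to $\int^\infty dv/v=\infty$. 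Equivalently, the lower bound of Proposition \ref{prop_Grauert_asymptotic_behavior} on the cofinal family $V_\delta$ of neighborhoods of $\pi^{-1}(x)$ gives the same divergence.

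For \emph{local finite volume} I would compute the leading contribution to the volume form. With $\omega$ denoting the $(1,1)$-form of the model and $\omega_1,\omega_2$ its first two summands, the dominant term in $\omega^n/n!$ at $E$ is $\omega_1^r\wedge\omega_2^{n-r}$ (all other pieces being bounded by this one near the divisor), giving
$$ dV \lesssim \prod_{i=1}^r\frac{\sqrt{-1}\,dz_i\wedge d\overline{z}_i}{|z_i|^2\,L^2}\cdot\frac{\omega_{\widetilde X}^{n-r}|_{\mathrm{transverse}}}{(-\log|z_1\cdots z_r|^2)^{n-r}\,M^{2(n-r)}}. $$
Integration in the transverse coordinates $z_{r+1},\dots,z_n$ over a bounded region contributes only a bounded multiplier. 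Writing $z_i=r_ie^{\sqrt{-1}\theta_i}$ and substituting $u_i=-\log r_i^2$ for $i=1,\dots,r$ transforms the remaining integral into
$$ \int_{u_i>c}\frac{du_1\cdots du_r}{(u_1+\cdots+u_r)^{n+r}\log^{2n}(u_1+\cdots+u_r)}\lesssim \int_c^\infty\frac{\lambda^{r-1}\,d\lambda}{\lambda^{n+r}\log^{2n}\lambda}<\infty, $$
using that the polytope slice $\{\sum u_i=\lambda,\ u_i>c\}$ has $(r-1)$-volume $\sim\lambda^{r-1}$.

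The principal obstacle is the coupling of $z_1,\dots,z_r$ through the factor $\log|z_1\cdots z_r|^2$, which prevents a naive factorization of the integrand. The substitution $u_i=-\log r_i^2$ and passage to $\lambda=u_1+\cdots+u_r$ convert this coupling into a single additive radial variable, after which the higher-dimensional estimate reduces to the one-dimensional $\int d\lambda/(\lambda^{n+1}\log^{2n}\lambda)$, and both completeness and finiteness of volume follow from the two elementary log-log computations above.
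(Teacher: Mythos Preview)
Your completeness argument is correct (modulo the slip that after $u=-\log\rho$, $v=\log u$ the integral becomes $\int^\infty dv$, not $\int^\infty dv/v$) and is dual to the paper's, which instead exhibits $\log\log(-\log\prod_i|s_i|^{2a_{i1}})$ as an exhaustion function on $\overline{U_\delta}\cap Z^o$ with $ds^2_\pi$-bounded gradient (via the computation (\ref{align_grad_norm_Gt})) and applies Hopf--Rinow.

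The finite-volume step has a genuine gap: $\omega_1^r\wedge\omega_2^{n-r}$ is \emph{not} the dominant piece of $\omega^n$. The third summand $\omega_3=\pi^\ast\omega_0$ is bounded only by $\omega_{\widetilde X}$, which exceeds $\omega_2$ by the unbounded factor $(-\log|z_1\cdots z_r|^2)\,M^2$; whenever $\pi^\ast\omega_0$ does not degenerate in the transverse directions $z_{r+1},\dots,z_n$ (for instance near a smooth point of $X\setminus X^o$, where $\pi$ is the identity), the term $\omega_1^r\wedge\omega_3^{n-r}$ strictly exceeds your displayed bound, so you have underestimated $dV$. The fix is easy: bound $\omega\lesssim\omega_1+C\omega_{\widetilde X}$, so that the leading contribution to $\omega^n$ is $\omega_1^r\wedge\omega_{\widetilde X}^{n-r}$; your substitution $\lambda=\sum u_i$ then yields the still-convergent $\int_c^\infty d\lambda/(\lambda^{r+1}(\log\lambda)^{2r})$, and the lower terms $\omega_1^k\wedge\omega_{\widetilde X}^{n-k}$ with $k<r$ are handled the same way. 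The paper sidesteps this chart-by-chart multinomial expansion by using the fibration $V^o_\delta\simeq(0,\ell(\delta))\times L_\delta$ along $\rho_2=|s|$ (Proposition~\ref{prop_Grauert_asymptotic_behavior}), which packages all exceptional directions into a single radial variable and reduces the volume estimate to the one-dimensional $\int_0^\delta d\rho_2/(\rho_2|\log\rho_2|\log^2(-\log\rho_2))<\infty$.
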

	\begin{proof}
		Since the statement is local, we consider a germ of complex space and use the notations in \S \ref{section_setting}. By (\ref{align_expend_Gt}), near $Z\backslash Z^o=\{\psi_1=+\infty\}$ we have
		\begin{align*}
		|d\log\log(\psi_{1})|_{ds^2_\pi}\leq|d\log\log(\psi_{1})|_{\sqrt{-1}\ddbar\varphi_{\pi}}=\left|\frac{d\psi_1}{\psi_1\log\psi_1}\right|_{\sqrt{-1}\ddbar\varphi_{\pi}}\leq2.
		\end{align*}
		Since $\log\log(\psi_{1})=\log\log(-\log\prod_{i\in I}|s_i|^{2a_{i1}})$ is an exhaustive $C^\infty$ function on $\overline{U_\delta}\cap Z^o$ for $0<\delta\ll 1$,
		the local completeness follows from the Hopf-Rinow theorem.

		By Proposition \ref{prop_Grauert_asymptotic_behavior}, we get that
		\begin{align*}
		ds^2\lesssim\frac{d\rho_2^2}{\rho_2^2\log^2\rho_2\log^2(-\log \rho_2)}+\frac{ds^2|_{L_\delta}}{\log^2(-\log \rho_2)}+\omega_{\widetilde{Z}}.
		\end{align*}
		As a consequence,
		\begin{align*}
		\int_{V_\delta}{\rm vol}_{ds^2}\lesssim \int_{0}^{\delta}\frac{d\rho_2}{\rho_2\log \rho_2\log^2(-\log\rho_2)}\int{\rm vol}_{L_\delta}<\infty.
		\end{align*}
	\end{proof}
	\subsection{Consistency of local $L^2$-cohomologies}
    Let $0<a<\delta'<\delta\ll 1$ and
    $W=U^o_\delta\cap\{\rho_1>a\}$.
    Let us consider the diagram in the sense of Mayer-Vietoris
    \begin{align}\label{align_MV1}
    \xymatrix{
    0\ar[r]& D^\bullet_{\rm min}(U_{\delta'}^o\cap W)\ar[r]& D^\bullet_{\rm min}(U_{\delta'}^o)\oplus D^\bullet_{\rm min}(W)\ar[r]^-{r}& D^\bullet_{\rm min}(U_{\delta}^o)\ar[r]& 0
    }
    \end{align}
    where $r(\alpha,\beta)=\widetilde{\alpha}+\widetilde{\beta}$ is the sum of the zero extensions.
    Here we write $D^\bullet_{\rm min}(-):=D^\bullet_{\rm min}(-,\bV;ds^2_\pi,h)$ for short.

    We claim that (\ref{align_MV1}) is exact. The only nontrivial part is the surjectivity of $r$.
    Let $\eta_1,\eta_2\in C^\infty(0,\delta)$ be nonnegative functions such that ${\rm supp}(\eta_1)\subset(0,\delta')$, ${\rm supp}(\eta_2)\subset(a,\delta)$ and $\eta_1+\eta_2=1$. Denote $\lambda_1:=\rho_1^\ast\eta_1$ and $\lambda_2:=\rho_1^\ast\eta_2$.
    By Lemma \ref{lem_U_delta}, $\lambda_1$, $\lambda_2$, $d\lambda_1$, $d\lambda_2$ all have bounded norms on $U^o_{\delta}$. Hence for every $\alpha\in D^\bullet_{\rm min}(U^o_{\delta})$, we have
    \begin{align*}
    \lambda_1\alpha\in D^\bullet_{\rm min}(U^o_{\delta'}),\quad \lambda_2\alpha\in D^\bullet_{\rm min}(W)
    \end{align*}
    and
    $\widetilde{\lambda_1\alpha}+\widetilde{\lambda_2\alpha}=\alpha$.
    This proves the surjectivity of $r$.
	\begin{prop}\label{prop_cpt_vs_min}
		For every $0<\delta'<\delta\ll 1$ and every $k$, the canonical morphisms
		\begin{align}\label{align_iso_1}
		H^k_{(2),\rm min}(U^o_{\delta'},\bV;ds^2_{\pi},h)\to H^k_{(2),\rm min}(U^o_{\delta},\bV;ds^2_{\pi},h)
		\end{align}
		and
		\begin{align}\label{align_iso_2}
		\bH^k_{\rm cpt}(U_{\delta},\sD^\bullet_{Z,\bV;ds^2_{\pi},h})\to H^k_{(2),\rm min}(U^o_{\delta},\bV;ds^2_{\pi},h)
		\end{align}
		are isomorphisms.
	\end{prop}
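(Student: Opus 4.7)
The proof splits into the two assertions, both following from the Mayer--Vietoris sequence (\ref{align_MV1}) together with the fibering structure of the annular region provided by Lemma \ref{lem_U_delta}.

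For (\ref{align_iso_1}): I fix $0<a<\delta'$ and apply (\ref{align_MV1}) with $W=U^o_\delta\cap\{\rho_1>a\}$. The associated long exact sequence features the terms $H^k_{(2),\min}(W)$ and $H^k_{(2),\min}(U^o_{\delta'}\cap W)$. By Lemma \ref{lem_U_delta}, these annuli are identified with $(a,\delta)\times M_\delta$ and $(a,\delta')\times M_\delta$ respectively, carrying metrics quasi-isometric to the product $d\rho_1^2+ds^2_\pi|_{M_\delta}$, so the Kunneth formula (Proposition \ref{prop_L2_Kunneth}) applies with the interval as the finite-dimensional factor. An elementary direct computation gives $H^i_{(2),\min}((a,c))=\bC$ for $i=1$ (represented by the integration functional) and $0$ otherwise, so Kunneth yields canonical isomorphisms
\begin{align*}
H^k_{(2),\min}(W)\simeq H^{k-1}_{(2),\min}(M_\delta)\simeq H^k_{(2),\min}(U^o_{\delta'}\cap W).
\end{align*}
Under these identifications the extension-by-zero map $\iota_2$ corresponds to the identity on $H^{k-1}_{(2),\min}(M_\delta)$, since extending a $1$-form supported in $(a,\delta')$ by zero preserves its integral over $(a,\delta)$. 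A diagram chase in the Mayer--Vietoris long exact sequence — splitting off the $H^k_{(2),\min}(W)$-summand via $(b_1,b_2)\mapsto b_1+\iota_1\iota_2^{-1}(b_2)$ — then shows that $j_{1\ast}:H^k_{(2),\min}(U^o_{\delta'})\to H^k_{(2),\min}(U^o_\delta)$ is an isomorphism.

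For (\ref{align_iso_2}): Since $\sD^\bullet_{Z,\bV;ds^2_\pi,h}$ is a complex of fine sheaves by Lemma \ref{lem_fine_sheaf}, $\bH^k_{\rm cpt}(U_\delta,\sD^\bullet)=H^k(\Gamma_{\rm cpt}(U_\delta,\sD^\bullet))$. A compactly supported section $\alpha$ on $U_\delta$ is a $\bV$-valued form on $U^o_\delta$ whose support is compact in $U_\delta$ and may touch $Z\setminus Z^o$. To see that $\alpha$ lies in $D^\bullet_{\min}(U^o_\delta)$, I use the exhaustion function $\log\log\psi_1$ near the singular locus; by the estimate appearing in the proof of Corollary \ref{cor_Gt_complete_finite_vol} (derived from (\ref{align_grad_norm_Gt})), this function has $ds^2_\pi$-bounded gradient. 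Cutoffs $\chi_n$ built from it satisfy $\|d\chi_n\|_\infty\to 0$, so $\chi_n\alpha\to\alpha$ and $\nabla(\chi_n\alpha)\to\nabla\alpha$ in $L^2$, and each $\chi_n\alpha$ has support compact inside the open Riemannian manifold $U^o_\delta$ and is approximable by $A^\bullet_{\rm cpt}(U^o_\delta,\bV)$ through standard mollification; this defines the natural map and shows it lands in $D^\bullet_{\min}(U^o_\delta)$. Its cohomology-level inverse is furnished by multiplication by a radial cutoff $\chi(\rho_1)$, equal to $1$ on $[0,\delta']$ and vanishing near $\delta$: this sends $D^\bullet_{\min}(U^o_\delta)$ into $\Gamma_{\rm cpt}(U_\delta,\sD^\bullet)$, and its failure to commute with $\nabla$ is the error $d\chi\wedge\alpha$ supported in the annulus where $\chi'\neq 0$, which is killed by a chain homotopy obtained by integrating along $\rho_1$ in the fibering of Lemma \ref{lem_U_delta}, in complete analogy with the operators $H,Q$ employed in Lemma \ref{lem_Kunneth_locL2}.

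The main obstacle is controlling the analytic behavior at the singular locus $Z\setminus Z^o$. For (\ref{align_iso_1}), Kunneth must be invoked with the interval providing the finite-dimensional factor (the fiber $M_\delta$ may approach the singular locus and be non-compact), which is fortunately harmless because the interval cohomology is explicitly one-dimensional. For (\ref{align_iso_2}), establishing that $\Gamma_{\rm cpt}(U_\delta,\sD^\bullet)$ embeds naturally into $D^\bullet_{\min}(U^o_\delta)$ relies on the local completeness of the distinguished metric at $Z\setminus Z^o$, concretely exploiting the bounded-gradient exhaustion $\log\log\psi_1$ delivered by (\ref{align_grad_norm_Gt}); assembling this with the radial cutoff in a way that produces a clean chain homotopy is the delicate step.
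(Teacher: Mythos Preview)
Your argument for (\ref{align_iso_1}) is essentially identical to the paper's: Mayer--Vietoris from (\ref{align_MV1}), the fibering of Lemma \ref{lem_U_delta}, the K\"unneth formula with the interval as the finite-dimensional factor, and the observation that extension by zero is an isomorphism on the annular pieces.

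For (\ref{align_iso_2}) you diverge. The paper does not build an explicit homotopy inverse; instead it observes that
\[
\Gamma_{\rm cpt}(U_\delta,\sD^\bullet_{Z,\bV;ds^2_\pi,h})=\bigcup_{\delta'<\delta}D^\bullet_{\rm min}(U^o_{\delta'})
\]
(the identification using exactly the completeness toward $Z\setminus Z^o$ that you invoke), and then applies the already-proved (\ref{align_iso_1}): since $D^\bullet_{\rm min}(U^o_{\delta'})\to D^\bullet_{\rm min}(U^o_t)$ is a quasi-isomorphism for every $\delta'<t\leq\delta$, taking the direct limit over $t<\delta$ shows $D^\bullet_{\rm min}(U^o_{\delta'})\to\Gamma_{\rm cpt}(U_\delta,\sD^\bullet)$ is a quasi-isomorphism, and hence so is the inclusion into $D^\bullet_{\rm min}(U^o_\delta)$. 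This is cleaner: it reuses part one wholesale and avoids any new analytic construction. Your route via a radial cutoff $\chi(\rho_1)$ and a chain homotopy ``in analogy with Lemma \ref{lem_Kunneth_locL2}'' is plausible, but the homotopy you need satisfies $\nabla H+H\nabla=\mathrm{Id}-\chi\cdot$ rather than $\mathrm{Id}-q^\ast Q$, so it is not literally the operator of Lemma \ref{lem_Kunneth_locL2}; you would have to write it down and check it preserves $D^\bullet_{\rm min}$. The paper's direct-limit trick sidesteps this entirely.
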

	\begin{proof}
		Denote $H^i_{(2),\rm min}(-):=H^i_{(2),\rm min}(-,\bV;ds^2_\pi,h)$ for short.
		The exact sequence (\ref{align_MV1}) gives the $L^2$-Mayer-Vietoris sequence
		\begin{align}\label{align_MV_PD1}
		\xymatrix{
			\cdots\ar[r]& H^i_{(2),\min}(U_{\delta'}^o\cap W)\ar[r]& H^i_{(2),\min}(U_{\delta'}^o)\oplus H^i_{(2),\min}(W)\ar[r]& H^i_{(2),\min}(U_{\delta}^o)\ar[r]&\cdots.
		}
		\end{align}
		By Proposition \ref{prop_L2_Poincare_duality}, for every $b_1<b_2$ one has
		$$H^i_{(2),\min}\left((b_1,b_2),dt^2\right)\simeq H^{1-i}_{(2),\max}\left((b_1,b_2),dt^2\right)^\ast\simeq H^{1-i}\left([b_1,b_2]\right)^\ast\simeq
		\begin{cases}
		\bC, & i=1 \\
		0, & i\neq 1
		\end{cases}.
		$$
		Lemma \ref{lem_U_delta} and the $L^2$-Kunneth formula (Proposition \ref{prop_L2_Kunneth}) give
		\begin{align*}
		H^k_{(2),\min}(W)\simeq \bigoplus_{i+j=k}H^i_{(2),\min}\left((a,\delta),dt^2\right)\otimes H^j_{(2),\min}(M_{a})\simeq H^{k-1}_{(2),\min}(M_{a})
		\end{align*}
		and
		\begin{align*}
		H^k_{(2),\min}(U_{\delta'}^o\cap W)\simeq \bigoplus_{i+j=k}H^i_{(2),\min}\left((a,\delta'),dt^2\right)\otimes H^j_{(2),\min}(M_{a})\simeq H^{k-1}_{(2),\min}(M_{a}).
		\end{align*}
		Hence for every $k$ the map of extension by zero
		$$H^k_{(2),\min}(U_{\delta'}^o\cap W)\to H^k_{(2),\min}(W)$$
		is an isomorphism.
		Hence by (\ref{align_MV_PD1}) we obtain that (\ref{align_iso_1}) is an isomorphism.

		For (\ref{align_iso_2}),
		recall that
		\begin{align*}
		\Gamma_{\rm cpt}(U_{\delta}, \sD^\bullet_{X,\bV;ds^2_{\pi},h})=\bigcup_{\delta'<\delta}D^\bullet_{\rm min}(U^o_{\delta'})\subset D^\bullet_{\rm min}(U^o_{\delta}).
		\end{align*}
		Here $D^\bullet_{\rm min}(U^o_{\delta'})$ is naturally a subspace of $D^\bullet_{\rm min}(U^o_{\delta})$ via extending forms by zero.
		By (\ref{align_iso_1}), for every $0<\delta'<t\leq\delta\ll1$, the canonical map
		\begin{align*}
		D^\bullet_{\rm min}(U^o_{\delta'})\to D^\bullet_{\rm min}(U^o_{t})
		\end{align*}
		is a quasi-isomorphism. By taking direct limit of $t$, the natural map
		\begin{align*}
		D^\bullet_{\rm min}(U^o_{\delta'})\to\Gamma_{\rm cpt}(U_{\delta}, \sD^\bullet_{X,\bV;ds^2_{\pi},h})\simeq\varinjlim_{\delta'<t<\delta}D^\bullet_{\rm min}(U^o_{t})
		\end{align*}
		is a quasi-isomorphism.
		As a consequence the inclusion map
		\begin{align*}
		\Gamma_{\rm cpt}(U_{\delta}, \sD^\bullet_{X,\bV;ds^2_{\pi},h})\subset D^\bullet_{\rm min}(U^o_{\delta})
		\end{align*}
		is a quasi-isomorphism. This implies that (\ref{align_iso_2}) are isomorphisms.
	\end{proof}
	\section{Local Vanishings}
	\subsection{Modify the boundary $\partial U_{\delta}$}\label{section_modify_dsGt}
	Notations as in \S \ref{section_setting} and \S \ref{section_U_delta}.
	The incompleteness of $ds^2_{\pi}$ at $\partial U_\delta$ would cause some analytical difficulties (\cite[Proposition 2.1.1]{Hormander1965}).
	The aims of this subsection are to modify $ds^2_{\pi}$ at $\partial U_\delta$ to a complete metric and to reduce proving (\ref{align_vanishing_cpt}) to solving $\nabla$-equation in the Hilbert space of square integrable forms with respect to this complete K\"ahler metric and a weight.

	Let $0<\delta\ll1$.
	Denote $\psi_{\delta}=-4n\log(\delta^2-\|w\|^2)$.
	Then
	$ds^2_{\delta}:=\sqrt{-1}\partial\dbar(\varphi_{\pi}+\psi_\delta)$ is a complete metric on $U_\delta^o$ (Corollary \ref{cor_Gt_complete_finite_vol}).
	\begin{lem}\label{lem_i<n}
		Let $0\leq i\leq 2n$, $0<\delta'<\delta<1$ and $\alpha\in L_{(2)}^i(U_{\delta'}^o,\bV;ds^2_{\delta'},e^{\psi_{\delta'}}h)$. Denote by $\tilde{\alpha}$ the zero extension of $\alpha$ on $U_{\delta}^o$, i.e. $\tilde{\alpha}|_{U^o_{\delta'}}=\alpha$ and $\tilde{\alpha}|_{U^o_{\delta}\backslash U^o_{\delta'}}=0$. Then $\tilde{\alpha}\in L_{(2)}^i(U_{\delta}^o,\bV;ds^2_{\pi},h)$. Moreover if $\alpha\in D_{\rm max}^{i}(U_{\delta'}^o,\bV;ds^2_{\delta'},e^{\psi_{\delta'}}h)$, then $\tilde{\alpha}\in D^i_{\rm max}(U_{\delta}^o,\bV;ds^2_{\pi},h)$ and $\nabla\tilde{\alpha}=\widetilde{\nabla\alpha}$.
	\end{lem}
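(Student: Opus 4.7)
I reduce both assertions to one uniform pointwise density inequality on $U^o_{\delta'}$: there exists $C=C(n)>0$ such that for every $k$-form $\alpha$ with $0\le k\le 2n$,
\begin{align*}
|\alpha|^2_{ds^2_\pi,h}\,{\rm vol}_{ds^2_\pi}\;\le\;C\,e^{\psi_{\delta'}}\,|\alpha|^2_{ds^2_{\delta'},h}\,{\rm vol}_{ds^2_{\delta'}}\qquad\text{pointwise on }U^o_{\delta'}.
\end{align*}
Integrating this bound and using $\tilde\alpha\equiv 0$ on $U^o_\delta\setminus U^o_{\delta'}$ immediately yields the first claim.

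\textbf{The pointwise estimate.} Set $f:=\delta'^2-\|w\|^2$ and $g_E:=\sqrt{-1}\partial\dbar\|w\|^2$. A direct $(1,1)$-form computation gives
\begin{align*}
\sqrt{-1}\partial\dbar\psi_{\delta'}\;=\;\frac{4n}{f}\,g_E\;+\;\frac{4n}{f^2}\,\sqrt{-1}\partial\|w\|^2\wedge\dbar\|w\|^2;
\end{align*}
combined with the rank-one bound $\sqrt{-1}\partial\|w\|^2\wedge\dbar\|w\|^2\le\|w\|^2\,g_E$ and $f+\|w\|^2=\delta'^2<1$, this yields $H:=ds^2_{\delta'}-ds^2_\pi=\sqrt{-1}\partial\dbar(\psi_{\delta'}-\|w\|^2)\le(4n/f^2)\,g_E$, while $H\ge 0$ because $f<\delta'^2<1<4n$. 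Diagonalising $ds^2_\pi$ and $ds^2_{\delta'}$ simultaneously, the eigenvalues $\lambda_1,\ldots,\lambda_{2n}$ of $ds^2_{\delta'}$ in an orthonormal basis for $ds^2_\pi$ satisfy $\lambda_i\ge 1$. Writing $\alpha=\sum_{|I|=k}\alpha_I e^I$ in this basis and putting $\lambda_I:=\prod_{i\in I}\lambda_i$, an elementary computation bounds the density ratio by
\begin{align*}
\max_{|I|=k}\frac{\lambda_I}{\sqrt{\prod_j\lambda_j}}\;\le\;\sqrt{\prod\nolimits_j\lambda_j}\;=\;\sqrt{\det\bigl((ds^2_\pi)^{-1}ds^2_{\delta'}\bigr)},
\end{align*}
using $\max_I\lambda_I\le\prod_j\lambda_j$ (which holds since $\lambda_i\ge 1$). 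Finally, since $ds^2_\pi\ge g_E$ by (\ref{align_metric}), the eigenvalues of $(ds^2_\pi)^{-1}H$ are all $\le 4n/f^2$; hence $\det(I+(ds^2_\pi)^{-1}H)\le(1+4n/f^2)^{2n}\le C/f^{4n}$ on $U^o_{\delta'}$, and combining with $e^{\psi_{\delta'}}=f^{-4n}$ and $f<1$ gives $\sqrt{\det}\le C^{1/2}/f^{2n}\le C^{1/2}f^{-4n}=C^{1/2}e^{\psi_{\delta'}}$, as required.

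\textbf{Distributional identity and main obstacle.} For the second assertion, the completeness of $ds^2_{\delta'}$ on $U^o_{\delta'}$ (the same $\log\log$-argument as in Corollary \ref{cor_Gt_complete_finite_vol} handles the boundary $\partial U_{\delta'}$, where $\psi_{\delta'}\to+\infty$) together with the equality $D_{\max}^\bullet=D_{\min}^\bullet$ on complete manifolds (recalled before Proposition \ref{prop_L2_Poincare_duality}) produces $\alpha_n\in A^k_{\rm cpt}(U^o_{\delta'},\bV)$ with $\alpha_n\to\alpha$ and $\nabla\alpha_n\to\nabla\alpha$ in the graph norm of $(ds^2_{\delta'},e^{\psi_{\delta'}}h)$. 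Their zero extensions $\tilde\alpha_n\in A^k_{\rm cpt}(U^o_\delta,\bV)$ satisfy $\nabla\tilde\alpha_n=\widetilde{\nabla\alpha_n}$ trivially; applying the pointwise density inequality to the Cauchy differences $\alpha_n-\alpha_m$ and $\nabla\alpha_n-\nabla\alpha_m$ shows that $\{\tilde\alpha_n\}$ and $\{\nabla\tilde\alpha_n\}$ are Cauchy in $L^2_{(2)}(U^o_\delta,\bV;ds^2_\pi,h)$, and passing to the distributional limit yields $\tilde\alpha\in D^k_{\max}(U^o_\delta,\bV;ds^2_\pi,h)$ with $\nabla\tilde\alpha=\widetilde{\nabla\alpha}$. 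The serious step is the pointwise density estimate: the exponent $4n$ in the definition of $\psi_{\delta'}$ is calibrated precisely so that $e^{\psi_{\delta'}}$ uniformly dominates $\sqrt{\det(ds^2_{\delta'}/ds^2_\pi)}$ in every degree $0\le k\le 2n$.
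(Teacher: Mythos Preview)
Your proof is correct and follows essentially the same approach as the paper: both establish the norm inequality $\|\alpha\|_{ds^2_\pi,h}\lesssim\|\alpha\|_{ds^2_{\delta'},e^{\psi_{\delta'}}h}$ by eigenvalue comparison, then use the completeness of $ds^2_{\delta'}$ to approximate by compactly supported forms and pass to the limit for the distributional identity. The only organizational difference is that the paper estimates the eigenvalues of $ds^2_\pi$ and of $\sqrt{-1}\partial\dbar\psi_{\delta'}$ separately with respect to $g_E$ and then combines them via Courant's minimax principle, whereas you bound the difference $H=ds^2_{\delta'}-ds^2_\pi$ directly against $ds^2_\pi$; this bypasses the minimax step and makes the role of the weight $e^{\psi_{\delta'}}=f^{-4n}$ in dominating $\sqrt{\det((ds^2_\pi)^{-1}ds^2_{\delta'})}$ more transparent.
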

	\begin{proof}
		Assume that the eigenvalues of $ds^2_{\pi}$ measured by $\sqrt{-1}\partial\dbar\|w\|^2|_{U_{\delta}^o}$ are  $\eta_1,\dots,\eta_n$. Since $\sqrt{-1}\partial\bar{\partial}\|w\|^2|_{U^o_\delta}\lesssim ds_{\pi}^2$, we have $1\lesssim \eta_k$ for each $k$. Since
		\begin{align*}
		\partial\dbar\psi_{\delta'}=\frac{4n\partial\dbar\|w\|^2}{\delta'^2-\|w\|^2}+\frac{4n\partial\|w\|^2\wedge \dbar\|w\|^2}{(\delta'^2-\|w\|^2)^2},
		\end{align*}
		the eigenvalues of $\sqrt{-1}\partial\dbar\psi_{\delta'}$ measured by $\sqrt{-1}\partial\dbar\|w\|^2$ are
		$$\lambda'_k\sim\frac{4n}{\delta'^2-\|w\|^2},\quad k=1,\dots,N-1,\quad \frac{4n}{\delta'^2-\|w\|^2}\lesssim\lambda'_{N}\lesssim\frac{4n}{(\delta'^2-\|w\|^2)^2}.$$
		By Courant's minimax principle, the eigenvalues of $ds^2_{\delta'}$ measured by $\sqrt{-1}\partial\dbar\|w\|^2|_{U^o_\delta}$ are
		\begin{align*}
		\lambda_k\sim\eta_k+\frac{4n}{\delta'^2-\|w\|^2},\quad k=1,\dots,n-1
		\end{align*}
		and
		\begin{align*}
		\eta_n+\frac{4n}{\delta'^2-\|w\|^2}\lesssim\lambda_{n}\lesssim\eta_n+\frac{4n}{(\delta'^2-\|w\|^2)^2}.
		\end{align*}
		As a consequence, we obtain that for any form $\alpha$ on $U_{\delta'}^o$,
		\begin{align}\label{align_norm_Gt_vs_delta}
		\|\alpha\|_{ds^2_{\pi},h}\lesssim\|\alpha\|_{ds^2_{\delta'},e^{\psi_{\delta'}}h}.
		\end{align}
		This proves the first part. For the second part, let $\alpha\in D_{\rm max}^{i}(U_{\delta'}^o,\bV;ds^2_{\delta'},e^{\psi_{\delta'}}h)$.  Since $ds^2_{\delta'}$ is complete, there is a sequence $\{\alpha_k\}\subset A^i_{\rm cpt}(U^o_{\delta'},\bV)$ which converges to $\alpha$ under the graph norm with respect to $(ds^2_{\delta'},e^{\psi_{\delta'}}h)$.
		By (\ref{align_norm_Gt_vs_delta}) we obtain the convergences
		$$\widetilde{\alpha_k}\to\widetilde{\alpha},\quad\nabla(\widetilde{\alpha_k})=\widetilde{\nabla\alpha_k}\to \widetilde{\nabla\alpha}$$
		in the spaces $L_{(2)}^i(U_{\delta}^o,\bV;ds^2_{\pi},h)$ and $ L_{(2)}^{i+1}(U_{\delta}^o,\bV;ds^2_{\pi},h)$ respectively.
		Thus $$\nabla\widetilde{\alpha}=\widetilde{\nabla\alpha}\in L_{(2)}^{i+1}(U_{\delta}^o,\bV;ds^2_{\pi},h),$$
		so $\tilde{\alpha}\in D^i_{\rm max}(U_{\delta}^o,\bV;ds^2_{\pi},h)$.
	\end{proof}
	\subsection{An estimate for pluriharmonic bundle}
	The main result of this subsection is
	\begin{thm}\label{thm_L2-estimate_main}
		Let $(M,ds^2)$ be a complete K\"ahler manifold with $\omega$ its K\"ahler form. Let $(\cV,\nabla,h)$ be a pluriharmonic bundle on $M$ and let $\bV:={\rm Ker}\nabla$ be the associated local system. Let $\varphi$, $F$ be $C^\infty$ functions on $M$ and $C>0$ a constant. If $\omega=\sqrt{-1}\ddbar(F+C\varphi)$ and $|dF|^2_{\infty}<8C$, then for every $r<n:=\dim M$,
		\begin{align*}
		\|\alpha\|^2_{ds^2,e^{\varphi}h}\leq\frac{8C^2}{8C-|dF|^2_\infty}\left(\|\nabla\alpha\|^2_{ds^2,e^{\varphi}h}+\|\nabla^\ast_{e^\varphi h}\alpha\|_{ds^2,e^{\varphi}h}^2\right),\quad \forall\alpha\in {\rm Dom}^r\nabla\cap{\rm Dom}^r\nabla^\ast_{e^{\varphi}h}.
		\end{align*}
		Here $\nabla:L^\bullet_{(2)}(M,\bV;ds^2,e^\varphi h)\to L^{\bullet+1}_{(2)}(M,\bV;ds^2,e^\varphi h)$ is the unbounded operator in the sense of distribution and $\nabla^\ast_{e^{\varphi}h}$ is its Hilbert adjoint.
	\end{thm}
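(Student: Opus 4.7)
My plan is to apply the Bochner-Kodaira formula of Lemma \ref{lem_Bochner_formula_harmonic_bundle} to the pluriharmonic bundle $(\cV,\nabla,e^\varphi h)$ and then control the resulting curvature term via the Donnelly-Fefferman trick. Since $h$ is pluriharmonic, $\Theta_h(\nabla)=0$, so Lemma \ref{lem_twisted_curvature} yields $\Theta_{e^\varphi h}(\nabla)=dd^c\varphi=2\partial\dbar\varphi$. Pairing the Bochner-Kodaira identity with a compactly supported smooth $r$-form $\alpha$ and discarding the two nonnegative squared-norm terms on the right gives
$$\|\nabla\alpha\|^2_{e^\varphi h}+\|\nabla^*_{e^\varphi h}\alpha\|^2_{e^\varphi h}\geq 2\bigl([\Lambda_\omega,\sqrt{-1}\partial\dbar\varphi]\alpha,\alpha\bigr)_{e^\varphi h}.$$
Using $\omega=\sqrt{-1}\ddbar(F+C\varphi)$, we substitute $\sqrt{-1}\partial\dbar\varphi=\tfrac{1}{C}(\omega-\sqrt{-1}\partial\dbar F)$. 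Since the Lefschetz operator $[\Lambda_\omega,\omega]$ acts on $r$-forms as the positive scalar $(n-r)\geq 1$ for $r<n$, the first piece yields a lower bound of $\tfrac{2}{C}\|\alpha\|^2$.

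The heart of the proof is to control the error term $\tfrac{2}{C}\bigl([\Lambda_\omega,\sqrt{-1}\partial\dbar F]\alpha,\alpha\bigr)$, which a priori involves the Hessian of $F$, using only $|dF|_\infty$. This is exactly the Donnelly-Fefferman trick: one expands the commutator in terms of the zeroth-order operators $[\Lambda_\omega,\partial F\wedge\cdot]$ and $[\Lambda_\omega,\dbar F\wedge\cdot]$, then applies the K\"ahler identities (\ref{align_Kahler_id2}) and integration by parts to transfer a derivative from $F$ onto $\alpha$. The output is an estimate of the form
$$\bigl|([\Lambda_\omega,\sqrt{-1}\partial\dbar F]\alpha,\alpha)_{e^\varphi h}\bigr|\leq |dF|_\infty\cdot\|\alpha\|_{e^\varphi h}\cdot\bigl(\|\nabla\alpha\|_{e^\varphi h}+\|\nabla^*_{e^\varphi h}\alpha\|_{e^\varphi h}\bigr).$$

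Setting $X^2:=\|\nabla\alpha\|^2+\|\nabla^*\alpha\|^2$ and $Y:=\|\alpha\|$, the above two inequalities give a quadratic inequality of the shape $X^2\geq \tfrac{2}{C}Y^2-\tfrac{c_0}{C}|dF|_\infty XY$ for an absolute constant $c_0$. A weighted Cauchy-Schwarz inequality $|dF|_\infty XY\leq \tfrac{1}{2}(\epsilon X^2+\epsilon^{-1}|dF|^2_\infty Y^2)$, followed by the optimal choice of $\epsilon$ under the constraint $|dF|^2_\infty<8C$, yields the sharp constant $Y^2\leq \tfrac{8C^2}{8C-|dF|^2_\infty}X^2$. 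Finally, the inequality extends from $\alpha\in A^r_{\mathrm{cpt}}(M,\bV)$ to $\alpha\in \mathrm{Dom}^r\nabla\cap\mathrm{Dom}^r\nabla^*_{e^\varphi h}$ by density: the completeness of $ds^2$ makes compactly supported smooth forms dense in the common domain under the graph norm (the pluriharmonic analog of \cite[Theorem 7.2]{Zucker1979}), and the estimate passes to the limit.

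The main obstacle will be the Donnelly-Fefferman commutator manipulation and the sharp constant $8$. This requires careful bookkeeping with the K\"ahler identities (\ref{align_Kahler_id2}), the fact that pluriharmonicity of $h$ forces $\Theta_{e^\varphi h}(\nabla)$ to be governed entirely by $\varphi$, and an optimal split of the Cauchy-Schwarz mass between the $X^2$ and $Y^2$ sides. A secondary, routine obstacle is the density/completeness argument needed to pass from smooth test forms to the domain of the closed operators.
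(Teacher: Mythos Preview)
Your overall strategy---Bochner--Kodaira for the twisted pluriharmonic metric $e^\varphi h$ combined with a Donnelly--Fefferman commutator manipulation, followed by density via completeness---is exactly the paper's. There is, however, a genuine gap in your error-term estimate, and it is the step where the sharp constant $8C$ is born.

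You write that the Donnelly--Fefferman expansion yields
\[
\bigl|([\Lambda_\omega,\sqrt{-1}\partial\dbar F]\alpha,\alpha)_{e^\varphi h}\bigr|\leq |dF|_\infty\,\|\alpha\|\bigl(\|\nabla\alpha\|+\|\nabla^\ast_{e^\varphi h}\alpha\|\bigr),
\]
involving only $\nabla$ and $\nabla^\ast$. But the K\"ahler identities \eqref{align_Kahler_id2} available here pair $\nabla$ with $(\nabla^c_{e^\varphi h})^\ast$ and $\nabla^c_{e^\varphi h}$ with $\nabla^\ast_{e^\varphi h}$. When you expand $[L(\sqrt{-1}dd^cF),\Lambda_\omega]$ via the Jacobi identity and these relations (this is what the paper does), the resulting pairing with $\alpha$ produces \emph{four} terms, bounded by
\[
|dF|_\infty\,\|\alpha\|\bigl(\|\nabla^c_{e^\varphi h}\alpha\|+\|(\nabla^c_{e^\varphi h})^\ast\alpha\|+\|\nabla\alpha\|+\|\nabla^\ast_{e^\varphi h}\alpha\|\bigr).
\]
Since you have already discarded $\|\nabla^c_{e^\varphi h}\alpha\|^2+\|(\nabla^c_{e^\varphi h})^\ast\alpha\|^2$ from the Bochner--Kodaira identity (``discarding the two nonnegative squared-norm terms''), you cannot now control these extra pieces. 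The fix, which is the paper's key maneuver, is \emph{not} to discard: keep the Bochner--Kodaira formula as an equality, apply Cauchy--Schwarz with weight $C$ to the four-term bound, and then substitute the identity once more to replace $\|\nabla^c_{e^\varphi h}\alpha\|^2+\|(\nabla^c_{e^\varphi h})^\ast\alpha\|^2$ by $\|\nabla\alpha\|^2+\|\nabla^\ast_{e^\varphi h}\alpha\|^2+([L(\sqrt{-1}dd^c\varphi),\Lambda_\omega]\alpha,\alpha)$. The curvature terms in $F$ and $\varphi$ then recombine into $[L(\sqrt{-1}dd^c(-F-C\varphi)),\Lambda_\omega]=-2[L(\omega),\Lambda_\omega]=2(n-r)$, yielding directly
\[
\Bigl(2(n-r)-\tfrac{|dF|_\infty^2}{4C}\Bigr)\|\alpha\|^2\leq 2C\bigl(\|\nabla\alpha\|^2+\|\nabla^\ast_{e^\varphi h}\alpha\|^2\bigr),
\]
from which the stated constant is immediate for $r<n$. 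Your final density argument is correct and matches the paper.
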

	\begin{proof}
		For any differential form $\xi$, let $L(\xi)$ be the operator defined by $L(\xi)(\alpha)=\xi\wedge\alpha$.
		Since
		\begin{align}\label{align_Kahler_Id3}
		[L(dF),\Lambda_\omega]=\sqrt{-1}L(d^cF)^\ast\quad\textrm{\cite[Chap. 5, (3.22)]{Wells1980}},
		\end{align}
		we obtain that
		\begin{align*}
		&[\nabla^c_{e^{\varphi}h},L(d^cF)^\ast]+[L(dF),\nabla^\ast_{e^{\varphi}h}]\\\nonumber
		=&[\nabla^c_{e^{\varphi}h},\sqrt{-1}[\Lambda_\omega,L(dF)]]+[L(dF),\sqrt{-1}[\Lambda_{\omega},\nabla^c_{e^{\varphi}h}]]\quad (\ref{align_Kahler_Id3}),(\ref{align_Kahler_id2})\\\nonumber
		=&\sqrt{-1}[\Lambda_\omega,[\nabla^c_{e^{\varphi}h},L(dF)]]\\\nonumber
		=&[L(\sqrt{-1}dd^cF),\Lambda_\omega].
		\end{align*}
		Denote $(-,-)_\varphi:=(-,-)_{ds^2,e^{\varphi}h}$ and $\|-\|_\varphi:=\|-\|_{ds^2,e^{\varphi}h}$ for simplicity. By Lemma \ref{lem_Bochner_formula_harmonic_bundle} and Lemma \ref{lem_twisted_curvature} we get that
		\begin{align*}
		\|\nabla\alpha\|^2_\varphi+\|\nabla^\ast_{e^{\varphi}h}\alpha\|^2_\varphi=\|\nabla^c_{e^{\varphi}h}\alpha\|^2_\varphi+\|(\nabla^c_{e^{\varphi}h})^\ast\alpha\|^2_\varphi+\left([\Lambda_{\omega},L(\sqrt{-1}dd^c\varphi)]\alpha,\alpha\right)_\varphi
		\end{align*}
		for every $\alpha\in A^r_{\rm cpt}(M,\bV)$. This implies that
		\begin{align*}
		&\left|([L(\sqrt{-1}dd^cF),\Lambda_\omega]\alpha,\alpha)_\varphi\right|\\\nonumber
		=&\left|([\nabla^c_{e^{\varphi}h},L(d^cF)^\ast]\alpha,\alpha)_\varphi+([L(dF),\nabla^\ast_{e^{\varphi}h}]\alpha,\alpha)_\varphi\right|\\\nonumber
		=&\left|(\alpha,d^cF\wedge(\nabla^c_{e^{\varphi}h})^\ast\alpha)_\varphi+(\nabla^c_{e^{\varphi}h}\alpha,d^cF\wedge\alpha)_\varphi+(dF\wedge \nabla^\ast_{e^{\varphi}h}\alpha,\alpha)_\varphi+(dF\wedge\alpha,\nabla\alpha)_\varphi\right|\\\nonumber
		\leq&|dF|_\infty\|\alpha\|_\varphi\left(\|\nabla^c_{e^{\varphi}h}\alpha\|_\varphi+\|(\nabla^c_{e^{\varphi}h})^\ast\alpha\|_\varphi+\|\nabla\alpha\|_\varphi+\|\nabla^\ast_{e^{\varphi}h}\alpha\|_\varphi\right)\\\nonumber
		\leq&\frac{|dF|^2_\infty}{4C}\|\alpha\|^2_\varphi+C\left(\|\nabla^c_{e^{\varphi}h}\alpha\|^2_\varphi+\|(\nabla^c_{e^{\varphi}h})^\ast\alpha\|^2_\varphi+\|\nabla\alpha\|^2_\varphi+\|\nabla^\ast_{e^{\varphi}h}\alpha\|^2_\varphi\right)\\\nonumber
		=&\frac{|dF|^2_\infty}{4C}\|\alpha\|^2_\varphi+C\left(2\|\nabla\alpha\|^2_\varphi+2\|\nabla^\ast_{e^{\varphi}h}\alpha\|^2_\varphi+([L(\sqrt{-1}dd^c\varphi),\Lambda_\omega]\alpha,\alpha)_\varphi\right).\\\nonumber
		\end{align*}
		Hence
		\begin{align*}
		\left([L(\sqrt{-1}dd^c(-F-C\varphi)),\Lambda_\omega]\alpha,\alpha\right)_\varphi-\frac{|dF|^2_\infty}{4C}\|\alpha\|^2_\varphi\leq 2C\left(\|\nabla\alpha\|^2_\varphi+\|\nabla^\ast_{e^{\varphi}h}\alpha\|^2_\varphi\right).
		\end{align*}
		Since $\omega=\sqrt{-1}\ddbar(F+C\varphi)$, we have that
		\begin{align*}
		\left([L(\sqrt{-1}dd^c(-F-C\varphi)),\Lambda_\omega]\alpha,\alpha\right)_\varphi=\left(-2[L(\omega),\Lambda_\omega]\alpha,\alpha\right)_\varphi=2(n-r)\|\alpha\|^2_\varphi.
		\end{align*}
		So
		\begin{align*}
		\left(2n-2r-\frac{|dF|^2_\infty}{4C}\right)\|\alpha\|^2_\varphi\leq 2C\left(\|\nabla\alpha\|^2_\varphi+\|\nabla^\ast_{e^{\varphi}h}\alpha\|^2_\varphi\right).
		\end{align*}
		Since $M$ is complete, $A^r_{\rm cpt}(M,\bV)$ is dense in ${\rm Dom}^r\nabla\cap{\rm Dom}^r\nabla^\ast_{e^{\varphi}h}$ under the graph norm $$\|-\|_\varphi+\|\nabla(-)\|_\varphi+\|\nabla^\ast_{e^{\varphi}h}(-)\|_\varphi.$$ The theorem is proved.
	\end{proof}
	\subsection{Local vanishing}
	The main result of this subsection is
	\begin{thm}\label{thm_local_vanishing}
		Notations as in \S \ref{section_setting} and \S \ref{section_U_delta}. Then for every $0<\delta\ll1$ and every $k\leq n$,
		\begin{align}\label{align_thm_local_vanishing}
		\bH^k_{\rm cpt}(U_{\delta},\sD^\bullet_{Z,\bV;ds^2_{\pi},h})=0.
		\end{align}
	\end{thm}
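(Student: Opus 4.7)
The plan is to convert the stated vanishing into a $\nabla$-equation solvability statement and then solve that equation via a twisted Donnelly--Fefferman estimate on a deformed complete K\"ahler metric. By Proposition \ref{prop_cpt_vs_min}, it suffices to show that for every $0<\delta\ll 1$ and $k\le n$,
\begin{equation*}
H^k_{(2),\min}(U^o_\delta,\bV;ds^2_\pi,h)=0.
\end{equation*}
Concretely, given a closed $\alpha\in D^k_{\min}(U^o_\delta,\bV;ds^2_\pi,h)$, we must produce $\beta\in D^{k-1}_{\min}(U^o_\delta,\bV;ds^2_\pi,h)$ with $\nabla\beta=\alpha$. By Proposition \ref{prop_cpt_vs_min} again, I may assume $\alpha$ has compact support inside some $U^o_{\delta''}$, and in fact, up to cutting off by a bump function supported in $V_{\delta''}$ (using the boundedness of $d\eta$ under $ds^2_\pi$ via Lemma \ref{lem_U_delta} and a partition of unity analogous to the Mayer--Vietoris argument preceding Proposition \ref{prop_cpt_vs_min}), I may further assume $\mathrm{supp}\,\alpha\subset V^o_{\delta''}\subset U^o_\delta$.

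Next I would replace $ds^2_\pi$ by the complete K\"ahler metric $ds^2_\delta=\sqrt{-1}\ddbar(\varphi_\pi+\psi_\delta)$ of \S\ref{section_modify_dsGt}, and work with the twisted hermitian metric $e^{\psi_\delta}h$. By Lemma \ref{lem_i<n}, an $L^2$ solution with respect to $(ds^2_\delta,e^{\psi_\delta}h)$ extends by zero to an element of $D^{k-1}_{\max}(U_\delta^o,\bV;ds^2_\pi,h)$; since $ds^2_\pi$ is locally complete and $\alpha$ is already compactly supported, an approximation argument upgrades this to a class in $D^{k-1}_{\min}$. Using the fibration $V^o_{\delta''}\simeq(0,\ell(\delta''))\times L_{\delta''}$ from Proposition \ref{prop_Grauert_asymptotic_behavior}, I write $\alpha=\alpha_1+d\rho_2\wedge\alpha_0$ and define a formal primitive $\beta(\rho_2,x):=\int_0^{\rho_2}\alpha_0(t,x)\,dt$, which is supported in $V_{\delta''}^o$ and satisfies $\nabla\beta=\alpha$.

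The main analytic difficulty is that this explicit $\beta$ is typically \emph{not} square-integrable against $(ds^2_\delta,e^{\psi_\delta}h)$ because of the factors $-\log\rho_2\cdot\log(-\log\rho_2)$ appearing in the lower bound of $ds^2_\pi$ in Proposition \ref{prop_Grauert_asymptotic_behavior}. To overcome this, I would introduce a one-parameter deformation $(ds^2_{\delta,\epsilon},w_{\delta,\epsilon}h)$, with $ds^2_{\delta,\epsilon}=\sqrt{-1}\ddbar(\varphi_\pi+\psi_\delta+\epsilon F_\epsilon)$ and a weight $w_{\delta,\epsilon}$ built so that, on one hand, $\|\beta\|_{ds^2_{\delta,\epsilon},w_{\delta,\epsilon}h}<\infty$ for each $\epsilon\in(0,1)$ (using the $0$-tameness of $(\bV,h)$ from Definition \ref{defn_0tame_harmonic_bundle} and the asymptotic estimates on $\varphi_\pi$ and $ds^2_\pi$ of \S\ref{section_setting}) and, on the other hand, the pair fits into the hypothesis of Theorem \ref{thm_L2-estimate_main} with a bound $|dF_\epsilon|_\infty^2<8C$ that is uniform in $\epsilon$. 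The theorem then gives, for each $\epsilon$, the estimate
\begin{equation*}
|(\alpha,u)_{\epsilon}|\le\|\beta\|_{\epsilon}\cdot\bigl(\|\nabla u\|_\epsilon+\|\nabla^\ast_\epsilon u\|_\epsilon\bigr)\lesssim \|\nabla^\ast_\epsilon u\|_\epsilon,\qquad u\in A^k_{\mathrm{cpt}}(U^o_\delta,\bV),
\end{equation*}
where the implied constant is independent of $\epsilon$, because the Bochner-type bound in Theorem \ref{thm_L2-estimate_main} absorbs the potentially divergent $\|\beta\|_\epsilon$.

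The final step is to pass to the limit $\epsilon\to 0$: since $\|\alpha\|_{ds^2_{\delta,\epsilon},w_{\delta,\epsilon}h}\lesssim \|\alpha\|_{ds^2_\delta,w_\delta h}$ by the asymptotic shape of the deformation, the uniform estimate yields a solution $\beta\in L^{k-1}_{(2)}(U^o_\delta,\bV;ds^2_\delta,e^{\psi_\delta}h)$ of $\nabla\beta=\alpha$ in the Hilbert space of $L^2$ forms with respect to $(ds^2_\delta,e^{\psi_\delta}h)$ via the standard Ohsawa-style duality (cf. the argument around \cite[\S 2.1.2 Lemma 2.1]{Ohsawa2018}). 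Then Lemma \ref{lem_i<n} allows zero-extension to $U^o_\delta$, giving a $D^{k-1}_{\min}$-primitive of $\alpha$ with respect to $ds^2_\pi$ and $h$; this proves \eqref{align_thm_local_vanishing}. The key obstacle I expect is engineering the deformation $(ds^2_{\delta,\epsilon},w_{\delta,\epsilon}h)$: one must simultaneously control the $L^2$-norm of the explicit primitive $\beta$ (which forces the weight to grow near $E$) and the gradient $|dF_\epsilon|_\infty$ appearing in Theorem \ref{thm_L2-estimate_main} (which forces it not to grow too fast), all while keeping $ds^2_{\delta,\epsilon}$ K\"ahler and complete so Theorem \ref{thm_L2-estimate_main} applies.
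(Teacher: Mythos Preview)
Your proposal follows the paper's strategy closely, but the displayed inequality chain is where the argument breaks as written, and this is precisely the heart of the proof.

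You write $|(\alpha,u)_\epsilon|\le\|\beta\|_\epsilon\cdot(\|\nabla u\|_\epsilon+\|\nabla^\ast_\epsilon u\|_\epsilon)\lesssim\|\nabla^\ast_\epsilon u\|_\epsilon$ with a constant independent of $\epsilon$, claiming the Bochner bound ``absorbs the potentially divergent $\|\beta\|_\epsilon$''. No such absorption is possible from that chain: if $\|\beta\|_\epsilon\to\infty$, the middle term diverges. The correct mechanism (Lemma~\ref{lem_est_dsdeltaepsilon}) is different. The r\^ole of the explicit primitive $\beta_\epsilon$ is \emph{only} to certify that $\alpha\in\mathrm{Im}^k\nabla$ in $L^2_{(\delta,\epsilon)}$; its norm is thereafter irrelevant. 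Separately, Theorem~\ref{thm_L2-estimate_main} is applied with $F=\varphi_\pi$, $C=1$, $\varphi=\varphi_{\delta,\epsilon}$, so that $|dF|^2_\infty\le 4<8$ by \eqref{align_grad_norm_Gt} uniformly in $\epsilon$; this yields $\|v\|_\epsilon\le\sqrt2\,\|\nabla v\|_\epsilon$ for $v\perp\mathrm{Ker}^{k-1}\nabla$, hence $\mathrm{Im}^k\nabla$ is closed and, dually, $\|u_1\|_\epsilon\le\sqrt2\,\|\nabla^\ast_\epsilon u_1\|_\epsilon$ for the component $u_1$ of $u$ in $\mathrm{Im}^k\nabla=(\mathrm{Ker}^k\nabla^\ast)^\perp$. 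Then $|(\alpha,u)_\epsilon|=|(\alpha,u_1)_\epsilon|\le\|\alpha\|_\epsilon\|u_1\|_\epsilon\le\sqrt2\,\|\alpha\|_\epsilon\|\nabla^\ast_\epsilon u\|_\epsilon$, and it is $\|\alpha\|_\epsilon$, bounded uniformly by Lemma~\ref{lem_twist_epsilon_good}, that furnishes the constant.

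Two smaller corrections. First, the $\epsilon$-deformation lives in the weight, not in $F$: the paper sets $\varphi_{\delta,\epsilon}=\epsilon\log\rho_2+\psi_\delta$ and $ds^2_{\delta,\epsilon}=\sqrt{-1}\ddbar(\varphi_\pi+\varphi_{\delta,\epsilon})$, keeping $F=\varphi_\pi$ fixed; the uniformity of the Donnelly--Fefferman constant is then automatic since that constant depends only on $|dF|_\infty$ and $C$, not on $\varphi$. Second, the paper's primitive is $K\alpha=\int_{\rho_2}^{\ell(\delta')}\alpha_0\,dt$ (integrating outward), and the laborious $L^2$ estimate in Lemma~\ref{lem_solve_d_integral} (using $0$-tameness and Proposition~\ref{prop_Grauert_asymptotic_behavior}) is tailored to this choice; also, one solves on a strictly smaller $U_{\delta_1}^o$ with respect to $(ds^2_{\delta_1},e^{\psi_{\delta_1}}h)$ and then zero-extends to $U_\delta^o$ via Lemma~\ref{lem_i<n}, rather than solving directly on $U_\delta^o$.
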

	The proof is postponed to the end of this subsection.
	We use the notations in \S \ref{section_setting},  \S \ref{section_U_delta} and \S\ref{section_modify_dsGt}. Recall that $\rho_2=|s|$ and $\psi_\delta=-4n\log(\delta^2-\|w\|^2)$.
	By (\ref{align_rho2_psh})
	\begin{align*}
	\varphi_{\delta,\epsilon}:=\epsilon\log\rho_2+\psi_\delta,\quad\forall\epsilon\geq 0
	\end{align*}
	is a plurisubharmonic function.  By assumption  (\ref{align_metric}) we have
	\begin{align*}
	ds^2_{\pi}\sim\sqrt{-1}\ddbar (\varphi_{\pi}+\|w\|^2).
	\end{align*}
	Let
	\begin{align*}
	ds^2_{\delta,\epsilon}:=\sqrt{-1}\ddbar(\varphi_{\pi}+\varphi_{\delta,\epsilon}), \quad\forall \epsilon\in[0,1).
	\end{align*}
	Then $ds^2_{\delta,\epsilon}$ are complete metrics that locally uniformly converge to $\sqrt{-1}\ddbar(\varphi_{\pi}+\psi_\delta)=ds^2_{\delta}$ over $U^o_\delta$ when $\epsilon\to 0$.
	For simplicity we denote $(-,-)_{\delta,\epsilon}:=(-,-)_{ds^2_{\delta,\epsilon},e^{\varphi_{\delta,\epsilon}}h}$ and $\|-\|_{\delta,\epsilon}:=\|-\|_{ds^2_{\delta,\epsilon},e^{\varphi_{\delta,\epsilon}}h}$.
	\begin{lem}\label{lem_twist_epsilon_good}
		For every $\epsilon\in(0,1)$ and every $k$ one has the bounded inclusion
		\begin{align*}
		L_{(2)}^k(U_\delta^o,\bV;ds^2_\delta,e^{\psi_\delta}h)\subset L_{(2)}^k(U_\delta^o,\bV;ds^2_{\delta,\epsilon},e^{\varphi_{\delta,\epsilon}}h).
		\end{align*}
	\end{lem}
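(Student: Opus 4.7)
The plan is to establish the bounded inclusion via a pointwise comparison of $L^2$-densities, pulled back to $\widetilde{U_\delta}$ through the biholomorphism $\pi|_{\pi^{-1}Z^o}$. Write $\omega_\delta$ and $\omega_{\delta,\epsilon}$ for the K\"ahler forms of $ds^2_\delta$ and $ds^2_{\delta,\epsilon}$; then by (\ref{align_rho2_psh}),
$\omega_{\delta,\epsilon} = \omega_\delta + \epsilon\sqrt{-1}\ddbar\log\rho_2 \geq \omega_\delta.$
Let $\nu_1(x),\dots,\nu_n(x)\geq 0$ be the eigenvalues of $\epsilon\sqrt{-1}\ddbar\log\rho_2$ measured against $\omega_\delta$. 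Choosing a unitary frame for $\omega_\delta$ that simultaneously diagonalizes $\sqrt{-1}\ddbar\log\rho_2$, one has $\omega_{\delta,\epsilon}^n/\omega_\delta^n=\prod_i(1+\nu_i)$ and each basis $k$-form satisfies $|dz^I\wedge d\bar z^J|^2_{\omega_{\delta,\epsilon}}=\prod_{i\in I}(1+\nu_i)^{-1}\prod_{j\in J}(1+\nu_j)^{-1}\leq 1$. Using $e^{\varphi_{\delta,\epsilon}}=e^{\psi_\delta}\rho_2^\epsilon$, this yields the pointwise bound
\begin{align*}
|\alpha|^2_{\omega_{\delta,\epsilon},\,e^{\varphi_{\delta,\epsilon}}h}\,\mathrm{vol}_{\omega_{\delta,\epsilon}} \;\leq\; \rho_2^\epsilon\prod_{i=1}^n(1+\nu_i)\cdot|\alpha|^2_{\omega_\delta,\,e^{\psi_\delta}h}\,\mathrm{vol}_{\omega_\delta}.
\end{align*}
It therefore suffices to show that $\rho_2^\epsilon\prod_i(1+\nu_i)$ is uniformly bounded on $\widetilde{U_\delta}$.

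The key observation is that, although $\log\rho_2$ is singular along $E$, the $(1,1)$-form $\sqrt{-1}\ddbar\log\rho_2 = -\tfrac{1}{2}\sqrt{-1}\Theta_h\bigl(\sO_{\widetilde Z}(\sum_i a_iE_i)\bigr)$ extends to a smooth (semi-)positive form on all of $\widetilde Z$, since the Lelong contribution cancels under $\ddbar$. Hence on the precompact set $\pi^{-1}(\overline{U_\delta})$ it is dominated by a constant multiple of any fixed smooth hermitian metric $\omega_{\widetilde Z}$. Applying Lemma \ref{lem_Grauert_metric_in_coordinates}, in a local chart $(z_1,\dots,z_n)$ with $E=\{z_1\cdots z_r=0\}$ one has $\omega_\pi \gtrsim \omega_{\widetilde Z}/\bigl((-\log|z_1\cdots z_r|^2)\log^2(-\log|z_1\cdots z_r|^2)\bigr)$; combined with $\omega_\delta\geq\omega_\pi$, this dominates $\omega_{\widetilde Z}$ up to a polylogarithmic factor in $|z_1\cdots z_r|$. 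Consequently each eigenvalue $\nu_i$ grows at most polylogarithmically near $E$, and so does $\prod_i(1+\nu_i)$.

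Finally, the relation $\rho_2=\prod_i|s_i|^{a_i}\lesssim|z_1\cdots z_r|^{\min_i a_i}$ shows that $\rho_2^\epsilon$ decays as a positive power of $|z_1\cdots z_r|$, which absorbs any polylogarithmic factor; away from $E$, compactness of $\pi^{-1}(\overline{U_\delta})$ gives a uniform bound for both $\rho_2^\epsilon$ and $\nu_i$. We conclude $\rho_2^\epsilon\prod_i(1+\nu_i)\leq C_\epsilon$ on $\widetilde{U_\delta}$, which upon integration yields the desired bounded inclusion. The main delicate step is the polylogarithmic estimate on the $\nu_i$ in the second paragraph: one must carefully match the smooth global bound $\sqrt{-1}\ddbar\log\rho_2\lesssim\omega_{\widetilde Z}$ against the anisotropic local expression of $\omega_\pi$ from Lemma \ref{lem_Grauert_metric_in_coordinates}, verifying the polylogarithmic control in every direction rather than only on the trace.
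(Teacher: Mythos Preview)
Your argument is correct and follows essentially the same route as the paper. The paper's proof compresses your eigenvalue analysis into the single line ``By Lemma~\ref{lem_Grauert_metric_in_coordinates}, $\log^{-2}\rho_2\, ds^2_{\delta,\epsilon}\lesssim ds^2_\delta\lesssim ds^2_{\delta,\epsilon}$,'' and then deduces $\|\alpha\|^2_{\delta,\epsilon}\lesssim\int|\alpha|^2_{\delta,0}\,e^{\psi_\delta}\rho_2^\epsilon(\log\rho_2)^{2n}\,\mathrm{vol}_{ds^2_\delta}\lesssim\|\alpha\|^2_{\delta,0}$; your unpacking via $\nu_i$ and the smooth extension of $\sqrt{-1}\ddbar\log\rho_2=-\tfrac12\sqrt{-1}\Theta_h$ is exactly what justifies that line.
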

	\begin{proof}
		Let $\epsilon\in(0,1)$.
		By Lemma \ref{lem_Grauert_metric_in_coordinates} we get that
		\begin{align*}
		\log^{-2}\rho_2ds^2_{\delta,\epsilon}\lesssim ds^2_\delta\lesssim ds^2_{\delta,\epsilon}.
		\end{align*}
		As a consequence
		\begin{align}\label{estimate1}
		\|\alpha\|^2_{\delta,\epsilon}=\int_{U^o_\delta}|\alpha|^2_{\delta,\epsilon}e^{\varphi_{\delta,\epsilon}}{\rm vol}_{ds^2_{\delta,\epsilon}}\lesssim\int_{U^o_\delta}|\alpha|^2_{\delta,0}e^{\psi_\delta}\rho_2^\epsilon(\log\rho_2)^{2n}{\rm vol}_{ds^2_{\delta}}\lesssim\|\alpha\|^2_{\delta,0}.
		\end{align}
		This proves the lemma.
	\end{proof}

	\begin{lem}\label{lem_est_dsdeltaepsilon}
		Let $r\leq n$ and $\alpha\in L_{(2)}^r(U_\delta^o,\bV;ds^2_{\delta,\epsilon},e^{\varphi_{\delta,\epsilon}}h)$. Assume that $\alpha=\nabla\beta$ for some $\beta\in L_{(2)}^{r-1}(U_\delta^o,\bV;ds^2_{\delta,\epsilon},e^{\varphi_{\delta,\epsilon}}h)$. Then for every $u\in A^r_{\rm cpt}(U_{\delta}^o,\bV)$ there is an estimate
		\begin{align*}
		|(\alpha,u)_{\delta,\epsilon}|\leq \sqrt{2}\|\alpha\|_{\delta,\epsilon}\|\nabla^\ast_{e^{\varphi_{\delta,\epsilon}}h} u\|_{\delta,\epsilon}.
		\end{align*}
	\end{lem}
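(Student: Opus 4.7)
The plan is to combine integration by parts with the weighted Bochner--Kodaira estimate of Theorem \ref{thm_L2-estimate_main}, applied not to $\alpha$ or $u$ but to a carefully chosen primitive of $\alpha$ of minimal norm.

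First I would pass from $\beta$ to its orthogonal projection onto $(\ker \nabla_{r-2})^\perp$. Concretely, decompose orthogonally in the Hilbert space $L^{r-1}_{(2)}(U^o_\delta,\bV;ds^2_{\delta,\epsilon},e^{\varphi_{\delta,\epsilon}}h)$,
\begin{align*}
\beta=\beta'+\beta'',\qquad \beta'\in\overline{\mathrm{Im}\,\nabla_{r-2}},\quad \beta''\in\ker(\nabla_{r-2})^\ast_{e^{\varphi_{\delta,\epsilon}}h}.
\end{align*}
Since $\nabla^2=0$ and $\nabla_{r-1}$ is closed (so that $\ker\nabla_{r-1}$ is closed and contains $\mathrm{Im}\,\nabla_{r-2}$), we have $\nabla\beta'=0$, hence $\alpha=\nabla\beta''$. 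In particular $\beta''\in\mathrm{Dom}\,\nabla_{r-1}\cap\mathrm{Dom}\,(\nabla_{r-2})^\ast_{e^{\varphi_{\delta,\epsilon}}h}$ with $\nabla\beta''=\alpha$ and $(\nabla_{r-2})^\ast\beta''=0$.

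Next I would apply Theorem \ref{thm_L2-estimate_main} to the $(r-1)$-form $\beta''$ (noting $r-1<n$ since $r\leq n$), with the parameters $\varphi=\varphi_{\delta,\epsilon}$, $F=\varphi_\pi$, and $C=1$. Then $\sqrt{-1}\ddbar(F+C\varphi)=\sqrt{-1}\ddbar(\varphi_\pi+\varphi_{\delta,\epsilon})$ is precisely the K\"ahler form of $ds^2_{\delta,\epsilon}$, as required. The key quantitative input is the bound $|d\varphi_\pi|_{\sqrt{-1}\ddbar\varphi_\pi}\leq 2$ from (\ref{align_grad_norm_Gt}); since $\varphi_{\delta,\epsilon}$ is plurisubharmonic one has $\sqrt{-1}\ddbar\varphi_\pi\leq ds^2_{\delta,\epsilon}$, and therefore $|dF|^2_\infty=|d\varphi_\pi|^2_{ds^2_{\delta,\epsilon}}\leq 4<8C=8$. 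Theorem \ref{thm_L2-estimate_main} then gives
\begin{align*}
\|\beta''\|^2_{\delta,\epsilon}\leq\frac{8\cdot 1^2}{8-4}\bigl(\|\nabla\beta''\|^2_{\delta,\epsilon}+\|(\nabla_{r-2})^\ast\beta''\|^2_{\delta,\epsilon}\bigr)=2\|\alpha\|^2_{\delta,\epsilon}.
\end{align*}

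Finally, since $u\in A^r_{\rm cpt}(U^o_\delta,\bV)$ lies in $\mathrm{Dom}\,(\nabla_{r-1})^\ast_{e^{\varphi_{\delta,\epsilon}}h}$, integration by parts yields
\begin{align*}
(\alpha,u)_{\delta,\epsilon}=(\nabla\beta'',u)_{\delta,\epsilon}=(\beta'',\nabla^\ast_{e^{\varphi_{\delta,\epsilon}}h}u)_{\delta,\epsilon},
\end{align*}
and Cauchy--Schwarz combined with the previous inequality gives
\begin{align*}
|(\alpha,u)_{\delta,\epsilon}|\leq\|\beta''\|_{\delta,\epsilon}\,\|\nabla^\ast_{e^{\varphi_{\delta,\epsilon}}h}u\|_{\delta,\epsilon}\leq\sqrt{2}\,\|\alpha\|_{\delta,\epsilon}\,\|\nabla^\ast_{e^{\varphi_{\delta,\epsilon}}h}u\|_{\delta,\epsilon}.
\end{align*}
The only delicate point is the calibration of the constant: it is precisely the bound $|d\varphi_\pi|_{\sqrt{-1}\ddbar\varphi_\pi}\leq 2$ that forces the choice $C=1$, $F=\varphi_\pi$ and produces the sharp value $\sqrt{2}$; everything else is formal Hilbert-space manipulation.
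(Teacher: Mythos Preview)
Your proof is correct and takes a genuinely different, somewhat more direct route than the paper.

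The paper first applies Theorem \ref{thm_L2-estimate_main} in degree $r-1$ to deduce $\|v\|_{\delta,\epsilon}\le\sqrt{2}\,\|\nabla v\|_{\delta,\epsilon}$ for $v\perp\ker\nabla$, then invokes H\"ormander's closed-range theorem \cite[Theorem 1.1.1]{Hormander1965} to conclude that ${\rm Im}^r\nabla$ is closed and that the \emph{dual} inequality $\|u_1\|_{\delta,\epsilon}\le\sqrt{2}\,\|\nabla^\ast u_1\|_{\delta,\epsilon}$ holds for $u_1\in{\rm Im}^r\nabla$. It then decomposes $u=u_1+u_2$ with $u_2\in\ker\nabla^\ast$, uses $(\alpha,u_2)=0$ (since $\alpha\in{\rm Im}\nabla$), and finishes with Cauchy--Schwarz on $(\alpha,u_1)$.

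You instead work on the $\alpha$ side: project $\beta$ to $\beta''\perp\overline{{\rm Im}\,\nabla_{r-2}}=\ker(\nabla_{r-2})^\ast$, note $\beta''\in{\rm Dom}\,\nabla\cap{\rm Dom}\,\nabla^\ast$ with $\nabla^\ast\beta''=0$, and apply Theorem \ref{thm_L2-estimate_main} directly to $\beta''$ to get $\|\beta''\|_{\delta,\epsilon}\le\sqrt{2}\,\|\alpha\|_{\delta,\epsilon}$; the result follows by one integration by parts. This bypasses the closed-range machinery entirely. The paper's detour through H\"ormander buys, as a side effect, the closedness of ${\rm Im}^r\nabla$ (not needed for the lemma itself), while your argument is shorter and uses only the orthogonal decomposition plus the basic fact $\overline{{\rm Im}\,\nabla_{r-2}}\subset\ker\nabla_{r-1}$.
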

	\begin{proof}
		Since $\varphi_{\pi}$, $\log\rho_2$ and $\psi_\delta$ are plurisubharmonic functions, by (\ref{align_grad_norm_Gt}) we have
		\begin{align*}
		|d\varphi_{\pi}|^2_{ds^2_{\delta,\epsilon}}\leq|d\varphi_{\pi}|^2_{\sqrt{-1}\ddbar \varphi_{\pi}}\leq 4.
		\end{align*}
		Hence Theorem \ref{thm_L2-estimate_main} (taking $C=1$, $F=\varphi_\pi$ and $\varphi=\varphi_{\delta,\epsilon}$) gives us the estimate
		\begin{align*}
		\|v\|^2_{\delta,\epsilon}\leq 2\left(\|\nabla v\|^2_{\delta,\epsilon}+\|\nabla^\ast_{e^{\varphi_{\delta,\epsilon}}h}v\|_{\delta,\epsilon}^2\right),\quad \forall v\in {\rm Dom}^{r-1}\nabla\cap{\rm Dom}^{r-1}\nabla^\ast_{e^{\varphi_{\delta,\epsilon}}h}.
		\end{align*}
		Here we denote $${\rm Dom}^{k}\nabla:={\rm Dom}\nabla\cap L^k_{(2)}(U_\delta^o,\bV;ds^2_{\delta,\epsilon},e^{\varphi_{\delta,\epsilon}}h),\quad {\rm Im}^{k}\nabla:={\rm Im}\nabla\cap L^k_{(2)}(U_\delta^o,\bV;ds^2_{\delta,\epsilon},e^{\varphi_{\delta,\epsilon}}h),\quad \textrm{etc.}$$
		As a consequence,
		\begin{align*}
		\|v\|_{\delta,\epsilon}\leq\sqrt{2}\|\nabla v\|_{\delta,\epsilon},\quad \forall v\in {\rm Dom}^{r-1}\nabla\cap({\rm Ker}^{r-1}\nabla)^\bot.
		\end{align*}
		By \cite[Theorem 1.1.1]{Hormander1965}, ${\rm Im}^{r}\nabla$ is closed and there is an orthogonal decomposition
		\begin{align*}
		L^r_{(2)}(U^o_\delta,\bV;ds^2_{\delta,\epsilon},e^{\varphi_{\delta,\epsilon}}h)={\rm Im}^{r}\nabla\oplus{\rm Ker}^r\nabla^{\ast}_{e^{\varphi_{\delta,\epsilon}}h}.
		\end{align*}
		Hence there is a unique decomposition
		\begin{align*}
		u=u_1+u_2,\quad u_1\in {\rm Im}^r\nabla, u_2\in {\rm Ker}^r\nabla^\ast_{e^{\varphi_{\delta,\epsilon}}h}.
		\end{align*}
		Since $u\in A^r_{\rm cpt}(U_{\delta}^o,\bV)$, $\nabla^\ast_{e^{\varphi_{\delta,\epsilon}}h}u_1=\nabla^\ast_{e^{\varphi_{\delta,\epsilon}}h} u$ is weighted square integrable. Hence by \cite[Theorem 1.1.1]{Hormander1965} one gets $$\|u_1\|_{\delta,\epsilon}\leq \sqrt{2}\|\nabla^\ast_{e^{\varphi_{\delta,\epsilon}}h} u_1\|_{\delta,\epsilon}.$$ Therefore
		\begin{align*}
		|(\alpha,u)_{\delta,\epsilon}|=|(\alpha,u_1)_{\delta,\epsilon}|\leq\|\alpha\|_{\delta,\epsilon}\|u_1\|_{\delta,\epsilon}\leq \sqrt{2}\|\alpha\|_{\delta,\epsilon}\|\nabla^\ast_{e^{\varphi_{\delta,\epsilon}}h} u_1\|_{\delta,\epsilon}= \sqrt{2}\|\alpha\|_{\delta,\epsilon}\|\nabla^\ast_{e^{\varphi_{\delta,\epsilon}}h} u\|_{\delta,\epsilon}.
		\end{align*}
	\end{proof}
	\begin{lem}\label{lem_solve_d_integral}
		Let $0<\delta''<\delta'<\delta\ll1$ so that $\overline{\pi^{-1}U_{\delta''}}\subset V_{\delta'}$ and $\overline{V_{\delta'}}\subset \pi^{-1}{U_{\delta}}$.
		Let $0<k\leq 2n$ and $\alpha\in D^k_{\rm max}(U^o_{\delta},\bV;ds^2_{\delta},e^{\psi_\delta}h)$ such that $\nabla\alpha=0$ and ${\rm supp}\alpha$ is compact in $U_{\delta''}$. For every $\epsilon\in(0,1)$, there is $\beta\in D^{k-1}_{\rm max}(U^o_\delta,\bV;ds^2_{\delta,\epsilon},e^{\varphi_{\delta,\epsilon}}h)$ such that $\nabla\beta=\alpha$.
	\end{lem}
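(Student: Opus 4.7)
The plan is to construct $\beta$ explicitly by integrating $\alpha$ along the $\rho_2$-fibers of the fibration (\ref{align_fiberation_V_delta}) provided by $V_{\delta'}^o\simeq (0,\ell(\delta'))\times L_{\delta'}$, and then to verify the required integrability using Proposition \ref{prop_Grauert_asymptotic_behavior} together with the $0$-tameness of the harmonic bundle $(\bV,h)$. First I lift everything to the desingularization $\widetilde Z$. The hypothesis $\overline{\pi^{-1}U_{\delta''}}\subset V_{\delta'}$ guarantees that the support of $\pi^\ast\alpha$ is contained in $V_{\delta'}$ and, crucially, stays in $\{\rho_2\leq c\}$ for some $c<\ell(\delta')$. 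Trivialize $\bV$ along the $\rho_2$-direction on $V_{\delta'}^o$ so that $\nabla=d\rho_2\wedge\partial_{\rho_2}+\nabla_L$, where $\nabla_L$ is the flat connection on $\bV|_{L_{\delta'}}$, and decompose $\alpha=\alpha_1+d\rho_2\wedge\alpha_0$ with $\iota_{\partial_{\rho_2}}\alpha_0=\iota_{\partial_{\rho_2}}\alpha_1=0$.

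Next I define
\[
\beta(x):=-\int_{\rho_2(x)}^{\ell(\delta')}\alpha_0(t,p_L(x))\,dt
\]
where $p_L:V_{\delta'}^o\to L_{\delta'}$ is the fiber projection, and extend $\beta$ by zero outside $V_{\delta'}^o$. Because ${\rm supp}(\pi^\ast\alpha)\subset\{\rho_2\leq c\}$, the form $\beta$ vanishes for $\rho_2>c$ and hence extends continuously by zero to all of $U_\delta^o$ (and the extension descends to $Z^o$ since $\pi$ is an isomorphism there). Using $\nabla\alpha=0$, which translates into $\nabla_L\alpha_1=0$ and $\partial_{\rho_2}\alpha_1=\nabla_L\alpha_0$, a direct computation as in the proofs of Lemmas \ref{lem_Kunneth_locL2} and \ref{lem_Thom_locL2} gives
\[
\nabla\beta=-\int_{\rho_2}^{\ell(\delta')}\nabla_L\alpha_0\,dt+d\rho_2\wedge\alpha_0=\alpha_1(\rho_2,\cdot)-\alpha_1(\ell(\delta'),\cdot)+d\rho_2\wedge\alpha_0=\alpha,
\]
the boundary term at $\rho_2=\ell(\delta')$ vanishing thanks to the support condition.

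It remains to show $\beta\in L^{k-1}_{(2)}(U_\delta^o,\bV;ds^2_{\delta,\epsilon},e^{\varphi_{\delta,\epsilon}}h)$ for every $\epsilon\in(0,1)$. On $V_{\delta'}^o$, Proposition \ref{prop_Grauert_asymptotic_behavior} yields the pointwise bound $|\alpha_0|_{ds^2_\pi|_{L_{\delta'}},h}\lesssim \rho_2|\log\rho_2|\log(-\log\rho_2)\,|\alpha|_{ds^2_\pi,h}$, while the $(ds^2_{\delta,\epsilon}, e^{\varphi_{\delta,\epsilon}}h)$-norm of $\beta$ acquires the favorable weight $\rho_2^\epsilon$ from $\varphi_{\delta,\epsilon}=\epsilon\log\rho_2+\psi_\delta$. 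The $0$-tameness of $(\bV,h)$ (Definition \ref{defn_0tame_harmonic_bundle}, or equivalently (\ref{align_defn_0tame2})) implies $|v|_h\lesssim\rho_2^{-\epsilon/4}$ for every flat section $v$, so changing the metric on the fibers by $h$ costs only an arbitrarily small polynomial factor in $\rho_2$. A Hardy-type estimate along the $\rho_2$-direction, combined with the fact that $\psi_\delta$ is bounded on ${\rm supp}(\beta)$ (which stays at positive distance from $\partial U_\delta$), then reduces the finiteness of $\|\beta\|_{\delta,\epsilon}^2$ to a one-variable weighted integral dominated by the finite quantity $\|\alpha\|^2_{ds^2_\delta,e^{\psi_\delta}h}$ multiplied by $\int_0^c\rho_2^{\epsilon/2}\frac{d\rho_2}{\rho_2|\log\rho_2|\log(-\log\rho_2)}$, which converges.

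The principal obstacle is the last $L^2$-estimate, where one must absorb all the logarithmic factors that appear simultaneously from the anisotropic behavior of $ds^2_{\delta,\epsilon}$ near $\{\rho_2=0\}$ (Proposition \ref{prop_Grauert_asymptotic_behavior}), from the volume form on $V_{\delta'}^o$, and from the $0$-tame bound on $h$, by means of the single small weight $\rho_2^\epsilon$ furnished by $\varphi_{\delta,\epsilon}$. The key trick is that the logarithmic asymptotics are all polynomial of fixed order in $\log\rho_2$ and $\log(-\log\rho_2)$, whereas $\rho_2^\epsilon$ beats any such polynomial; the completion term $\psi_\delta$ only matters near $\partial U_\delta$, where ${\rm supp}(\beta)$ does not reach, which is precisely the reason for introducing the auxiliary neighborhood $V_{\delta'}$ and performing integration in the $\rho_2$- rather than the $\rho_1$-direction.
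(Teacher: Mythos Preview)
Your proposal is correct and follows essentially the same route as the paper: integrate $\alpha_0$ along the $\rho_2$-fibers of the foliation (\ref{align_fiberation_V_delta}), use the support hypothesis to make the homotopy identity collapse to $\nabla\beta=\alpha$, and absorb all logarithmic losses from Proposition \ref{prop_Grauert_asymptotic_behavior} and from the $0$-tame metric $h$ into the single weight $\rho_2^\epsilon$. The one organizational difference is that the paper first proves the operator bound $\|K\alpha\|_{\delta,\epsilon}\le C'\|\alpha\|_{\delta,0}$ for \emph{smooth} compactly supported $\alpha$ (where the homotopy formula $K\nabla+\nabla K=-{\rm Id}$ is immediate) and then reaches the given $\alpha\in D^k_{\rm max}$ by approximation using the completeness of $ds^2_\delta$; this sidesteps having to justify the distributional identity $\nabla\beta=\alpha$ and the extension by zero directly for a merely measurable $\alpha$, and also makes transparent that $\beta\in D^{k-1}_{\rm max}$ rather than just $L^2$.
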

	\begin{proof}
		Recall the fiberation (\ref{align_fiberation_V_delta}):
		\begin{align*}
		V^o_{\delta'}\simeq (0,\ell(\delta'))\times L_{\delta'},\quad
		\end{align*}
		whose first component is $\rho_2$. Fix an isomorphism of the local systems
		\begin{align*}
		\bV\simeq q^\ast \bV|_{L_{\delta'}}
		\end{align*}
		where $q:(0,\ell(\delta'))\times L_{\delta'}\to (0,\ell(\delta'))$ is the projection.
		By (\ref{align_rho2_psh}), $\sqrt{-1}\ddbar\varphi_{\delta,\epsilon}$ represents a non-degenerate metric on $\overline{V_{\delta'}}$. Hence by Proposition \ref{prop_Grauert_asymptotic_behavior}, there is a constant $C\geq 1$ such that for every $\epsilon\in[0,1)$ one has
		\begin{align}\label{align_ds2_deltaepsilon_1}
		\frac{d\rho_2^2}{\rho_2^2\log^2\rho_2\log^2(-\log \rho_2)}+\frac{ds^2_\pi|_{L_{\delta'}}}{\log^2\rho_2\log^2(-\log \rho_2)}
		\leq C^2 ds^2_{\delta,\epsilon}|_{V^o_{\delta'}}
		\end{align}
		and
		\begin{align}\label{align_ds2_deltaepsilon_2}
		C^{-2}ds^2_{\delta,\epsilon}|_{V^o_{\delta'}}\leq \frac{d\rho_2^2}{\rho_2^2\log^2\rho_2\log^2(-\log \rho_2)}+ds^2_\pi|_{L_{\delta'}}.
		\end{align}
		Let $\alpha\in A^k(U^o_{\delta},\bV)$ such that ${\rm supp}\alpha$ is compact in $U^o_{\delta''}$. Let
		$\alpha|_{V^o_{\delta'}}=d\rho_2\wedge\alpha_0+\alpha_1$ be the decomposition so that
		$$\iota_{\frac{\partial}{\partial \rho_2}}\alpha_0=0,~~\iota_{\frac{\partial}{\partial \rho_2}}\alpha_1=0.$$
		Denote
		\begin{align*}
		K\alpha:=\int_{\rho_2}^{\ell(\delta')}\alpha_0(t,\cdot)dt.
		\end{align*}
		Recall that $\nabla(\alpha'\otimes e)=d\alpha'\otimes e$ for $\alpha'\in A^\ast(U^o_{\delta})$ and $e\in\Gamma(U^o_{\delta},\bV)$.
		Then
		\begin{align}\label{align_K_homotopy_to_Id}
		K\nabla\alpha+\nabla K\alpha=-\alpha.
		\end{align}
		Denote $$N_1(\epsilon'):=\max_{0\leq s\leq\ell(\delta')}s^{\epsilon'}\left(-\log s\log(-\log s)\right)^{2n-1},\quad \forall\epsilon'>0.$$
		By (\ref{align_ds2_deltaepsilon_1}) and (\ref{align_ds2_deltaepsilon_2}) we obtain that
		\begin{align}\label{align_|ds|_asymtopic}
		C^{-1}\rho_2\leq|d\rho_2|_{ds^2_{\delta,\epsilon}}\leq CN_1(\epsilon')\rho_2^{1-\epsilon'},\quad \forall \epsilon\geq0, \forall\epsilon'>0
		\end{align}
		and
		\begin{align}\label{align_|alpha0|_asymtopic}
		\rho_2^{\epsilon'}|\alpha_0|_{ds^2_{\delta,\epsilon},h}\leq C^{2n-1}N_1(\epsilon')|\alpha_0|_{ds^2_{\pi}|_{L_{\delta'}},h}\leq C^{4n-2}N_1(\epsilon')|\alpha_0|_{ds^2_{\delta},h},\quad\forall \epsilon>0, \forall \epsilon'>0.
		\end{align}
		For every $t\in(0,\ell(\delta'))$, denote $M_t:=\{t\}\times L_{\delta'}$.
		Denote by $g_{t,\epsilon}:=ds^2_{\delta,\epsilon}|_{M_t}$, $h_t:=h|_{M_t}$ and denote by ${\rm vol}_{t,\epsilon}$ the volume form of  $(M_t,g_{t,\epsilon})$. For every form $\gamma$, $|\gamma|_{g_{t,\epsilon},h_t}:=\big|\gamma|_{M_t}\big|_{g_{t,\epsilon},h_t}$ is a function on $t\in(0,\ell(\delta'))$. By (\ref{align_ds2_deltaepsilon_1}) and (\ref{align_ds2_deltaepsilon_2}), for every $\epsilon\geq 0$, every $\epsilon'>0$ and every $0<t<t'<\ell(\delta')<1$, one has
		\begin{align}\label{align_vol_asymtopic}
		t^{\epsilon'}{\rm vol}_{t,\epsilon}\leq C^{2n-1}t^{\epsilon'}{\rm vol}_{ds^2_\pi|_{L_{\delta'}}}\leq C^{4n-2}N_1(\epsilon'){\rm vol}_{t,0}
		\end{align}
		and
		\begin{align}\label{align_vol_asymtopic2}
		t^{\epsilon'}{\rm vol}_{t,\epsilon}\leq C^{2n-1}t'^{\epsilon'}{\rm vol}_{ds^2_\pi|_{L_{\delta'}}}
		\leq C^{4n-2} N_1(\epsilon'){\rm vol}_{t',\epsilon}.
		\end{align}
		Since $h$ is $0$-tame (\ref{align_defn_0tame2}),
		$$N_2(\epsilon'):=\max\left\{s^{\epsilon'}\frac{|v|_{h_s}}{|v|_{h_{\ell(\delta')}}},s^{\epsilon'}\frac{|v|_{h_{\ell(\delta')}}}{|v|_{h_s}}\bigg|0\leq s\leq\ell(\delta'),0\neq v\in \bV\right\}$$
		is finite for every $\epsilon'>0$.
		Hence by (\ref{align_|alpha0|_asymtopic}) we have
		\begin{align*}
		t^{\epsilon'}|\alpha_0|_{g_{t,\epsilon},h_{t}}&\leq C^{2n-1}N_1(\frac{\epsilon'}{3})N_2(\frac{\epsilon'}{3})t^{\frac{\epsilon'}{3}}|\alpha_0|_{ds^2_\pi|_{L_{\delta'}},h_{\ell(\delta')}}\\\nonumber
		&\leq C^{2n-1}N_1(\frac{\epsilon'}{3})N_2(\frac{\epsilon'}{3})t'^{\frac{\epsilon'}{3}}|\alpha_0|_{ds^2_\pi|_{L_{\delta'}},h_{\ell(\delta')}}\\\nonumber
		&\leq C^{4n-2}N_1(\frac{\epsilon'}{3})N_2(\frac{\epsilon'}{3})^2|\alpha_0|_{g_{t',\epsilon},h_{t'}}
		\end{align*}
		for every $\epsilon>0$, every $\epsilon'>0$ and every $0<t<t'<\ell(\delta')$.
		It follows that for every $\epsilon>0$,
		\begin{align}\label{align_ds2_s=t}
		&\left|\int_\rho^{\ell(\delta')}\alpha_0dt\right|^2_{g_{\rho,\epsilon},h_\rho}\\\nonumber
		\leq&\left(\int_{\rho}^{\ell(\delta')}|\alpha_0|^2_{g_{\rho,\epsilon},h_{\rho}}t^{1+\epsilon/2}dt\right)\left(\int_{\rho}^{\ell(\delta')}t^{-1-\epsilon/2}dt\right)\\\nonumber
		\leq& C^{8n-4}N_1(\frac{\epsilon}{24})^2N_2(\frac{\epsilon}{24})^4\rho^{-\frac{\epsilon}{4}}\left(\int_{\rho}^{\ell(\delta')}|\alpha_0|^2_{g_{t,\epsilon},h_t}t^{1+\epsilon/2}dt\right)\left(\int_{\rho}^{\ell(\delta')}t^{-1-\epsilon/2}dt\right).
		\end{align}
		Hence for every $\epsilon\in(0,1)$, we have
		\begin{align*}
		&\int_{V_{\delta'}}\left|\int_{\rho}^{\ell(\delta')}\alpha_0dt\right|^2_{g_{\rho,\epsilon},h_\rho}\rho^{\epsilon}{\rm vol}_{ds^2_{\delta,\epsilon}}\\\nonumber
		=&\int_0^{\ell(\delta')}\int_{M_\rho}\left|\int_\rho^{\ell(\delta')}\alpha_0dt\right|^2_{g_{\rho,\epsilon},h_\rho}{\rm vol}_{\rho,\epsilon}\rho^{\epsilon}|d\rho|^{-1}_{ds^2_{\delta,\epsilon}}d\rho\\\nonumber
		\lesssim&\int_0^{\ell(\delta')}\int_{M_\rho}\left(\int_{\rho}^{\ell(\delta')}|\alpha_0|^2_{g_{t,\epsilon},h_t}t^{1+\epsilon/2}dt\right)\left(\int_{\rho}^{\ell(\delta')}t^{-1-\epsilon/2}dt\right){\rm vol}_{\rho,\epsilon}\rho^{\frac{3\epsilon}{4}}|d\rho|^{-1}_{ds^2_{\delta,\epsilon}}d\rho\quad(\ref{align_ds2_s=t})\\\nonumber
		\lesssim&\int_0^{\ell(\delta')}\int_{M_\rho}\left(\int_{\rho}^{\ell(\delta')}|\alpha_0|^2_{g_{t,\epsilon},h_t}t^{1+\epsilon/2}dt\right){\rm vol}_{\rho,\epsilon}\rho^{\epsilon/4}|d\rho|^{-1}_{ds^2_{\delta,\epsilon}}d\rho\\\nonumber
		\lesssim&\int_0^{\ell(\delta')}\left(\int_{\rho}^{\ell(\delta')}\int_{M_t}|\alpha_0|^2_{g_{t,\epsilon},h_t}{\rm vol}_{t,\epsilon}t^{1+\epsilon/2}dt\right)\rho^{\epsilon/8}|d\rho|^{-1}_{ds^2_{\delta,\epsilon}}d\rho\quad(\ref{align_vol_asymtopic2})\\\nonumber
		=&\int_{0}^{\ell(\delta')}\int_{M_t}|\alpha_0|^2_{g_{t,\epsilon},h_t}{\rm vol}_{t,\epsilon}t^{1+\epsilon/2}dt\int_{0}^{t}s^{\epsilon/8}|ds|^{-1}_{ds^2_{\delta,\epsilon}}ds\\\nonumber
		\lesssim&\int_{0}^{\ell(\delta')}\int_{M_t}|\alpha_0|^2_{g_{t,\epsilon},h_t}{\rm vol}_{t,\epsilon}t^{1+5\epsilon/8}dt\quad(\ref{align_|ds|_asymtopic})\\\nonumber
		\lesssim&\int_{0}^{\ell(\delta')}\int_{M_t}|dt\wedge\alpha_0|^2_{g_{t,\epsilon},h_t}{\rm vol}_{t,\epsilon}t^{-1+5\epsilon/8}dt\quad(\ref{align_|ds|_asymtopic})\\\nonumber
		\lesssim&\int_{0}^{\ell(\delta')}\int_{M_t}|\alpha|^2_{ds^2_\delta}{\rm vol}_{t,0}t^{\epsilon/4}|dt|^{-1}_{ds^2_\delta}dt\quad(\ref{align_|ds|_asymtopic}),  (\ref{align_|alpha0|_asymtopic}), (\ref{align_vol_asymtopic})\\\nonumber
		\lesssim&\int_{V_{\delta'}}|\alpha|^2_{ds^2_\delta}{\rm vol}_{ds^2_\delta}.
		\end{align*}
		Here the controlling constants of $\lesssim$ only depend on $\epsilon$ and $\delta'$. Since $e^{\psi_\delta}\sim 1$ over $\overline{V_{\delta'}}$, we obtain that
		\begin{align*}
		\|K\alpha\|_{\delta,\epsilon}\leq C'\|\alpha\|_{\delta,0},\quad \forall\alpha\in A_{\rm cpt}(U^o_{\delta''},\bV),
		\end{align*}
		for some constant $C'>0$ depending only on $\epsilon$, $\delta$, $\delta'$ and $\delta''$.

		Hence by (\ref{estimate1}) and  (\ref{align_K_homotopy_to_Id}) we obtain that
		\begin{align*}
		\|K\alpha\|_{\delta,\epsilon}+\|\nabla K\alpha\|_{\delta,\epsilon}
		\leq\|K\alpha\|_{\delta,\epsilon}+\|\alpha\|_{\delta,\epsilon}+\|K\nabla \alpha\|_{\delta,\epsilon}
		\leq (C'+1)(\|\alpha\|_{\delta,0}+\|\nabla \alpha\|_{\delta,0}).
		\end{align*}
		Now let $\alpha\in D^k_{\rm max}(U_{\delta}^o,\bV;ds^2_{\delta},e^{\psi_\delta}h)$ such that $\nabla \alpha=0$ and ${\rm supp}\alpha$ is compact in $U_{\delta''}$. Since $ds^2_\delta$ is a complete metric, there are $\alpha_n\in A^k_{\rm cpt}(U_{\delta''}^o,\bV)$, $n\in\bZ_{>0}$, such that
		\begin{align}\label{align_graph_convergence}
		\alpha_n\to\alpha,\quad \nabla\alpha_n\to \nabla\alpha=0
		\end{align}
		under the $L^2$-norm with respect to $ds^2_{\delta}$ and $e^{\psi_{\delta}} h$. By Lemma \ref{lem_twist_epsilon_good}, (\ref{align_graph_convergence}) are also convergent sequences with respect to $ds^2_{\delta,\epsilon}$ and $e^{\varphi_{\delta,\epsilon}}h$.
		By the boundedness of $K$, we obtain that
		$$\|K\alpha_n-K\alpha_m\|_{\delta,\epsilon}\leq C'\|\alpha_n-\alpha_m\|_{\delta,0}.$$
		Hence $K\alpha_n$ is a Cauchy sequence. Denote by $\beta\in L_{(2)}^{k-1}(U_{\delta}^o,\bV;ds_{\delta,\epsilon}^2,e^{\varphi_{\delta,\epsilon}}h)$ its limit. Applying (\ref{align_K_homotopy_to_Id}) on $\alpha_n$, i.e.
		$$\nabla K\alpha_n+K\nabla \alpha_n=-\alpha_n$$
		and taking its limit
		we obtain the convergent sequences
		\begin{align*}
		K\alpha_n\to\beta,\quad \nabla K\alpha_n\to -\alpha
		\end{align*}
		with respect to the metrics $ds^2_{\delta,\epsilon}$ and $e^{\varphi_{\delta,\epsilon}} h$. Hence $\beta\in D^{k-1}_{\rm max}(U^o_\delta,\bV;ds^2_{\delta,\epsilon},e^{\varphi_{\delta,\epsilon}}h)$ and
		$\nabla \beta=-\alpha$. This proves the lemma.
	\end{proof}
	Now we are ready to prove the main result of this subsection.
	\begin{proof}[Proof of Theorem \ref{thm_local_vanishing}]
		Let $k\leq n$ and $\alpha\in D^{k}_{\rm max}(U_\delta^o,\bV;ds^2_{\pi},h)$ such that $\nabla \alpha=0$. Since $\{\pi^{-1}U_\delta\}$ and $\{V_\delta\}$ are cofinal systems of neighborhoods of $E_{\{0\}}$, we may choose $0<\delta_2<\delta'<\delta_1<\delta\ll1$ so that $\overline{\pi^{-1}U_{\delta_2}}\subset V_{\delta'}$, $\overline{V_{\delta'}}\subset \pi^{-1}{U_{\delta_1}}$ and $\overline{U_{\delta_1}}\subset U_\delta$. The proof is divided into two steps.

		{\bf Step 1:} Assume that
		${\rm supp}\alpha$ is compact in $U_{\delta_2}$. By Lemma \ref{lem_twist_epsilon_good},
		$$\alpha\in D^{k}_{\rm max}(U_{\delta_1}^o,\bV;ds^2_{\delta_1},e^{\psi_{\delta_1}}h)\subset L_{(2)}^k(U_{\delta_1}^o,\bV;ds^2_{\delta_1,\epsilon},e^{\varphi_{\delta_1,\epsilon}}h)$$
		for every $\epsilon\in(0,1)$.
		By Lemma \ref{lem_est_dsdeltaepsilon} and Lemma \ref{lem_solve_d_integral} we have the estimate
		\begin{align*}
		|(\alpha,u)_{\delta_1,\epsilon}|^2\leq \sqrt{2}\|\alpha\|_{\delta_1,\epsilon}\|\nabla^\ast_{e^{\varphi_{\delta_1,\epsilon}}h} u\|^2_{\delta_1,\epsilon},\quad \forall u\in A^k_{\rm cpt}(U_{\delta_1}^o,\bV).
		\end{align*}
		By taking $\epsilon\to0$ we obtain that
		\begin{align*}
		|(\alpha,u)_{\delta_1,0}|^2\leq \sqrt{2}\|\alpha\|_{\delta_1,0}\|\nabla^\ast_{e^{\psi_{\delta_1}}h} u\|^2_{\delta_1,0},\quad \forall u\in A^k_{\rm cpt}(U_{\delta_1}^o,\bV).
		\end{align*}
		Since $ds^2_{\delta_1}$ is complete, this implies the estimate
		\begin{align*}
		|(\alpha,u)_{\delta_1,0}|^2\leq \sqrt{2}\|\alpha\|_{\delta_1,0}\|\nabla^\ast_{e^{\psi_{\delta_1}}h} u\|^2_{\delta_1,0},\quad\forall u\in {\rm Dom}^k\nabla^\ast_{e^{\psi_{\delta_1}}h}.
		\end{align*}
		Therefore by [\cite{Ohsawa2018}, Lemma 2.1] there is $\beta\in D^{k-1}_{\rm max}(U_{\delta_1}^o,\bV;ds^2_{\delta_1},e^{\psi_{\delta_1}}h)$ such that $\nabla\beta=\alpha$. Let $\widetilde{\beta}$ be the zero extension of $\beta$ on $U^o_\delta$. By Lemma \ref{lem_i<n}, $$\widetilde{\beta}\in D^{k-1}_{\rm max}(U_{\delta}^o,\bV;ds^2_{\pi},h)$$ and
		$\nabla\widetilde{\beta}=\alpha$. This proves (\ref{align_thm_local_vanishing}) for the first case.

		{\bf Step 2:}
		By Proposition \ref{prop_cpt_vs_min}, the 'extension by zero' map
		\begin{align*}
		\bH^k_{\rm cpt}(U_{\delta_2},\sD^\bullet_{Z,\bV;ds^2_{\pi},h})\to \bH^k_{\rm cpt}(U_{\delta},\sD^\bullet_{Z,\bV;ds^2_{\pi},h})
		\end{align*}
		is an isomorphism for every $k$. Hence every cohomology class in $\bH^k_{\rm cpt}(U_{\delta},\sD^\bullet_{Z,\bV;ds^2_{\pi},h})$ can be represented by some locally $L^2$-integrable $\nabla$-closed form $\alpha$ whose support is compact in $U_{\delta_2}$. Therefore by Step 1 we obtain the vanishing (\ref{align_thm_local_vanishing}).
	\end{proof}
	\section{Proof of the Main Theorem}
	We are ready to prove the main theorem (Theorem \ref{thm_main_harmonic}) of this paper. Before that let us recall the statements used in the bi-induction.
	\begin{thm}\label{thm_main_harmonic_proof}
		Let $X$ be a complex space of pure dimension and $X^o\subset X_{\rm reg}$ a dense Zariski open subset. Let $ds^2$ be a distinguished metric associated to $(X,X^o)$ and $(\bV,h)$ a $0$-tame harmonic bundle on $(X,X^o)$. Then
		\begin{enumerate}
			\item $\sD^\bullet_{X,\bV;ds^2,h}$ is a complex of fine sheaves.
			\item There is a canonical quasi-isomorphism
			$$\sD^\bullet_{X,\bV;ds^2,h}\simeq IC_X(\bV)$$ in $D(X)$.
		\end{enumerate}
	\end{thm}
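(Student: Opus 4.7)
The plan is to prove Theorem \ref{thm_main_harmonic_proof} via the bi-induction scheme on $n = \dim X$ outlined in the introduction, coupling the main statement ${\bf IC}_n$ with the local Verdier duality statement ${\bf VD}_n$. The fineness of $\sD^\bullet_{X,\bV;ds^2,h}$ follows directly from Lemma \ref{lem_fine_sheaf}, since a distinguished metric locally dominates a hermitian metric on $X$. The substantive content is the quasi-isomorphism with $IC_X(\bV)$. The base case ${\bf IC}_0$ is immediate: a $0$-dimensional reduced complex space is discrete, so $X^o = X$ and both complexes collapse to the sheaf $\bV$.

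For the step ${\bf IC}_{n-1} \Rightarrow {\bf VD}_n$, I would apply the abstract $L^2$-Poincar\'e duality (Proposition \ref{prop_L2_Poincare_duality}) to small neighborhoods $U \subset X$ of a point $x$. The only nontrivial input is the finite dimensionality of the maximal $L^2$-cohomology in middle degree. I would establish this through an $L^2$-Mayer-Vietoris sequence associated to a decomposition of $U$ into an inner piece $U_{\delta_1}$ (whose compactly supported $L^2$-cohomology is finite dimensional by the hypothesis built into ${\bf VD}_n$) and a collar neighborhood of $\partial U_\delta$. Over the collar, Lemma \ref{lem_U_delta} provides a product structure so that the $L^2$-K\"unneth formula (Proposition \ref{prop_L2_Kunneth}) reduces the cohomology to that of the link $M_\delta$. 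Since $M_\delta$ is compact and intersects $X \setminus X^o$ only in strata of dimension strictly less than $n$, the induction hypothesis ${\bf IC}_{n-1}$ identifies its $L^2$-cohomology with the intersection cohomology of a compact stratified space, hence finite dimensional.

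For the step ${\bf VD}_n \Rightarrow {\bf IC}_n$, I first reduce by Corollary \ref{cor_IC_is_local} to the case of a germ $0 \in Z \subset \bC^N$. After fixing a Whitney stratified desingularization $\pi : \widetilde{Z} \to Z$, Propositions \ref{prop_Grauert_stratified} and \ref{prop_harmonic_metric_stratified} show that $(ds^2,h)$ is bimeromorphically stratified along the fibering stratification; so by Proposition \ref{prop_Dstr} the cohomology sheaves of $\sD^\bullet_{Z,\bV;ds^2,h}$ are weakly constructible, and its stalks and costalks at $x \in S$ are computed by the $L^2$-cohomology of a transversal slice $C_x$. Combined with the $L^2$-Poincar\'e lemma (Lemma \ref{lem_Kunneth_locL2}) and $L^2$-Thom isomorphism (Lemma \ref{lem_Thom_locL2}), the Goresky-MacPherson-Deligne criterion (Proposition \ref{prop_GM_criterion}, via Remark \ref{rmk_GM_criterion}) reduces ${\bf IC}_n$ to the two germwise vanishings (\ref{align_vanishing}) and (\ref{align_vanishing_cpt}) at the origin of $Z$.

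The main obstacle, and where the real analysis lives, is these two vanishings. The compactly supported vanishing (\ref{align_vanishing_cpt}) is exactly Theorem \ref{thm_local_vanishing}, proved by combining the deformed metric and weight $(ds^2_{\delta,\epsilon}, e^{\varphi_{\delta,\epsilon}}h)$, the twisted Donelly-Fefferman type estimate of Theorem \ref{thm_L2-estimate_main}, and the explicit primitive constructed along the fibration of $V_\delta$ using the $0$-tameness estimates of Theorem \ref{thm_0tame_estimate}. The ordinary vanishing (\ref{align_vanishing}) is then extracted from ${\bf VD}_n$: the dual $(\bV^\ast, h^\ast)$ is again a $0$-tame harmonic bundle, so Theorem \ref{thm_local_vanishing} applies to it, and the local Verdier pairing of ${\bf VD}_n$ converts the compactly supported vanishing for $\bV^\ast$ into the ordinary vanishing for $\bV$ in complementary degrees. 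This is the sole point where ${\bf VD}_n$ is used. Finally, to upgrade the existence of a quasi-isomorphism to a canonical one, I invoke the second half of Proposition \ref{prop_GM_criterion}: starting from the tautological quasi-isomorphism $\bV \to \sD^\bullet_{X,\bV;ds^2,h}|_{X^o}$ on the smooth locus (where the $L^2$-de Rham complex is the ordinary smooth de Rham complex of $\bV$), the proposition produces a natural zigzag of quasi-isomorphisms connecting $\sD^\bullet_{X,\bV;ds^2,h}$ to $IC_X(\bV)$, closing the induction.
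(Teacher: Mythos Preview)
Your proposal is correct and follows essentially the same bi-induction scheme as the paper, invoking the same key ingredients (Lemma \ref{lem_fine_sheaf}, Proposition \ref{prop_GM_criterion}, Propositions \ref{prop_Dstr}, \ref{prop_Grauert_stratified}, \ref{prop_harmonic_metric_stratified}, Theorem \ref{thm_local_vanishing}, and the $L^2$-Mayer-Vietoris/K\"unneth arguments for the link). One small refinement: in the step ${\bf IC}_{n-1}\Rightarrow{\bf VD}_n$, the link $\overline{M_\delta}$ is a real (not complex) analytic space, so ${\bf IC}_{n-1}$ is not applied to $\overline{M_\delta}$ directly; rather, as in the paper, one identifies $\sD^\bullet_{\overline{M_\delta}}\simeq i^\ast\sD^\bullet_{X,\bV;ds^2,h}$ and uses ${\bf IC}_{n-1}$ on the complex transversal slices $C_x$ (of dimension $<n$) to show constructibility of the stalks, whence finite dimensionality by compactness.
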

    \begin{thm}\label{thm_main_L2_local_duality_proof}
    	Notations as in Theorem \ref{thm_main_harmonic_proof}. Let $x\in X$ be a point. Assume that
    	$$\varinjlim_{x\in U}\left(\bH^k_{\rm cpt}(U,\sD^\bullet_{X,\bV^\ast;ds^2,h^\ast})\right)^\ast$$
    	is finite dimensional for every $0\leq k\leq\dim X$ and
    	$$\varinjlim_{x\in U}\left(\bH^{\dim X}_{\rm cpt}(U,\sD^\bullet_{X,\bV;ds^2,h})\right)^\ast$$
    	is finite dimensional.
    	Then for every $0\leq k\leq \dim X$ there is an isomorphism
    	\begin{align}\label{align_weak_local_duality}
    	\varinjlim_{x\in U}\bH^{2\dim X-k}(U,\sD^\bullet_{X,\bV;ds^2,h})\simeq \varinjlim_{x\in U}\left(\bH^k_{\rm cpt}(U,\sD^\bullet_{X,\bV^\ast;ds^2,h^\ast})\right)^\ast
    	\end{align}
    	where $U$ ranges over all open neighborhoods of $x$.
    \end{thm}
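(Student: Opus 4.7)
The plan is to reduce the local duality statement to the abstract $L^{2}$-Poincar\'e duality of Proposition \ref{prop_L2_Poincare_duality}, applied over a cofinal system of distinguished precompact neighborhoods of $x$, using the bi-induction hypothesis for lower-dimensional spaces to supply the missing finite-dimensionality. Since the assertion is local, I localize at $x$ and work in the germ-of-complex-space setting of \S\ref{section_setting}--\S\ref{section_U_delta}: after embedding a neighborhood of $x$ as $0\in Z\subset\bC^{N}$ and choosing a Whitney stratified desingularization, take the cofinal system $\{U_{\delta}\}_{0<\delta\ll 1}$ of distinguished neighborhoods equipped with the distinguished metric $ds^{2}_{\pi}$. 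Fineness of the $L^{2}$-de Rham complex (Lemma \ref{lem_fine_sheaf}) gives
\begin{equation*}
\bH^{k}(U_{\delta},\sD^{\bullet}_{X,\bV;ds^{2},h})\simeq H^{k}_{(2),\rm max}(U^{o}_{\delta},\bV;ds^{2}_{\pi},h),
\end{equation*}
and Proposition \ref{prop_cpt_vs_min}, applied to both $(\bV,h)$ and $(\bV^{\ast},h^{\ast})$, identifies the compact-support hypercohomology with $H^{k}_{(2),\rm min}(U^{o}_{\delta},\bV^{\ast};ds^{2}_{\pi},h^{\ast})$ and shows that this stabilizes as $\delta\to 0$, so that the finite-dimensionality assumption of the theorem transfers to each individual $U_{\delta}$.

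Next, for each small $\delta$, apply Proposition \ref{prop_L2_Poincare_duality} to the Riemannian manifold $(U^{o}_{\delta},ds^{2}_{\pi})$ with the dual hermitian local systems $(\bV,h),(\bV^{\ast},h^{\ast})$. For $0\leq k<\dim X$ condition (1) of the proposition is verified immediately, since the assumption gives finite dimensionality of both $H^{k}_{\rm min}(U^{o}_{\delta},\bV^{\ast})$ and $H^{k+1}_{\rm min}(U^{o}_{\delta},\bV^{\ast})$. The delicate case is $k=\dim X$, where condition (2) must be invoked, and the missing ingredient is finite dimensionality of $H^{\dim X}_{\rm max}(U^{o}_{\delta},\bV;ds^{2}_{\pi},h)$. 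This is produced by an $L^{2}$-Mayer--Vietoris argument built on the decomposition $U^{o}_{\delta}=U^{o}_{\delta'}\cup W$ with collar $W\simeq[\delta',\delta)\times M_{\delta}$ furnished by Lemma \ref{lem_U_delta}: the compact piece $U^{o}_{\delta'}$ contributes $\bH^{\dim X}_{\rm cpt}(U_{\delta'},\sD^{\bullet}_{X,\bV;ds^{2},h})$, finite dimensional by hypothesis, while the $L^{2}$-K\"unneth formula (Proposition \ref{prop_L2_Kunneth}) reduces the collar contribution to the $L^{2}$-cohomology of the link $M_{\delta}$. The finite dimensionality of the latter is supplied by the bi-induction hypothesis ${\bf IC}_{\dim X-1}$: $M_{\delta}$ is a compact stratified real-analytic space whose singular strata are intersections of $\{\|w\|=\delta\}$ with the complex strata $Z_{i}\setminus Z_{i-1}$ of complex dimension strictly less than $\dim X$, and an application of ${\bf IC}_{\dim X-1}$ to the (lower-dimensional) complex transverse slices identifies the link's $L^{2}$-cohomology with intersection cohomologies of compact complex spaces of smaller dimension.

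Combining these pieces produces, for each small $\delta$, a perfect pairing
\begin{equation*}
H^{2\dim X-k}_{\rm max}(U^{o}_{\delta},\bV;ds^{2}_{\pi},h)\times H^{k}_{\rm min}(U^{o}_{\delta},\bV^{\ast};ds^{2}_{\pi},h^{\ast})\to\bC,\quad([\alpha],[\beta])\mapsto\int_{U^{o}_{\delta}}\alpha\wedge\beta,
\end{equation*}
and passing to the direct limit over $\delta\to 0$, together with the identifications of the first paragraph, yields the isomorphism (\ref{align_weak_local_duality}). The principal obstacle is the boundary degree $k=\dim X$: neither side of the desired pairing is known to be finite dimensional a priori from the hypotheses alone, and overcoming this requires both the $L^{2}$-Mayer--Vietoris cut-off near $\partial U_{\delta}$ and the bi-induction hypothesis applied to the link $M_{\delta}$. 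A subsidiary technical point is that the induced metric and hermitian local system on (a neighborhood of) $M_{\delta}$ must be arranged to fit the scope of ${\bf IC}_{\dim X-1}$, which is ensured by the bimeromorphic fibering neighborhood construction (Proposition \ref{prop_bimero_fibering_neigh}) together with the asymptotic estimates for the distinguished metric (Lemma \ref{lem_Grauert_metric_in_coordinates}, Proposition \ref{prop_Grauert_asymptotic_behavior}).
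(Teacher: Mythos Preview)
Your proposal is correct and follows essentially the same strategy as the paper: localize to the germ setting, use Proposition~\ref{prop_cpt_vs_min} to stabilize the compact-support side, apply Proposition~\ref{prop_L2_Poincare_duality} for $k<\dim X$ via condition~(1) and for $k=\dim X$ via condition~(2), and supply the missing finite-dimensionality of $H^{\dim X}_{(2),\max}$ by an $L^{2}$-Mayer--Vietoris argument reducing to the link $M_{\delta}$, whose $L^{2}$-cohomology is finite by ${\bf IC}_{\dim X-1}$. The only technical refinement the paper makes that you gloss over is running the Mayer--Vietoris on the \emph{closed} set $\overline{U_{\delta}}\cap X^{o}$ (with collar $(\delta'',\delta]\times M_{\delta}$) rather than on $U^{o}_{\delta}$, so that local completeness of $ds^{2}_{\pi}$ gives $H^{\dim X}_{(2),\max}(U^{o}_{\delta})\simeq H^{\dim X}_{(2),\min}(\overline{U_{\delta}}\cap X^{o})$; without this adjustment the Mayer--Vietoris (which is naturally for the min extension) would compute the wrong boundary condition at $\partial U_{\delta}$.
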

Denote the statements
\begin{itemize}
	\item ${\bf IC}_n$: Theorem \ref{thm_main_harmonic_proof} holds when $\dim X\leq n$.
	\item ${\bf VD}_n$: Theorem \ref{thm_main_L2_local_duality_proof} holds when $\dim X\leq n$.
\end{itemize}
The proof of Theorem \ref{thm_main_harmonic} (= Theorem \ref{thm_main_harmonic_proof}) follows a bi-induction procedure:
\begin{description}
	\item[Initial Step] ${\bf IC}_0$. This case is trivial.
	\item[Bi-induction Step] ${\bf IC}_{n-1}\Rightarrow {\bf VD}_{n}$, ${\bf VD}_{n}\Rightarrow {\bf IC}_{n}$.
\end{description}
\subsection{Proof of ${\bf IC}_{n-1}\Rightarrow {\bf VD}_{n}$}
Since the problem is local, we assume that $x=0\in X\subset \bC^N$ and $\dim X\leq n$. We use the notations in \S \ref{section_setting} and \S\ref{section_U_delta}. By Proposition \ref{prop_cpt_vs_min} the natural maps
\begin{align*}
\left(H^k_{(2),\rm min}(U^o_\delta,\bV^\ast;ds^2,h^\ast)\right)^\ast\to\left(\bH^k_{\rm cpt}(U_\delta,\sD^\bullet_{X,\bV^\ast;ds^2,h^\ast})\right)^\ast\to\varinjlim_{x\in U}\left(\bH^k_{\rm cpt}(U,\sD^\bullet_{X,\bV^\ast;ds^2,h^\ast})\right)^\ast
\end{align*}
are isomorphisms for every $0<\delta\ll 1$ and every $0\leq k\leq 2\dim X$.
By Proposition \ref{prop_L2_Poincare_duality} there are dualities
\begin{align*}
H^{2\dim X-k}_{(2),\rm max}(U^o_\delta,\bV;ds^2,h)\simeq \left(\bH^k_{\rm cpt}(U_\delta,\sD^\bullet_{X,\bV^\ast;ds^2,h^\ast})\right)^\ast
\end{align*}
when $k<\dim X$ and a duality
\begin{align*}
H^{\dim X}_{(2),\rm max}(U^o_\delta,\bV;ds^2,h)\simeq \left(\bH^{\dim X}_{\rm cpt}(U_\delta,\sD^\bullet_{X,\bV^\ast;ds^2,h^\ast})\right)^\ast
\end{align*}
whenever
$H^{\dim X}_{(2),\rm max}(U^o_\delta,\bV;ds^2,h)$
is finite dimensional.
Taking direct limits we obtain the duality (\ref{align_weak_local_duality}).

The rest of the proof is devoted to show that $H^{\dim X}_{(2),\rm max}(U^o_\delta,\bV;ds^2,h)$
is finite dimensional. For simplicity we write $D^\bullet_{\rm min}(-):=D^\bullet_{\rm min}(-,\bV;ds^2,h)$ and $H^\ast_{(2),\rm min}(-):=H^\ast_{(2),\rm min}(-,\bV;ds^2,h)$.

Let $0<\delta''<\delta'<\delta\ll 1$. Denote $W:= (\overline{U_\delta}\cap X^o)\backslash \overline{U_{\delta''}}$.
By the same argument as (\ref{align_MV1}), one has the exact sequence in the sense of Mayer-Vietoris:
\begin{align*}
\xymatrix{
	0\ar[r]& D^\bullet_{\rm min}(W\cap U^o_{\delta'})\ar[r]& D^\bullet_{\rm min}(W)\oplus D^\bullet_{\rm min}(U^o_{\delta'})\ar[r]& D^\bullet_{\rm min}(\overline{U_\delta}\cap X^o)\ar[r]& 0.
}
\end{align*}
Hence we obtain the $L^2$-Mayer-Vietoris sequence
\begin{align}\label{align_mainproof_MV_PD1}
\xymatrix{
	\cdots\ar[r]& H^i_{(2),\min}(W\cap U^o_{\delta'})\ar[r]& H^i_{(2),\min}(U^o_{\delta'})\oplus H^i_{(2),\min}(W)\ar[r]& H^i_{(2),\min}(\overline{U_\delta}\cap X^o)\ar[r]&\cdots
}.
\end{align}
Notice that
\begin{align*}
H^i_{(2),\min}\left((\delta'',\delta],dt^2\right)\simeq H^i_{\rm dR}\left([\delta'',\delta],\{\delta''\}\right)=0,\quad\forall i
\end{align*}
and
\begin{align*}
H^i_{(2),\min}\left((\delta'',\delta'),dt^2\right)\simeq H^{1-i}_{(2),\max}\left((\delta'',\delta'),dt^2\right)^\ast\simeq H^{1-i}\left([\delta'',\delta']\right)^\ast\simeq
\begin{cases}
\bC, & i=1 \\
0, & i\neq 1
\end{cases}.
\end{align*}
Therefore Lemma \ref{lem_U_delta} and the $L^2$-Kunneth formula (Proposition \ref{prop_L2_Kunneth}) imply that
\begin{align*}
H^k_{(2),\min}(W)\simeq \bigoplus_{i+j=k}H^i_{(2),\min}\left((\delta'',\delta],dt^2\right)\otimes H^j_{(2),\min}(M_{\delta},\bV|_{M_{\delta}};ds^2|_{M_\delta},h|_{M_{\delta}})=0,\quad\forall k
\end{align*}
and
\begin{align*}
H^k_{(2),\min}(W\cap U^o_{\delta'})&\simeq \bigoplus_{i+j=k}H^i_{(2),\min}\left((\delta'',\delta'),dt^2\right)\otimes H^j_{(2),\min}(M_{\delta},\bV|_{M_{\delta}};ds^2|_{M_\delta},h|_{M_{\delta}})\\\nonumber
&\simeq H^{k-1}_{(2),\min}(M_{\delta},\bV|_{M_{\delta}};ds^2|_{M_\delta},h|_{M_{\delta}}),\quad\forall k.
\end{align*}
Here $M_\delta:=\{z\in X^o|\|z\|=\delta\}$.
Since $ds^2|_{M_\delta}$ is complete, for every $k$ we have
\begin{align}\label{align_main_PD_iso1}
H^{k}_{(2),\min}(M_{\delta},\bV|_{M_{\delta}};ds^2|_{M_\delta},h|_{M_{\delta}})&\simeq H^{k}_{(2),\max}(M_{\delta},\bV|_{M_{\delta}};ds^2|_{M_\delta},h|_{M_{\delta}})\\\nonumber
&\simeq \bH^k(\overline{M_{\delta}},\sD^\bullet_{\overline{M_{\delta}},\bV|_{M_{\delta}};ds^2|_{M_\delta},h|_{M_{\delta}}}).
\end{align}

{\bf Claim:} \textit{Assuming ${\bf IC}_{n-1}$, the cohomology sheaves of  $\sD^\bullet_{\overline{M_{\delta}},\bV|_{M_{\delta}};ds^2|_{M_\delta},h|_{M_{\delta}}}$ are constructible when $0<\delta\ll1$.}

Assume that the claim is true. Since $$\overline{M_\delta}=\{z\in X|\|z\|=\delta\}$$ is a compact real analytic space, by (\ref{align_main_PD_iso1}) $H^{k}_{(2),\min}(M_{\delta},\bV|_{M_{\delta}};ds^2|_{M_\delta},h|_{M_{\delta}})$ is finite dimensional for every $k$. By the assumption of ${\bf VD}_n$ and Proposition \ref{prop_cpt_vs_min},
$$H^{\dim X}_{(2),\rm min}(U^o_{\delta'})\simeq \bH^{\dim X}_{\rm cpt}(U_{\delta},\sD^\bullet_{X,\bV;ds^2,h})\simeq \left(\varinjlim_{x\in U}\left(\bH^{\dim X}_{\rm cpt}(U,\sD^\bullet_{X,\bV;ds^2,h})\right)^\ast\right)^\ast$$
is finite dimensional. By (\ref{align_mainproof_MV_PD1}) and the local completeness of $ds^2$ we obtain that
$$H^{\dim X}_{(2),\rm max}(U^o_\delta,\bV;ds^2,h)\simeq H^{\dim X}_{(2),\min}(\overline{U_\delta}\cap X^o)$$
is finite dimensional. The proof is finished.
\begin{proof}[Proof of the claim]
	Let us fix a Whitney stratified desingularization $(\{X_i\},\pi:\widetilde{X}\to X)$ of $X$ as in \S \ref{section_setting}. Since $0<\delta\ll1$ we assume that $0$ is the unique stratum of dimension $0$ and $\overline{M_\delta}$ intersects transversally with every stratum of $\{X_i\}$. For simplicity we denote $\sD^\bullet_{\overline{M_{\delta}}}:=\sD^\bullet_{\overline{M_{\delta}},\bV|_{M_{\delta}};ds^2|_{M_\delta},h|_{M_{\delta}}}$.
	Therefore Lemma \ref{lem_U_delta}
	and the $L^2$-Poincar\'e lemma (Lemma \ref{lem_Kunneth_locL2}) show that
	\begin{align}\label{align_main_VD_boundary_L2}
	\sD^\bullet_{\overline{M_{\delta}}}\simeq i^\ast\sD^\bullet_{X,\bV;ds^2,h},
	\end{align}
	where $i:\overline{M_\delta}\to X$ is the closed embedding. By Proposition \ref{prop_Dstr} and Proposition \ref{prop_Grauert_stratified}, the cohomology sheaves of $\sD^\bullet_{X,\bV;ds^2,h}$ are weakly constructible, so are the cohomology sheaves of $\sD^\bullet_{\overline{M_{\delta}}}$. It remains to show that for every $x\in \overline{M_\delta}$ and every $k$, $h^k(\sD^\bullet_{\overline{M_{\delta}}})_x$ is finite dimensional. This is done in two cases.
	\begin{description}
		\item[Case I: $x\in M_\delta$] In this case $\sD^\bullet_{\overline{M_{\delta}},x}\simeq \bV_{x}$ because of the usual Poincar\'e Lemma with coefficients in a local system (c.f. \cite[Proposition 2.19]{Deligne1970}).
		\item[Case II: $x\in S\cap (\overline{M_{\delta}}\backslash M_\delta)$ for some stratum $S$ of $\{X_i\}$] By (\ref{align_main_VD_boundary_L2}) and Proposition \ref{prop_Dstr} we obtain that
		\begin{align*}
		h^k(\sD^\bullet_{\overline{M_{\delta}}})_x\simeq
		h^k(\sD^\bullet_{X,\bV;ds^2,h})_x\simeq
		h^k(\sD^\bullet_{C_x,\bV|_{C_x};ds^2|_{C_x},h|_{C_x}})_x
		\end{align*}
		for some germ of complex subspace $x\in C_x\subset X$ which intersects transversally with every stratum of $\{X_i\}$. By Proposition \ref{prop_Grauert_stratified} and Proposition \ref{prop_harmonic_metric_stratified} we see that $ds^2|_{C_x}$ is distinguished associated to $(C_x,C_x\cap X^o)$ and $(\bV|_{C_x\cap X^o},h|_{C_x\cap X^o})$ is a $0$-tame harmonic bundle on $(C_x,C_x\cap X^o)$. Since $\dim C_x=\dim X-\dim S<n$, by ${\bf IC}_{n-1}$ there is a quasi-isomorphism
		$$\sD^\bullet_{C_x,\bV|_{C_x};ds^2|_{C_x},h|_{C_x}}\simeq IC_{C_x}(\bV|_{C_x\cap X^o}).$$
		As a consequence, $h^k(\sD^\bullet_{\overline{M_{\delta}}})_x$ is finite dimensional for every $k$.
	\end{description}
	This finishes the proof of the claim.
\end{proof}
\subsection{Proof of ${\bf VD}_{n}\Rightarrow {\bf IC}_{n}$}
The proof is divided into several steps.
		\begin{enumerate}
			\item {\bf Fineness:} The Fineness of $\sD^\bullet_{X,\bV;ds^2,h}$ follows from Lemma \ref{lem_fine_sheaf}.
			\item {\bf Localizing:} By Corollary \ref{cor_IC_is_local} we assume that $X$ is a germ of complex space and
			$$ds^2\sim ds^2_\pi:=\sqrt{-1}\ddbar\varphi_\pi+\omega_0$$
			for some Whitney stratified desingularization $\pi:\widetilde{X}\to X$ of $(X,X^o)$ and a certain hermitian metric $\omega_0$ on $X$.
			\item {\bf Verification of Proposition \ref{prop_GM_criterion}:}
			\begin{enumerate}
				\item $\sD^\bullet_{X,\bV;ds^2,h}|_{X^o}\simeq \bV$: This is the usual Poincar\'e Lemma with coefficients in a local system (c.f. \cite[Proposition 2.19]{Deligne1970}).
				\item {\bf Weakly constructible of $\sD^\bullet_{X,\bV;ds^2,h}$:} This follows from Proposition \ref{prop_Dstr} together with Proposition \ref{prop_Grauert_stratified} and Proposition \ref{prop_harmonic_metric_stratified}.
				\item {\bf Local Vanishing:} Using the notations in Theorem \ref{thm_local_vanishing}, one has
				\begin{align*}
				\bH^k_{\rm cpt}(U_{\delta},\sD^\bullet_{Z,\bV;ds^2_{\pi},h})=0,\quad \forall k\leq \dim Z
				\end{align*}
				and
				\begin{align*}
				\bH^k_{\rm cpt}(U_{\delta},\sD^\bullet_{Z,\bV^\ast;ds^2_{\pi},h^\ast})=0,\quad \forall k\leq \dim Z
				\end{align*}
				because the dual hermitian local system $(\bV^\ast,h^\ast)$ is also a $0$-tame harmonic bundle on $(Z,Z^o)$. By ${\bf VD}_{n}$ we obtain that
				\begin{align*}
				H^k_{(2),\rm max}(U^o_{\delta},\bV;ds^2_{\pi},h)=0,\quad \forall k\geq \dim Z.
				\end{align*}
				Taking $Z=C_x$ in Proposition \ref{prop_Dstr} for every point $x\in X$, we obtain the required local vanishings (\ref{align_GMD_vanishing}) and (\ref{align_GMD_covanishing}).
			\end{enumerate}
		\end{enumerate}

	\section{Applications: K\"ahler Package on the Intersection cohomology}
	The $L^2$-representation of Hodge modules provides a way to study Hodge modules by using differential geometrical methods as well as $L^2$-techniques. In this section we use Theorem \ref{thm_main_harmonic} to give a differential geometrical proof of the Hodge-Lefschetz package of a pure Hodge module, or more generally a $0$-tame harmonic bundle. This is one of the motivations why Cheeger-Goresky-MacPherson post Conjecture \ref{conj_CGM}.
	\subsection{Poincar\'e duality and Verdier duality}
	A typical property of $IC_X(\bV)$ is its Verdier duality, i.e.
	\begin{align}\label{align_dual_IC}
	IC_X(\bV)\simeq\bD(IC_X(\bV^\ast)).
	\end{align}
	This induces the global duality (Poincar\'e duality)
	\begin{align*}
	IH^{2\dim X-k}(X,\bV^\ast)\simeq \left(IH^k(X,\bV)\right)^\ast,\quad \forall k
	\end{align*}
	in the case when $X$ is compact.
	Here $\bD$ is the Verdier dual operator on $D(X)$. Theorem \ref{thm_main_harmonic} shows that these dualities could be realized by integration as in the classical case.

	Locally at every point $x\in X$, (\ref{align_dual_IC}) is equivalent to
	\begin{align*}
	\varinjlim_{x\in U}\bH^{2\dim X-k}(U,IC_X(\bV)))\simeq \varinjlim_{x\in U}\left(\bH^k_{\rm cpt}(U,IC_X(\bV^\ast))\right)^\ast,\quad \forall k
	\end{align*}
	where $U$ ranges over all open neighborhoods of $x$. The following theorems could be viewed as differential geometrical proofs of the local and global dualities of intersection complexes.
	\begin{thm}[Locally Duality]\label{thm_main_L2_local_duality}
		Notations as in Theorem \ref{thm_main_harmonic}. Let $x\in X$ be a point. Then there is a duality
		\begin{align*}
		\varinjlim_{x\in U}\bH^{2\dim X-k}(U,\sD^\bullet_{X,\bV;ds^2,h})\simeq \varinjlim_{x\in U}\left(\bH^k_{\rm cpt}(U,\sD^\bullet_{X,\bV^\ast;ds^2,h^\ast})\right)^\ast,\quad \forall k
		\end{align*}
		where $U$ ranges over all open neighborhoods of $x$. Both sides of the isomorphism are finite dimensional. Moreover, the duality is induced by the limits of the pairings
		\begin{align*}
		\bH^{2\dim X-k}(U,\sD^\bullet_{X,\bV;ds^2,h})\times \bH^k_{\rm cpt}(U,\sD^\bullet_{X,\bV^\ast;ds^2,h^\ast})\to\bC,\quad \forall k,
		\end{align*}
		$$([\alpha],[\beta])\mapsto\int_{U\cap X^o}\alpha\wedge\beta.$$
	\end{thm}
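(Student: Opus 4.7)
The plan is to deduce Theorem \ref{thm_main_L2_local_duality} as a clean consequence of the now fully proved Theorem \ref{thm_main_harmonic} combined with the abstract $L^2$-Poincar\'e duality of Proposition \ref{prop_L2_Poincare_duality}, following the same mechanism as the bi-induction step ${\bf IC}_{n-1}\Rightarrow{\bf VD}_n$ but with the finite-dimensionality hypothesis now automatic. First I localize: since both sides of the claimed duality are direct limits over neighborhoods of $x$, I may replace $X$ by a small neighborhood of $x$ and, by Definition \ref{defn_Grauert_type_metric}, assume that $0=x\in X\subset\Omega\subset\bC^N$ is a germ with $ds^2\sim\sqrt{-1}\ddbar\varphi_{\pi}+\omega_0$ for some Whitney stratified desingularization $\pi:\widetilde{X}\to X$. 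This places us in the setting of \S\ref{section_setting}--\S\ref{section_U_delta} with the cofinal system of precompact neighborhoods $\{U_\delta\}_{0<\delta\ll 1}$.

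Next I reduce both hypercohomology groups to genuine $L^2$-cohomologies on $U_\delta^o$. On the compact-support side, Proposition \ref{prop_cpt_vs_min} gives
\begin{align*}
\bH^k_{\rm cpt}(U_\delta,\sD^\bullet_{X,\bV^\ast;ds^2,h^\ast})\simeq H^k_{(2),\min}(U_\delta^o,\bV^\ast;ds^2,h^\ast)
\end{align*}
and shows that all transition maps stabilize. On the unrestricted side, fineness (Lemma \ref{lem_fine_sheaf}) yields $\bH^k(U_\delta,\sD^\bullet_{X,\bV;ds^2,h})=H^k(\Gamma(U_\delta,\sD^\bullet_{X,\bV;ds^2,h}))$, and since $\overline{U_{\delta'}}$ is compact in $U_\delta$ for $\delta'<\delta$, restriction of a locally $L^2$ section on $U_\delta$ produces a globally $L^2$ form on $U_{\delta'}^o$. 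This yields a commuting zig-zag
\begin{align*}
D_{\rm max}^\bullet(U_\delta^o,\bV;ds^2,h)\hookrightarrow\Gamma(U_\delta,\sD^\bullet_{X,\bV;ds^2,h})\to D_{\rm max}^\bullet(U_{\delta'}^o,\bV;ds^2,h)\hookrightarrow\cdots,
\end{align*}
whose two subsystems are mutually cofinal. Taking direct limits gives
\begin{align*}
\varinjlim_{\delta}\bH^{2\dim X-k}(U_\delta,\sD^\bullet_{X,\bV;ds^2,h})\simeq\varinjlim_{\delta}H^{2\dim X-k}_{(2),\max}(U_\delta^o,\bV;ds^2,h).
\end{align*}

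Finally, Theorem \ref{thm_main_harmonic} applied to both $(\bV,h)$ and $(\bV^\ast,h^\ast)$ gives canonical quasi-isomorphisms $\sD^\bullet_{X,\bV;ds^2,h}\simeq IC_X(\bV)$ and $\sD^\bullet_{X,\bV^\ast;ds^2,h^\ast}\simeq IC_X(\bV^\ast)$; the constructibility of the intersection complex then makes both $H^{2\dim X-k}_{(2),\max}(U_\delta^o,\bV;ds^2,h)$ and $H^k_{(2),\min}(U_\delta^o,\bV^\ast;ds^2,h^\ast)$ finite-dimensional for every sufficiently small $\delta$. Hypothesis (2) of Proposition \ref{prop_L2_Poincare_duality} is therefore satisfied, and furnishes a perfect pairing
\begin{align*}
H^{2\dim X-k}_{(2),\max}(U_\delta^o,\bV;ds^2,h)\times H^k_{(2),\min}(U_\delta^o,\bV^\ast;ds^2,h^\ast)\to\bC,\quad([\alpha],[\beta])\mapsto\int_{U_\delta^o}\alpha\wedge\beta.
\end{align*}
Passing to the direct limit over $\delta\to 0$ yields the duality asserted in the theorem, with the integration formula surviving the limit. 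The only mildly delicate point is the cofinality argument identifying $\varinjlim\bH^\bullet(U_\delta,\sD^\bullet)$ with $\varinjlim H^\bullet_{(2),\max}(U_\delta^o)$; every other ingredient is used as a black box, so no genuine obstacle remains now that Theorem \ref{thm_main_harmonic} is in hand.
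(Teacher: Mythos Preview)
Your approach is essentially the paper's: localize, identify the compact-support hypercohomology with $H^k_{(2),\min}$ via Proposition \ref{prop_cpt_vs_min}, deduce finite-dimensionality from Theorem \ref{thm_main_harmonic}, and invoke Proposition \ref{prop_L2_Poincare_duality}. You even spell out the cofinality zig-zag identifying $\varinjlim_\delta\bH^{2\dim X-k}(U_\delta,\sD^\bullet)$ with $\varinjlim_\delta H^{2\dim X-k}_{(2),\max}(U_\delta^o)$, which the paper leaves implicit.

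One small logical slip: you appeal to hypothesis (2) of Proposition \ref{prop_L2_Poincare_duality}, which requires knowing in advance that $H^{2\dim X-k}_{(2),\max}(U_\delta^o,\bV;ds^2,h)$ is finite-dimensional. But Theorem \ref{thm_main_harmonic} only gives you finite-dimensionality of $\bH^{2\dim X-k}(U_\delta,\sD^\bullet)\simeq\bH^{2\dim X-k}(U_\delta,IC_X(\bV))$, i.e.\ of the \emph{locally} $L^2$ cohomology, not of the global $L^2$-cohomology $H_{(2),\max}$ on the incomplete manifold $U_\delta^o$; your cofinality argument identifies the direct limits, not the individual terms. The fix is immediate: use hypothesis (1) instead, since $H^k_{(2),\min}(U_\delta^o,\bV^\ast;ds^2,h^\ast)$ is finite-dimensional for \emph{all} $k$ (in particular for $k$ and $k+1$) by the argument you already gave. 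This is exactly what the paper does.
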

    \begin{proof}
    	Since the problem is local, we assume that $x=0\in X\subset \bC^N$. Let $U_\delta=\left\{z\in X\big|\|z\|<\delta\right\}$ and $U_\delta^o=X^o\cap U_\delta$. By Proposition \ref{prop_cpt_vs_min} the natural maps
    	\begin{align*}
    	\left(H^k_{(2),\rm min}(U^o_\delta,\bV^\ast;ds^2,h^\ast)\right)^\ast\to\left(\bH^k_{\rm cpt}(U_\delta,\sD^\bullet_{X,\bV^\ast;ds^2,h^\ast})\right)^\ast\to\varinjlim_{x\in U}\left(\bH^k_{\rm cpt}(U,\sD^\bullet_{X,\bV^\ast;ds^2,h^\ast})\right)^\ast
    	\end{align*}
    	are isomorphisms for every $k$ and every $0<\delta\ll 1$. Hence by Theorem \ref{thm_main_harmonic}, $H^k_{(2),\rm min}(U^o_\delta,\bV^\ast;ds^2,h^\ast)$ is finite dimensional for every $k$. Now
    	the theorem is a consequence of Proposition \ref{prop_L2_Poincare_duality}.
    \end{proof}
	\begin{thm}[Poincar\'e Duality]\label{thm_main_L2_duality}
		Notations as in Theorem \ref{thm_main_harmonic}. Assume that $X$ is a compact complex space. Then for every $k$ there is a perfect pairing
		\begin{align*}
		H^{2\dim X-k}_{(2)}(X^o,\bV;ds^2,h)\times H^k_{(2)}(X^o,\bV^\ast;ds^2,h^\ast)\to\bC,\quad ([\alpha],[\beta])\mapsto \int_{X^o}\alpha\wedge\beta.
		\end{align*}
	\end{thm}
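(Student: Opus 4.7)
The plan is to deduce the statement directly from the abstract $L^2$-Poincar\'e duality (Proposition \ref{prop_L2_Poincare_duality}) combined with the main theorem (Theorem \ref{thm_main_harmonic}). Three ingredients are needed: global completeness of $ds^2$ on $X^o$, finite dimensionality of the two $L^2$-cohomologies in play, and the identification of the Hodge-star pairing of Proposition \ref{prop_L2_Poincare_duality} with the integration pairing appearing in the statement.

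First I would observe that, since $X$ is compact and $ds^2$ is distinguished, Corollary \ref{cor_Gt_complete_finite_vol} gives local completeness and local finiteness of volume at every point, which by compactness upgrades to $(X^o, ds^2)$ being a complete Riemannian manifold of finite volume. Completeness implies that the minimal and maximal ideal boundary conditions agree, so
$$H^k_{(2),\min}(X^o,\bV;ds^2,h) \simeq H^k_{(2),\max}(X^o,\bV;ds^2,h) = H^k_{(2)}(X^o,\bV;ds^2,h),$$
and likewise for the dual hermitian local system $(\bV^\ast,h^\ast)$, which is again a $0$-tame harmonic bundle on $(X,X^o)$ (cf.\ Definition \ref{defn_0tame_harmonic_bundle}, where the defining estimate is symmetric in $v$ and its dual).

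Next, I would invoke Theorem \ref{thm_main_harmonic} to obtain the canonical isomorphisms
$$H^k_{(2)}(X^o,\bV;ds^2,h) \simeq IH^k(X,\bV), \qquad H^k_{(2)}(X^o,\bV^\ast;ds^2,h^\ast) \simeq IH^k(X,\bV^\ast),$$
which are finite dimensional for every $k$ because $X$ is compact and the intersection complex is a bounded constructible complex on a compact space. Thus both hypotheses of Proposition \ref{prop_L2_Poincare_duality} are satisfied, and the proposition produces the perfect pairing
$$H^{2\dim X - k}_{(2),\max}(X^o,\bV;ds^2,h) \times H^k_{(2),\min}(X^o,\bV^\ast;ds^2,h^\ast) \longrightarrow \bC, \qquad ([\alpha],[\beta]) \mapsto \int_{X^o} \alpha \wedge \beta.$$
By the first step this is exactly the pairing in the conclusion.

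The genuine analytic and topological work has already been absorbed into Theorem \ref{thm_main_harmonic}, Corollary \ref{cor_Gt_complete_finite_vol}, and Proposition \ref{prop_L2_Poincare_duality}, so the proof is essentially an assembly step; the only point that requires a brief verification is that the Hodge-star duality pairing built in the proof of Proposition \ref{prop_L2_Poincare_duality} really coincides with the integration pairing, which follows from the identity $\int_{X^o}\alpha\wedge\beta = \pm\int_{X^o}\langle\alpha,\ast\beta\rangle\,{\rm vol}$ used there and from the fact that each cohomology class on either side admits a harmonic representative in $L^2$ thanks to finite dimensionality. No further obstacle is anticipated.
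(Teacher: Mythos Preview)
Your proposal is correct and follows essentially the same route as the paper: invoke Theorem \ref{thm_main_harmonic} to obtain finite dimensionality of both $L^2$-cohomologies (the paper does this in one line without spelling out the intermediate step through $IH^\ast$), and then apply Proposition \ref{prop_L2_Poincare_duality}. Your additional remarks on completeness and on $(\bV^\ast,h^\ast)$ being $0$-tame are correct elaborations that the paper leaves implicit.
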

	\begin{proof}
		By Theorem \ref{thm_main_harmonic}, both $L^2$-cohomologies $H^{\ast}_{(2)}(X^o,\bV;ds^2,h)$ and $H^{\ast}_{(2)}(X^o,\bV^\ast;ds^2,h^\ast)$ are of finite dimension. Now the theorem follows from Proposition \ref{prop_L2_Poincare_duality}.
	\end{proof}
	\subsection{Lefschetz package}
	By the Lefschetz package we mean the Hard Lefschetz and the Lefschetz decomposition. Due to the classical Hodge theory this package holds for the de Rham cohomology of every compact K\"ahler manifold. The Lefschetz package for semisimple local systems is established by Simpson \cite{Simpson1992}. More generally, Kashiwara \cite{Kashiwara1998} conjectures that the Lefschetz package should hold for any semisimple perverse sheaf over a projective algebraic variety\footnote{The conjecture of Kashiwara is even more general. It asserts that the Lefschetz package should be true in the relative case, for semisimple holonomic $\sD$-modules with possibly irregular singularities. For $\sD$-modules with regular singularities over a quasi-projective algebraic variety,  Kashiwara's conjecture is proved by C. Sabbah \cite{Sabbah2005} in lisse case and by T. Mochizuki \cite{Mochizuki20072,Mochizuki20071} and V. Drinfeld \cite{Drinfeld2001} in general.}. In this subsection we prove this conjecture over a compact K\"ahler base by using the $L^2$-representation of the intersection complex.

	Let $X$ be a complex space. Define $\sA^\bullet_X$ to be the complex of piecewise $C^\infty$ forms which can be extended smoothly over some neighborhood of a local embedding $X\subset\bC^N$. I.e. locally $\sA^\bullet_X:=\sA^\bullet_{\bC^N}|_X$. Since the restriction of sheaves is an exact functor, $\sA^\bullet_X$ is a complex of fine sheaves and the canonical inclusion
	$\iota:\bC_X\to\sA^\bullet_X$ is a quasi-isomorphism. We say a closed $(1,1)$-form $\omega\in\sA^2_X$ is positive if $\omega$ is the K\"ahler form of some hermitian metric on $X$, i.e. locally $\omega$ is the restriction of a K\"ahler form through a local embedding $X\subset \bC^N$. A cohomology class $\xi\in H^2(X,\bR)$ is a K\"ahler class if it is represented by some closed positive $(1,1)$-form. A typical example of positive forms is $\omega_{FS}|_X$ where $X\subset \mathbb{P}^N$ is a projective variety and $\omega_{FS}$ is the K\"ahler class associated to the Fubini-Study metric.

	Let $X^o\subset X_{\rm reg}$ be a dense Zariski open subset and $(\bV,h)$ a $0$-tame harmonic bundle on $(X,X^o)$.
	Let $ds^2$ be a distinguished metric associated to $(X,X^o)$. Since every form in $\sA^\bullet_X$ has bounded length with respect to $ds^2$ (Lemma \ref{lem_fine_sheaf}), there is a well defined wedge product
	\begin{align*}
	\sA^\bullet_X\otimes \sD^\bullet_{X,\bV;ds^2,h}\to \sD^\bullet_{X,\bV;ds^2,h}.
	\end{align*}
	This makes $\sD^\bullet_{X,\bV;ds^2,h}$ a differential graded $\sA^\bullet_X$-module.

	Let $\tau:IC_X(\bV)\to \sD^\bullet_{X,\bV;ds^2,h}$ be the quasi-isomorphism in Theorem \ref{thm_main_harmonic}. Since $\tau$ is $\bC_X$-linear, the diagram
	\begin{align*}
	\xymatrix{
		\bC_X\otimes IC_X(\bV)\ar[r]\ar[d]^{\iota\otimes\tau} & IC_X(\bV)\ar[d]^\tau\\
		\sA^\bullet_X\otimes \sD^\bullet_{X,\bV;ds^2,h}\ar[r]& \sD^\bullet_{X,\bV;ds^2,h}
	}
	\end{align*}
	is commutative in $D(X)$. As a consequence, the diagram of wedge products
	\begin{align*}
	\xymatrix{
		H^i(X,\bC_X)\otimes IH^j(X,\bV)\ar[r]\ar[d] & IH^{i+j}(X,\bV)\ar[d]\\
		H^i(\Gamma(\sA^\bullet_X))\otimes H_{(2)}^j(X^o,\bV;ds^2,h)\ar[r]& H_{(2)}^{i+j}(X^o,\bV;ds^2,h)
	}
	\end{align*}
	is commutative for every $i$, $j$ when $X$ is compact.
	\begin{thm}\label{thm_Lefschetz_package}
		Let $X$ be a compact K\"ahler space of dimension $n$ and $X^o\subset X_{\rm reg}$ a dense Zariski open subset. Let $\bV$ be a local system admitting a $0$-tame harmonic metric. Let $\omega\in H^2(X,\bR)$ be a K\"ahler class and denote $L_{\omega}(-):=\omega\wedge-$. Then the following statements hold.
		\begin{description}
			\item[Hard Lefschetz] The morphism
			\begin{align*}
			L_{\omega}^k:IH^{n-k}(X,\bV)\to IH^{n+k}(X,\bV)
			\end{align*}
			is an isomorphism for every $k>0$.
			\item[Lefschetz Decomposition] Define the primitive cohomology
			\begin{align*}
			IP^{n-k}(X,\bV):={\rm Ker}\left(L_{\omega}^{k+1}:IH^{n-k}(X,\bV)\to IH^{n+k+2}(X,\bV)\right),\quad \forall k\geq 0.
			\end{align*}
			Then for every $k\geq n$ there is an orthogonal decomposition
			\begin{align*}
			IH^k(X,\bV)=\bigoplus_{i}L_\omega^i IP^{k-2i}(X,\bV).
			\end{align*}
		\end{description}
	\end{thm}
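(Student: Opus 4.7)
The strategy is to pass to the $L^2$-de Rham side via Theorem \ref{thm_main_harmonic}, represent intersection cohomology classes by $L^2$-harmonic forms, and apply the standard $\mathfrak{sl}_2$-calculus for a complete Kähler manifold with pluriharmonic coefficients. Given a Kähler class $\omega \in H^2(X,\bR)$, I first choose a Kähler form $\omega_0 \in \sA^2_X$ representing it and construct, via Lemma \ref{lem_Grauert_metric_exist}, a complete Kähler distinguished metric $ds^2$ on $X^o$ whose Kähler form is $\omega_{ds^2} = \sqrt{-1}\ddbar\varphi_\pi + \omega_0$. By Theorem \ref{thm_main_harmonic}, one has $IH^k(X,\bV) \simeq H^k_{(2)}(X^o,\bV;ds^2,h)$ in finite-dimensional spaces, and by completeness together with finite-dimensionality (\cite[Theorem A.2.2]{Kashiwara_Kawai1987}) every class admits a unique harmonic representative in $\sH^k \subset L^k_{(2)}$.

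Next I verify that the operator $L_\omega$ on $IH^*(X,\bV)$ is realized, under the isomorphism above, by wedge product with $\omega_{ds^2}$. The function $\varphi_\pi$ is bounded on $X^o$ and, by estimate (\ref{align_grad_norm_Gt}) together with the local finite-volume property (Corollary \ref{cor_Gt_complete_finite_vol}), one has $\varphi_\pi \in \sD^0_X$ and $d^c\varphi_\pi \in \sD^1_X$ in the $L^2$-complex with trivial coefficients; thus $\sqrt{-1}\ddbar\varphi_\pi = \tfrac{1}{2}d(d^c\varphi_\pi)$ is $L^2$-exact, so $\omega_{ds^2}$ and $\omega_0$ represent the same class under the identification $H^2_{(2)}(X^o;ds^2) \simeq IH^2(X)$. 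Combined with the compatibility between the $\sA^\bullet_X$-module structure on $\sD^\bullet_{X,\bV;ds^2,h}$ and the cup product on $IC_X(\bV)$ set up just before the theorem, this identifies $L_\omega$ with the action of $L_{\omega_{ds^2}}$ on harmonic representatives.

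Third, I invoke the Kähler identities (\ref{align_Kahler_id2}) for the complete Kähler manifold $(X^o,ds^2)$ with pluriharmonic coefficients $(\bV,h)$; these hold in the sense of unbounded operators by the arguments of \cite[\S 7]{Zucker1979} and \cite[\S 6]{Kashiwara_Kawai1987}, with completeness being essential. Together with the Bochner identity (Lemma \ref{lem_Bochner_formula_harmonic_bundle}) and the pluriharmonicity $\Theta_h(\nabla)=0$, they yield $\Delta_\nabla = \Delta_{\nabla^c_h}$; hence $L_{\omega_{ds^2}}$ and $\Lambda_{\omega_{ds^2}}$ preserve $\sH^\bullet$ and generate an $\mathfrak{sl}_2$-representation on $\bigoplus_k \sH^k$. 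From the $\mathfrak{sl}_2$-theory one deduces the isomorphism $L^k_{\omega_{ds^2}}\colon \sH^{n-k} \xrightarrow{\sim} \sH^{n+k}$ and the primitive decomposition, orthogonal with respect to the $L^2$-inner product on harmonic forms, which transport via Theorem \ref{thm_main_harmonic} to Hard Lefschetz and the Lefschetz decomposition on $IH^*(X,\bV)$.

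The main obstacle is the compatibility step in paragraph two: verifying that the Lefschetz operator defined via the $\sA^\bullet_X$-action on $IC_X(\bV)$ truly matches the wedge operator on $L^2$-harmonic forms (followed by harmonic projection) under the canonical quasi-isomorphism of Theorem \ref{thm_main_harmonic}. This is ultimately a consequence of the canonicity in the Goresky-MacPherson-Deligne criterion (Proposition \ref{prop_GM_criterion}), but must be spelled out with care since the isomorphism in Theorem \ref{thm_main_harmonic} is given by an explicit zigzag of quasi-isomorphisms and the $\sA^\bullet_X$-module action factors through the derived category.
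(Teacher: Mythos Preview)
Your proposal is correct and follows essentially the same approach as the paper: pass to $L^2$-cohomology via Theorem \ref{thm_main_harmonic}, use completeness and finite-dimensionality to get harmonic representatives, apply the K\"ahler identities and pluriharmonicity to obtain the $\mathfrak{sl}_2$-action on harmonic forms, and then check compatibility of $L_\omega$ with $L_{\omega_{ds^2}}$.

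The one place where you are overcautious is the compatibility step you flag as the ``main obstacle.'' The paper disposes of it in one line: for any $\nabla$-closed $\alpha$ one has
\[
\omega_\pi\wedge\alpha-\omega\wedge\alpha=\sqrt{-1}\ddbar\varphi_\pi\wedge\alpha=\nabla(\sqrt{-1}\dbar\varphi_\pi\wedge\alpha),
\]
and since $|\dbar\varphi_\pi|_{\omega_\pi}$ is bounded by (\ref{align_grad_norm_Gt}), the primitive $\sqrt{-1}\dbar\varphi_\pi\wedge\alpha$ lies in $D^{\ast+1}_{\rm max}$. This works directly at the level of $L^2$-forms with coefficients in $\bV$, so there is no need to first argue that $[\omega_{ds^2}]=[\omega_0]$ in $IH^2(X)$ and then invoke the cup-product compatibility through the zigzag; the $\sA^\bullet_X$-module structure on $\sD^\bullet_{X,\bV;ds^2,h}$ (set up just before the theorem) already makes the diagram commute, and the explicit primitive above shows $L_\omega=L_{\omega_\pi}$ on cohomology without any derived-category subtlety.
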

	\begin{proof}
		Let $\omega_\pi=\sqrt{-1}\ddbar\varphi_\pi+\omega$ be the K\"ahler form of the distinguished metric $ds^2_\pi$ associated to some Whitney stratified desingularization $\pi:\widetilde{X}\to X$ of $(X,X^o)$ (Lemma \ref{lem_Grauert_metric_exist}). By Theorem \ref{thm_main_harmonic} there is a canonical isomorphism
		\begin{align*}
		IH^\ast(X,\bV)\simeq H^\ast_{(2)}(X^o,\bV;ds^2_\pi,h).
		\end{align*}
		Denote
		\begin{align}\label{align_L_omegapi}
		L_{\omega_\pi}:H^\ast_{(2)}(X^o,\bV;ds^2_\pi,h)\to H^{\ast+2}_{(2)}(X^o,\bV;ds^2_\pi,h),\quad [\alpha]\mapsto[\omega_\pi\wedge\alpha].
		\end{align}
		Notice that $d\omega_\pi=0$ and $|\omega_\pi|_{\omega_\pi}=1$ is bounded, so (\ref{align_L_omegapi}) is well defined. Since $IH^\ast(X,\bV)$ is finite dimensional, so is $H^\ast_{(2)}(X^o,\bV;ds^2_\pi,h)$. Hence
		\begin{align*}
		H^k_{(2)}(X^o,\bV;ds^2_\pi,h)\simeq \sH^k:=\{\alpha\in D^k_{\rm max}(X^o,\bV;ds^2_\pi,h)|\nabla\alpha=0,\nabla^\ast_h\alpha=0\},\quad\forall k
		\end{align*}
		where the isomorphism is compatible with the operator $\omega_\pi\wedge-$. Hence $L_{\omega_\pi}$ and its dual $\Lambda_{\omega_\pi}$ send harmonic forms to harmonic forms. As in the classical situation, they form a part of an ${\bf sl}(2)$-representation on $\sH^\ast$.  Now the standard argument shows the Hard Lefschetz and Lefschetz Decomposition for $L_{\omega_\pi}$.

		Since the Lefschetz operator $L_\omega$ is represented by $\omega\wedge-$, the theorem follows if we could show that the diagram
		\begin{align*}
		\xymatrix{
			H^\ast_{(2)}(X^o,\bV;ds^2_\pi,h)\ar[r]^{L_{\omega_\pi}}\ar[d]^\simeq & H^{\ast+2}_{(2)}(X^o,\bV;ds^2_\pi,h) \ar[d]^\simeq\\
			IH^\ast(X,\bV) \ar[r]^{L_\omega} & IH^{\ast+2}(X,\bV)
		}
		\end{align*}
		is commutative. Equivalently $[\omega\wedge\alpha]=[\omega_\pi\wedge\alpha]$ for every $\alpha\in {\rm Ker}\nabla$. This is because
		\begin{align*}
		\omega_\pi\wedge\alpha-\omega\wedge\alpha=\sqrt{-1}\ddbar\varphi_\pi\wedge\alpha=\nabla(\sqrt{-1}\dbar\varphi_\pi\wedge\alpha).
		\end{align*}
		By (\ref{align_grad_norm_Gt}), $|\dbar\varphi_\pi|_{\omega_\pi}$ is bounded , so $\dbar\varphi_\pi\wedge\alpha\in D^{\ast+1}_{\rm max}(X^o,\bV;ds^2_\pi,h)$. This finishes the proof.
	\end{proof}
	Note that every semisimple perverse sheaf on a complex space is a direct sum of intersection complexes with coefficients in a semisimple local system. Using the $0$-tame harmonic metrics on semisimple local systems (\cite{Mochizuki20072,Mochizuki20071,Jost_Zuo1997}, Remark \ref{rmk_harmonic_bundle_regular_singularity_examples}), we are able to show
	\begin{cor}[Kashiwara's conjecture over K\"ahler space: Absolute case]\label{cor_Kashiwara_conj}
		Let $X$ be a compact K\"ahler space and $P$ a semisimple perverse sheaf on $X$. Let $\omega\in H^2(X,\bC)$ be a K\"ahler class and $L_\omega$ the associated Lefschetz operator. Then
		\begin{align*}
		L^k_\omega:\bH^{-k}(X,P)\to \bH^{k}(X,P),\quad \forall k>0
		\end{align*}
		is an isomorphism. As a consequence, $L_\omega$ induces a Lefschetz decomposition on $\bH^\ast(X,P)$.
	\end{cor}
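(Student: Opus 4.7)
The plan is to reduce the statement to Theorem \ref{thm_Lefschetz_package} by decomposing $P$ into simple summands. Since $P$ is semisimple by hypothesis, one may write
\[
P \simeq \bigoplus_{\alpha} IC_{Z_\alpha}(\bV_\alpha)[\dim Z_\alpha],
\]
where each $Z_\alpha \subset X$ is an irreducible closed analytic subspace and each $\bV_\alpha$ is a simple local system on a dense Zariski open subset of $(Z_\alpha)_{\rm reg}$. This uses the standard classification of simple perverse sheaves on a complex space; the shift by $\dim Z_\alpha$ reflects the perverse $t$-structure convention.

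Second, I would exploit the fact that $X$ is compact K\"ahler to ensure each $Z_\alpha$ is a compact K\"ahler subspace with the restricted K\"ahler class $\omega|_{Z_\alpha}$ (in the sense of Definition \ref{defn_hermitian_metric}). By Remark \ref{rmk_harmonic_bundle_regular_singularity_examples}(2), each semisimple $\bV_\alpha$ carries a Corlette--Jost--Zuo $0$-tame harmonic metric on $(Z_\alpha, Z_\alpha^o)$, unique up to flat automorphism. Via the closed immersion $i_\alpha : Z_\alpha \hookrightarrow X$ and the adjunction $i_{\alpha *}=i_{\alpha !}$, there is a natural identification
\[
\bH^k(X, IC_{Z_\alpha}(\bV_\alpha)[\dim Z_\alpha]) \simeq IH^{k+\dim Z_\alpha}(Z_\alpha, \bV_\alpha),
\]
under which the action of the Lefschetz operator $L_\omega$ on the left translates to $L_{\omega|_{Z_\alpha}}$ on the right. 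I would then apply Theorem \ref{thm_Lefschetz_package} with the K\"ahler class $\omega|_{Z_\alpha}$ and the $0$-tame harmonic bundle $(\bV_\alpha, h_\alpha)$ to obtain the Hard Lefschetz isomorphism
\[
L_{\omega|_{Z_\alpha}}^k : IH^{\dim Z_\alpha - k}(Z_\alpha, \bV_\alpha) \xrightarrow{\sim} IH^{\dim Z_\alpha + k}(Z_\alpha, \bV_\alpha)
\]
for every $k > 0$.

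Summing these isomorphisms over $\alpha$ yields the Hard Lefschetz statement $L_\omega^k : \bH^{-k}(X, P) \xrightarrow{\sim} \bH^k(X, P)$. The Lefschetz decomposition is then a formal consequence: having Hard Lefschetz, one builds an $\mathfrak{sl}(2)$-representation on $\bH^\ast(X, P)$ in which $L_\omega$ acts as a raising operator, and the primitive decomposition is just the decomposition into $\mathfrak{sl}(2)$-irreducibles, exactly as in the closing argument of the proof of Theorem \ref{thm_Lefschetz_package}.

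The main obstacle is the bookkeeping in the second step: one needs to verify that $L_\omega$ really is compatible with the direct sum decomposition and corresponds to the Lefschetz operator of the restricted K\"ahler class on each stratum $Z_\alpha$. This compatibility is not obvious abstractly because the decomposition of $P$ is only canonical up to isomorphism, but it holds because $L_\omega$ is cup product with an element of $H^2(X,\bR)$, which is compatible with any morphism in $D(X)$ of degree zero; in particular, it preserves each isotypic summand and commutes with pushforward along closed immersions. Once this compatibility is established, everything else is a direct application of results proved earlier in the paper.
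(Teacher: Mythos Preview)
Your proposal is correct and follows the same approach as the paper: decompose $P$ into intersection complexes with semisimple coefficients, equip each with a $0$-tame harmonic metric via Remark~\ref{rmk_harmonic_bundle_regular_singularity_examples}(2), and apply Theorem~\ref{thm_Lefschetz_package} summand by summand. Your discussion of the compatibility of $L_\omega$ with the direct sum decomposition and with pushforward along closed immersions makes explicit a point the paper leaves implicit in its one-line proof.
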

    \begin{proof}
    	Since $P$ is semisimple, it is a direct sum of $IC_{Z_i}(\bV_i)[\dim Z_i]$, $i\in I$ where $Z_i$ is an irreducible Zariski closed analytic subspace of $X$ and $\bV_i$ is a semisimple local system over some Zariski open subset $Z^o_i\subset Z_{i,\rm reg}$. Now the corollary follows from Theorem \ref{thm_Lefschetz_package}.
    \end{proof}
	\subsection{Pure Hodge structure}\label{section_Hodge_structures}
	Let $X$ be a compact K\"ahler space (or more generally let $X$ be in the Fujiki class $(\sC)$) and $(\cV,\nabla,\mathcal{F}^\bullet,h)$ an $\bR$-polarized variation of Hodge structure on a dense Zariski open subset $X^o\subset X_{\rm reg}$ with quasi-unipotent local monodromies. By Lemma \ref{lem_Grauert_metric_exist} there is a K\"ahler distinguished metric $ds^2$. Let $\bV:={\rm Ker}\nabla$ and
	let
	$$\mathcal{F}^p_{(2)}:=\left\{\alpha\in\sD^\bullet_{X,\bV;ds^2,h}\bigg|\alpha=\sum_{P\geq p}\alpha^{P,Q}\in \sL^{P,Q}(\bV)\right\},$$
	where $\sL^{P,Q}(\bV)$ stands for the sheaf of differential forms with total degree $(P,Q)$. Since $ds^2$ is complete (Corollary \ref{cor_Gt_complete_finite_vol}), the filtered complex $(\sD^\bullet_{X,\bV;ds^2,h},\mathcal{F}^\bullet_{(2)})$ and the Lefschetz operator $L$ associated to $ds^2$ provide a polarized pure Hodge structure on $H^\ast_{(2)}(X^o,\bV;ds^2,h)$ (\cite[\S 6]{Kashiwara_Kawai1987} or \cite[\S 7]{Zucker1979}). Moreover the Lefschetz package (Theorem \ref{thm_Lefschetz_package}) is compatible with this Hodge structure.
	\begin{conj}\label{conj_Hodge_structure}
		Notations as in Theorem \ref{thm_main_harmonic}.
		Assume that $X$ is a compact K\"ahler space and $ds^2$ is a K\"ahler distinguished metric.
		Then the $L^2$-pure Hodge structure on $H^\ast_{(2)}(X^o,\bV;ds^2,h)$ is compactible with Saito's pure Hodge structure on $IH^\ast(X,\bV)$ via the canonical isomorphism in Theorem \ref{thm_main_harmonic}.
	\end{conj}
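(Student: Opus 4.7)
My plan is to attack Conjecture \ref{conj_Hodge_structure} in stages, propagating the comparison from cases where both sides are known to agree through the functorial operations respected by both constructions. Let $\pi:\widetilde{X}\to X$ be the Whitney stratified desingularization underlying the distinguished metric $ds^2$, and let $(\bV,h)$ correspond to an $\bR$-polarized variation of Hodge structure with quasi-unipotent local monodromies. On $\widetilde{X}$, the preimage $D:=\pi^{-1}(X\setminus X^o)$ is a simple normal crossing divisor, and the pulled-back VHS admits a canonical filtered extension (Deligne's canonical extension with Hodge bundles). Saito's Hodge module $IC_X(\bV)$ is the strict support summand of the direct image $H^0\pi_*(\mathrm{can.\ Hodge\ module\ on\ }\widetilde X)$, and its Hodge filtration at the cohomology level is functorial under such pushforwards.

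First, I would establish the comparison on the smooth model $\widetilde{X}$. The distinguished metric pulls back to a complete Kähler metric on $\widetilde{X}\setminus D$ which is not of Poincaré type, but its local form (Lemma \ref{lem_Grauert_metric_in_coordinates}) differs from the Poincaré-type metric only by a conformal-type factor with controlled $(p,q)$-behavior. The Hodge filtration $F^p_{(2)}$ is defined via the \emph{total} bidegree of smooth forms, so two quasi-isometric Kähler metrics whose associated $(1,0)/(0,1)$ splittings on the tangent bundle agree produce the \emph{same} filtered subcomplex of measurable forms. Since both the distinguished metric and a Poincaré-type metric share the underlying complex structure, one obtains a filtered quasi-isomorphism between the two $L^2$-de Rham complexes on $\widetilde{X}$. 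The theorems of Cattani-Kaplan-Schmid \cite{Cattani_Kaplan_Schmid1987} and Kashiwara-Kawai \cite{Kashiwara_Kawai1987} then identify the $L^2$-Hodge structure on $\widetilde X$ for the Poincaré metric with Saito's Hodge structure on $IC_{\widetilde X}(\pi^*\bV)[\dim X]$, giving the conjecture on $\widetilde X$.

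Next, I would propagate to $X$ via $\pi$. On the analytic side one uses a filtered zigzag: the pushforward $\pi_*\sD^\bullet_{\widetilde X,\pi^*\bV;\pi^*ds^2,\pi^*h}$ receives a natural filtered map from $\sD^\bullet_{X,\bV;ds^2,h}$, and both compute $R\pi_*IC_{\widetilde X}(\pi^*\bV)$ (resp.\ $IC_X(\bV)$ as a direct summand via the Goresky-MacPherson-Deligne criterion, Proposition \ref{prop_GM_criterion}, applied with the canonical zigzag stated there). On Saito's side, the Hodge filtration on $IC_X(\bV)$ is by definition the one induced from $H^0\pi_*$ applied to the filtered de Rham complex of the canonical Hodge module on $\widetilde X$. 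The uniqueness of the canonical quasi-isomorphism in Proposition \ref{prop_GM_criterion} (driven by the stalk/costalk vanishings, which are themselves strict-support axioms shared by both sides) is the key device: it forces the filtered splitting of $IC_X(\bV)$ inside $R\pi_*IC_{\widetilde X}(\pi^*\bV)$ produced by Saito's decomposition theorem to coincide with the one produced by the $L^2$-construction.

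The main obstacle is precisely the filtered strictness of this pushforward comparison. The $F^p_{(2)}$-filtration consists of smooth, not holomorphic, forms, whereas Saito's Hodge filtration is encoded in a filtered holomorphic complex of $\sO$-modules. To bridge them one must show that the natural inclusion of the holomorphic Hodge subcomplex (built from Deligne's canonical extension and its Hodge bundles, truncated by the intersection cohomology growth conditions) into the $L^2$-Dolbeault complex is a \emph{filtered} quasi-isomorphism, and that this inclusion is compatible with $\pi_*$. This requires sharp $L^2$-estimates for $\dbar$ against the Hodge metric with weights coming from the distinguished potential $\varphi_\pi$, generalizing Zucker's curve estimates and the higher-dimensional NCD case, but now for a metric that is not of Poincaré type along the exceptional divisor. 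I expect this filtered $\dbar$-Poincaré lemma near the singular boundary to be the decisive technical input; once it is in place, combining it with the strict support uniqueness (Proposition \ref{prop_GM_criterion}) and Saito's formalism should yield the conjecture.
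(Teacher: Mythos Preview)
The statement is a \emph{conjecture} in the paper, not a theorem: the authors explicitly leave it open and record only partial evidence (the isolated-singularity, trivial-coefficient case via Zucker's argument, and the identification of the top Hodge piece with Saito's $S$-sheaf in their companion work). There is therefore no proof in the paper to compare your proposal against.

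That said, your plan contains a genuine conceptual gap in its first step. You claim that the pulled-back distinguished metric on $\widetilde X\setminus D$ and a Poincar\'e-type metric yield filtered quasi-isomorphic $L^2$-de Rham complexes on $\widetilde X$, and then invoke Cattani--Kaplan--Schmid/Kashiwara--Kawai. But these two complexes are \emph{not} quasi-isomorphic even before filtering: by Theorem~\ref{thm_main_harmonic} one has $R\pi_*\sD^\bullet_{\widetilde X,\pi^*\bV;\pi^*ds^2,\pi^*h}\simeq \sD^\bullet_{X,\bV;ds^2,h}\simeq IC_X(\bV)$, whereas CKS/KK identify the Poincar\'e-metric $L^2$-complex with $IC_{\widetilde X}(\pi^*\bV)$. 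Their pushforwards differ by the extra summands in the decomposition theorem, and already their global hypercohomologies $IH^*(X,\bV)$ versus $IH^*(\widetilde X,\pi^*\bV)$ have different ranks in general. The paper even flags this (Remark after Lemma~\ref{lem_Grauert_metric_exist}): the distinguished metric is not of Poincar\'e type along $D$, and the two are not quasi-isometric --- the extra $\log^2(-\log|z_1\cdots z_r|^2)$ factor in Lemma~\ref{lem_Grauert_metric_in_coordinates} changes which forms are square-integrable. So the bridge you build in step one collapses, and with it the subsequent ``propagate via $\pi_*$'' step, since there is nothing on $\widetilde X$ to propagate from.

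Your closing paragraph correctly isolates the real difficulty: one needs a filtered $\bar\partial$-Poincar\'e lemma for the distinguished metric at points of $X\setminus X^o$, identifying $\mathcal F^p_{(2)}$ with Saito's Hodge filtration locally. But this must be done directly on $X$ (or on the cones $C_x$ appearing in Proposition~\ref{prop_Dstr}), not reduced to the smooth NCD case via $\pi$. That local filtered comparison is exactly what remains open.
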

	This conjecture is related to the Harris-Zucker conjecture \cite[Conjecture 5.3]{Harris-Zucker2001}, which claims that the Hodge structure of Saito is compatible with the $L^2$-Hodge structure via the isomorphism (\cite{Saper_Stern1990,Looijenga1988})
	$$H^\ast_{(2)}(X^o,\bV;ds^2,h)\simeq IH^\ast(X,\bV)$$
	for a connected component $X^o$ of a Shimura variety and its Satake-Baily-Borel compactification $X$. Here $\bV$ is the local system associated to a rational representation of the group in the Shimura datum.

	There are several evidences for Conjecture \ref{conj_Hodge_structure}.
	\begin{enumerate}
		\item When $X\backslash X^o$ is a subset of isolated points and $\bV=\bC$ is the trivial Hodge module, Conjecture \ref{conj_Hodge_structure} holds by applying Zucker's argument in \cite{Zucker1987}.
		\item In \cite[Theorem 1.1]{SZ2021} the authors show that the top Hodge piece $\mathcal{F}^{\rm top}_{(2)}$ is quasi-isomorphic to Saito's $S$-sheaf $S(IC_X(\bV))$, the lowest (re-indexed) Hodge piece of the Hodge module $IC_X(\bV)$. Hence we obtain
		\begin{thm}\label{thm_resolve_S_sheaf}
			Let $X$ be a compact K\"ahler space of dimension $n$ and $X^o\subset X_{\rm reg}$ a dense Zariski open subset. Let $\bV=(\cV,\nabla,\cF^\bullet,h)$ be an $\bR$-polarized variation of Hodge structure of weight $r$ on $X^o$ with quasi-unipotent local monodromies. Let $H^{p,q}_{(2)}(X^o,\bV)$ be the $(p,q)$-summand in the $L^2$-Hodge structure of $H^\ast_{(2)}(X^o,\bV;ds^2,h)$ and let $IH^{p,q}(X,\bV)$ be the $(p,q)$-summand of Saito's Hodge structure on $IH^\ast(X,\bV)$. Then for every $q\geq0$ there is a natural isomorphism
			$$IH^{n+r,q}(X,\bV)\simeq H^{n+r,q}_{(2)}(X^o,\bV).$$
		\end{thm}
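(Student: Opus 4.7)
The strategy is to identify both sides with the hypercohomology of a common complex, namely Saito's $S$-sheaf $S(IC_X(\bV))$, which is the lowest (re-indexed) piece of the Hodge filtration on the filtered $\sD$-module underlying the pure Hodge module $IC_X(\bV)$. The proof will combine three inputs: (a) the $L^2$-Hodge decomposition at the level of sheaves produced by the top Hodge piece of $\sD^\bullet_{X,\bV;ds^2,h}$; (b) the quasi-isomorphism $\mathcal{F}^{\mathrm{top}}_{(2)} \simeq S(IC_X(\bV))$ from \cite[Theorem 1.1]{SZ2021}; and (c) Saito's strictness theorem, which reads off the top Hodge piece of $IH^{\ast}(X,\bV)$ as the hypercohomology of $S(IC_X(\bV))$.

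First, on the analytic side, I would use that $ds^2$ is a complete K\"ahler distinguished metric, so the K\"ahler identities for unbounded operators (\cite[\S 7]{Zucker1979}, \cite[\S 6]{Kashiwara_Kawai1987}) apply. Finite-dimensionality from Theorem \ref{thm_main_harmonic} gives a Hodge decomposition
$$H^{k}_{(2)}(X^o,\bV;ds^2,h)=\bigoplus_{p+q=k} H^{p,q}_{(2)}(X^o,\bV),$$
and $\mathcal{F}^{\mathrm{top}}_{(2)}$ is by construction the subcomplex of $\bV$-valued $L^2$ forms of holomorphic degree exactly $n+r$ (the top possible degree, since $\bV$ carries a VHS of weight $r$). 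Consequently the associated hypercohomology spectral sequence degenerates to give
$$\bH^{n+r+q}(X,\mathcal{F}^{\mathrm{top}}_{(2)}) \simeq H^{n+r,q}_{(2)}(X^o,\bV),\quad q\geq 0.$$

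Second, I would invoke the key input \cite[Theorem 1.1]{SZ2021}, which provides a canonical quasi-isomorphism $\mathcal{F}^{\mathrm{top}}_{(2)}\simeq_{\mathrm{qis}} S(IC_X(\bV))$ in $D(X)$. Composing with the previous identification yields
$$H^{n+r,q}_{(2)}(X^o,\bV) \;\simeq\; \bH^{n+r+q}(X, S(IC_X(\bV))).$$
Finally, on the Hodge-module side, Saito's strictness theorem for the direct image under $a_X \colon X \to \mathrm{pt}$ applied to the pure Hodge module $IC_X(\bV)$ implies that the Hodge filtration on $\bH^{\ast}(X,IC_X(\bV))=IH^{\ast}(X,\bV)$ is induced termwise from the filtered $\sD$-module, and that its lowest (re-indexed, i.e.\ highest $(p,q)$) piece is exactly $\bH^{\ast}(X,S(IC_X(\bV)))$. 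This gives $IH^{n+r,q}(X,\bV)\simeq \bH^{n+r+q}(X,S(IC_X(\bV)))$, and chaining the two identifications produces the stated isomorphism.

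The main obstacle is verifying that the quasi-isomorphism of \cite{SZ2021} is compatible with the perverse shift and the conventional indexing that relates the polarized VHS of weight $r$ to Saito's filtration; in particular, one must check that ``top Hodge piece'' in the $L^2$-filtration $\mathcal{F}^{\bullet}_{(2)}$ corresponds precisely to ``bottom piece of $F_\bullet$'' on the filtered $\sD$-module side, after accounting for the $[n]$-shift embedded in the definition of $IC_X(\bV)$ and the Tate twist associated to a weight-$r$ polarization. Once the numerology is pinned down, the rest is formal: both filtrations are strict (by K\"ahler identities on one side, by Saito on the other), the degeneration is automatic, and the isomorphism follows from the diagram obtained by composing (a)--(c).
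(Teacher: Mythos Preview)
Your proposal is correct and follows essentially the same route as the paper: the paper does not give a detailed proof but simply records that the quasi-isomorphism $\mathcal{F}^{\mathrm{top}}_{(2)}\simeq S(IC_X(\bV))$ from \cite[Theorem 1.1]{SZ2021}, combined with the $L^2$-Hodge decomposition (via K\"ahler identities and finite-dimensionality from Theorem \ref{thm_main_harmonic}) and Saito's strictness, yields the statement. Your write-up is a faithful elaboration of the implicit ``Hence we obtain'' in the text.
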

	\end{enumerate}

	\bibliographystyle{plain}
	\bibliography{CGM}

\end{document}